\documentclass[a4paper]{amsart}
\usepackage[left=3cm,right=3cm,top=3cm,bottom=3cm]{geometry}

\usepackage[utf8]{inputenc}
\usepackage{amsmath}
\usepackage{amsfonts}
\usepackage{amssymb}
\usepackage{mathrsfs, multirow}
\usepackage{sseq}

\usepackage{graphicx}
\usepackage{epigraph}
\usepackage[pagebackref]{hyperref}
\usepackage{amsthm}
\usepackage{verbatim}
\usepackage{xcolor}

\usepackage{tikz}
\usepackage{tikz-cd}
\usetikzlibrary{calc}
\usetikzlibrary{decorations.pathmorphing}

\usepackage[utf8]{inputenc}
\usepackage{amsmath}
\usepackage{amsfonts}
\usepackage{amssymb}
\usepackage{mathrsfs, multirow}
\usepackage{sseq}
\usepackage{listings}

\usepackage{graphicx}
\usepackage{epigraph}
\usepackage{xypic}
\usepackage[pagebackref]{hyperref}
\usepackage{amsthm}
\usepackage{xcolor}

\usepackage{tikz}
\usepackage{tikz-cd}
\usetikzlibrary{calc}
\usetikzlibrary{decorations.pathmorphing}

\DeclareMathOperator{\supp}{supp}

\DeclareMathOperator{\Tor}{Tor}
\DeclareMathOperator{\Ext}{Ext}
\DeclareMathOperator{\Ker}{Ker}
\DeclareMathOperator{\Coker}{Coker}
\DeclareMathOperator{\Img}{Im}

\DeclareMathOperator{\sk}{sk}

\DeclareMathOperator{\colim}{colim}

\DeclareMathOperator{\lk}{lk}

\DeclareMathOperator{\chr}{char}

\newcommand{\uxa}{\ensuremath{(\underline{X},\underline{A})}} 
\newcommand{\ux}{\ensuremath{(\underline{X},\underline{\ast})}}
\newcommand{\us}{\ensuremath{(\underline{S},\underline{\ast})}}

\newcommand{\djk}{\ensuremath{DJ_{\K}}}

\DeclareMathOperator{\spn}{span}

\def\CC{\mathbb{C}}
\def\ZZ{\mathbb{Z}}
\def\FF{\mathbb{F}}
\def\QQ{\mathbb{Q}}

\def\Zg{\ZZ_{\geq 0}}
\def\Zm{\Zg^m}

\def\Z{\mathcal{Z}}
\def\K{\mathcal{ K}}
\def\L{\mathcal{L}}

\def\ZK{\Z_\K}
\def\DJ{\mathrm{DJ}}

\def\ux{(\underline{X},\underline{\ast})}
\def\uxa{(\underline{X},\underline{A})}
\def\k{\mathbf{k}}

\def\H{{\widetilde{H}}}
\def\pt{{\mathrm{pt}}}
\def\id{{\mathrm{id}}}

\def\kKc{\k\langle\K\rangle}
\def\MF{\mathrm{MF}}
\def\AMF{\mathrm{AMF}}
\def\Kf{\K^f}

\newtheorem*{theorem*}{Theorem}
\newtheorem{thm}{Theorem}[section]
\newtheorem{theorem}[thm]{Theorem}
\newtheorem{lemma}[thm]{Lemma}

\newtheorem*{thm*}{thm}
\newtheorem{lmm}[thm]{Lemma}

\newtheorem{prp}[thm]{Proposition}
\newtheorem{crl}[thm]{Corollary}

\theoremstyle{definition}

\numberwithin{equation}{section}
\newtheorem{dfn}[thm]{Definition}
\newtheorem{rmk}[thm]{Remark}
\newtheorem{exm}[thm]{Example}

\numberwithin{equation}{section}

\tikzset{curve/.style={settings={#1},to path={(\tikztostart)
    .. controls ($(\tikztostart)!\pv{pos}!(\tikztotarget)!\pv{height}!270:(\tikztotarget)$)
    and ($(\tikztostart)!1-\pv{pos}!(\tikztotarget)!\pv{height}!270:(\tikztotarget)$)
    .. (\tikztotarget)\tikztonodes}},
    settings/.code={\tikzset{quiver/.cd,#1}
        \def\pv##1{\pgfkeysvalueof{/tikz/quiver/##1}}},
    quiver/.cd,pos/.initial=0.35,height/.initial=0}

\tikzset{tail reversed/.code={\pgfsetarrowsstart{tikzcd to}}}
\tikzset{2tail/.code={\pgfsetarrowsstart{Implies[reversed]}}}
\tikzset{2tail reversed/.code={\pgfsetarrowsstart{Implies}}}
\tikzset{no body/.style={/tikz/dash pattern=on 0 off 1mm}}

\graphicspath{ {./images/} }

\hypersetup{
    colorlinks=true,
    linkcolor=blue,
    citecolor=blue,
    urlcolor=blue,
    pdftitle={A colimit decomposition for the loop homology of polyhedral products},
    pdfauthor={Lewis Stanton and Fedor Vylegzhanin},
}

\title{A Colimit decomposition for the loop homology of polyhedral products}
\author{Lewis Stanton} 
\address{Mathematical Sciences, University of Southampton, Southampton SO17 1BJ, United Kingdom}
\email{L.R.Stanton@soton.ac.uk}

\author{Fedor Vylegzhanin} 
\address{\parbox{\linewidth}{
National Research University Higher School of Economics, Moscow, Russia;\\
Steklov Mathematical Insitute of Russian Academy of Sciences, Moscow, Russia
}}
\email{vylegf@gmail.com}

\subjclass[2020]{Primary 55P35, 57S12; Secondary 16E30, 13F55, 18A30.}
\keywords{polyhedral product, loop homology, colimit, presentation, Poincar\'e series}

\begin{document}

\begin{abstract}
We show that the loop homology algebras of polyhedral products of the form $\ux^{\K}$ can be written as a colimit over the flagification of $\K$, and obtain a similar result for the Poincar\'e series. This effectively reduces the study of the algebras $H_*(\Omega\ux^{\K})$ to the case of 1-neighbourly simplicial complexes.
We give presentations of the loop homology of Davis--Januszkiewicz spaces (i.e. Yoneda algebras of Stanley--Reisner rings) and calculate the Poincar\'e series of looped polyhedral products associated to various families of simplicial complexes, including HMF-presented complexes and skeleta of flag complexes.
\end{abstract}
\maketitle

\section{Introduction}
\label{sec:intro}

Polyhedral products are a family of spaces which have been the subject of intense study recently due to their many applications across mathematics. One particular object of study has been their pointed loop space, and much work has been done to study them from various perspectives \cite{BBC20}. These include aspects of their homotopy theory \cite{Th17}, and their connections to commutative algebra \cite{vylegzhanin22} and geometric group theory \cite{PRah24}.

In this paper, we focus on commutative algebra by studying the loop homology of a family of polyhedral products. To proceed further, we state the definition of a polyhedral product. Let $\K$ be a simplicial complex on the vertex set $[m]=\{1,\dots,m\}$, and let $(X_1,A_1),\cdots,(X_m,A_m)$ be pairs of pointed $CW$-complexes. For a face $I \in \K$, define $\uxa^{I} = \prod_{i=1}^m Y_i^{I}$, where $Y_i^{I} = X_i$ if $i \in I$, and $Y_i^{I} = A_i$ if $i \notin I$. The \textit{polyhedral product} determined by $\K$ and $\uxa$ is \[\uxa^\K =\bigcup\limits_{I \in \K} \uxa^{I}\subseteq \prod\limits_{i=1}^m X_i.\] An important special case is when each $(X_i,A_i) = (\mathbb{C}P^\infty,*)$. In this case, the polyhedral product is called the \textit{Davis-Januskiewicz space}, and is denoted by $\djk$. This is closely related to the \textit{moment-angle complex}, which is the polyhedral product $\ZK := (D^2,S^1)^\K$.

In this paper, the main object of study is $\Omega \ux^\K$. When $\K$ is a flag complex, the homotopy theory of $\Omega \ux^\K$ and the algebra $H_*(\Omega\ux^\K)$ are well understood. Decompositions up to homotopy for certain spaces $X_i$ were given in \cite{S}, a presentation and an additive description of the loop homology was given in \cite{cai}. For arbitrary $\K$,
information about the torsion appearing in the loop homology and the Steenrod algebra action was given in \cite{SV}, as well as a formula for the Poincar\'e series in terms of $H_*(\Omega\ZK)$. 
However, not much is known about the algebra structure. 

In this paper, we expand out understanding of the algebra structure of $H_*(\Omega \ux^\K)$ by giving a general colimit decomposition of $H_*(\Omega \ux^\K)$ for any simplicial complex $\K$, generalising a result of \cite{Dob09,cai}. For a simplicial complex $\K$, let $\Kf$ be the flag complex on the $1$-skeleton of $\K$. It was shown in \cite{S1} that the homotopy type of $\Omega \ux^\K$ is essentially determined by the homotopy type of $\Omega \ux^{\K_I}$, for all $I \in \Kf$. We prove an algebraic analogue of this result, which gives a colimit decomposition of $H_*(\Omega \ux^\K)$ over all faces of $\Kf$. \begin{thm}[Theorem \ref{thm:uxk_colimit_decomposition} in the text]
\label{thm:uxk_colimit_decomposition_intro}
Let $\k$ be a principal ideal domain and let $X_1,\dots,X_m$ be simply connected pointed spaces. Let $\K$ be a simplicial complex, and suppose $H_*(\Omega\ux^\K;\k)$ is a free $\k$-module. Then
the natural map of Hopf algebras
$$\colim_{I\in\Kf}H_*(\Omega\ux^{\K_I};\k)\to H_*(\Omega\ux^\K;\k)$$
is an isomorphism.
\end{thm}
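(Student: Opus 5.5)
We compare both sides through a common presentation by generators and relations, using the known description in the flag case together with the fibration relating $\ux^\K$ to $\ux^{\Kf}$. Write $A=H_*(\Omega\ux^\K;\k)$ and $C=\colim_{I\in\Kf}H_*(\Omega\ux^{\K_I};\k)$. The colimit is taken in connected graded Hopf algebras (equivalently, in connected graded algebras, since colimits of Hopf algebras are computed on the underlying algebras). The natural map $\varphi\colon C\to A$ is induced by the retractions $\ux^{\K_I}\to\ux^\K$; these are retracts, so each $H_*(\Omega\ux^{\K_I})\to A$ is split injective.

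\textbf{Step 1: surjectivity.} We show $A$ is generated by the images of the $H_*(\Omega\ux^{\K_I})$ for $I\in\Kf$. In fact vertices and edges already suffice, i.e. the images of $H_*(\Omega X_i)$ and $H_*(\Omega\ux^{\K_{\{i,j\}}})$. The tool is the result of \cite{S1}: $\Omega\ux^\K$ is determined by the $\Omega\ux^{\K_I}$, $I\in\Kf$. Concretely, the map $\ux^\K\to\ux^{\Kf}$ has homotopy fibre built from the fibres over the full subcomplexes $\K_I$. After looping, this gives a homotopy-theoretic splitting, and $H_*$ of the fibre is generated by Samelson/Whitehead-type products. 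These products come from the full subcomplexes $\K_I$ with $I\in\Kf$ that are not faces of $\K$. Freeness of $A$ over $\k$ is used here, so that Künneth applies and the Hopf-algebra structure is well behaved. If this route is awkward, we will instead argue by induction on $m$ using the pushout $\ux^\K=\ux^{\K\setminus v}\cup_{\ux^{\lk v}\times X_v}\ux^{\mathrm{star}\, v}$.

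\textbf{Step 2: injectivity via filtration and Poincaré series.} We compare sizes. On the target side, the formula of \cite{SV} expresses the Poincaré series of $A$ through $H_*(\Omega\ZK)$, and hence through the homology of full subcomplexes. On the source side, the colimit over the poset $\Kf$ can be computed by a van Kampen / Mayer–Vietoris type formula. For flag diagrams, the colimit of Hopf algebras indexed by faces of a flag complex should have the expected Poincaré series; this is the algebraic shadow of the fact that the homotopy colimit over $\Kf$ of the $\Omega\ux^{\K_I}$ models $\Omega\ux^\K$, generalising \cite{Dob09,cai}, where $\K=\Kf$ and the colimit is just over the faces of a flag complex, recovering the graph-product description. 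The plan is to filter both sides by word length in the generators from Step 1 and pass to associated graded objects, which are primitively generated. Over a PID with free modules, an equality of Poincaré series together with surjectivity then forces $\varphi$ to be an isomorphism, provided $C$ is also $\k$-free; we establish that freeness by exhibiting a $\k$-basis.

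\textbf{Main obstacle.} Step 2 is the hard part: controlling the size of the colimit $C$. Colimits of algebras can be wildly larger than expected, so we need a normal-form or PBW-type basis for $C$. We would first try to prove that $C$ is isomorphic to $T(V)/J$, where $V$ is the sum of the indecomposables of the $H_*(\Omega\ux^{\K_I})$ and $J$ is generated by the relations inside each $\K_I$. We would then check that this quotient is Koszul-like with respect to the flag structure of $\Kf$, so that its Hilbert series is multiplicative over full subcomplexes. Matching this against the \cite{SV} formula completes the argument.
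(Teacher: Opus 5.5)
Your proposal has a genuine gap: neither surjectivity nor injectivity is actually established, and the key mechanism is missing.

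\textbf{Step 1.} The claim that vertices and edges suffice is false. Take $\K=\partial\Delta_{[3]}$ and $X_i=\CC P^\infty$. The images of $H_*(\Omega X_i)$ and $H_*(\Omega(X_i\times X_j))$ generate only the finite-dimensional exterior algebra $\Lambda[u_1,u_2,u_3]$. But $H_*(\Omega\djk)$ contains $H_*(\Omega\ZK)\cong H_*(\Omega S^5)$, in particular the indecomposable $w_{[3]}$ in degree $4$. For 1-neighbourly $\K$ the theorem is trivial, because $[m]$ is terminal in $\Kf$; this is the base case of the paper's induction, which you never isolate.

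The same example defeats your fallback. The star--link decomposition glues along $\ux^{\lk v}$, not along $\ux^{\lk v}\times X_v$. The link is not a full subcomplex, so $\ux^{\lk v}=X_1\vee X_2\to X_1\times X_2=\ux^{\K\setminus v}$ has no left homotopy inverse. The algebraic pushout of loop homologies is again $\Lambda[u_1,u_2,u_3]$, so loop homology does not preserve this pushout, not even up to surjectivity. The appeal to \cite{S1} for ``generation by Samelson products'' is not an argument either.

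\textbf{Step 2.} The hard part, the size of $C$, is left as an unresolved ``main obstacle''. The suggested fix, a Koszul-type normal form making Hilbert series ``multiplicative over full subcomplexes'', is not a mechanism. The relevant relation is additive in reciprocals: $1/P(H)=1/P(H_A)+1/P(H_B)-1/P(H_{AB})$.

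Nor is the Poincar\'e series of $A$ available independently in the form you need. \cite{SV} expresses it through Backelin--Berglund polynomials of all full subcomplexes, and only over a field. In the paper, the inclusion--exclusion you would match against (Theorem \ref{thm:poincare_series_reduction_to_neighbourly}, Proposition \ref{prp:bb-additivity}) is \emph{derived from} the pushout theorem you are missing.

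\textbf{The missing idea.} If $\K^1$ is not complete, write $\K=\K_A\cup_{\K_{A\cap B}}\K_B$ with $A,B\subsetneq[m]$. Then the maps out of $\ux^{\K_{A\cap B}}$ have left homotopy inverses, and one shows that loop homology sends such a pushout to $H_A\ast_{H_{AB}}H_B$. The paper proves this as follows.
\begin{enumerate}
\item Take homotopy fibres of the retractions onto $X_{AB}$. The fibre of $X\to X_{AB}$ is $F_A\vee F_B$.
\item The split fibration and Theorem \ref{thm:wedge_loop_homology} then give $H\cong(S_A\ast S_B)\otimes H_{AB}$ additively.
\item The map $\lambda\colon(S_A\ast S_B)\otimes H_{AB}\to H_A\ast_{H_{AB}}H_B$ is surjective. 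This uses both splittings $S_A\otimes H_{AB}\cong H_A\cong H_{AB}\otimes S_A$ to move $H_{AB}$ to the right.
\item The composite of $\lambda$ with the comparison map is the isomorphism from step 2, so $\lambda$ and the comparison map are isomorphisms.
\end{enumerate}
Topology thus supplies a spanning set of exactly the right size, and no normal form for the colimit is ever needed. Induction, together with Lemma \ref{lem:breakdowncolimit}, then assembles the colimit over $\Kf$.
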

Here the colimit is taken in the category of graded Hopf algebras (equivalently graded associative algebras with unit, see Lemma \ref{lmm:hopf-colimit}). The proof of Theorem \ref{thm:uxk_colimit_decomposition_intro} is obtained from a more general result showing that the loop homology functor preserves pushouts of maps admitting a left homotopy inverse (see Theorem \ref{thm:general-theorem-for-pushout}).

Since $I\in\Kf$ if and only if $\K_I$ is a \emph{1-neighbourly} complex (i.e. a complex where any two vertices are connected by an edge), Theorem \ref{thm:uxk_colimit_decomposition_intro}  effectively reduces the study of presentations of the algebra $H_*(\Omega\ux^\K)$ to the case when $\K$ is 1-neighbourly. We obtain a similar result about the additive structure. For a graded vector space $V= \oplus_{i \geq 0}V_i$, the \emph{Poincar\'e series} of $V$ is defined as the power series $P(V)=P(V;t):= \sum_{i \geq 0} \mathrm{dim}(V_i)t^i \in \mathbb{Z}[[t]]$. We prove the following.
\begin{thm}[Theorem \ref{thm:poincare_series_reduction_to_neighbourly} in the text]
\label{thm:intropoincare_series_reduction_to_neighbourly}
Let $\k$ be a principal ideal domain and let $X_1,\dots,X_m$ be simply connected pointed spaces of finite type. Let $\K$ be a simplicial complex, and suppose $H_*(\Omega\ux^\K;\k)$ is a free $\k$-module. Then we have the following identity of Poincar\'e series.
\begin{align*}
    1/P(H_*(\Omega\ux^\K;\k))&=\sum_{I\in\Kf} (1-\chi(\lk_{\Kf} I))/P(H_*(\Omega\ux^{\K_I};\k))\\
    &=\sum_{B\in\Kf}(-1)^{|B|}\sum_{I\subseteq B}(-1)^{|I|}/P(H_*(\Omega\ux^{\K_I};\k)).
\end{align*}
\end{thm}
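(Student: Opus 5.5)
The plan is to prove a more general statement by induction over subcomplexes of $\Kf$, using the colimit decomposition of Theorem~\ref{thm:uxk_colimit_decomposition_intro} together with the standard Poincar\'e series formula for amalgamated free products of connected Hopf algebras. First I reduce to the case where $\k$ is a field. Since $H_*(\Omega\ux^\K;\k)$ is $\k$-free and everything is of finite type, its Poincar\'e series is unchanged by passing to the fraction field or a residue field. Each $\ux^{\K_I}$ is a retract of $\ux^\K$, so $H_*(\Omega\ux^{\K_I};\k)$ is also free, and the same holds for these terms. So I assume $\k$ is a field.

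For a subcomplex $M\subseteq\Kf$, let $A_M:=\colim_{I\in M}H_*(\Omega\ux^{\K_I})$, a colimit in connected Hopf algebras. The generalised claim is
\[1/P(A_M)=\sum_{I\in M}(1-\chi(\lk_M I))/P(A_I),\qquad A_I:=H_*(\Omega\ux^{\K_I}).\]
For $M=\Kf$, Theorem~\ref{thm:uxk_colimit_decomposition_intro} identifies $A_{\Kf}$ with $H_*(\Omega\ux^\K)$, which gives the first equality.

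I prove the claim by induction on the number of faces of $M$. If $M=\Delta^B$ is a full simplex, the colimit is the value at the terminal face $B$. The right-hand side also reduces to $1/P(A_B)$, because links of non-maximal faces in a simplex are cones with $\chi=1$, while $\lk B=\{\varnothing\}$ has $\chi=0$. Otherwise I choose a maximal face $B$ of $M$ and write $M=L\cup\Delta^B$, where $L$ is $M$ with $B$ removed and $L\cap\Delta^B=\partial\Delta^B$. Since $M$ is the union of these subcomplexes, a cofinality / left Kan extension argument gives a pushout
\[A_M\cong A_L\ast_{A_{\partial B}}A_B\]
of connected Hopf algebras. If both maps out of $A_{\partial B}$ are injective, Milnor--Moore says $A_L$ and $A_B$ are free as modules over the sub-Hopf algebra $A_{\partial B}$. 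The amalgamated product then has the additive normal form that yields
\[1/P(A_M)=1/P(A_L)+1/P(A_B)-1/P(A_{\partial B}).\]
Substituting the inductive formulas, the claim follows from the inclusion–exclusion identity
\[1-\chi(\lk_M I)=\bigl(1-\chi(\lk_L I)\bigr)+\bigl(1-\chi(\lk_{\Delta^B}I)\bigr)-\bigl(1-\chi(\lk_{\partial\Delta^B}I)\bigr).\]
This identity holds because reduced Euler characteristic is additive over the union $\lk_M I=\lk_L I\cup\lk_{\Delta^B}I$.

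The main obstacle is injectivity of $A_{\partial B}\to A_B$ and $A_{\partial B}\to A_L$. I would deduce it from retractions. For $J\subseteq I$, the coordinate projections $\ux^{\K_I}\to\ux^{\K_J}$ give Hopf algebra retractions $A_I\to A_J$, natural in the pair $J\subseteq I$. For a full subcomplex $N\subseteq M$ on a vertex set $V$, sending $A_I\mapsto A_{I\cap V}$ then defines a retraction $A_M\to A_N$ of the canonical map, so the canonical map is injective.
\begin{itemize}
\item For $\partial\Delta^B\subset\Delta^B$ this does not apply directly, since $\partial\Delta^B$ is not full. Here I would instead invoke the paper's Theorem~\ref{thm:general-theorem-for-pushout} to compare $A_{\partial B}$ with $H_*(\Omega\ux^{(\K_B)'})$, where $(\K_B)'$ is $\K_B$ with the face $B$ removed if present. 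That group is $A_B$ or maps in via the inclusion $\ux^{(\K_B)'}\to\ux^{\K_B}$, whose loop map has a homotopy section by the results of \cite{S1}.
\item If this fails, the fallback is to run the whole argument topologically. Realise $A_M$ as $H_*(\Omega\ux^{\K_M})$ for the complex $\K_M=\bigcup_{I\in M}\K_I$, and apply the pushout theorem, which is available because the relevant inclusions have left homotopy inverses.
\end{itemize}

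Finally, the second formula follows from the first by expanding the link term:
\[1-\chi(\lk_{\Kf}I)=\sum_{B\in\Kf,\,B\supseteq I}(-1)^{|B|-|I|}.\]
This counts faces of the link, with $B=I$ contributing the empty face. Exchanging the order of summation and using $(-1)^{-|I|}=(-1)^{|I|}$ gives the stated double sum.
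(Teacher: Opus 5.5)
\textbf{There is a genuine gap: your inductive claim is false.} You assert $1/P(A_M)=\sum_{I\in M}(1-\chi(\lk_M I))/P(A_I)$ for an \emph{arbitrary} subcomplex $M\subseteq\Kf$, and this fails, so the induction cannot be carried out.

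Suppose $B=\{1,2,3\}\in\Kf$ and take $M=\partial\Delta_B$. Every pair in $B$ is an edge of $\K$, so $A_{\{i,j\}}=A_{\{i\}}\otimes A_{\{j\}}$. The colimit over $\partial\Delta_B$ therefore kills all cross commutators and equals $A_{\{1\}}\otimes A_{\{2\}}\otimes A_{\{3\}}$. Write $a_i=1/P(H_*(\Omega X_i))$. Then your left side is $a_1a_2a_3$, while your right side is $1-\sum_i a_i+\sum_{i<j}a_ia_j$. They differ by $(1-a_1)(1-a_2)(1-a_3)\neq 0$. For $X_i=\CC P^\infty$ this is $(1+t)^{-3}$ versus $(1-t+t^2)(1+t)^{-2}$.

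The step that fails is the one you flagged. The formula $1/P(A_M)=1/P(A_L)+1/P(A_B)-1/P(A_{\partial B})$ needs $A_{\partial B}\to A_B$ to be injective. This is typically false, already for $|B|=2$, where the map is $A_{\{i\}}\ast A_{\{j\}}\to A_{\{i\}}\otimes A_{\{j\}}$. Running your induction on $\partial\Delta_{\{1,2,3\}}$ (removing an edge) hits exactly this case.

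Your proposed repairs do not help.
\begin{itemize}
\item In general $\colim_{I\subsetneq B}A_I\neq H_*(\Omega\ux^{\K'})$ for $\K'=\bigcup_{I\subsetneq B}\K_I$. For $\K=\partial\Delta_{[3]}$, $B=[3]$, $X_i=\CC P^\infty$, the colimit is $\Lambda[u_1,u_2,u_3]$, whereas $H_*(\Omega\DJ_{\partial\Delta_{[3]}})$ has series $(1+t)^3/(1-t^4)$. The colimit theorem is indexed by $(\K')^f$, not by $\partial\Delta_B$.
\item Non-full inclusions such as the fat wedge $\ux^{\partial\Delta_B}\to\ux^{\Delta_B}$ have no left homotopy inverse, since cup products obstruct one. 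So Theorem \ref{thm:general-theorem-for-pushout} does not apply.
\item A homotopy section of the \emph{looped} inclusion would in any case give surjectivity, not injectivity.
\end{itemize}

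In some examples, e.g.\ $\K=\Delta_{\{1,2,3\}}\cup\Delta_{\{3,4\}}$ with $B=\{1,2,3\}$ removed, the errors in the false formulas for $L$ and for $\partial\Delta_B$ cancel. That is why the final answer can look right, but the argument is not a proof.

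\textbf{The missing idea is to decompose only along full subcomplexes, where retractions exist; this is the paper's proof.} Put $a_J:=1/P(H_*(\Omega\ux^{\K_J};\k))$. If $J\notin\Kf$, then $\K_J=\K_A\cup\K_B$ with $A,B\subsetneq J$ by \cite[Lemma 4.4]{S}. Lemmas \ref{lem:pushout} and \ref{lem:retraction} put this pushout in the setting of Theorem \ref{thm:general-theorem-for-pushout}, and part (3) gives $a_J=a_A+a_B-a_{A\cap B}$. Since $\Kf_J=\Kf_A\cup\Kf_B$ and $\Kf_A\cap\Kf_B=\Kf_{A\cap B}$, inclusion--exclusion on the double sum, together with the trivial case $J\in\Kf$, completes an induction on $|J|$ (Lemma \ref{lmm:counting_lemma}). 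This route needs neither the colimit theorem, nor the reduction to a field, nor Milnor--Moore. Your passage from the link form to the double-sum form is correct.
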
 

In Section \ref{sec:Polyhedralproducts}, we apply this to calculate the Poincar\'e series of various families of simplicial complexes, including the $k$-skeleta of flag complexes (which includes any $1$-dimensional simplicial complex), and $2$-dimensional simplicial complexes. In \cite{S} and \cite{S1} respectively, it was shown that when each $X_i$ is a sphere, $\Omega \ux^{\K}$ decomposes as a product of spheres, loops on spheres and certain well-studied torsion spaces up to homotopy. This informs on properties of the loop homology of such polyhedral products as a module. The calculation of Poincar\'e series allows one to enumerate the terms that appear in such a decomposition. 

In the case of the Davis--Januskiewicz space and a field $\k$, the \emph{Stanley--Reisner ring} associated to $\K$ is the ring $\k[\K] := \k[v_1,\cdots,v_m]/(v^I = 0, \: I \notin \K)$. By \cite[Proposition 6.5]{franz-hga}, there is an isomorphism of Hopf algebras $H_*(\Omega \djk;\k) \cong \Ext_{\k[\K]}(\k,\k)$. The Poincar\'e series of $\Ext_{\k[\K]}(\k,\k)$ was expressed in terms of the Poincar\'e series of a finite set of simplicial complexes by Berglund \cite{berglund}. Theorem \ref{thm:uxk_colimit_decomposition_intro} gives a colimit decomposition of such Ext algebras (Corollary \ref{cor:ext}), and Theorem \ref{thm:intropoincare_series_reduction_to_neighbourly} can be used to prove a more explicit formula for the Poincar\'e series in certain cases. We also show that the algebra $\Ext_{\k[\K]}(\k,\k)$ is multiplicatively generated by $\Ext^1$ and $\Ext^2$ for a wide class of complexes (Theorem \ref{thm:new-k2}), extending a result of Conner and Shelton \cite{conner-shelton}.

In addition, we give explicit presentations of $H_*(\Omega \djk;\k)$ as a graded Hopf algebra when $\K$ is a \emph{HMF-presented} simplicial complex (see Definition \ref{defn:HMF-presented}). The class of HMF-presented complexes are closely connected to homology missing face complexes introduced by Beben and Grbi\'c \cite{bg} and totally homology fillable complexes introduced by Iriye and Kishimoto \cite{fat-wedge}. 
Theorem \ref{thm:uxk_colimit_decomposition_intro} allows us to obtain a presentation of $H_*(\Omega\djk;\k)$ under a weaker assumption that $\K_I$ is HMF-presented for each $I\in\Kf$ (see Theorem \ref{thm:hmf-presented-presentation-upgrade}). This assumption holds for $k$-skeleta of a flag simplicial complexes,  so we upgrade the module information of $H_*(\Omega \djk;\k)$ from \cite{S} to a presentation of $H_*(\Omega \djk;\k)$ as an algebra in this case.

Finally we extend the aforementioned results to \emph{weighted polyhedral products}, a generalisation of polyhedral products defined by So, Stanley and Theriault \cite{weighted}. In this context, we give a presentation of certain weighted polyhedral products associated to graphs, and obtain a loop space decomposition in the case of a tree.

In Section \ref{sec:prelim}, we state some algebraic and categorical results we will require, as well as defining the weighted polyhedral product. In Section \ref{sec:loophompushout}, we prove the main technical result behind Theorem \ref{thm:uxk_colimit_decomposition_intro}, namely that the loop homology functor preserves pushouts of maps admitting left homotopy inverses. In Section \ref{sec:presforDJK}, we define HMF-presentable complexes and prove some general properties of the associated $H_*(\Omega \djk)$. Section \ref{sec:Polyhedralproducts} contains the proof of Theorem \ref{thm:uxk_colimit_decomposition_intro}, and this theorem is applied to give presentations of $H_*(\Omega \djk)$ in certain cases. Applications to the Poincar\'e series are given in Section \ref{sec:PS} and generalisations to weighted polyhedral products associated to graphs are considered in Section \ref{sec:weighted}.

The first author was supported by the EPSRC grant EP/Z534894/1 during the preparation of this work. The second author was supported by the Theoretical Physics and Mathematics Advancement Foundation ``BASIS''. The authors would like to thank William Hornslien for helpful discussions in the course of this work.

\section{Preliminary material}
\label{sec:prelim}

\subsection*{Hopf algebras and loop homology}

Let $X$ be a simply connected $CW$-complex and let $\k$ be a commutative ring with unit. Then $H_*(\Omega X;\k)$ is a connected associative algebra, where the multiplication is induced by the loop multiplication $\mu:\Omega X \times \Omega X \rightarrow \Omega X$. If $H_*(\Omega X;\k)$ is a free $\k$-module (e.g. if $\k$ is a field), this algebra has the structure of a cocommutative Hopf algebra, where the coproduct is induced by the diagonal map $\Delta: \Omega X \rightarrow \Omega X \times \Omega X$. Moreover, any map $f:X \rightarrow Y$ of topological spaces induces a map of algebras $(\Omega f)_*:H_*(\Omega X;\K)\to H_*(\Omega Y;\k)$ (a map of Hopf algebras in the torsion-free case).
In general, the functor $H\Omega:Top_* \rightarrow HopfAlg_\k$ does not preserve colimits. However, we show in Section ~\ref{sec:loophompushout} that it does preserve pushouts under certain conditions.  

\begin{prp}
\label{prp:split_hopf_properties}
Consider a homotopy fibration of simply connected spaces which has a homotopy section,
$$\xymatrix{
F\ar[r]^-i
&
E
\ar@{->>}[r]_-p 
&
B,
\ar@{_(->}@<-1ex>[l]_-s
}\quad p\circ s\simeq \id_B.$$
Then the continuous maps
\begin{gather*}
\varphi_L:\Omega F\times\Omega B\overset{\Omega i\times\Omega s}\longrightarrow \Omega E\times\Omega E\overset{\mu}\longrightarrow \Omega E,\quad \varphi_R:\Omega B\times\Omega F\overset{\Omega s\times\Omega i}\longrightarrow \Omega E\times\Omega E\overset{\mu}\longrightarrow \Omega E\end{gather*}
are homotopy equivalences.
In particular, for the induced diagram of algebras and their homomorphisms,
$$\xymatrix{
H_*(\Omega F;\k)\ar[r]^-\iota
&
H_*(\Omega E;\k)
\ar@{->>}[r]_-\pi
&
H_*(\Omega B;\k),
\ar@{_(->}@<-1ex>[l]_-\sigma
}$$
the $\k$-linear maps
$$\Phi_L:H_*(\Omega F;\k)\otimes H_*(\Omega B;\k)\overset{\iota\otimes\sigma}\longrightarrow H_*(\Omega E;\k)\otimes H_*(\Omega E;\k)\overset{\mu}\longrightarrow H_*(\Omega E;\k),$$
$$\Phi_R:H_*(\Omega B;\k)\otimes H_*(\Omega F;\k)\overset{\sigma\otimes\iota}\longrightarrow H_*(\Omega E;\k)\otimes H_*(\Omega E;\k)\overset{\mu}\longrightarrow H_*(\Omega E;\k)$$
are isomorphisms of $\k$-modules if $H_*(\Omega F;\k)$ and $H_*(\Omega B;\k)$ are free $\k$-modules of finite type.
\end{prp}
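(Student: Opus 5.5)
The argument has two parts: the homotopy equivalences come from the looped fibration sequence, and the algebraic statement follows by applying homology and the Künneth theorem.

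\emph{The homotopy equivalence $\varphi_L$.} Looping the fibration gives a homotopy fibration $\Omega F\xrightarrow{\Omega i}\Omega E\xrightarrow{\Omega p}\Omega B$ of H-spaces and H-maps, with homotopy section $\Omega s$ satisfying $\Omega p\circ\Omega s\simeq \id_{\Omega B}$. All spaces involved are connected, since $F,E,B$ are simply connected. On homotopy groups the long exact sequence splits, so
$$0\to\pi_n(\Omega F)\xrightarrow{(\Omega i)_*}\pi_n(\Omega E)\xrightarrow{(\Omega p)_*}\pi_n(\Omega B)\to 0$$
is split short exact, with splitting $(\Omega s)_*$. On $\pi_n$ of an H-space, the map induced by the loop multiplication $\mu$ is addition. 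Hence
$$(\varphi_L)_*:\pi_n(\Omega F)\oplus\pi_n(\Omega B)\to\pi_n(\Omega E),\qquad (a,b)\mapsto (\Omega i)_*a+(\Omega s)_*b,$$
and this is an isomorphism by the splitting. For $n=0$ all groups are trivial, because the loop spaces are connected. So $\varphi_L$ is a weak equivalence. Loop spaces of CW complexes have the homotopy type of CW complexes, so by Whitehead's theorem $\varphi_L$ is a homotopy equivalence. The argument for $\varphi_R$ is identical with the order of the factors reversed. Addition in $\pi_n$ is commutative for $n\ge1$, so no issue arises with the order of multiplication.

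\emph{The isomorphisms $\Phi_L$ and $\Phi_R$.} Apply $H_*(-;\k)$ to $\varphi_L$. The cross product
$$H_*(\Omega F)\otimes H_*(\Omega B)\to H_*(\Omega F\times\Omega B)$$
is an isomorphism by the Künneth theorem over the PID $\k$: the Tor term vanishes because $H_*(\Omega F;\k)$ is free. The finite type hypothesis ensures the tensor product is degreewise finite and the Künneth map is a degreewise isomorphism. By naturality of the cross product,
$$(\Omega i\times\Omega s)_*(x\times y)=\iota(x)\times\sigma(y),$$
and $\mu_*(u\times v)=u\cdot v$ by the definition of the Pontryagin product. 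So $(\varphi_L)_*$ composed with the Künneth isomorphism equals $\Phi_L$. Since $\varphi_L$ is a homotopy equivalence, $(\varphi_L)_*$ is an isomorphism, and therefore so is $\Phi_L$. The same argument applies to $\Phi_R$.

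The main point requiring care is the first step: checking that the H-space multiplication induces addition on homotopy groups, and that the looped sequence remains a fibration with a section. Both are standard facts. The rest is bookkeeping with the Künneth theorem.
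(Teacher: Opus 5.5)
Your argument is correct, and it is the standard proof of what the paper only cites (Lemma B.2(1) and Theorem B.3(1) of Vylegzhanin's work). The looped fibration has a split long exact sequence of homotopy groups, so $\varphi_L,\varphi_R$ are weak equivalences and hence homotopy equivalences. Then $\Phi_L,\Phi_R$ are just $(\varphi_L)_*,(\varphi_R)_*$ precomposed with the cross product.

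One adjustment is needed. In the paper $\k$ is an arbitrary commutative ring, not a PID, so the PID K\"unneth sequence does not cover the statement. Use instead that $C_*(\Omega F;\k)$ is a bounded-below complex of free modules with free homology. It is therefore chain homotopy equivalent to $H_*(\Omega F;\k)$ with zero differential. Together with Eilenberg--Zilber, this makes the cross product an isomorphism over any $\k$. The finite-type hypothesis plays no role in this step.
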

\begin{proof}
See \cite[Lemma B.2(1) and Theorem B.3(1)]{vylegzhanin25} for $\varphi_L$ and $\Phi_L$; the case of $\varphi_R$ and $\Phi_R$ is similar.
\end{proof}
\begin{rmk}
More generally, if the map $\Omega p: \Omega E\to \Omega B$ has a homotopy section $s':\Omega B\to \Omega E$, then there is a homotopy equivalence $\Omega F\times\Omega B\simeq \Omega E$ and also an \emph{extension of Hopf algebras} $H_*(\Omega F)\to H_*(\Omega E)\to H_*(\Omega B)$. In particular, as in \cite[Lemma B.2(1) and Theorem B.3(1)]{vylegzhanin25} for example, there is an isomorphism $\Phi_L:H_*(\Omega F)\otimes H_*(\Omega B)\overset\simeq\longrightarrow H_*(\Omega E)$ of coalgebras, left $H_*(\Omega F)$-modules and right $H_*(\Omega B)$-comodules. 

Under the assumptions of Proposition \ref{prp:split_hopf_properties}, one can take $s'=\Omega s$, and the corresponding extension of Hopf algebras is ``semi-direct'' (or, following \cite{smith_semi-tensor}, a \emph{semi-tensor product}).
\end{rmk}

Another result, which is important in the proof of Theorem \ref{thm:general-theorem-for-pushout}, is the description of the loop homology of a wedge.
 \begin{thm}
\label{thm:wedge_loop_homology}
Let $X_1$, $X_2$ be simply connected CW-complexes such that each $H_*(\Omega X_i;\k)$ is a free $\k$-module. Then the natural map $H_*(\Omega X_1;\k)\ast H_*(\Omega X_2;\k)\to H_*(\Omega(X_1\vee X_2);\k)$ is an isomorphism.
\end{thm}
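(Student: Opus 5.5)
We would prove Theorem \ref{thm:wedge_loop_homology} by comparing Poincar\'e-type counts on both sides, after first establishing a splitting of $\Omega(X_1\vee X_2)$. The natural map $\theta\colon H_*(\Omega X_1)\ast H_*(\Omega X_2)\to H_*(\Omega(X_1\vee X_2))$ comes from the universal property of the free product (coproduct of connected algebras), using the algebra maps induced by the inclusions $X_j\to X_1\vee X_2$.

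The first step is the homotopy fibration
$$F\longrightarrow X_1\vee X_2\overset{q}\longrightarrow X_1,$$
where $q$ collapses $X_2$. This fibration has a section, namely the inclusion of $X_1$, and its fibre is classically $F\simeq X_2\rtimes\Omega X_1\simeq X_2\vee (X_2\wedge\Omega X_1)$ (Ganea/Gray). Since $X_1,X_2$ are simply connected, so is $F$. Proposition \ref{prp:split_hopf_properties} then yields a $\k$-module isomorphism
$$H_*(\Omega F)\otimes H_*(\Omega X_1)\cong H_*(\Omega(X_1\vee X_2)),$$
given by multiplication in the loop homology.

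The second step identifies $H_*(\Omega F)$. Write $F\simeq\Sigma W\vee X_2$-like pieces; more precisely, $X_2\rtimes\Omega X_1$ is homotopy equivalent to a wedge of copies of $X_2\wedge(\Omega X_1)_+$. An inductive argument here is circular, so instead we use the classical fact (Bott--Samelson / Hilton) that $H_*(\Omega(X_2\rtimes \Omega X_1))$ is the free product of $H_*(\Omega X_2)$ with a tensor algebra. The cleaner route is algebraic. Let $A_j=H_*(\Omega X_j)$ and $\bar A_1$ its augmentation ideal. The free product $A_1\ast A_2$ decomposes as a left $A_1$-free module (or right, as needed) $(A_2\ast T(\bar A_1\otimes\bar A_2))\otimes A_1$. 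One then checks that $\theta$ restricted to the subalgebra generated by $A_2$ and the "conjugates" $\bar A_1\otimes\bar A_2$ maps onto $H_*(\Omega F)$.

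Concretely, we plan to reduce to the field case via freeness and the universal coefficient theorem, and argue by filtration. There is a map of spectral sequences, or of Eilenberg--Moore/cobar constructions: $C_*(\Omega(X_1\vee X_2))$ is quasi-isomorphic to the coproduct of dg algebras $C_*(\Omega X_1)\amalg C_*(\Omega X_2)$. This holds because the cobar construction on the coalgebra $C_*(X_1\vee X_2)\simeq C_*(X_1)\oplus_{\k} C_*(X_2)$ is the free product of the cobar constructions, and Adams' theorem gives $\Omega C_*(X)\simeq C_*(\Omega X)$ for simply connected $X$. The homology of a free product of connected dg algebras that are $\k$-free with $\k$-free homology is the free product of the homologies, by a K\"unneth-type argument: the free product is $\k\oplus\bigoplus$ alternating tensor products of augmentation ideals, and each summand's homology is computed by K\"unneth, using freeness over the PID $\k$.

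The main obstacle is this last K\"unneth step. One must know that $\bar A_j$ is $\k$-free and that the chain-level augmentation ideals are free, so that there are no Tor terms; the hypothesis that each $H_*(\Omega X_j)$ is free supplies exactly this. The identification of $\Omega$ of a coproduct coalgebra with a coproduct of algebras is formal, and we would cite Adams' cobar theorem for the comparison.
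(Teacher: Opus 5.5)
Your final argument (the cobar paragraph) is correct and follows the paper's strategy. You pick a free dg-algebra model of the loop space in which the wedge becomes a free product. You then use that homology commutes with free products of connected, $\k$-free dg algebras with $\k$-free homology.

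The difference is the model.
\begin{itemize}
\item The paper uses Adams--Hilton models, which are built cell by cell. So Adams--Hilton's Corollary 2.2 gives directly that $AH(X_1)\ast AH(X_2)$ is a model for $X_1\vee X_2$, and nothing more is needed.
\item With Adams' cobar construction the wedge step is not formal. A singular simplex of $X_1\vee X_2$, even a 1-reduced one, can meet both summands. So $C_*(X_1\vee X_2)$ is not the wedge coalgebra $\k\oplus\bar C_*(X_1)\oplus\bar C_*(X_2)$.
\end{itemize}
To close this you must add three things:
\begin{itemize}
\item the subcoalgebra $C_*(X_1)+C_*(X_2)\subseteq C_*(X_1\vee X_2)$ is that wedge coalgebra, and its inclusion is a quasi-isomorphism (Mayer--Vietoris);
\item the cobar construction takes quasi-isomorphisms of simply connected $\k$-free dg coalgebras to quasi-isomorphisms (filter by tensor length);
\item naturality of Adams' equivalence identifies the resulting isomorphism with the natural map.
\end{itemize}
Only after these does ``cobar of a wedge coalgebra is the free product of the cobar constructions'' finish the proof. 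In return, your description of $A\ast B$ as $\k\oplus\bigoplus$ (alternating tensor products of augmentation ideals), with each summand a subcomplex to which K\"unneth applies, justifies the step the paper only asserts.

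You should delete the first half of the proposal. The fibration route through $X_2\rtimes\Omega X_1\to X_1\vee X_2\to X_1$ is circular, as you noticed: computing $H_*(\Omega(X_2\rtimes\Omega X_1))$ is itself a wedge-type loop homology problem. It also contains a false step. The equivalence $X_2\rtimes\Omega X_1\simeq X_2\vee(X_2\wedge\Omega X_1)$ needs $X_2$ to be a co-$H$-space; in general cup products distinguish the two sides, for example when $X_2=\CC P^2$. The planned reduction to field coefficients is never used and can be dropped.
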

\begin{proof}
By \cite[Theorem 2.1]{adams-hilton}, each simply connected CW-complex $X$ admits an \emph{Adams--Hilton model} $AH(X)$ -- a dg-algebra such that the underlying algebra is a free tensor algebra with generators corresponding to cells of $X$, and $H(AH(X))\cong H_*(\Omega X;\k)$. By \cite[Corollary 2.2]{adams-hilton}, $AH(X_1)\ast AH(X_2)$ is an Adams--Hilton model for $X_1\vee X_2$. It follows that there are isomorphisms $H_*(\Omega(X_1\vee X_2);\k)\cong H(AH(X_1)\ast AH(X_2))\cong H(AH(X_1))\ast H(AH(X_2))$, since homology respects free products of dg algebras with torsion-free homology.
 \end{proof}
\begin{rmk}
More generally, Li Cai proved in \cite[Corollary 4.3]{cai} that $H_*(\Omega\ux^\K;\k)$ is a \emph{graph product} of algebras $H_*(\Omega X_i;\k)$, if $\k$ is a field, $X_i$ are simply connected spaces and $\K$ is a flag simplicial complex. Theorem \ref{thm:uxk_colimit_decomposition_intro} is a generalisation of Cai's result.
\end{rmk}

Finally, we require the following result of Ganea \cite{G} and a consequence for the Poincar\'e series of the loop homology of a wedge of spaces.
\begin{lmm}
\label{lmm:ganea}
For pointed spaces $X_1,X_2$, there is a homotopy equivalence \[\Omega(X_1\vee X_2)\simeq\Omega X_1\times\Omega X_2\times\Omega\Sigma(\Omega X_1\wedge\Omega X_2).\qed\]
\end{lmm}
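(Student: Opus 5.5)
The approach is the standard one for Ganea's theorem: identify the homotopy fibre of the inclusion of the wedge into the product, and then use the fact that loops on this inclusion split.

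\textbf{Step 1: the fibration.} Consider the inclusion $j:X_1\vee X_2\to X_1\times X_2$ and let $F$ be its homotopy fibre. This gives a homotopy fibration
$$F\longrightarrow X_1\vee X_2\overset{j}\longrightarrow X_1\times X_2.$$

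\textbf{Step 2: identify $F$.} We claim $F\simeq\Sigma\Omega X_1\wedge\Omega X_2$, the join $\Omega X_1\ast\Omega X_2$.

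One route is via Mather's cube theorem. Write $X_1\vee X_2$ as the homotopy pushout of $X_1\leftarrow\ast\to X_2$. Pull back the path fibration over $X_1\times X_2$ along each corner of this square. The fibres over $\ast$, $X_1$ and $X_2$ are $\Omega X_1\times\Omega X_2$, $\Omega X_2$ and $\Omega X_1$ respectively, up to homotopy. The relevant maps are the two projections.

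By the cube theorem, $F$ is the homotopy pushout of
$$\Omega X_2\leftarrow\Omega X_1\times\Omega X_2\to\Omega X_1,$$
which is the join $\Omega X_1\ast\Omega X_2\simeq\Sigma\Omega X_1\wedge\Omega X_2$.

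Alternatively, one can simply cite Ganea's original argument in \cite{G}.

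\textbf{Step 3: loop the fibration and split it.} Looping gives the fibration
$$\Omega F\to\Omega(X_1\vee X_2)\overset{\Omega j}\to\Omega X_1\times\Omega X_2.$$
The map $\Omega j$ has a right homotopy inverse $s$, defined as the loop-multiplication of $\Omega X_1\to\Omega(X_1\vee X_2)$ and $\Omega X_2\to\Omega(X_1\vee X_2)$, each induced by the wedge inclusions. Composing with $\Omega j$ recovers the identity on the product, coordinatewise.

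\textbf{Step 4: conclude.} The map $\Omega F\times(\Omega X_1\times\Omega X_2)\to\Omega(X_1\vee X_2)$, given by loop-multiplying $\Omega i$ with $s$, is a weak equivalence. This follows from the long exact homotopy sequence, since the section splits it into short exact sequences, and the map is then a homotopy equivalence for CW-type spaces. This is the same mechanism as Proposition \ref{prp:split_hopf_properties}, applied here with a section of the looped map only, as in the remark following it.

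Substituting $F\simeq\Sigma\Omega X_1\wedge\Omega X_2$ from Step 2 yields the stated decomposition.

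\textbf{Main obstacle.} The main obstacle is the identification of the fibre in Step 2. The cube-theorem computation needs care about the homotopy-coherence of the pulled-back square. I would handle this by working with fibrant replacements, or simply defer to \cite{G}.
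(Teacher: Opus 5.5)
Your proposal is correct and matches the paper, which gives no argument of its own and simply cites Ganea \cite{G}. Your proof is the standard argument behind that citation: identify the homotopy fibre of $X_1\vee X_2\to X_1\times X_2$ as the join $\Omega X_1\ast\Omega X_2\simeq\Sigma(\Omega X_1\wedge\Omega X_2)$ via the cube theorem, then split the looped fibration using the section $\mu\circ(\Omega i_1\times\Omega i_2)$. This is the same cube-lemma-plus-splitting mechanism the paper itself uses in Section \ref{sec:loophompushout}, and the only tacit hypotheses needed (well-pointedness and CW homotopy type) are ones you already flag.
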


\begin{lmm}
\label{lmm:poincare-series-for-wedge}
Let $X_1$, $X_2$ be simply connected CW-complexes such that each $H_*(\Omega X_i;\k)$ is a free $\k$-module. Then $1/P(H_*(\Omega(X_1\vee X_2);\k)=1/P(H_*(\Omega X _1;\k))+1/P(H_*(\Omega X_2;\k))-1$.
\end{lmm}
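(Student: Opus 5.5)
We derive the identity from Ganea's splitting, Lemma \ref{lmm:ganea}, together with the fact that $\Omega\Sigma$ of a space with free homology has tensor-algebra homology. Write $A_i=H_*(\Omega X_i;\k)$, $a_i=P(A_i)$, and let $\bar a_i=a_i-1=P(\bar A_i)$ be the series of the augmentation ideal. These are honest power series in $\ZZ[[t]]$ because the $X_i$ are simply connected, so each $A_i$ is connected. Since we are assuming the $A_i$ are free $\k$-modules, Poincar\'e series are defined via ranks. I will tacitly assume finite type so that the ranks are finite; this is implicit in the statement making sense.

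\textbf{Step 1.} Take homology of the equivalence in Lemma \ref{lmm:ganea}. The smash $\Omega X_1\wedge\Omega X_2$ has reduced homology $\bar A_1\otimes\bar A_2$ by the K\"unneth theorem, and this is free. Hence by Bott--Samelson,
$$H_*(\Omega\Sigma(\Omega X_1\wedge\Omega X_2))\cong T(\bar A_1\otimes\bar A_2),$$
which is again free. The K\"unneth theorem, applicable because all factors are free, then gives
$$P(H_*(\Omega(X_1\vee X_2)))=\frac{a_1a_2}{1-\bar a_1\bar a_2}.$$

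\textbf{Step 2.} Invert this series and simplify:
$$\frac{1-(a_1-1)(a_2-1)}{a_1a_2}=\frac{a_1+a_2-a_1a_2}{a_1a_2}=\frac1{a_2}+\frac1{a_1}-1,$$
which is the claimed identity.

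Alternatively, one can avoid Ganea altogether via Theorem \ref{thm:wedge_loop_homology}. A free product of connected augmented algebras has underlying module $\k\oplus\bigoplus$ (alternating tensor words in $\bar A_1,\bar A_2$). Writing $u=\bar a_1,\ v=\bar a_2$, the generating function of alternating words is
$$\frac{(1+u)(1+v)}{1-uv}.$$
This is the same series as in Step 1, and the same algebra finishes the argument.

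The only delicate point is Step 1: identifying the homology of the Ganea factor as a tensor algebra on a free module, and justifying K\"unneth without Tor terms. Both hold because $\bar A_1,\bar A_2$ are free over the PID $\k$, so all tensor products involved are free.
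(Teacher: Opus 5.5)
Your proof is correct and matches the paper, which gives exactly these two arguments: its main proof is your alternative route via Theorem \ref{thm:wedge_loop_homology} and the Poincar\'e series of a free product. It also names your Step 1 route (Lemma \ref{lmm:ganea}, the K\"unneth isomorphism and Bott--Samelson) as a second proof.
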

\begin{proof}
This follows from Theorem \ref{thm:wedge_loop_homology}, since the identities $P(V\oplus V')=P(V)+P(V')$, and $P(V\otimes V')=P(V)\cdot P(V')$ imply that $1/P(A_1\ast A_2)=1/P(A_1)+1/P(A_2)-1$.

Alternatively, this follows from Lemma \ref{lmm:ganea}, the K\"unneth isomorphism and the Bott--Samelson theorem.
\end{proof}

\subsection*{Categorical Preliminaries}

In this section, we state categorical results that we will require. These are well known to experts, but will be helpful to collect for later reference. 

\begin{lmm}
\label{lmm:hopf-colimit}
If $\{A_\alpha\}$ is a diagram of Hopf algebras, then the colimit $\colim_{\alpha} A_\alpha$ in the category of Hopf algebras is isomorphic to the colimit in the category of associative algebras.
\end{lmm}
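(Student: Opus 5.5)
The plan is to show directly that the colimit $A=\colim_\alpha A_\alpha$, formed in graded associative unital algebras over $\k$, carries a unique Hopf algebra structure for which the structure maps $j_\alpha:A_\alpha\to A$ are Hopf maps, and that with this structure $A$ has the universal property among Hopf algebras. Concretely, $A$ is the quotient of the free product $\ast_\alpha A_\alpha$ by the two-sided ideal generated by the elements $f(x)-x$ for each arrow $f:A_\alpha\to A_\beta$ of the diagram and $x\in A_\alpha$. To keep tensor products well behaved I will work under the standing conventions of the paper, with Hopf algebras flat over $\k$ (e.g.\ $\k$ a field or the relevant modules free), so that $A\otimes A$ is again the correct target.

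The first step is to build the comultiplication. The maps $(j_\alpha\otimes j_\alpha)\circ\Delta_\alpha:A_\alpha\to A\otimes A$ are algebra homomorphisms, and they are compatible with the diagram because each $f$ is a coalgebra map. The universal property of the algebra colimit then gives a unique algebra map $\Delta:A\to A\otimes A$ with $\Delta j_\alpha=(j_\alpha\otimes j_\alpha)\Delta_\alpha$. In the same way the counits $\varepsilon_\alpha$ induce $\varepsilon:A\to\k$.

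The second step is to verify the coalgebra axioms by uniqueness. For coassociativity, $(\Delta\otimes1)\Delta$ and $(1\otimes\Delta)\Delta$ are both algebra maps $A\to A^{\otimes3}$, and they agree after precomposing with each $j_\alpha$. The colimit's universal property therefore forces them to be equal, and counitality follows by the same argument. For the antipode, define $S:A\to A^{op}$ as the algebra map induced by $j_\alpha S_\alpha:A_\alpha\to A^{op}$; these are antihomomorphisms compatible with the diagram.

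The main obstacle is checking the antipode identity $\mu(S\otimes1)\Delta=\eta\varepsilon$, because the left-hand side is not an algebra map, so the uniqueness argument does not apply directly. I will check it on products of generators $x=j_{\alpha_1}(a_1)\cdots j_{\alpha_n}(a_n)$, which span $A$. The set of $x$ satisfying the identity is closed under products: if $x$ and $y$ both satisfy it, the standard Sweedler computation $\sum S(y')S(x')x''y''=\varepsilon(x)\varepsilon(y)$ shows $xy$ does too, using that $\Delta$ is multiplicative and $S$ is anti-multiplicative. Each $j_\alpha(a)$ satisfies the identity because $S_\alpha$ is an antipode. Hence the identity holds on all of $A$. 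In the connected graded setting of the paper one could instead note that any connected graded bialgebra has an antipode automatically.

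Finally, for the universal property, let $g_\alpha:A_\alpha\to H$ be a compatible family of Hopf maps. The induced algebra map $g:A\to H$ is a coalgebra map, since $\Delta_H g$ and $(g\otimes g)\Delta$ are algebra maps agreeing on every $j_\alpha$. A bialgebra map automatically commutes with antipodes, so $g$ is a Hopf map, and uniqueness of $g$ is inherited from the algebra colimit. This identifies $A$ with the colimit in Hopf algebras.
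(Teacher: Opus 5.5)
Your argument is correct, but it takes a different route from the paper. The paper proves the lemma in one line: by \cite[Theorem 2.5]{agore} the forgetful functor from Hopf algebras to algebras has a right adjoint, hence preserves colimits.

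You instead construct the Hopf structure on the algebra colimit by hand. The maps $\Delta$, $\varepsilon$, $S$ come from the universal property, using that the transition maps are coalgebra maps commuting with antipodes. Coassociativity, counitality, and the coalgebra property of the induced map to a test Hopf algebra all follow because two algebra maps out of the colimit that agree on every $j_\alpha$ coincide. The antipode axiom is the only identity not of this form, and you check it on spanning words by multiplicativity. You state only $\mu(S\otimes 1)\Delta=\eta\varepsilon$, but the other side is identical.

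The paper's proof is shorter, but it rests entirely on an external existence theorem. It also leaves implicit how that result transfers to the graded, connected, cocommutative setting over a principal ideal domain where the lemma is actually used. Your proof is self-contained and adapts verbatim to those variants; for instance, cocommutativity passes to the colimit since $\Delta$ and its graded twist are algebra maps agreeing on each $j_\alpha$. It also exhibits the coproduct on the colimit explicitly through its values on the images of the $A_\alpha$.

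One correction to your setup: the flatness convention is never used. Every step is an equality of algebra maps into some $A^{\otimes n}$, which is valid over any commutative ring. Moreover, if flatness were imposed as a restriction on the category, the statement would become false, because the algebra colimit can acquire torsion. Over $\ZZ$, take the pushout of $T(x)\leftarrow T(y)\to\ZZ$ with $y\mapsto 2x$, $y\mapsto 0$ and primitive generators. The algebra colimit is $T(x)/(2x)$, whereas the colimit among $\ZZ$-free Hopf algebras is $\ZZ$.
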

\begin{proof}
The forgetful functor $HopfAlg_\k\to Alg_\k$ has a right adjoint by \cite[Theorem 2.5]{agore}, so it preserves colimits.
\end{proof}

Let $\mathcal{C}$ be a small, cocomplete category, and let $D:\mathcal{I}\rightarrow \mathcal{C}$ be a diagram. Let $\mathcal{I}_1$ and $\mathcal{I}_2$ be full subcategories of $\mathcal{I}$, such that $\mathcal{I} = \mathcal{I}_1 \cup \mathcal{I}_2$. Define $\mathcal{I}_L = \mathcal{I}_1 \cap \mathcal{I}_2$. The restriction of $D$ to $\mathcal{I}_1$, $\mathcal{I}_2$ and $\mathcal{I}_L$ define diagrams $D_1$, $D_2$ and $D_L$ respectively. Denote by $D'$ the diagram $D_1 \xleftarrow{i_1} D_L \xrightarrow{i_2} D_2$. The following result follows from \cite[Chapter X, Section 7, Theorem 1]{Mac}

\begin{lmm}
\label{lem:Kanext}
The diagram $D'$ is a left Kan extension of $D$. Hence their colimits are equal. \qed
\end{lmm}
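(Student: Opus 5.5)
The plan is to verify directly that the cocone $D_1\leftarrow D_L\rightarrow D_2$ induced by the inclusions of full subcategories has the same colimit as $D$, rather than appealing to the general Kan extension machinery. Write $L=\colim D_L$, $C_1=\colim D_1$ and $C_2=\colim D_2$. The inclusions $\mathcal{I}_L\subseteq\mathcal{I}_j\subseteq\mathcal{I}$ induce canonical maps $i_j:L\to C_j$ and $C_j\to\colim D$. We will show that the resulting square is a pushout, i.e.\ that $\colim D\cong C_1\sqcup_L C_2$. Formally this is the colimit of the ``diagram of colimits'' $D'$, which is the content of the lemma. The cleanest route is the universal property: a cocone on $D$ with vertex $T$ should correspond bijectively to a pair of cocones on $D_1$ and $D_2$ with vertex $T$ that agree on $D_L$.

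The key steps are as follows.

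\emph{Step 1.} Restriction sends a cocone $\{f_\alpha:D(\alpha)\to T\}_{\alpha\in\mathcal{I}}$ to cocones on $D_1$ and $D_2$ that agree on $\mathcal{I}_L$. Equivalently, it gives maps $C_1\to T$ and $C_2\to T$ whose composites with $i_1$ and $i_2$ agree.

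\emph{Step 2.} Conversely, take compatible cocones $f^1$ and $f^2$. Since every object of $\mathcal{I}$ lies in $\mathcal{I}_1\cup\mathcal{I}_2$, we define $f_\alpha$ by $f^1$ or $f^2$; on objects of $\mathcal{I}_L$ the two definitions agree. We must then check naturality along every morphism $\alpha\to\beta$ of $\mathcal{I}$. If both endpoints lie in the same $\mathcal{I}_j$, this holds because $\mathcal{I}_j$ is full.

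\emph{Step 3.} These two constructions are mutually inverse and natural in $T$. Yoneda then gives $\colim D\cong \colim D'$.

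I expect the main obstacle to lie in Step 2, namely morphisms $\alpha\to\beta$ with $\alpha\in\mathcal{I}_1\setminus\mathcal{I}_2$ and $\beta\in\mathcal{I}_2\setminus\mathcal{I}_1$. Here naturality does not follow formally, and the bare hypothesis $\mathcal{I}=\mathcal{I}_1\cup\mathcal{I}_2$ (union of objects) is insufficient in general. I would address this by reading the hypothesis ``$\mathcal{I}=\mathcal{I}_1\cup\mathcal{I}_2$'' as including morphisms: every morphism of $\mathcal{I}$ lies in $\mathcal{I}_1$ or in $\mathcal{I}_2$.

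In the intended application $\mathcal{I}$ is a face poset, e.g.\ of $\Kf$, decomposed as the face posets of two subcomplexes. There any relation $\alpha\subseteq\beta$ with $\beta$ in one subcomplex forces $\alpha$ into that subcomplex too, so the condition holds automatically. More generally, it suffices that every such morphism factors through an object of $\mathcal{I}_L$, and then naturality follows by composing the two squares.

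Under this condition the check is immediate. I would also remark that this is exactly the condition making the left Kan extension computation of \cite[Chapter X, Section 7, Theorem 1]{Mac} apply pointwise. Namely, the comma category of $\mathcal{I}$ over the span category is covered by objects of $\mathcal{I}_1$ and $\mathcal{I}_2$ glued along $\mathcal{I}_L$, which yields the stated equality of colimits.
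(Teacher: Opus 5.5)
Your proposal is correct, but it takes a different route from the paper. The paper only cites Mac Lane. The natural reading of its argument is this: sending $\mathcal{I}_L$ to $L$ and $\mathcal{I}_j\setminus\mathcal{I}_L$ to $j$ defines a functor $K$ from $\mathcal{I}$ to the span category $\{1\leftarrow L\rightarrow 2\}$. The comma categories $(K\downarrow j)$ are the $\mathcal{I}_j$, so by the pointwise formula $\mathrm{Lan}_K D$ is the span $\colim D_1\leftarrow\colim D_L\rightarrow\colim D_2$. Then $\colim D\cong\colim \mathrm{Lan}_K D$, because colimits are Kan extensions to the point and Kan extensions compose.

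You instead verify, by cocones and Yoneda, the combined content of Lemmas \ref{lem:Kanext} and \ref{lem:pointwisecolimit}, i.e.\ Lemma \ref{lem:breakdowncolimit}. You never prove the literal claim that $D'$ is a Kan extension, but nothing later uses more than the colimit identity.

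Your point about the hypothesis is right. With $\mathcal{I}=\{a\to b\}$, $\mathcal{I}_1=\{a\}$, $\mathcal{I}_2=\{b\}$, the objects are covered, yet $\colim D=D(b)$ differs in general from $D(a)\sqcup D(b)$. So ``$\mathcal{I}=\mathcal{I}_1\cup\mathcal{I}_2$'' must be read on morphisms.

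The two routes trade off as follows. The Kan extension route is conceptual, but it needs $K$ to exist. That is the stronger condition that every morphism with target in $\mathcal{I}_j$ has source in $\mathcal{I}_j$. It holds for face posets of subcomplexes. It fails for the coequaliser diagram defining weighted polyhedral products, used in Lemma \ref{lem:pushout}. There, for $\tau\in\L$ and $\sigma\in\K_1\setminus\L$, the term $\ux^{\tau}_{\tau\to\sigma}$ lies in $D_1$ but not in $D_L$, and $\underline{c}^{\sigma/\tau}$ maps it to $\ux^{\tau}_{\tau}$, which lies in $D_L$.

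Your weaker condition (each morphism lies in $\mathcal{I}_1$ or $\mathcal{I}_2$, or factors through $\mathcal{I}_L$) does hold there. So your direct argument covers every use in the paper.

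For the same reason, your closing remark is inaccurate: the condition is not ``exactly'' what makes the pointwise Kan extension computation work. For example, $\mathcal{I}=\{a\to b\}$ with $\mathcal{I}_1=\{a,b\}$ and $\mathcal{I}_2=\mathcal{I}_L=\{b\}$ satisfies your condition but admits no such $K$. This side remark plays no role in Steps 1--3.
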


Now we turn our attention to the colimit of $D'$. The maps $i_1$ and $i_2$ induce maps of colimits $f_1: \colim D_L \rightarrow \colim D_1$ and $f_2:\colim D_L \rightarrow D_2$.

\begin{lmm}
\label{lem:pointwisecolimit}
    The colimit of $D'$ is equal to the pushout of $f_1$ and $f_2$.\qed
\end{lmm}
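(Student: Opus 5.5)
The statement is Lemma \ref{lem:pointwisecolimit}: the colimit of $D'$ equals the pushout of $f_1:\colim D_L\to\colim D_1$ and $f_2:\colim D_L\to\colim D_2$. Here $D'$ is regarded as a diagram indexed by the category $\mathcal{I}_1\cup_{\mathcal{I}_L}\mathcal{I}_2$, glued along $\mathcal{I}_L$. I would prove it by comparing universal properties directly, using only that $\mathcal{C}$ is cocomplete, so that all three colimits and the pushout exist.

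\emph{Step 1: the comparison map.} Let $P=\colim D_1\sqcup_{\colim D_L}\colim D_2$. The structure maps $D_1(a)\to\colim D_1\to P$ for $a\in\mathcal{I}_1$ and $D_2(b)\to\colim D_2\to P$ for $b\in\mathcal{I}_2$ assemble into a cocone on $D'$. The only compatibility to check is on objects $c\in\mathcal{I}_L$, where the two candidate maps $D(c)\to P$ must agree. They do, because $f_1$ and $f_2$ are induced by the inclusions, so $D(c)\to\colim D_L\xrightarrow{f_k}\colim D_k$ equals the structure map $D(c)\to\colim D_k$, and the pushout square commutes.

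\emph{Step 2: universality.} Take a cocone $\{g_x:D(x)\to T\}$ on $D'$. Restricting to $\mathcal{I}_1$ and to $\mathcal{I}_2$ gives unique maps $h_1:\colim D_1\to T$ and $h_2:\colim D_2\to T$. The composites $h_1f_1$ and $h_2f_2$ both restrict on each $D(c)$, $c\in\mathcal{I}_L$, to $g_c$. By uniqueness in the universal property of $\colim D_L$ they are therefore equal, so $h_1,h_2$ factor uniquely through $P$.

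\emph{Step 3: uniqueness.} Any map $P\to T$ compatible with the $g_x$ is determined by its restrictions to $\colim D_1$ and $\colim D_2$. These restrictions are forced to be $h_1$ and $h_2$ by the universal properties of $\colim D_1$ and $\colim D_2$. Hence $P$ is a colimit of $D'$.

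There is no real obstacle. The only point needing care is bookkeeping: $D'$ must be read as the diagram on the glued index category, so that a cocone on it is exactly a pair of cocones on $D_1$ and $D_2$ agreeing on $\mathcal{I}_L$. Once that is fixed, the argument above is just the standard statement that colimits commute with colimits, applied to the pushout $D_1\leftarrow D_L\rightarrow D_2$ of diagrams.
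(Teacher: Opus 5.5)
Your proof is correct, and it is essentially the argument the paper has in mind: the paper states this lemma without proof as a standard fact (colimits commute with colimits), and your direct check of the universal property for $P=\colim D_1\sqcup_{\colim D_L}\colim D_2$ is the routine verification it leaves to the reader. You are also right to read $D'$ as a diagram on the glued index category $\mathcal{I}_1\cup_{\mathcal{I}_L}\mathcal{I}_2$ rather than on $\mathcal{I}$, since the paper handles the comparison with the original diagram $D$ separately in Lemma \ref{lem:Kanext}.
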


Combining Lemma ~\ref{lem:Kanext} and Lemma ~\ref{lem:pointwisecolimit}, we obtain the following result.

\begin{lmm}
\label{lem:breakdowncolimit}
    Let $\mathcal{C}$ be a small, cocomplete category, and let $D:\mathcal{I}\rightarrow \mathcal{C}$ be a diagram. Let $\mathcal{I}_1$ and $\mathcal{I}_2$ be full subcategories of $\mathcal{I}$, with $\mathcal{I}_L = \mathcal{I}_1 \cap \mathcal{I}_2$. Let $D_1$, $D_2$ and $D_L$ be the restrictions of $D$ to $\mathcal{I}_1$, $\mathcal{I}_2$ and $\mathcal{I}_L$ respectively. Define $f_1:\colim D_L \rightarrow \colim D_1$ and $f_2:\colim D_L \rightarrow \colim D_2$ as the maps induced by the inclusion of diagrams $D_L \rightarrow D_1$ and $D_L \rightarrow D_2$ respectively. Then the colimit of $D$ is equal to the pushout of $f_1$ and $f_2$. \qed
\end{lmm}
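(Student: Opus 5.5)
The plan is to combine Lemma~\ref{lem:Kanext} and Lemma~\ref{lem:pointwisecolimit}, after checking that their hypotheses match the present setting. Write $\mathcal{I}=\mathcal{I}_1\cup\mathcal{I}_2$, where $\mathcal{I}_1,\mathcal{I}_2$ are full subcategories, and $\mathcal{I}_L=\mathcal{I}_1\cap\mathcal{I}_2$. The first step is to form the diagram $D'\colon D_1\xleftarrow{i_1}D_L\xrightarrow{i_2}D_2$. I would view it as a diagram indexed by the category $\mathcal{J}$ obtained by gluing $\mathcal{I}_1$, $\mathcal{I}_2$ and a copy of $\mathcal{I}_L$ along the inclusions. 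There is a canonical functor $\mathcal{J}\to\mathcal{I}$, and $D'$ is the composite of $D$ with it. By Lemma~\ref{lem:Kanext}, $\colim_{\mathcal{I}}D\cong\colim_{\mathcal{J}}D'$. Concretely, a cocone under $D$ is the same thing as a pair of cocones under $D_1$ and $D_2$ that agree on $D_L$.

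This compatibility is exactly what needs checking, and it is the main point to verify rather than a real obstacle. Since the $\mathcal{I}_k$ are full and cover $\mathcal{I}$, every object of $\mathcal{I}$ lies in $\mathcal{I}_1$ or in $\mathcal{I}_2$. I would also want every morphism $a\to b$ of $\mathcal{I}$ to lie in $\mathcal{I}_1$ or $\mathcal{I}_2$. This holds when $\mathcal{I}=\mathcal{I}_1\cup\mathcal{I}_2$ is understood as a union of subcategories. It is automatic in the intended application, where $\mathcal{I}$ is a face poset $\Kf$ covered by face posets of two full subcomplexes. There, if $a\le b$ and $b$ lies in a downward-closed $\mathcal{I}_k$, then $a$ also lies in $\mathcal{I}_k$. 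Granting this, the naturality conditions of a cocone under $D$ split into conditions on $D_1$ and on $D_2$, and the legs at objects of $\mathcal{I}_L$ are shared.

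The second step is Lemma~\ref{lem:pointwisecolimit}: compute $\colim_{\mathcal{J}}D'$ by iterated colimits. Colimits commute with colimits in the cocomplete category $\mathcal{C}$. So first take the colimit over each of the three pieces $\mathcal{I}_1$, $\mathcal{I}_L$, $\mathcal{I}_2$, giving $\colim D_1\xleftarrow{f_1}\colim D_L\xrightarrow{f_2}\colim D_2$, where the $f_k$ are induced by the inclusions of diagrams. Then take the pushout of this span. Composing the two isomorphisms shows that $\colim D$ is the pushout of $f_1$ and $f_2$.

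Alternatively, I could prove the statement directly by checking the universal property. A cocone from $\colim D$ to an object $T$ restricts to cocones under $D_1$ and $D_2$, hence to maps $\colim D_k\to T$. These maps agree after precomposition with $f_k$, because both restrict to the same cocone on $D_L$. Conversely, a compatible pair of maps $\colim D_k\to T$ gives a cocone under $D$ by the covering property above. These constructions are mutually inverse, so $\colim D$ satisfies the universal property of the pushout.
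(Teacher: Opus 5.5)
Your proposal is correct and follows the paper's route: the paper proves this lemma simply by combining Lemma~\ref{lem:Kanext} (replacing $D$ by the span of diagrams $D'$) with Lemma~\ref{lem:pointwisecolimit} (identifying $\colim D'$ with the pushout of $f_1$ and $f_2$). You also make explicit a hypothesis the paper leaves implicit: every morphism of $\mathcal{I}$ must have its source and target in a common $\mathcal{I}_k$. This holds for the downward-closed face posets used later, and your direct universal-property check is a valid self-contained substitute for the two cited lemmas.
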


\subsection*{Properties of (weighted) polyhedral products}

In this section, we state the definition of a generalisation of polyhedral products called weighted polyhedral products and state some basic properties we will require. The following definition is a special case of \cite[Definition 2.18]{weighted}.

Let $\K$ be a simplicial complex on $[m]$, and let $X_1,\cdots,X_m$ be spaces. For each face $\sigma \in \K$, define $\ux^{\sigma} = \prod\limits_{i\in \sigma} X_i$. Note that for $\tau \subseteq \sigma$, there is a natural inclusion $\ux^{\tau} \rightarrow \ux^{\sigma}$.

Let $Y$ be any space. A \textit{power map} is a collection of maps $\{\rho_a:Y \rightarrow Y\}_{a \in \mathbb{N}}$ such that $\rho_1$ is the identity map, and $\rho_a \circ \rho_b = \rho_{ab}$ for any $a,b \in \mathbb{N}$. For example, if $Y$ is an $H$-space or a co-$H$ space, letting each $\rho_a$ be the degree $a$ map is a power map. If each $X_i$ has an associated power map, for each $\underline{a} = (a_1,\cdots,a_m) \in \mathbb{N}^m$, there is a map $\underline{a}(\sigma):\ux^{\sigma} \rightarrow \ux^{\sigma}$ defined as the product map $\rho_{a_1} \times \cdots \times \rho_{a_m}$.

We wish to restrict to certain sequences $\underline{a}$. Let $\Delta^{m-1}$ be the $(m-1)$-simplex. A \textit{power sequence} is a map $c:\Delta^{m-1} \rightarrow \mathbb{N}^m$, $c(\sigma) = (c_1^{\sigma},\cdots,c_m^{\sigma})$ such that $c_i^{\sigma} = 1$ for $i \notin \sigma$, and $c^{\tau}_i$ divides $c_i^{\sigma}$ for $\tau \subseteq \sigma$ and $1 \leq i \leq m$. In most cases, we will consider a power sequence restricted to a simplicial complex $\K \subseteq \Delta^{m-1}$.

If $\tau$ and $\sigma$ are faces of $\Delta^{m-1}$ such that $\tau \subseteq \sigma$, then $c^{\sigma}_{i}/c^{\tau}_i$ is a positive integer for each $i$. Let $\underline{c}^{\sigma/\tau}$ be the power sequence $(c_1^{\sigma}/c^{\tau}_1,\cdots,c^{\sigma}_m/c^{\tau}_m)$.

\begin{dfn}
    Let $\K$ be a simplicial complex on $[m]$, and let $(X_1,\rho^1),\cdots,(X_m,\rho^m)$ be spaces with associated power maps. Let $\underline{c}:\K \rightarrow \mathbb{N}^m$ be a power sequence. Define the \textit{weighted polyhedral product} as the coequaliser \[\ux^{\K,c} =\colim \bigsqcup\limits_{\tau \rightarrow \sigma \in \K}\ux^{\tau}_{\tau \rightarrow \sigma} \rightrightarrows \bigsqcup\limits_{\sigma \in \K} \ux^{\sigma}_{\sigma},\] where $\ux^{\tau}_{\tau \rightarrow \sigma} = \ux^{\tau}$, the top map is the inclusion $\ux^{\tau}_{\tau \rightarrow \sigma} \rightarrow \ux^{\sigma}_{\sigma}$, and the bottom map is $\underline{c}^{\sigma/\tau}:\ux^{\tau}_{\tau\rightarrow\sigma} \rightarrow \ux^{\tau}_{\tau}$.
\end{dfn}
\begin{exm}
    Letting $\rho^i_a$ be the identity map for all $a \in \mathbb{N}$, and $c^\sigma = (1,\cdots,1)$ for all $\sigma$, we obtain the ordinary polyhedral product $\ux^\K$. 
\end{exm}

Observe that for a simplicial inclusion $\L \rightarrow \K$, there is an inclusion of diagrams, which induces a map of weighted polyhedral products $\ux^{\mathcal{L},c} \rightarrow \ux^{\K,c}$. The following is a weighted polyhedral product analogue of \cite[Proposition 3.1]{GT}. 

\begin{lmm}
\label{lem:pushout}
    Suppose $\K = \K_1 \cup_\L \K_2$. Then there is a pushout of weighted polyhedral products \[\begin{tikzcd}
	{\ux^{\L,c}} & {\ux^{\K_1,c}} \\
	{\ux^{\K_2,c}} & {\ux^{\K,c}.}
	\arrow[from=1-1, to=1-2]
	\arrow[from=1-1, to=2-1]
	\arrow[from=1-2, to=2-2]
	\arrow[from=2-1, to=2-2]
\end{tikzcd}\]
\end{lmm}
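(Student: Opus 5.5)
The plan is to reduce the statement to the fact that colimits commute with colimits. Both the weighted polyhedral product and the pushout are colimits in $Top_*$ (or in spaces), and we compare the indexing diagrams directly. Write $\mathcal{F}(\K)$ for the face poset of $\K$, regarded as a category with a morphism $\tau\to\sigma$ whenever $\tau\subseteq\sigma$. Define a functor $D_\K:\mathcal{F}(\K)\to Top$ by $\sigma\mapsto\ux^\sigma$, sending $\tau\subseteq\sigma$ to the composite of the inclusion $\ux^\tau\to\ux^\sigma$ with the power map $\underline{c}^{\sigma/\tau}$.

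The first step is to check that the coequaliser in the definition is exactly $\colim_{\mathcal{F}(\K)} D_\K$. This is the standard formula expressing a colimit as a coequaliser of coproducts over objects and over morphisms. For $D_\K$ to be a functor we need compatibility under composition, $\underline{c}^{\sigma/\tau}\circ\underline{c}^{\tau/\upsilon}=\underline{c}^{\sigma/\upsilon}$ (suitably intertwined with inclusions). This follows from $\rho_a\circ\rho_b=\rho_{ab}$, from $c^\tau_i\mid c^\sigma_i$, and from the fact that the inclusions commute with coordinatewise power maps, with $\rho_1=\id$ on the remaining coordinates.

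Next we observe that $\K=\K_1\cup_\L\K_2$ means $\mathcal{F}(\K)=\mathcal{F}(\K_1)\cup\mathcal{F}(\K_2)$ as full subcategories, with intersection $\mathcal{F}(\L)$. These subcategories are full because each is a downward-closed set of faces of a poset. Moreover, every morphism of $\mathcal{F}(\K)$ lies in one of them: if $\tau\subseteq\sigma$ and $\sigma\in\K_i$, then $\tau\in\K_i$. The restrictions of $D_\K$ to these subcategories are $D_{\K_1}$, $D_{\K_2}$ and $D_\L$, because the power sequence $c$ is fixed and restricted.

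Now we apply Lemma \ref{lem:breakdowncolimit} to $\mathcal{C}=Top$ (cocomplete; the smallness hypothesis there is only a formality), with $\mathcal{I}_1=\mathcal{F}(\K_1)$ and $\mathcal{I}_2=\mathcal{F}(\K_2)$. This shows that $\colim D_\K$ is the pushout of $\colim D_\L\to\colim D_{\K_1}$ and $\colim D_\L\to\colim D_{\K_2}$. By construction these maps are the maps of weighted polyhedral products induced by the simplicial inclusions, which gives the claimed pushout square.

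The main point needing care is the hypothesis of Lemma \ref{lem:Kanext}, namely that $\mathcal{I}=\mathcal{I}_1\cup\mathcal{I}_2$ holds on morphisms as well as on objects. This is exactly the downward-closure observation above. Everything else is formal.
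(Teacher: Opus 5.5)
Your overall strategy is the paper's: its proof just applies Lemma \ref{lem:breakdowncolimit} to the colimit defining $\ux^{\K,c}$. However, your first step is false as stated. The coequaliser in the definition is \emph{not} $\colim_{\mathcal{F}(\K)}D_\K$ for your functor $D_\K(\tau\subseteq\sigma)=\iota\circ\underline{c}^{\sigma/\tau}$. In the definition, the power map $\underline{c}^{\sigma/\tau}$ lands in the $\tau$-summand, not the $\sigma$-summand. The relation imposed is $\iota(x)\sim\underline{c}^{\sigma/\tau}(x)$ with $\iota(x)\in\ux^\sigma$ and $\underline{c}^{\sigma/\tau}(x)\in\ux^\tau$. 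Your colimit instead imposes $x\sim\iota(\underline{c}^{\sigma/\tau}(x))$ for $x\in\ux^\tau$. Power maps such as degree maps on spheres are not invertible, so one relation cannot be turned into the other, and the constructions really differ. For $\K=\Delta^1$ the poset $\mathcal{F}(\K)$ has a terminal object, so your colimit is $X_1\times X_2$ for every $c$. By contrast, $\ux^{\Delta^1,c}$ is the pushout of $X_1\times X_2\hookleftarrow X_1\vee X_2\xrightarrow{\rho_a\vee\rho_b}X_1\vee X_2$, i.e.\ the cofibre of $ab[i_1,i_2]$ as in Lemma \ref{lem:1simplexdecomp}. So, as written, your argument proves the pushout statement for a different construction, one that ignores $c$ whenever $\K$ is a simplex.

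The repair keeps the rest of your argument. Index the colimit by the category $\mathcal{S}(\K)$ whose objects are the faces $\sigma\in\K$ together with the pairs $(\tau\subseteq\sigma)$ of faces. Its only non-identity morphisms are $(\tau\subseteq\sigma)\to\sigma$ and $(\tau\subseteq\sigma)\to\tau$, sent respectively to the inclusion $\ux^\tau\to\ux^\sigma$ and to $\underline{c}^{\sigma/\tau}:\ux^\tau\to\ux^\tau$. The coequaliser in the definition is exactly the colimit of this diagram. There are no composable non-identity morphisms, so your functoriality check becomes unnecessary. Now $\mathcal{S}(\K_1)$ and $\mathcal{S}(\K_2)$ are full subcategories with union $\mathcal{S}(\K)$ and intersection $\mathcal{S}(\L)$. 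Your downward-closure observation is what shows every morphism lies in one of them: if $\sigma\in\K_k$ then $\tau\in\K_k$, so both arrows out of $(\tau\subseteq\sigma)$ lie in $\mathcal{S}(\K_k)$. Lemma \ref{lem:breakdowncolimit} then gives the claimed pushout, with the maps being those induced by the simplicial inclusions.
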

\begin{proof}
    This follows from Lemma ~\ref{lem:breakdowncolimit} and the definition of the weighted polyhedral product as a colimit.
\end{proof}

We require one further property of weighted polyhedral products. The following combines \cite[Proposition 5.5, Corollary 5.6]{weighted}.

\begin{lmm}
\label{lem:retraction}
Let $\K$ be a simplicial complex, and $\L$ a full subcomplex of $\K$. Then there is a natural map $r:\ux^{\K,c} \rightarrow \ux^{\L,c}$ which is a left homotopy inverse for the map $\ux^{\L,c} \rightarrow \ux^{\K,c}$ induced by the inclusion $\L \rightarrow \K$. \qed
\end{lmm}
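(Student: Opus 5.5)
The plan is to build $r$ piece by piece on the coequaliser that defines $\ux^{\K,c}$, check that the pieces glue, and observe that $r$ restricts to the identity on $\ux^{\L,c}$. That makes $r$ a strict left inverse, and in particular a left homotopy inverse. Let $V\subseteq[m]$ be the vertex set of $\L$. For a face $\sigma\in\K$ put $\sigma'=\sigma\cap V$. Since $\sigma'$ is a face of $\K$ with all vertices in $V$ and $\L$ is full, $\sigma'\in\L$. This is the only place fullness is used. Because $\sigma'\subseteq\sigma$, the power sequence $\underline{c}^{\sigma/\sigma'}$ is defined.

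For each $\sigma\in\K$ define
\[
r_\sigma:\ux^{\sigma}_{\sigma}\longrightarrow\ux^{\sigma'}_{\sigma'}\subseteq\bigsqcup_{\rho\in\L}\ux^{\rho}_{\rho},
\]
as the coordinate projection $\ux^\sigma\to\ux^{\sigma'}$ followed by the product power map $\underline{c}^{\sigma/\sigma'}$ on the coordinates in $\sigma'$. A naive definition using the projection alone fails, because the weights $c^\sigma_i$ and $c^{\sigma'}_i$ differ for $i\in\sigma'$. The correction by $\underline{c}^{\sigma/\sigma'}$ rescales from the weight of $\sigma$ to the weight of $\sigma'$. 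These maps assemble into a map on $\bigsqcup_{\sigma\in\K}\ux^\sigma_\sigma$, and it remains to check that $r$ coequalises the two structure maps.

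The key step, and the only real obstacle, is this compatibility check. Take $\tau\subseteq\sigma$ in $\K$ and $x\in\ux^\tau_{\tau\to\sigma}$, and write $p$ for coordinate projections. Along the inclusion, $x$ goes to $x\in\ux^\sigma_\sigma$, and then $r_\sigma(x)=\underline{c}^{\sigma/\sigma'}(p x)$ lies in $\ux^{\tau'}\subseteq\ux^{\sigma'}_{\sigma'}$. In $\ux^{\L,c}$ this point is identified with $\underline{c}^{\sigma'/\tau'}\underline{c}^{\sigma/\sigma'}(p x)=\underline{c}^{\sigma/\tau'}(p x)\in\ux^{\tau'}_{\tau'}$. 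Along the power map, $x$ goes to $\underline{c}^{\sigma/\tau}(x)\in\ux^\tau_\tau$, and then
\[
r_\tau\bigl(\underline{c}^{\sigma/\tau}(x)\bigr)=\underline{c}^{\tau/\tau'}\underline{c}^{\sigma/\tau}(p x)=\underline{c}^{\sigma/\tau'}(p x)\in\ux^{\tau'}_{\tau'}.
\]
The two results agree. This uses three facts: the power maps satisfy $\rho_a\rho_b=\rho_{ab}$; the divisibility condition makes the quotients $c^{\sigma}_i/c^{\tau'}_i$ multiply correctly; and projections commute with product power maps. Hence $r$ descends to a map $r:\ux^{\K,c}\to\ux^{\L,c}$. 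Naturality in the spaces $X_i$ and in their power maps is immediate, since $r$ is built from projections and power maps.

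Finally, for $\sigma\in\L$ we have $\sigma'=\sigma$ and $\underline{c}^{\sigma/\sigma}=(1,\dots,1)$, and $\rho_1=\id$. So $r_\sigma$ is the identity on $\ux^\sigma_\sigma$. The composite $\ux^{\L,c}\to\ux^{\K,c}\xrightarrow{r}\ux^{\L,c}$ is therefore the identity on each piece of the coequaliser, hence the identity. If the colimit is intended up to homotopy, I would instead define $r$ by the same formula on the homotopy coequaliser (the coherences are strict, so no further homotopies are needed). Alternatively, one can note that the diagram is Reedy cofibrant for CW-complexes, so the strict and homotopy colimits agree.
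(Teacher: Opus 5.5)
Your proof is correct. The paper gives no argument of its own here: it imports the statement from So--Stanley--Theriault \cite[Proposition 5.5, Corollary 5.6]{weighted}. Your route is a direct, self-contained verification from the coequaliser definition used in this paper. The assignment $\sigma\mapsto\sigma'=\sigma\cap V$, together with the corrected maps $r_\sigma=\underline{c}^{\sigma/\sigma'}\circ p$, does define a map of coequaliser diagrams. Your compatibility check, using $\rho_a\rho_b=\rho_{ab}$ and the divisibility condition, is right, and so is your observation that projection alone would not be compatible. What this buys over the citation is transparency: fullness enters only to guarantee $\sigma\cap V\in\L$, and you get a strict retraction, not merely a homotopy one.

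Two small remarks. First, you implicitly use that the power maps preserve basepoints, $\rho_a(\ast)=\ast$. This is exactly what makes $\underline{c}^{\sigma/\sigma'}$ carry $\ux^{\tau'}\subseteq\ux^{\sigma'}$ into itself in your compatibility step, where the coordinates in $\sigma'\setminus\tau'$ must stay at the basepoint. The paper's definition of a power map does not say this explicitly, but the coequaliser definition and all its examples presuppose it, so you should state it. Second, your closing paragraph about homotopy coequalisers is unnecessary, since the lemma concerns the strict coequaliser. The Reedy-cofibrancy claim is also not justified as stated, because the legs $\underline{c}^{\sigma/\tau}$ of the diagram are power maps rather than cofibrations. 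I would simply drop that paragraph.
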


Now suppose that a simplicial complex $\K$ satisfies $\K=\K_A\cup\K_B$ for some $A,B\subsetneq[m]$. By Lemma \ref{lem:pushout} and Lemma \ref{lem:retraction}, we obtain a homotopy pushout 
\[\begin{tikzcd}
	{\ux^{\K_{A\cap B},c}} & {\ux^{\K_A,c}} \\
	{\ux^{\K_B,c}} & {\ux^{\K,c},}
	\arrow["{f_A}", from=1-1, to=1-2]
	\arrow["{f_B}", from=1-1, to=2-1]
	\arrow[from=1-2, to=2-2]
	\arrow[from=2-1, to=2-2]
\end{tikzcd}\]
where the maps $f_A$ and $f_B$ admit left homotopy inverses. In the next section, we study how the loop space functor interacts with pushout of this form.

\section{Loop homology of a pushout of retractions}
\label{sec:loophompushout}

In this section, we prove that the loop homology functor preserves pushouts of maps with left homotopy inverses. We will assume throughout this section that homology is taken to have coefficients in some commutative ring with unit. Moreover, all spaces will be assumed to have homology which are free modules over this ring.

\subsection*{Homotopy theoretic setup}

Suppose we have a homotopy pushout
\[\begin{tikzcd}
	{X_{AB}} & {X_A} \\
	{X_B} & X
	\arrow["{f_A}", from=1-1, to=1-2]
	\arrow["{f_B}", from=1-1, to=2-1]
	\arrow["{g_A}", from=1-2, to=2-2]
	\arrow["{g_B}", from=2-1, to=2-2]
    \arrow["{f}", from=1-1, to=2-2]
\end{tikzcd}\] of simply connected $CW$-complexes
such that the maps $f_A$ and $f_B$ have left homotopy inverses $r_A$ and $r_B$, respectively. Then $f$ admits a left homotopy inverse $r$ by the universal property, see the diagram below:
\begin{equation}\label{eqn:spacepushout}\begin{tikzcd}
	{X_{AB}} & {X_A} & \\
	{X_B} & X \\
	&& {X_{AB}.}
	\arrow["{f_A}", from=1-1, to=1-2]
	\arrow["{f_B}", from=1-1, to=2-1]
	\arrow["{g_A}", from=1-2, to=2-2]
	\arrow["{r_A}", curve={height=-12pt}, from=1-2, to=3-3]
	\arrow["{g_B}"', from=2-1, to=2-2]
	\arrow["{r_B}", curve={height=12pt}, from=2-1, to=3-3]
	\arrow["r", from=2-2, to=3-3]
\end{tikzcd}\end{equation}

By construction, the composite map $X_{AB}\to X_{AB}$ is homotopic to the identity.
Define $F$, $F_A$ and $F_B$ as the homotopy fibres of $r$, $r_A$ and $r_B$. Arguing as in \cite[Proposition 3.3]{theriault}, we obtain a homotopy pushout \[\begin{tikzcd}
	{*} & {F_A} \\
	{F_B} & F.
	\arrow[from=1-1, to=1-2]
	\arrow[from=1-1, to=2-1]
	\arrow[from=1-2, to=2-2]
	\arrow[from=2-1, to=2-2]
\end{tikzcd}\] We immediately obtain the following result.
\begin{lmm}
\label{lmm:use of cube lemma}
The natural map $F_A\vee F_B\to F$ is a homotopy equivalence. \qed
\end{lmm}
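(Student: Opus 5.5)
The plan is to read this off the homotopy pushout of fibres stated just before the lemma. Its top-left corner is a point, so the pushout of $F_A\leftarrow *\rightarrow F_B$ is, up to homotopy, the wedge $F_A\vee F_B$. More precisely, the homotopy pushout of two maps out of a point is the double mapping cylinder of $F_A\leftarrow *\to F_B$. Since $*$ is the basepoint and the spaces are well-pointed CW-complexes, this double mapping cylinder is homotopy equivalent to $F_A\vee F_B$. The induced map to $F$ is then an equivalence by the defining property of the homotopy pushout.

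Concretely, I would carry this out in three steps.

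First, recall how the square for fibres arises, following \cite[Proposition 3.3]{theriault}. Apply Mather's cube lemma to the cube obtained by pulling back the homotopy fibration $F\to X\overset{r}\to X_{AB}$ along the maps $X_{AB}\to X$, $X_A\to X$ and $X_B\to X$ of the original square \eqref{eqn:spacepushout}. The resulting fibres over $X_{AB}$, $X_A$, $X_B$ are respectively the homotopy fibres of $r\circ f\simeq\id$, of $r\circ g_A\simeq r_A$ and of $r\circ g_B\simeq r_B$. These are $*$, $F_A$ and $F_B$. The bottom face is a homotopy pushout and the side faces are homotopy pullbacks, so the cube lemma says the top face, which is the square $*\to F_A$, $*\to F_B$, $F_A\to F$, $F_B\to F$, is a homotopy pushout.

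Second, identify the homotopy pushout of $F_A\leftarrow *\rightarrow F_B$ with $F_A\vee F_B$. The inclusion of the basepoint is a cofibration, so the strict pushout, namely the wedge, is already a homotopy pushout.

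Third, check that the comparison map $F_A\vee F_B\to F$ obtained from the universal property is the natural map, i.e.\ the map induced by the fibre inclusions $F_A\to F$ and $F_B\to F$ coming from $g_A$ and $g_B$. Then conclude that it is a homotopy equivalence.

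The only delicate point is the bookkeeping in the cube lemma. One has to check that the homotopies $r\circ g_A\simeq r_A$ and $r\circ g_B\simeq r_B$ are compatible enough that the fibres really are the $F_A$, $F_B$ defined as fibres of $r_A$, $r_B$. This works because $r$ was constructed from $r_A$ and $r_B$ by the universal property, so the faces commute up to coherent homotopy. Since the paper takes this square as established, the lemma itself amounts to the second step.
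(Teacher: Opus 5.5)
Your proposal is correct and is essentially the paper's argument. The paper obtains the homotopy pushout square with corners $*$, $F_A$, $F_B$, $F$ by the cube-lemma argument of \cite[Proposition 3.3]{theriault}, which is your first step of pulling back the fibre of $r$ over the square \eqref{eqn:spacepushout}; it then reads off $F_A\vee F_B\simeq F$ immediately, as in your second and third steps.
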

By definition of $r$, there is a homotopy fibration diagram 
\begin{equation}
\label{eqn:maps-of-fibrations} 
    \begin{tikzcd}
	{F_A} & {X_A} & {X_{AB}} \\
	F & X & {X_{AB}} \\
	{F_B} & {X_B} & {X_{AB}.}
	\arrow[from=1-1, to=1-2]
	\arrow[from=1-1, to=2-1]
	\arrow["{{r_A}}", from=1-2, to=1-3]
	\arrow["{{g_A}}", from=1-2, to=2-2]
	\arrow[equals, from=1-3, to=2-3]
	\arrow[from=2-1, to=2-2]
	\arrow["r", from=2-2, to=2-3]
	\arrow[equals, from=2-3, to=3-3]
	\arrow[from=3-1, to=2-1]
	\arrow[from=3-1, to=3-2]
	\arrow["{{g_B}}"', from=3-2, to=2-2]
	\arrow["{{r_B}}", from=3-2, to=3-3]
\end{tikzcd}
\end{equation}

\begin{lmm}
\label{lmm:split-pushout-homotopy-equivalences}
There are homotopy equivalences $\Omega X_A\simeq\Omega F_A\times\Omega X_{AB}$, $\Omega X_B\simeq\Omega F_B\times\Omega X_{AB}$, $\Omega X\simeq\Omega X_{AB}\times\Omega F_A\times\Omega F_B\times\Omega\Sigma(\Omega F_A\wedge\Omega F_B)$.
\end{lmm}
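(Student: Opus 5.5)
Each of the three fibrations in diagram \eqref{eqn:maps-of-fibrations} has a homotopy section, so I will apply Proposition \ref{prp:split_hopf_properties} three times and then use Lemmas \ref{lmm:use of cube lemma} and \ref{lmm:ganea}.

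\emph{The two outer fibrations.} Consider first $F_A\to X_A\xrightarrow{r_A}X_{AB}$. By hypothesis $r_A\circ f_A\simeq\id_{X_{AB}}$, so $f_A$ is a homotopy section of $r_A$. Proposition \ref{prp:split_hopf_properties} then says the map
\[\varphi_L:\Omega F_A\times\Omega X_{AB}\to\Omega X_A\]
is a homotopy equivalence. The same argument applied to $F_B\to X_B\xrightarrow{r_B}X_{AB}$, with section $f_B$, gives $\Omega X_B\simeq\Omega F_B\times\Omega X_{AB}$.

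\emph{The middle fibration.} Consider $F\to X\xrightarrow{r}X_{AB}$. By construction of $r$ in diagram \eqref{eqn:spacepushout}, we have $r\circ f\simeq\id_{X_{AB}}$, so $f=g_A\circ f_A$ is a homotopy section of $r$. Proposition \ref{prp:split_hopf_properties} gives $\Omega X\simeq\Omega F\times\Omega X_{AB}$, and by Lemma \ref{lmm:use of cube lemma} we may replace $F$ by $F_A\vee F_B$. Lemma \ref{lmm:ganea} then yields
\[\Omega(F_A\vee F_B)\simeq\Omega F_A\times\Omega F_B\times\Omega\Sigma(\Omega F_A\wedge\Omega F_B).\]
Rearranging the factors of the product gives the stated equivalence for $\Omega X$.

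\emph{Main obstacle.} Proposition \ref{prp:split_hopf_properties} requires the spaces in the fibrations to be simply connected, so the one real point to check is that $F$, $F_A$ and $F_B$ are simply connected. Since $X_A$ and $X_{AB}$ are simply connected, the long exact sequence of the fibration gives
\[\pi_2(X_A)\to\pi_2(X_{AB})\to\pi_1(F_A)\to\pi_1(X_A)=0.\]
The map $\pi_2(r_A)$ is surjective because $r_A$ has the section $f_A$. Hence $\pi_1(F_A)=0$, and $F_A$ is connected since $\pi_1(X_{AB})=0$. The same argument works for $F_B$ and for $F$.

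Alternatively, one could bypass this check by splitting directly at the level of loop spaces. The composite $\Omega F\times\Omega X_{AB}\to\Omega X$ given by $\mu\circ(\Omega i\times\Omega s)$ induces an isomorphism on homotopy groups, because the long exact sequence of $\Omega F\to\Omega X\to\Omega X_{AB}$ splits. It is therefore a weak equivalence, which is enough here since all spaces involved are CW.
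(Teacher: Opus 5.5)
Your proposal is correct and is the paper's proof: the rows of \eqref{eqn:maps-of-fibrations} split via the homotopy sections $f_A$, $f_B$, $f$ by Proposition \ref{prp:split_hopf_properties}, and the third equivalence then follows from Lemma \ref{lmm:use of cube lemma} together with Lemma \ref{lmm:ganea}. Your check that $F_A$, $F_B$ and $F$ are simply connected, using surjectivity of $\pi_2$ of the retractions, is a hypothesis of Proposition \ref{prp:split_hopf_properties} that the paper leaves implicit, and your argument for it is correct.
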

\begin{proof}
In \eqref{eqn:maps-of-fibrations}, the rows are homotopy fibrations with homotopy sections $f_A$, $f_B$, $f$, respectively. Hence the homotopy equivalences follow from Proposition \ref{prp:split_hopf_properties}, Lemma \ref{lmm:use of cube lemma} and Lemma \ref{lmm:ganea}.
\end{proof}

\subsection*{Algebraic setup}
We denote $H_{AB}:=H_*(\Omega X_{AB}),$
\begin{gather*}
H_A:=H_*(\Omega X_A),\quad H_B:=H_*(\Omega X_B),\quad H:=H_*(\Omega X),\\
S_A:=H_*(\Omega F_A),\quad S_B:=H_*(\Omega F_B),\quad S:=H_*(\Omega F).
\end{gather*}
The homotopy pushout \eqref{eqn:spacepushout} induces the commutative diagram
\begin{equation}\begin{tikzcd}
	{H_{AB}} & {H_A} & \\
	{H_B} & H \\
	&& {H_{AB},}
	\arrow["{s_A}", from=1-1, to=1-2]
	\arrow["{s_B}", from=1-1, to=2-1]
	\arrow["{j_A}", from=1-2, to=2-2]
	\arrow["{p_A}", curve={height=-12pt}, from=1-2, to=3-3]
	\arrow["{j_B}", from=2-1, to=2-2]
	\arrow["{p_B}", curve={height=12pt}, from=2-1, to=3-3]
	\arrow["p", from=2-2, to=3-3]
    \arrow["{s}", from=1-1, to=2-2]
\end{tikzcd}\end{equation}
where $p\circ s:H_{AB}\to H_{AB}$ is the identity map. The diagram \eqref{eqn:maps-of-fibrations} induces the commutative diagram
$$\xymatrix{
S_A\ar[r]^-{i_A}
\ar[d]
&
H_A
\ar[r]^-{p_A}
\ar[d]^-{j_A}
&
H_{AB}
\ar@{=}[d]
\\
S
\ar[r]^-i
&
H\ar[r]^-p
&
H_{AB}
\ar@{=}[d]
\\
S_B
\ar[r]^-{i_B}
\ar[u]
&
H_B
\ar[u]_-{j_B}
\ar[r]^-{p_B}
&
H_{AB}.
}$$
\begin{lmm}
\label{lmm:Phi_are_iso}
The following $\k$-linear maps are isomorphisms:
\begin{enumerate}
    \item $S_A\otimes H_{AB}\to H_A,$ $a\otimes h\mapsto i_A(s)\cdot s_A(h)$ and $
H_{AB}\otimes S_A\to H_A,$ $h\otimes a\mapsto s_A(h)\cdot i_A(s)$.
\item $S_B\otimes H_{AB}\to H_B,$ $b\otimes h\mapsto i_B(s)\cdot s_B(h)$ and $H_{AB}\otimes S_B\to H_B,$ $h\otimes b\mapsto s_B(h)\cdot i_B(s)$.
\item $S\otimes H_{AB}\to H$, $x\otimes h\mapsto i(x)\cdot s(h)$.
\end{enumerate}
\end{lmm}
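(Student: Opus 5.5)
The plan is to deduce all three statements from Proposition \ref{prp:split_hopf_properties}, applied to the three rows of diagram \eqref{eqn:maps-of-fibrations}. In part (1) we read the formula as $a\otimes h\mapsto i_A(a)\cdot s_A(h)$; the letter $s$ there is a typo for $a$, and similarly in (2).

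For (1), the top row $F_A\to X_A\xrightarrow{r_A}X_{AB}$ is a homotopy fibration of simply connected spaces with homotopy section $f_A$, since $r_A\circ f_A\simeq\id$. Simple connectivity of $F_A$ needs a brief check. The long exact sequence gives $\pi_1(F_A)\to\pi_1(X_A)=0$, and $\pi_2(X_A)\to\pi_2(X_{AB})$ is surjective because it has a section. Hence $\pi_1(F_A)=0$, and $F_A$ is connected since $\pi_1(X_{AB})=0$. In homology, $\Omega f_A$ induces $s_A$, $\Omega r_A$ induces $p_A$, and the fibre inclusion induces $i_A$. The maps $\Phi_L$ and $\Phi_R$ of Proposition \ref{prp:split_hopf_properties} are then exactly the two maps in (1). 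Part (2) is identical, using the bottom row with section $f_B$. For (3), the middle row $F\to X\xrightarrow{r}X_{AB}$ has homotopy section $f=g_A\circ f_A$, because $r\circ f\simeq r_A\circ f_A\simeq\id$. Its homology is $s$, and the map $\Phi_L$ is the map in (3).

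The only real point is verifying the hypothesis of Proposition \ref{prp:split_hopf_properties} that $H_*(\Omega F)$ and $H_*(\Omega X_{AB})$, and likewise $S_A$ and $S_B$, are free $\k$-modules of finite type. The standing assumption of this section is that all homology is free, so freeness can be taken as given. I would still justify it from the hypotheses on $X_A$, $X_B$, $X$ where possible, since Lemma \ref{lmm:split-pushout-homotopy-equivalences} exhibits $\Omega F_A$ and $\Omega X_{AB}$ as retracts of $\Omega X_A$ up to homotopy.

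Concretely, $S_A$ and $H_{AB}$ are direct summands of $H_A$ via $i_A$ and $s_A$, using the homotopy equivalences. Over a principal ideal domain a direct summand of a free module is free, and it is of finite type whenever $H_A$ is. The same argument gives $S_B$ from $H_B$, and $S$ from $H$ via the equivalence $\Omega X\simeq\Omega F\times\Omega X_{AB}$. Finite type is implicit in the paper's finite-type setting, or can be dropped by using the version in \cite{vylegzhanin25}. This bookkeeping is the main obstacle; the rest is direct citation.
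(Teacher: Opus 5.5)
Your proposal is correct and follows the paper's proof, which simply applies Proposition \ref{prp:split_hopf_properties} to the three rows of \eqref{eqn:maps-of-fibrations} with homotopy sections $f_A$, $f_B$ and $f$; your reading of $i_A(s)$ as $i_A(a)$ is also right. One quibble concerns your freeness bookkeeping: in Theorem \ref{thm:general-theorem-for-pushout}(2) freeness of $H$ is a conclusion rather than a hypothesis, so you should get freeness of $S$ from $S\cong S_A\ast S_B$ (Theorem \ref{thm:wedge_loop_homology} and Lemma \ref{lmm:use of cube lemma}), not by treating $S$ as a summand of $H$.
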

\begin{proof}
The homotopy fibrations $F_A\to X_A\xrightarrow{r_A} X_{AB}$, $F_B\to X_B\xrightarrow{r_B} X_{AB}$ and $F\to X\xrightarrow{r} X_{AB}$ have homotopy sections $f_A$, $f_B$, $f$, respectively. The result then follows from Proposition \ref{prp:split_hopf_properties}.
\end{proof}
\begin{lmm}
\label{lmm:use of cube lemma alg}    
The natural map $S_A\ast S_B\to S$ is an isomorphism.
\end{lmm}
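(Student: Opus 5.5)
The plan is to apply the loop homology functor to the homotopy equivalence of Lemma \ref{lmm:use of cube lemma} and then identify the result with a free product using Theorem \ref{thm:wedge_loop_homology}. The natural map $S_A\ast S_B\to S$ is the algebra map induced by the two homomorphisms $S_A\to S$ and $S_B\to S$, which come from the fibre inclusions $F_A\to F$ and $F_B\to F$ in \eqref{eqn:maps-of-fibrations}. These two space-level maps assemble into the map $F_A\vee F_B\to F$ of Lemma \ref{lmm:use of cube lemma}. Consequently the algebra map $S_A\ast S_B\to S$ factors as
$$S_A\ast S_B\longrightarrow H_*(\Omega(F_A\vee F_B))\longrightarrow H_*(\Omega F),$$
where the first arrow is the natural map of Theorem \ref{thm:wedge_loop_homology} and the second is induced by the wedge map. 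To confirm this factorisation, one checks it on the generators $S_A$ and $S_B$ of the free product, where it holds by the universal property of the free product.

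The second arrow is an isomorphism because $F_A\vee F_B\to F$ is a homotopy equivalence by Lemma \ref{lmm:use of cube lemma}, so $\Omega$ of it is a homotopy equivalence of loop spaces, and hence induces an algebra isomorphism on homology. The first arrow is an isomorphism by Theorem \ref{thm:wedge_loop_homology}, once its hypotheses are verified.

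That verification is the main obstacle. Theorem \ref{thm:wedge_loop_homology} requires $F_A$ and $F_B$ to be simply connected CW-complexes whose loop homologies are free $\k$-modules.

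For simple connectivity, use the long exact homotopy sequence of $F_A\to X_A\xrightarrow{r_A}X_{AB}$. Since $r_A$ has the section $f_A$, the map $\pi_2(X_A)\to\pi_2(X_{AB})$ is surjective. Since $X_A$ is simply connected, this gives $\pi_1(F_A)=0$. Connectedness of $F_A$ follows similarly from $\pi_1(X_{AB})=0$. The same argument applies to $F_B$.

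Each $F_A$ can be replaced by a homotopy equivalent CW-complex, which changes nothing up to isomorphism.

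For freeness, Proposition \ref{prp:split_hopf_properties} gives a homotopy equivalence $\Omega X_A\simeq\Omega F_A\times\Omega X_{AB}$. Because $\Omega X_{AB}\to\Omega X_A\to\Omega X_{AB}$ is the identity, $\Omega F_A$ is a retract of $\Omega X_A$, as $\Omega F_A\to\Omega X_A\to\Omega F_A$ via the projection of the product decomposition. Therefore $S_A$ is a direct summand of $H_A$. This summand is free over a PID under the standing assumption of this section that all homologies are free modules, and in any case that assumption can be applied directly to $F_A$. The same holds for $S_B$.

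Combining the two isomorphisms proves the lemma.
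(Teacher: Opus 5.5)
Your proposal is correct and follows the paper's proof: you factor $S_A\ast S_B\to S$ through $H_*(\Omega(F_A\vee F_B))$, get the first map from Theorem~\ref{thm:wedge_loop_homology}, and get the second from the homotopy equivalence of Lemma~\ref{lmm:use of cube lemma}. Your extra checks are sound and fill in hypotheses the paper leaves implicit: $F_A$ and $F_B$ are simply connected by the long exact sequence with a section, and their loop homologies are free as direct summands of $H_A$ and $H_B$.
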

\begin{proof}
By definition, the map in the statement of the lemma equals the composite
$$S_A\ast S_B=H_*(\Omega F_A)\ast H_*(\Omega F_B)\to H_*(\Omega(F_A\vee F_B))\to H_*(\Omega F)=S.$$
The first map is an isomorphism by Theorem \ref{thm:wedge_loop_homology}, and the second map is an isomorphism since $F_A\vee F_B\to F$ is a homotopy equivalence by Lemma \ref{lmm:use of cube lemma}.
\end{proof}

Consider the pushout $\widehat{H}:=H_A\ast_{H_{AB}}H_B$ of the injective maps $s_A$ and $s_B$. Let $\widehat{s}$, $\widehat{j}_A$, $\widehat{j}_B$ be the induced maps to $\widehat{H}$ as in the diagram below
$$\xymatrix{
&
&S_A
\ar[d]^-{i_A}\\
&H_{AB}
\ar[r]^-{s_A}
\ar[d]^-{s_B}
\ar[rd]^-{\widehat{s}}
&
H_A\ar[d]^-{\widehat{j}_A}
\\
S_B
\ar[r]^-{i_B}
&
H_B
\ar[r]^-{\widehat{j}_B}
&
\widehat{H}.
}$$

For homomorphisms of algebras $h_1:R_1\to S_1$, $h_2:R_2\to S_2$, we denote the componentwise map by $h_1\ast h_2:R_1\ast R_2\to S_1\ast S_2$. On the other hand, for $h_1:R_1\to S$ and $h_2:R_2\to S$ we denote by $h_1\sqcup h_2:R_1\ast R_2\to S$ the map of algebras defined by the universal property. It follows by definition that
$h_1\sqcup h_2=(\id_S\sqcup\id_S)\circ (h_1\ast h_2):$ $R_1\ast R_2\to S.$
\begin{lmm}
\label{lmm:lambda surjective}
Consider the $\k$-linear map $\lambda:(S_A\ast S_B)\otimes H_{AB}\to \widehat{H}$ defined as the composition
$$\lambda:
(S_A\ast S_B)\otimes H_{AB}\overset{(i_A\ast i_B)\otimes \id}\longrightarrow (H_A\ast H_B)\otimes H_{AB}\overset{(\widehat{j}_A\sqcup \widehat{j}_B)\otimes \widehat{s}}\longrightarrow\widehat{H}\otimes\widehat{H}\overset{\mu}\longrightarrow\widehat{H}.$$
Then
\begin{enumerate}
    \item $\Img\lambda\subseteq\widehat{H}$ is a subalgebra;
    \item $\lambda$ is a surjective map.
\end{enumerate} 
\end{lmm}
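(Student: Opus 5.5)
The plan is to prove (1) first by a ``commutation'' argument that moves elements of $\widehat{s}(H_{AB})$ past elements coming from $S_A$ and $S_B$, and then to deduce (2) from (1) via generators of $\widehat{H}$.

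First I describe $\Img\lambda$ concretely. Write $u\in S_A\ast S_B$ as a linear combination of words in letters from $S_A$ and $S_B$. The map $(\widehat{j}_A\sqcup\widehat{j}_B)\circ(i_A\ast i_B)$ is an algebra map, so it sends such a word to the corresponding product of elements $\widehat{j}_A i_A(a)$ and $\widehat{j}_B i_B(b)$ in $\widehat{H}$. Call the image of this algebra map $U\subseteq\widehat{H}$; it is a subalgebra containing $1$. Then
$$\Img\lambda=\spn\{\,u\cdot\widehat{s}(h)\;:\;u\in U,\ h\in H_{AB}\,\}=U\cdot\widehat{s}(H_{AB}).$$

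The key step, which I expect to be the main point, is the commutation relation
$$\widehat{s}(H_{AB})\cdot U\subseteq U\cdot\widehat{s}(H_{AB}).$$
It suffices to check this on the generators of $U$ and then induct on word length. Take a generator $\widehat{j}_A i_A(a)$. The element $s_A(h)\cdot i_A(a)$ lies in $H_A$. By the isomorphism $S_A\otimes H_{AB}\to H_A$ of Lemma \ref{lmm:Phi_are_iso}(1), it can be written as $\sum_k i_A(a_k)\,s_A(h_k)$. Applying the algebra map $\widehat{j}_A$ and using $\widehat{j}_A\circ s_A=\widehat{s}$ gives
$$\widehat{s}(h)\cdot\widehat{j}_A i_A(a)=\sum_k \widehat{j}_A i_A(a_k)\,\widehat{s}(h_k).$$
The same argument with Lemma \ref{lmm:Phi_are_iso}(2) handles $\widehat{j}_B i_B(b)$. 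Now let $w=g\,w'$ be a word with first letter $g$. Move $\widehat{s}(h)$ past $g$ using the relation just proved, then apply the inductive hypothesis to $w'$. This yields $\widehat{s}(h)\,w\in U\cdot\widehat{s}(H_{AB})$.

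Given the commutation relation, (1) follows. For $u,u'\in U$ and $h,h'\in H_{AB}$, write $\widehat{s}(h)u'=\sum u''\widehat{s}(h'')$. Then
$$u\,\widehat{s}(h)\,u'\,\widehat{s}(h')=\sum (u u'')\,\widehat{s}(h''h'),$$
using that $\widehat{s}$ is multiplicative and $U$ is closed under products. This lies in $\Img\lambda$. Also $1=\lambda(1\otimes 1)$, so $\Img\lambda$ is a subalgebra.

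For (2), by Lemma \ref{lmm:Phi_are_iso}(1) every element of $H_A$ has the form $\sum i_A(a)s_A(h)$. Hence
$$\widehat{j}_A(H_A)\subseteq U\cdot\widehat{s}(H_{AB})=\Img\lambda,$$
and similarly $\widehat{j}_B(H_B)\subseteq\Img\lambda$. The pushout $\widehat{H}=H_A\ast_{H_{AB}}H_B$ is generated as an algebra by $\widehat{j}_A(H_A)\cup\widehat{j}_B(H_B)$, since it is a quotient of $H_A\ast H_B$. Therefore the subalgebra $\Img\lambda$ from (1) is all of $\widehat{H}$, so $\lambda$ is surjective.
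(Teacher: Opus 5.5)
Your proposal is correct and follows essentially the same route as the paper: use the isomorphisms of Lemma \ref{lmm:Phi_are_iso} to commute $\widehat{s}(H_{AB})$ to the right past the images of $S_A$ and $S_B$, deduce that $\Img\lambda$ is closed under multiplication, and conclude surjectivity because $\widehat{j}_A(H_A)\cup\widehat{j}_B(H_B)\subseteq\Img\lambda$ generates $\widehat{H}$. Your write-up is slightly more careful than the paper's in three places: you rewrite $s_A(h)\,i_A(a)$ as $\sum_k i_A(a_k)\,s_A(h_k)$, which is the direction actually needed to move $\widehat{s}(h)$ rightwards; you give the induction on word length explicitly; and you check the unit.
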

\begin{proof}
Denote $\overline{a}:=\widehat{j}_A(i_A(a))$ for $a\in S_A$, $\overline{b}:=\widehat{j}_B(i_B(b))$ for $b\in S_B$, $\overline{h}:=\widehat{s}(h)$ for $h\in H_{AB}$. By definition, we have
$$\lambda(a_1b_1\dots a_Nb_N\otimes h):=\overline{a}_1\cdot \overline{b}_1\dots\overline{a}_N\cdot\overline{b}_N\cdot\overline{h}\in\widehat{H},$$
hence $\Img\lambda\subseteq\widehat{H}$ is additively generated by elements of that form.

Let $a\in S_A$ and $h\in H_{AB}$.
By Lemma \ref{lmm:Phi_are_iso}, we have
$i_A(a)\cdot s_A(h)=\sum_\alpha s_A(h_\alpha)\cdot i_A(a_\alpha)\in H_A$
for some $a_\alpha\in S_A$, $h_\alpha\in H_{AB}$. Applying $\widehat{j}_A$ to both sides, we obtain
$\overline{a}\cdot\overline{h}=\sum_\alpha\overline{h}_\alpha\cdot\overline{a}_\alpha\in  \widehat{H}.$ Similarly, $\overline{b}\cdot\overline{h}=\sum_\beta\overline{h_\beta}\cdot\overline{b_\beta}$ for some $b_\beta\in S_B$, $h_\beta\in H_{AB}$.

By these formulas, any element of the form $\overline{h}\cdot \overline{a}_1\overline{b}_1\dots \overline{a}_N\overline{b}_N$ is a linear combination of elements of the form $\overline{a}_1'\overline{b}_1'\dots\overline{a}_N'\overline{b}_N'\overline{h}'$. Also, $\widehat{s}$ is a map of algebras, so $\overline{h}_1\cdot\overline{h}_2=\overline{h_1\cdot h_2}$. It follows that the product of two elements of the form
$\overline{a}_1\overline{b}_1\dots\overline{a_N}\overline{b_N}\cdot\overline{h}$
is a linear combination of elements of the same form. This proves (1).

To prove (2), note that $\Img(\widehat{j}_A)\cup\Img(\widehat{j}_B)$ is a generating set for the algebra $\widehat{H}=H_A\ast_{H_{AB}}H_B$. Since $\Img\lambda\subseteq \widehat{H}$ is a subalgebra by (1), it is sufficient to show that $\Img(\widehat{j}_A)\cup\Img(\widehat{j}_B)\subseteq\Img\lambda$. 
By part (3) of Lemma \ref{lmm:Phi_are_iso}, we have an additive isomorphism
$$\Phi_L:S_A\otimes H_{AB}\to H_A,\quad s\otimes h\mapsto i_A(s)\cdot s_A(h),$$
hence the $\k$-module $\Img(\widehat{j}_A:H_A\to\widehat{H})$ is generated by elements of the form $$\widehat{j}_A(i_A(s)\cdot s_A(h))=\widehat{j}_A(i_A(s))\cdot \widehat{j}_A(s_A(h))=\overline{a}\cdot\overline{h}\in\Img\lambda.$$
It follows that $\Img\widehat{j}_A\subseteq \Img\lambda$.
Similarly, $\Img\widehat{j}_B\subseteq\Img\lambda$.
\end{proof}

Define the map of algebras $c:\widehat{H}\to H$, using the universal property of a pushout as in the diagram below:
$$\xymatrix{
H_{AB}
\ar[r]^-{s_A}
\ar[d]_-{s_B}
\ar[rd]^-{\widehat{s}}
&
H_A\ar[d]^-{\widehat{j}_A}
\ar@/^/[rdd]^-{j_A}
\\
H_B
\ar[r]^-{\widehat{j}_B}
\ar@/_/[rrd]_-{j_B}
&
\widehat{H}
\ar@{-->}[rd]^-c
\\
&&H.
}$$
Since the diagram commutes, $c\circ\widehat{s}=s:H_{AB}\to H.$

\subsection*{The main theorem}
For a space $X$, denote by $\tau(X)$ the set of primes appearing as $p$-torsion in $H_*(X;\ZZ)$.
\begin{thm}
\label{thm:general-theorem-for-pushout}
Let $X_{AB},X_A,X_B$ be simply connected pointed spaces of finite type, and suppose there are maps $f_A:X_{AB}\to X_A$ and $f_B:X_{AB}\to X_B$ which admit left homotopy inverses. Denote by $X$ the homotopy pushout of $f_A$ and $f_B$. Then
\begin{enumerate}
    \item $\tau(\Omega X)=\tau(\Omega X_A)\cup\tau(\Omega X_B)$.
    \item If $H_*(\Omega X_A;\k)$, $H_*(\Omega X_B;\k)$ and $H_*(\Omega X_{AB};\k)$ are free $\k$-modules, then the natural map of Hopf algebras \[H_*(\Omega X_A;\k) *_{H_*(\Omega X_{AB};\k)} H_*(\Omega X_B;\k)\to H_*(\Omega(X_A\cup_{X_{AB}}X_B);\k)\]
is an isomorphism.
    \item Under the assumptions of (2), $$1/P(H_*(\Omega X;\k))=1/P(H_*(\Omega X_A;\k))+1/P(H_*(\Omega X_B;\k))-1/P(H_*(\Omega X_{AB};\k)).$$
\end{enumerate}
\end{thm}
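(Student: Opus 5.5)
The plan is to assemble the homotopy-theoretic and algebraic setup already built in this section. For part (1), use Lemma \ref{lmm:split-pushout-homotopy-equivalences}: $\Omega X\simeq\Omega X_{AB}\times\Omega F_A\times\Omega F_B\times\Omega\Sigma(\Omega F_A\wedge\Omega F_B)$, while $\Omega X_A\simeq \Omega F_A\times\Omega X_{AB}$ and $\Omega X_B\simeq\Omega F_B\times\Omega X_{AB}$. By the integral Künneth theorem, $\tau$ of a finite-type product is the union of the $\tau$'s of the factors; this gives $\tau(\Omega X_A)\cup\tau(\Omega X_B)=\tau(\Omega X_{AB})\cup\tau(\Omega F_A)\cup\tau(\Omega F_B)$. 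It remains to show that $\tau(\Omega\Sigma(\Omega F_A\wedge\Omega F_B))$ is contained in this union. By Bott--Samelson, $H_*(\Omega\Sigma Y;\ZZ)$ is additively a direct sum of tensor powers of $\widetilde H_*(Y)$, together with $\Tor$ terms coming from iterated Künneth formulas. Hence its torsion primes are exactly those of $\widetilde H_*(\Omega F_A\wedge\Omega F_B)$ (James splitting $\Sigma\Omega\Sigma Y\simeq\bigvee\Sigma Y^{\wedge n}$ makes this clean), and the smash Künneth formula bounds these by $\tau(\Omega F_A)\cup\tau(\Omega F_B)$.

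For part (2), I would show that $c:\widehat H\to H$ is an isomorphism. Consider the square
$$\lambda:(S_A\ast S_B)\otimes H_{AB}\to\widehat H,\qquad c\circ\lambda=\Phi,$$
where $\Phi$ is the composite $(S_A\ast S_B)\otimes H_{AB}\to S\otimes H_{AB}\to H$, $x\otimes h\mapsto i(x)s(h)$. The identity $c\circ\lambda=\Phi$ holds because $c\circ\widehat j_A\circ i_A=j_A\circ i_A$ equals the composite $S_A\to S\xrightarrow{i}H$ (by commutativity of the diagram induced from \eqref{eqn:maps-of-fibrations}), similarly for $B$, and $c\circ\widehat s=s$. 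Now $\Phi$ is an isomorphism, by Lemma \ref{lmm:use of cube lemma alg} together with part (3) of Lemma \ref{lmm:Phi_are_iso}. Hence $\lambda$ is injective, and by Lemma \ref{lmm:lambda surjective}(2) it is surjective. So $\lambda$ is bijective, and therefore $c=\Phi\circ\lambda^{-1}$ is bijective, i.e.\ an algebra isomorphism. It is a map of Hopf algebras by naturality, and the pushout in Hopf algebras agrees with the algebra pushout by Lemma \ref{lmm:hopf-colimit}.

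The main obstacle is the verification that the components of $\Phi$ really are the maps induced by the fibre inclusions. This requires checking that the identification $S_A\ast S_B\cong S$ in Lemma \ref{lmm:use of cube lemma alg} is compatible with the left column maps of the diagram; I expect this to follow from naturality of \eqref{eqn:maps-of-fibrations}. Freeness of $S_A$, $S_B$ and $S$ (needed for Proposition \ref{prp:split_hopf_properties}) should follow from the assumed freeness of $H_A$, $H_B$ and $H_{AB}$ via the retractions. Indeed, each of $S_A$, $H_{AB}$ is a direct summand of $H_A$ as a module, using the product splittings and the Künneth theorem over a PID.

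For part (3), take Poincaré series in the isomorphisms of Lemma \ref{lmm:Phi_are_iso} and Lemma \ref{lmm:use of cube lemma alg}. This gives $P(H_A)=P(S_A)P(H_{AB})$, the analogous identity for $B$, and $P(H)=P(S_A\ast S_B)P(H_{AB})$, with $1/P(S_A\ast S_B)=1/P(S_A)+1/P(S_B)-1$ as in Lemma \ref{lmm:poincare-series-for-wedge}. Dividing the last identity through by $P(H_{AB})$ and substituting $1/P(S_A)=P(H_{AB})/P(H_A)$ and $1/P(S_B)=P(H_{AB})/P(H_B)$ yields the stated formula.
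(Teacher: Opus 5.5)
Your proposal is correct and follows the paper's proof essentially step by step. You use the same reduction of (1) via Lemma~\ref{lmm:split-pushout-homotopy-equivalences}, the same factorisation of $c\circ\lambda$ through the isomorphisms of Lemmas~\ref{lmm:use of cube lemma alg} and~\ref{lmm:Phi_are_iso} together with surjectivity of $\lambda$ for (2), and the same Poincar\'e series computation for (3). The differences are minor: you prove the inclusion $\tau(\Omega\Sigma(\Omega F_A\wedge\Omega F_B))\subseteq\tau(\Omega F_A)\cup\tau(\Omega F_B)$ directly (James splitting plus K\"unneth) where the paper quotes \cite[Lemma 6.5]{SV}, and you derive freeness of $S_A,S_B,S$, which the paper simply assumes throughout the section. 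For $S$, that freeness comes from $S\cong S_A\ast S_B$ (Theorem~\ref{thm:wedge_loop_homology}), not from a retraction, since freeness of $H$ is not given.
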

\begin{proof}
For (1), note that $\tau(Y\times Z)=\tau(Y)\cup\tau(Z)$ by K\"unneth theorem. By Lemma \ref{lmm:split-pushout-homotopy-equivalences}, it is sufficient to prove that $\tau(\Omega\Sigma(\Omega F_A\wedge\Omega F_B))\subseteq\tau(\Omega F_A)\cup\tau(\Omega F_B)$. This follows from \cite[Lemma 6.5]{SV}.

(2) is equivalent to the statement that $c:\widehat{H}\to H$ is an isomorphism.
Consider the diagram
$$\xymatrix{
(S_A\ast S_B)\otimes H_{AB}
\ar[d]^-{(j'_A\sqcup j'_B)\otimes\id}
\ar[rr]^-{(i_A\ast i_B)\otimes\id}
&&
(H_A\ast H_B)\otimes H_{AB}
\ar[d]^-{(j_A\sqcup j_B)\otimes\id}
\ar[rr]^-{(\widehat{j}_A\sqcup\widehat{j}_B)\otimes \widehat{s}}
&&
\widehat{H}\otimes\widehat{H}
\ar[d]^-{c\otimes c}
\ar[r]^-\mu
&
\widehat{H}
\ar[d]^-{c}
\\
S\otimes H_{AB}
\ar[rr]^-{i\otimes \id}
&&
H\otimes H_{AB}
\ar[rr]^-{\id\otimes s}
&&
H\otimes H
\ar[r]^-\mu
&
H.
}$$
 The left square commutes since $i\circ j'_A=j_A\circ i_A$, and $i\circ j'_B=j_B\circ i_B$. The middle square commutes, since $c\circ\widehat{s}=s$, $c\circ\widehat{j}_A=j_A$, and $c\circ\widehat{j}_B=j_B$. The right square commutes since $c$ is a map of algebras. We thus obtain a commutative diagram
$$\xymatrix{
(S_A\ast S_B)\otimes H_{AB}
\ar[d]_-{(j_A'\sqcup j_B')\otimes\id}
\ar[r]^-\lambda
&
\widehat{H}
\ar[d]^-c
\\
S\otimes H_{AB}
\ar[r]^-{\Phi_L}
&
H.
}$$
The left map is an isomorphism by Lemma \ref{lmm:use of cube lemma alg}, and the bottom map is an isomorphism by Lemma \ref{lmm:Phi_are_iso}. Hence the composite map
$$(S_A\ast S_B)\otimes H_{AB}\overset\lambda\longrightarrow \widehat{H}\overset{c}\longrightarrow H$$
is an isomorphism of $\k$-modules. It follows that $\lambda$ is injective. By Lemma \ref{lmm:lambda surjective}, $\lambda$ is surjective, hence it is an isomorphism. Since $c\circ\lambda$ is an isomorphism, so is $c$.

For (3), Lemmas \ref{lmm:Phi_are_iso} and \ref{lmm:use of cube lemma alg} imply that
$$S\simeq S_A\ast S_B,\quad H\simeq S\otimes H_{AB},\quad H_A\simeq S_A\otimes H_{AB},\quad H_B\simeq S_B\otimes H_{AB},$$
hence
$P(S_A)\cdot P(H_{AB})=P(H_A)$, $P(S_B)\cdot P(H_{AB})=P(H_B)$ and $P(H)=P(S_A\ast S_B)\cdot P(H_{AB})$. By Lemma \ref{lmm:poincare-series-for-wedge}, we obtain
$$1/P(H)=1/P(H_{AB})\cdot 1/P(S_A\ast S_B)=1/P(H_A)+1/P(H_B)-1/P(H_{AB}),$$
as required.
\end{proof}

If $X$ is a space defined as in Theorem \ref{thm:general-theorem-for-pushout}, we can give a decomposition of $\Omega X$ under certain conditions. To do this, we require some notation following \cite{S1}. For a collection of topological spaces $\mathcal{X}$, let $\prod \mathcal{X}$ (resp. $\bigvee \mathcal{X}$) be the collection of spaces homotopy equivalent to a finite type product (resp. wedge) of spaces in $\mathcal{X}$. Let $\mathcal{P} := \{S^1,S^3,S^7, \Omega S^n \: : \: n \geq 2, n \notin\{2,4,8\}\}$. By \cite[Theorem 1.1]{HW}, for any finite type $H$-space $X$ localised at a prime $p$, there is a unique decomposition of $X$, up to homotopy, as a finite type product of indecomposable spaces. For $n \geq 2$, $r \geq 1$, and $p$ a prime, let $P^n(p^r)$ be the homotopy cofibre of the degree $p^r$ map on $S^{n-1}$. Let $\mathcal{T}$ be the collection of indecomposable spaces which appear in the decomposition of the loop space of a wedge of Moore spaces of the form $\bigvee_{i=1}^m P^{n_i}(p_i^{r_i})$, where $m \geq 1$, $n_i \geq 3$, $p_i$ is a prime and $r_i \geq 1$. 

The indecomposable spaces which appear in the loop space decomposition of $\bigvee_{i=1}^m P^{n_i}(p_i^{r_i})$, where $m \geq 1$, $n_i \geq 3$, and each $p_i^{r_i}\neq2$, are known and studied in \cite{C,CMN1,CMN2}. If there exists $p_i^{r_i}=2$, these spaces are more mysterious, but partial progress has been made in \cite{W}. 

Now let $\mathcal{W} := \{S^n \: | \: n \geq 2\}$. As in the product case, \cite[Theorem 1.1]{HW} implies that for any finite type co--$H$-space $X$ localised at a prime $p$, there is a unique decomposition of $X$, up to homotopy, as a finite type product of indecomposable spaces. Let $\mathcal{M}$ be the collection of Moore spaces of the form $P^{n}(p^r)$, where $n \geq 3$, $p$ is a prime, and $r \geq 1$, and the indecomposable factors which appear as wedge summands in the unique $2$-local wedge decomposition of spaces of the form $\Sigma ((P^{n_1}(2) \wedge \cdots \wedge (P^{n_l}(2))$, where $l \geq 2$, and each $n_i \geq 3$. 

With this notation, we obtain the following result, the proof of which is similar to \cite[Theorem 5.3]{S1}.
\begin{thm}
\label{thm:decomposition-for-pushout}
Under assumptions of Theorem \ref{thm:general-theorem-for-pushout}, the following hold:
\begin{itemize}
    \item if $\Omega X_A,\Omega X_B\in\prod (\mathcal{P\cup T})$, then $\Omega X\in\prod(\mathcal{P\cup T})$;
    \item if $\Omega X_A,\Omega X_B\in\prod \mathcal{P}$, then $\Omega X\in\prod \mathcal{P}$. \qed
\end{itemize}
\end{thm}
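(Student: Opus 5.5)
The plan is to start from the homotopy decomposition of Lemma \ref{lmm:split-pushout-homotopy-equivalences},
\[\Omega X\simeq\Omega X_{AB}\times\Omega F_A\times\Omega F_B\times\Omega\Sigma(\Omega F_A\wedge\Omega F_B),\]
together with $\Omega X_A\simeq\Omega F_A\times\Omega X_{AB}$ and $\Omega X_B\simeq\Omega F_B\times\Omega X_{AB}$. The work then splits into two parts: showing that the first three factors lie in the relevant class, and showing that the factor $\Omega\Sigma(\Omega F_A\wedge\Omega F_B)$ does too.

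For the first three factors, $\Omega X_{AB}$, $\Omega F_A$ and $\Omega F_B$ are retracts of $\Omega X_A$ or $\Omega X_B$. Since $\Omega X_A\in\prod(\mathcal{P}\cup\mathcal{T})$, it is a finite type product of indecomposables from $\mathcal{P}\cup\mathcal{T}$ after localising at any prime. By the uniqueness part of \cite[Theorem 1.1]{HW}, the factors $\Omega F_A$ and $\Omega X_{AB}$ (which are $H$-spaces of finite type) decompose into indecomposables drawn from the same list. So they lie in $\prod(\mathcal{P}\cup\mathcal{T})$, and likewise in $\prod\mathcal{P}$ in the second case. Here one has to be slightly careful about working prime by prime and integrally; I would follow the convention of \cite{S1}, where membership in $\prod(\cdot)$ is checked locally.

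The main obstacle is the factor $\Omega\Sigma(\Omega F_A\wedge\Omega F_B)$. I would first show that $\Sigma\Omega F_A$ and $\Sigma\Omega F_B$ lie in $\bigvee(\mathcal{W}\cup\mathcal{M})$, respectively in $\bigvee\mathcal{W}$ in the second case. For factors in $\mathcal{P}$, the James splitting $\Sigma\Omega S^n\simeq\bigvee_k S^{k(n-1)+1}$ and $\Sigma S^{1},\Sigma S^3,\Sigma S^7$ give spheres. A product of factors suspends to a wedge of suspended smashes: $\Sigma(Y\times Z)\simeq\Sigma Y\vee\Sigma Z\vee\Sigma Y\wedge Z$. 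For factors in $\mathcal{T}$, I would use the argument of \cite[Theorem 5.3]{S1}, which shows that suspensions of these spaces split as wedges of spheres and Moore spaces, with smashes of mod $2$ Moore spaces accounting for $\mathcal{M}$. Then $\Sigma\Omega F_A\wedge\Omega F_B$ is a wedge of smashes of such pieces, desuspended once. Smashes of spheres and Moore spaces are again wedges of spheres and Moore spaces, except for smashes of $P^{n}(2)$'s, which is exactly why $\mathcal{M}$ includes the indecomposable summands of $\Sigma(P^{n_1}(2)\wedge\cdots\wedge P^{n_l}(2))$.

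Finally, by the Hilton--Milnor theorem, the loop space of a finite type wedge of spaces in $\mathcal{W}\cup\mathcal{M}$ is a product of loop spaces of spheres and of iterated smashes. These are again wedges of spheres and Moore spaces. By definition of $\mathcal{T}$ (together with the fact that $\Omega S^n$ for $n=2,4,8$ splits into factors of $\mathcal{P}$, for example $\Omega S^2\simeq S^1\times\Omega S^3$), this gives $\prod(\mathcal{P}\cup\mathcal{T})$. In the torsion-free case, only spheres occur, so we land in $\prod\mathcal{P}$. Combining the factors completes the proof.
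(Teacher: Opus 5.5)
Your proposal follows the paper's proof: the splitting of $\Omega X$ from Lemma \ref{lmm:split-pushout-homotopy-equivalences}, closure of the class under retracts for $\Omega X_{AB}$, $\Omega F_A$, $\Omega F_B$, then $\Sigma(\Omega F_A\wedge\Omega F_B)\in\bigvee(\mathcal{W}\cup\mathcal{M})$, and finally looping back into $\prod(\mathcal{P}\cup\mathcal{T})$. The only difference is that the paper quotes these steps from \cite[Theorem 4.9, Lemma 5.2(4) and Lemma 5.2(2)]{S1} rather than re-deriving them, and quoting them is advisable because your hand versions are loose in three places: integral versus $p$-local retract closure; the ``desuspended once'' step, which should be done as $\Sigma(Y\wedge Z)\simeq(\Sigma Y)\wedge Z$; and looping wedges that contain smashes of mod~$2$ Moore spaces, which the definition of $\mathcal{T}$ does not literally cover.
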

\begin{proof}
    We prove the $\prod(\mathcal{P} \cup \mathcal{T})$ case. The same proof holds when restricting to spaces in $\prod\mathcal{P}$. 
    
Suppose $\Omega X_A,\Omega X_B\in\prod(\mathcal{P\cup T})$. By Lemma \ref{lmm:split-pushout-homotopy-equivalences}, there is a homotopy equivalence \[\Omega X \simeq \Omega X_{AB} \times \Omega F_A \times \Omega F_B \times \Omega \Sigma(\Omega F_A \wedge \Omega F_B),\] where $F_A$, $F_B$ are the homotopy fibres of the retractions $X_A \to X_{AB}$ and $X_B \to X_{AB}$ respectively. By assumption $\Omega X_A,\Omega X_B\in\prod (\mathcal{P\cup T})$, and so the decomposition implies that it suffices to show $\Omega F_A$, $\Omega F_B$, and $\Omega \Sigma(\Omega F_A \wedge \Omega F_B)$ are in $\prod(\mathcal{P} \cup \mathcal{T})$.

By Lemma \ref{lmm:split-pushout-homotopy-equivalences}, $\Omega F_A$ and $\Omega F_B$ retract off $\Omega X_A$ and $\Omega X_B$. By \cite[Theorem 4.9]{S1}, $\prod(\mathcal{P} \cup \mathcal{T})$ is closed under retracts, implying that $\Omega F_A$, $\Omega F_B \in \prod(\mathcal{P}\cup \mathcal{T})$. Hence \cite[Lemma 5.2 (4)]{S1} implies that $\Sigma(\Omega F_A \wedge \Omega F_B) \in \bigvee(\mathcal{W} \cup \mathcal{M})$. The result then follows from \cite[Lemma 5.2 (2)]{S1}.
\end{proof}

\section{Presentations for loop homology of some Davis--Januszkiewicz spaces}
\label{sec:presforDJK}

In this section, we prove some general results about the algebra $H_*(\Omega \djk)$ to set up for the applications of Theorem \ref{thm:uxk_colimit_decomposition_intro}. Our main result is Theorem \ref{thm:hodj-presentation-for-torsionfree-hmf-presented}, which is a generalisation of an unpublished result obtained by Dobrinskaya \cite{dobrinskaya_unpublished}. 

\subsection{Missing faces in simplicial complexes}
Let $\K$ be a simplicial complex on $[m]$ without ghost vertices. For $I\subseteq[m]$, define the simplicial complexes
$$\Delta_I:=\{J:J\subseteq I\},~\partial\Delta_I:=\{J:J\subsetneq I\},~\partial^2\Delta_I:=\{J\subseteq I:~|I\setminus J|\geq 2\}=\sk_{|I|-3}\Delta_I.$$

\begin{dfn}
A subset $I\subseteq[m]$ is a \emph{missing face} of $\K$ if $\K_I=\partial\Delta_I$. Denote by $\MF(\K)$ the set of missing faces, and define $\MF_i(\K):=\{I\in\MF(\K),~|I|=i\}$, and $\MF_{\geq i}(\K)=\{I\in\MF(\K),~|I|\geq i\}$.

A subset $L\subseteq[m]$ is an \emph{almost missing face} of $\K$ if $\partial^2\Delta_L\subseteq\K_L\subsetneq\partial\Delta_L$. Denote by $\AMF(\K)$ the set of almost missing faces.
\end{dfn}
If $A \subseteq \MF(\K)$, then it is easy to check that $\K\cup A$ is a simplicial complex. Also, it is clear that $\MF(\K_J)=\{I\in\MF(\K):~I\subseteq J\}$ and $\AMF(\K_J)=\{I\in\AMF(\K):~I\subseteq J\}$. 

We use the following notation for the augmented chain complex of a simplicial complex $\K$:
$$\widetilde{C}_*(\K;\k)=\bigoplus_{I\in\k}\k\cdot e_I,\quad d(e_I)=\sum_{i\in I}(-1)^{|I_{<i}|}e_{I\setminus\{i\}},\quad |I_{<i}|=\#\{j\in I:~j<i\}.$$
\begin{lmm}
\label{lmm:amf_homology}
For each $J\subseteq[m]$ and $I\in\MF(\K_J)$, there is a well defined class $[\partial\Delta_I]\in\H_{|I|-2}(\K_J;\k)$ repesented by the simplicial chain $c_I=\sum_{i\in I}(-1)^{|I_{<i}|}e_{I\setminus\{i\}}\in\widetilde{C}_{|I|-2}(\K_J;\k)$.

For each $L\in\AMF(\K_J)$, there is an additive relation
\begin{equation}
\label{eqn:amf_homology_relation}
\rho_L=0\in\H_{|L|-3}(\K_J;\k),\quad \rho_L:=\sum_{\begin{smallmatrix}
i\in L:\\L\setminus i\notin\K
\end{smallmatrix}}
(-1)^{|L_{<i}|}[\partial\Delta_{L\setminus i}]
\end{equation}
between these classes. Thus the composition of the natural maps
\begin{equation}
\label{eqn:amf_homology_presentation}
\bigoplus_{L\in\AMF_{i+3}(\K_J)}\k\cdot\rho_L\overset{\nu_J}\longrightarrow \bigoplus_{I\in\MF_{i+2}(\K_J)}\k\cdot x_I\overset{\mu_J}\longrightarrow\H_i(\K_J;\k)
\end{equation}
is zero for each $i\geq 0$.
\end{lmm}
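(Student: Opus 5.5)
The plan is to verify each claim at the level of simplicial chains in $\widetilde{C}_*(\K_J;\k)$, so everything reduces to checking that certain chains lie in $\K_J$ and that certain boundaries vanish or match.

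\emph{Step 1: $[\partial\Delta_I]$ is well defined.} Since $I\in\MF(\K_J)$, we have $(\K_J)_I=\partial\Delta_I$. Hence every $I\setminus\{i\}$ is a face of $\K_J$, and $c_I$ is a chain in $\widetilde{C}_{|I|-2}(\K_J;\k)$. Formally $c_I=d(e_I)$, computed in $\widetilde{C}_*(\Delta_I)$, so $d(c_I)=d^2(e_I)=0$. This uses only the standard identity $d^2=0$ for the given sign convention $(-1)^{|I_{<i}|}$. Thus $c_I$ is a cycle and defines a class in $\H_{|I|-2}(\K_J;\k)$. When $|I|=1$ we get $c_I=e_\varnothing$ in degree $-1$; but $I$ is not a missing face there because there are no ghost vertices, so the index $i\geq 0$ only involves $|I|\geq 2$.

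\emph{Step 2: the relation $\rho_L=0$.} Let $L\in\AMF(\K_J)$. Consider the chain $e'=\sum_{i\in L,\;L\setminus i\in\K}(-1)^{|L_{<i}|}e_{L\setminus i}$, which is a genuine chain of $\K_J$ in degree $|L|-3$. We compute $d(e')$ by comparing with $d(d(e_L))=0$ in $\Delta_L$:
\[0=\sum_{i\in L}(-1)^{|L_{<i}|}d(e_{L\setminus i})=d(e')+\sum_{i:\,L\setminus i\notin\K}(-1)^{|L_{<i}|}d(e_{L\setminus i}).\]
For $i$ with $L\setminus i\notin\K$, the set $L\setminus i$ is a missing face of $\K_J$. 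Indeed, all its proper subsets have codimension $\geq 2$ in $L$, so they lie in $\partial^2\Delta_L\subseteq\K_L$, while $L\setminus i\notin\K$ itself. Then $d(e_{L\setminus i})=c_{L\setminus i}$, and the last sum is the cycle representing $\rho_L$. Therefore $\rho_L$ is represented by $-d(e')$, a boundary in $\K_J$, and so $\rho_L=0$ in homology. If every $L\setminus i\notin\K$, then $e'=0$; in that case $\K_L=\partial^2\Delta_L$ and the relation says the sum of the $c$'s is $d^2(e_L)=0$ already at chain level, which is consistent.

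\emph{Step 3: the composite is zero.} Define $\nu_J(\rho_L)=\sum(-1)^{|L_{<i}|}x_{L\setminus i}$ and $\mu_J(x_I)=[\partial\Delta_I]$. The composite $\mu_J\circ\nu_J$ then sends $\rho_L$ to the class $\rho_L$, which vanishes by Step 2. The degrees match: $|L|=i+3$ gives missing faces of size $i+2$ and classes in $\H_i$.

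The main obstacle is bookkeeping rather than anything conceptual. One must check carefully that each $L\setminus i\notin\K$ is really a missing face of $\K_J$ (via the condition $\partial^2\Delta_L\subseteq\K_L$), and that the signs in $c_{L\setminus i}$ agree with those produced by $d(e_{L\setminus i})$. The latter holds by definition of $d$.
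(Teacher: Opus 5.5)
Your proof is correct and matches the paper's argument. The paper uses the same chain, written $\phi=-e'$, together with $d(d(e_L))=0$ in $\widetilde{C}_*(\Delta_J;\k)$ to exhibit $\rho_L$ as a boundary in $\widetilde{C}_*(\K_J;\k)$; your explicit check, via $\partial^2\Delta_L\subseteq\K_L$, that each $L\setminus i\notin\K$ is a missing face fills in a step the paper leaves implicit, though note the small slip that $e'$ lies in degree $|L|-2$, not $|L|-3$ (only $d(e')$ lies in degree $|L|-3$).
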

\begin{proof}
Consider $\widetilde{C}_*(\K_J;\k)$ as a chain subcomplex in $\widetilde{C}_*(\Delta_J;\k)$.  For $I\in\MF(\K_J)$, the chain $c_I\in\widetilde{C}_*(\K_J;\k)$ is a boundary $d(e_I)$ in the larger chain complex, so it is a cycle in the smaller complex; it follows that the class $[\partial\Delta_I]$ is well defined.

For $L\in\AMF(\K_J)$, consider the chain $$\phi:=-\sum_{\begin{smallmatrix}i\in L:\\L\setminus i\in\K\end{smallmatrix}}(-1)^{|L_{<i}|}e_{L\setminus i}\in\widetilde{C}_*(\K_J;\k).$$
In the larger chain complex $\widetilde{C}_*(\Delta_J;\k)$, we have
$$d(\phi)=d(d(e_L))+d(\phi)=d(\sum_{\begin{smallmatrix}i\in L:\\L\setminus i\notin\K\end{smallmatrix}}(-1)^{|L_{<i}|}e_{L\setminus i})=\sum_{\begin{smallmatrix}
i\in L:\\L\setminus i\notin\K\end{smallmatrix}}(-1)^{|L_{<i}|}c_{L\setminus i}
.$$
The right hand side is a cycle in $\widetilde{C}_{*}(\K_J;\k)$ which represents the class $\rho_L$, and the left hand side is a boundary in $\widetilde{C}_*(\K_J;\k)$. It follows that $\rho_L=0$.
\end{proof}

\begin{dfn}
\label{defn:HMF-presented}
We say that a simplicial complex $\K$ is
\begin{enumerate}
    \item \emph{homology missing face} (or \emph{HMF}) over $\k$, if for each $J\subseteq[m]$ the group $\H_*(\K_J;\k)$ is generated by the classes of missing faces, i.e. the map $\mu_J$ in the diagram \eqref{eqn:amf_homology_presentation} is surjective.
    \item \emph{HMF-presented} over $\k$ if for each $J\subseteq[m]$, the group $\H_*(\K_J;\k)$ admits the following additive presentation: it is spanned by classes of missing faces modulo the relations \eqref{eqn:amf_homology_relation}, i.e. the diagram \eqref{eqn:amf_homology_presentation} becomes an exact sequence \[\bigoplus_{L\in\AMF(\K_J)}\k\cdot\rho_L\overset{\nu_J}\longrightarrow \bigoplus_{I\in\MF(\K_J)}\k\cdot x_I\overset{\mu_J}\longrightarrow\H_*(\K_J;\k) \to 0.\] 
\end{enumerate}
\end{dfn}
\begin{lmm}
\label{lmm:skeleta-HMF-presented}
For any $0\leq d\leq m-1$, the complex $\sk_d\Delta_{[m]}$ is HMF-presented over any ring $\k$.
\end{lmm}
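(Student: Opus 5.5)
Let $\K=\sk_d\Delta_{[m]}$. Every full subcomplex has the form $\K_J=\sk_d\Delta_J$, so it suffices to treat $\K=\sk_d\Delta_J$ for an arbitrary finite set $J$ with $|J|=n$. The plan is to compute both sides of the sequence \eqref{eqn:amf_homology_presentation} explicitly and compare them with the cellular chain complex of the full simplex $\Delta_J$.

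First we describe the missing and almost missing faces of $\sk_d\Delta_J$.
\begin{itemize}
\item The complex $\sk_d\Delta_J$ consists of all subsets of size $\le d+1$.
\item A subset $I$ is a missing face exactly when $|I|=d+2$.
\item A subset $L$ is an almost missing face exactly when $|L|=d+3$: then $\partial^2\Delta_L$ (sets of size $\le d+1$) lies in the complex, while no facet $L\setminus i$ does. If $|L|\le d+2$, the condition $\K_L\subsetneq\partial\Delta_L$ fails. If $|L|\ge d+4$, $\partial^2\Delta_L$ contains sets of size $d+2$.
\end{itemize}
For such $L$ every $L\setminus i$ is missing, so $\rho_L=\sum_{i\in L}(-1)^{|L_{<i}|}x_{L\setminus i}$.

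Next we identify the sequence with chains. Set $i=d$. The map $x_I\mapsto e_I$ identifies $\bigoplus_{I\in\MF}\k x_I$ with $\widetilde C_{d+1}(\Delta_J)$. Under this identification, $\nu_J(\rho_L)$ corresponds to $d(e_L)$ for $e_L\in\widetilde C_{d+2}(\Delta_J)$, so $\Img\nu_J=\Img(d_{d+2})$. The map $\mu_J$ sends $e_I$ to the class of $d(e_I)=c_I$ in $\H_d(\sk_d\Delta_J)=Z_d(\sk_d\Delta_J)=Z_d(\Delta_J)$. The last equality holds because the $d$-chains of the skeleton and of $\Delta_J$ coincide, and there are no $(d+1)$-chains in the skeleton.

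Now exactness follows. Since $\widetilde H_*(\Delta_J;\k)=0$ for $J\neq\varnothing$, the boundary $d_{d+1}:\widetilde C_{d+1}(\Delta_J)\to Z_d(\Delta_J)$ is surjective, so $\mu_J$ is surjective. Its kernel is $Z_{d+1}(\Delta_J)=\Img(d_{d+2})=\Img\nu_J$, which gives exactness in degree $d$.

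It remains to handle the other degrees.
\begin{itemize}
\item In degrees $i\neq d$, $\H_i(\sk_d\Delta_J)=0$: for $i<d$ it agrees with $\widetilde H_i(\Delta_J)=0$, and above $d$ there are no chains. There are also no missing faces of size $i+2$, so the sequence is $0\to0\to0$.
\item When $|J|\le d+1$, $\K_J$ is a full simplex and everything vanishes.
\item When $J=\varnothing$, $\widetilde H_{-1}=\k$ with no missing faces. This is excluded, or trivial, by the convention that $J$ ranges over nonempty subsets; if the convention requires it, we check this edge case against the paper's reading of the definition.
\end{itemize}

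The only delicate point is the sign bookkeeping. We need $\nu_J(\rho_L)$ to match $d(e_L)$ exactly, and $\mu_J(x_I)=c_I=d(e_I)$. Both follow directly from the definitions of $c_I$ and of $d$. No substantial obstacle is expected.
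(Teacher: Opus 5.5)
Your argument is correct and is essentially the paper's proof: both identify $\nu_J$ and $\mu_J$ with $d_{d+2}\colon\widetilde C_{d+2}(\Delta_J;\k)\to\widetilde C_{d+1}(\Delta_J;\k)$ followed by $d_{d+1}$ onto $Z_d(\Delta_J)=\H_d(\sk_d\Delta_J;\k)$, and then use acyclicity of the simplex. You are more explicit than the paper about the missing and almost missing faces, the signs, the passage to full subcomplexes $\K_J=\sk_d\Delta_J$, and the vanishing in degrees $i\neq d$; the paper gets that vanishing from the wedge-of-spheres homotopy type, and the $J=\varnothing$, degree $-1$ case does not arise because the sequence is only considered for $i\geq 0$.
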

\begin{proof}
The simplicial complex $\K$
has dimension $d$ and is homotopy equivalent to a wedge of $d$-dimensional spheres, so
$\H_d(\K)\cong\Ker(\widetilde{C}_d(\K)\to\widetilde{C}_{d-1}(\K))=\Ker(\widetilde{C}_d(\Delta_{[m]})\to\widetilde{C}_{d-1}(\Delta_{[m]}))$, and $\H_i(\K)=0$ for $i\neq d$.
Putting this together, we obtain an exact sequence
\[\widetilde{C}_{d+2}(\Delta_{[m]};\k)\to\widetilde{C}_{d+1}(\Delta_{[m]};\k)\to\H_*(\K;\k)\to 0,\]
which is equivalent to \eqref{eqn:amf_homology_presentation}.
\end{proof}

\begin{prp}
\label{prp:HMF-criterion}
Let $\K$ be a simplicial complex and $\k$ be principal ideal domain. Then $\H_*(\K;\k)$ is generated by missing faces if and only if the following conditions hold:
\begin{enumerate}
    \item The abelian group $C:=\Coker(\mu:\bigoplus_{I\in\MF(\K)}\ZZ\cdot x_I\to\H_*(\K;\ZZ))$ is finite;
    \item  $p\in\k^\times$ for each prime $p\in\ZZ$ such that $C$ has $p$-torsion;
    \item  $\chr\k\neq p$ for each prime $p\in\ZZ$ such that $\H_*(\K;\ZZ)$ has $p$-torsion.
\end{enumerate}
Moreover, if this condition holds, then $\H_*(\K;\k)\cong\H_*(\K;\ZZ)\otimes\k$.
\end{prp}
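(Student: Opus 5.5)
We compare homology over $\k$ with integral homology through the universal coefficient theorem, using that the missing-face map $\mu$ is defined over $\ZZ$ and commutes with change of coefficients. Write $\H_*=\H_*(\K;\ZZ)$, which is finitely generated, and $F=\bigoplus_{I\in\MF(\K)}\ZZ\cdot x_I$. The map $\mu_\k:F\otimes\k\to\H_*(\K;\k)$ factors as
$$F\otimes\k\xrightarrow{\mu\otimes\k}\H_*\otimes\k\hookrightarrow \H_*(\K;\k),$$
where the second map is the injection from the universal coefficient sequence $0\to\H_i\otimes\k\to\H_i(\K;\k)\to\Tor(\H_{i-1},\k)\to0$. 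Its cokernel is $\bigoplus_i\Tor(\H_{i-1},\k)$. Right exactness of $\otimes\k$ applied to $F\to\H_*\to C\to0$ gives $\Coker(\mu\otimes\k)=C\otimes\k$. So $\mu_\k$ is surjective if and only if both of the following hold:
\begin{enumerate}
    \item[(a)] $C\otimes\k=0$;
    \item[(b)] $\Tor(\H_*,\k)=0$.
\end{enumerate}
In that case the injection $\H_*\otimes\k\to\H_*(\K;\k)$ is an isomorphism, which gives the final assertion.

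It remains to translate (a) and (b) into conditions (1)--(3). Since $\k$ is a PID, its characteristic is $0$ or a prime $\ell$.

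For (b): we have $\Tor(\ZZ/p^r,\k)=\{x\in\k: p^rx=0\}$. This is zero exactly when $p^r$ is not a zero divisor in the domain $\k$, that is, when $\chr\k\neq p$. Hence (b) is equivalent to (3).

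For (a): the group $C$ is a finitely generated abelian group, so $C\cong\ZZ^r\oplus\bigoplus\ZZ/p_j^{r_j}$. Then $C\otimes\k\cong\k^r\oplus\bigoplus\k/p_j^{r_j}\k$, and this vanishes if and only if
\begin{itemize}
    \item $r=0$, because $\k\neq0$, and
    \item each $p_j$ is a unit in $\k$, because $\k/p^r\k=0$ exactly when $p^r$, equivalently $p$, is invertible.
\end{itemize}
The first condition is that $C$ is finite, which is (1). The second is that every prime occurring as torsion in $C$ is invertible, which is (2).

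There is no substantial obstacle. The only points needing care are that $\k$ is a domain, so that $\Tor$ is governed by the characteristic, and that $C$ is finitely generated, so that the torsion analysis applies. The latter holds because $\K$ is a finite complex.
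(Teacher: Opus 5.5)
Your proposal is correct and follows essentially the same route as the paper: the universal coefficient sequence together with right exactness of $-\otimes\k$ reduces surjectivity of $\mu$ over $\k$ to $C\otimes\k=0$ and $\Tor(\H_*(\K;\ZZ),\k)=0$. You then translate these conditions into (1)--(3) using the structure of finitely generated abelian groups and the fact that $\k$ is a domain. Your justification that the composite is surjective exactly when both factors are (because the second map is injective) is a detail the paper leaves implicit, and it is correct.
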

\begin{proof}
The definition of $\mu$ and the Universal Coefficient theorem implies there are exact sequences
\[\begin{tikzcd}
	& {\bigoplus\limits_{I \in \MF(\K)} \k \cdot x_I} & {\tilde{H}_*(\K;\mathbb{Z}) \otimes \k} & {C \otimes \k} & 0 \\
	0 & {\tilde{H}_*(\K;\mathbb{Z}) \otimes \k} & {\tilde{H}_*(\K;\k)} & {\mathrm{Tor}(\tilde{H}_{*-1}(\K;\mathbb{Z}),\k)} & 0.
	\arrow["\mu", from=1-2, to=1-3]
	\arrow[from=1-3, to=1-4]
	\arrow[from=1-4, to=1-5]
	\arrow[from=2-1, to=2-2]
	\arrow[from=2-2, to=2-3]
	\arrow[from=2-3, to=2-4]
	\arrow[from=2-4, to=2-5]
\end{tikzcd}\]
Hence $\H_*(\K;\k)$ is generated by missing faces if and only if $C\otimes\k=0$ and $\Tor(\H_*(\K;\ZZ),\k)=0$. Observe that $\k\otimes\ZZ \cong \k\neq 0$, and $\k\otimes\ZZ/p^i=0$ if and only if $p\in\k^\times$. Since $C$ and $\H_*(\K;\ZZ)$ are finitely generated abelian groups, it follows that  condition $C\otimes\k=0$ is equivalent to ``(1) and (2)''. Moreover, $\Tor(\ZZ,\k)=0$ for any $\k$, and $\Tor(\ZZ/p^i,\k)=0$ if and only if the map $p^i\cdot(-):\k\to\k$ is injective. The latter holds if and only if $p\neq\chr\k$. Hence, $\Tor(\H_*(\K;\ZZ),\k)=0$ is equivalent to (3).
\end{proof}
\begin{crl}
\label{crl:hmf-over-Z}
The following conditions are equivalent:
\begin{enumerate}
    \item $\K$ is HMF over $\ZZ/p$ for all prime $p\in\ZZ$;
    \item $\K$ is HMF over $\ZZ$, and $\H_*(\K_J;\ZZ)$ has no torsion for all $\varnothing\neq J\subseteq[m]$.
\end{enumerate}
\end{crl}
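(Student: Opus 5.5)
The plan is to apply Proposition \ref{prp:HMF-criterion} to every full subcomplex $\K_J$, once for each coefficient ring involved. Write $C_J:=\Coker(\mu_J:\bigoplus_{I\in\MF(\K_J)}\ZZ\cdot x_I\to\H_*(\K_J;\ZZ))$. By definition, $\K$ is HMF over $\k$ exactly when $\K_J$ has $\H_*(\K_J;\k)$ generated by missing faces for every $J\subseteq[m]$. So both directions reduce to comparing the three conditions of Proposition \ref{prp:HMF-criterion} for $\k=\ZZ/p$ and for $\k=\ZZ$, one $J$ at a time. The empty set contributes nothing, since $\H_*(\K_\varnothing)$ is just the augmentation degree and is handled uniformly.

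\textbf{(1) implies (2).} Fix $J$ and suppose $\K_J$ is HMF over $\ZZ/p$ for every prime $p$.
\begin{enumerate}
\item Condition (3) of the Proposition for $\k=\ZZ/p$ gives $\chr(\ZZ/p)=p\neq q$ for every prime $q$ that occurs as torsion in $\H_*(\K_J;\ZZ)$. Taking $p=q$ gives a contradiction, so $\H_*(\K_J;\ZZ)$ is torsion-free.
\item Condition (1) says $C_J$ is finite.
\item Condition (2) for $\k=\ZZ/p$ requires $p\in(\ZZ/p)^\times$ whenever $C_J$ has $p$-torsion. This is impossible, so $C_J$ has no $p$-torsion for any $p$. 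Being finite, $C_J=0$.
\end{enumerate}
Hence $\mu_J$ over $\ZZ$ is surjective for all $J$, i.e. $\K$ is HMF over $\ZZ$, and all $\H_*(\K_J;\ZZ)$ are torsion-free.

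\textbf{(2) implies (1).} HMF over $\ZZ$ means $\mu_J$ is surjective over $\ZZ$, so $C_J=0$. Then conditions (1) and (2) of the Proposition hold vacuously for any $\k$. Torsion-freeness of $\H_*(\K_J;\ZZ)$ makes condition (3) vacuous as well. Applying the Proposition with $\k=\ZZ/p$ then shows $\H_*(\K_J;\ZZ/p)$ is generated by missing faces, for every $J$ and every $p$.

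The only subtle point is that Proposition \ref{prp:HMF-criterion} is stated for a single complex, while HMF quantifies over all $J$. I would therefore apply it to each $\K_J$ separately, using that the missing faces of $\K_J$ are those of $\K$ contained in $J$; this is harmless. Otherwise the argument is bookkeeping, and I expect no real obstacle.
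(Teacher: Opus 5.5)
Your proof is correct and follows the paper's argument. The paper likewise applies Proposition \ref{prp:HMF-criterion} to each full subcomplex $\K_J$ and observes that both conditions reduce to $C_J=0$ together with torsion-freeness of $\H_*(\K_J;\ZZ)$ for every nonempty $J$; you simply spell out the prime-by-prime bookkeeping (taking $p=q$ to rule out $q$-torsion in $C_J$ and in $\H_*(\K_J;\ZZ)$) that the paper leaves implicit.
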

\begin{proof}
By Proposition \ref{prp:HMF-criterion}, both conditions are equivalent to the following: for each $\varnothing\neq J\subseteq[m]$, $C=0$, and $\H_*(\K_J;\ZZ)$ has no torsion.
\end{proof}
\begin{prp}
\label{prp:integer_coefficient_basis}
 Let $\k$ be a principal ideal domain, and suppose that $\H_*(\K;\k)$ is a free $\k$-module generated by missing faces. Then there are integers $\{\mu_{I,q}\in\ZZ:q\in Q,~I\in\MF(\K)\}$ for some finite set $Q$ such that $\{\sum_{I\in\MF(\K)}\mu_{I,q}[\partial\Delta_I]:~q\in Q\}$ is a basis of the $\k$-module $\H_*(\K_J;\k)$.
\end{prp}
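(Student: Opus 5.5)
We reduce to integral coefficients via Proposition \ref{prp:HMF-criterion} and then do linear algebra over $\ZZ$. (The statement writes $\H_*(\K_J;\k)$; we treat the case $J=[m]$, since the general case is the same argument applied to $\K_J$.) Since $\H_*(\K;\k)$ is generated by missing faces, Proposition \ref{prp:HMF-criterion} applies. It gives that $C=\Coker\mu$ is finite, that every prime $p$ with $p$-torsion in $C$ is invertible in $\k$, and that $\chr\k$ divides no order of torsion in $\H_*(\K;\ZZ)$. It also gives $\H_*(\K;\k)\cong\H_*(\K;\ZZ)\otimes\k$, with the classes $[\partial\Delta_I]\otimes 1$ corresponding to the $\k$-classes $[\partial\Delta_I]$. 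So it suffices to find integer combinations of the integral classes $[\partial\Delta_I]\in\H_*(\K;\ZZ)$ whose images in $\H_*(\K;\ZZ)\otimes\k$ form a $\k$-basis.

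Set $M:=\H_*(\K;\ZZ)$, which is finitely generated, and let $N:=\Img\mu\subseteq M$. Then $M/N=C$ is finite, and $N$ is generated by the finitely many classes $[\partial\Delta_I]$. First we show that $N\otimes\k\to M\otimes\k$ is surjective. Right exactness gives an exact sequence $N\otimes\k\to M\otimes\k\to C\otimes\k\to 0$, and $C\otimes\k=0$ because every prime dividing $|C|$ is a unit in $\k$. Next we write $N\cong\ZZ^r\oplus T_N$ with $T_N$ finite. We then choose a $\ZZ$-basis $n_1,\dots,n_r$ of a free complement of $T_N$ in $N$, together with generators of the cyclic summands of $T_N$. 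Each of these elements is an integer combination $\sum_I\mu_{I,q}[\partial\Delta_I]$, since the $[\partial\Delta_I]$ generate $N$; this gives the required integers $\mu_{I,q}$.

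The main obstacle is the torsion. The image of $T_N\otimes\k$ in $M\otimes\k$ must vanish, while the images of the $n_q$ must be $\k$-linearly independent. For the first point, $T_N\subseteq\mathrm{Tors}(M)$, and $\mathrm{Tors}(M)\otimes\k$ is a torsion $\k$-module. It sits inside the free module $\H_*(\K;\k)$, so it is zero. For the second point, choose a free complement of $\mathrm{Tors}(M)$ in $M$, so that $M\cong\ZZ^{r'}\oplus\mathrm{Tors}(M)$; then $M\otimes\k\cong\k^{r'}$. Since $M/N$ is finite, the free parts have equal rank, $r=r'$. Projecting $n_1,\dots,n_r$ to $\ZZ^{r'}$ gives a matrix whose determinant $D$ is nonzero and divides the order of a finite quotient of $C$, up to torsion contributions. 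The delicate step is to check that every prime dividing $D$ is invertible in $\k$. For this we use that the map $\k^r\to M\otimes\k=\k^r$ is surjective by the first paragraph. A surjective endomorphism of the finitely generated module $\k^r$ is injective, which is a standard fact over commutative rings. Therefore the images of $n_1,\dots,n_r$ form a $\k$-basis, and we take $Q=\{1,\dots,r\}$, keeping only the combinations giving the $n_q$.

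Transporting these classes back through $\H_*(\K;\ZZ)\otimes\k\cong\H_*(\K;\k)$, which sends $[\partial\Delta_I]\otimes1$ to $[\partial\Delta_I]$, gives the basis claimed in the statement.
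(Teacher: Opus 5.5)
Your proof is correct, but it takes a different route from the paper's.

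\textbf{The paper's route.} The paper splits on the type of $\k$.
\begin{enumerate}
\item For $\k=\FF_p$ it lifts the coefficients of an arbitrary $\FF_p$-basis to $\ZZ$.
\item In positive characteristic it base-changes from $\FF_p$.
\item For subrings of $\QQ$ it clears denominators, which are units in $\k$.
\item For a general characteristic-zero PID it passes through $R=\ZZ[1/p:p\in\k^\times]$.
\end{enumerate}
Proposition \ref{prp:HMF-criterion} is used only in the last case, to show that $\H_*(\K;R)$ is free and generated by missing faces.

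\textbf{Your route.} You work once in $M=\H_*(\K;\ZZ)$ with the subgroup $N$ spanned by the classes $[\partial\Delta_I]$. You show that any lift of a $\ZZ$-basis of $N/\mathrm{Tors}(N)$ maps to a $\k$-basis, using three facts:
\begin{enumerate}
\item surjectivity, from $C\otimes\k=0$;
\item the vanishing of $\mathrm{Tors}(M)\otimes\k$;
\item a surjective endomorphism of $\k^r$ is bijective.
\end{enumerate}

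\textbf{What each buys.} Your argument is uniform, with no case analysis on $\k$. It also gives a slightly stronger conclusion: the integers $\mu_{I,q}$ depend only on $\K$ and work for every PID satisfying the hypotheses. The paper's choices in its first and third cases depend on $\k$. In exchange, the paper's argument is more hands-on and avoids choosing complements inside $N$.

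\textbf{Two small points to tighten.}
\begin{enumerate}
\item The claim that $\mathrm{Tors}(M)\otimes\k$ is a torsion $\k$-module silently uses condition (3) of Proposition \ref{prp:HMF-criterion}. If $\chr\k=p$ and $M$ had $p$-torsion, then $\ZZ/p\otimes\k\cong\k$ would not be torsion. Cite condition (3) at that step.
\item The sentence about the determinant $D$ is unnecessary once you have the surjectivity argument. If you keep it, it can be made precise. With $\pi\colon M\to M/\mathrm{Tors}(M)$, you have $|D|=|(M/\mathrm{Tors}(M))/\pi(N)|$. This group is a quotient of $C=M/N$, so $D$ divides $|C|$ exactly, with no ``torsion contributions''.
\end{enumerate}
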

\begin{proof}
By assumption, $\H_*(\K;\k)$ has an additive basis of the form $\{\sum_{I\in\MF(\K)}\mu_{I,q}'[\partial\Delta_I]:q\in Q\}$, where $\mu_{I,q}'\in\k$. There are four cases:

\textit{Case 1:} if $\k=\FF_p$ is a prime field, then we can take integral lifts of $\mu_{I,q}'\in\FF_p$ as $\mu_{I,q}$.

\textit{Case 2:} if $\k$ is any principal ideal domain of positive characteristic, then $\chr\k=p>0$ is a prime number. In particular, $\k$ is a $\FF_p$-algebra, and so $\H_*(\K;\k)\cong\H_*(\K;\FF_p)\otimes_{\FF_p}\k$, and the result follows from the case $\k=\FF_p$. 

\textit{Case 3:} if $\k$ is a subring of $\QQ$, then $\mu_{I,q}'$ are rational numbers with $\k$-invertible denominators. Multiplying by the common denominators, we obtain a basis with integral coefficients, and the result follows.

\textit{Case 4:} if $\k$ is any principal ideal domain of characteristic zero, define the subring $R$ of $\mathbb{Q}$ by $R:=\ZZ[1/p:~p\in\ZZ\text{ is prime, }p\in\k^\times]\subseteq\QQ$. Then $\k$ is a $R$-algebra, and $p\in\k^\times$ if and only if $p\in R^\times$. By Proposition \ref{prp:HMF-criterion} applied to $\k$ and $R$, we have: $\H_*(\K;\k)\cong \H_*(\K;\ZZ)\otimes\k$ is a free $\k$-module, and $\H_*(\K;R)\cong \H_*(\K;\ZZ)\otimes R$ is generated by missing faces.

We now check that $\H_*(\K;\ZZ)\otimes R$ is a free $R$-module. Write $\H_*(\K;\ZZ)$ as a direct sum of copies of $\ZZ$ and $\ZZ/p^i$ for $p$ prime. If $\H_*(\K;\ZZ)\otimes R$ is not free over $R$, then there is a direct summand $\ZZ/p^i$ with $p\notin R^\times$. Then $p\notin\k^\times$ by definition of $R$, so the $\k$-module $\H_*(\K;\ZZ)\otimes\k$ has a torsion summand $\ZZ/p^i\otimes\k$. However, $\H_*(\K;\ZZ)\otimes\k$ is free over $\k$, which is a contradiction.

It follows that $\H_*(\K;R)$ is a free $R$-module generated by missing faces, so by Case 3 it has a basis of the required form. Finally, $\H_*(\K;\k)\cong\H_*(\K;\ZZ)\otimes\k\cong (\H_*(\K;\ZZ)\otimes R)\otimes_R\k\cong\H_*(\K;R)\otimes_R\k$, so the same set of elements is a basis for $\H_*(\K;\k)$.
\end{proof}

\subsection{HMF complexes and homology fillable complexes}
\label{subsec:HMF and THF}
\emph{Totally homology fillable} simplicial complexes were defined by Iriye and Kishimoto in \cite{fat-wedge}. For this class, the homotopy theory of simplicial complexes simplifies significantly, see \cite[Theorem 7.8, Corollary 7.9, Proposition 7.11, Theorem 8.21]{fat-wedge}. In particular, $\ZK$ is a wedge of spheres.

\begin{dfn}[{\cite[Definition 7.6]{fat-wedge}}]
A simplicial complex $\K$ is \emph{homology fillable} if
each connected component $\L$ of $\K$ satisfies the following conditions:
\begin{itemize}
    \item For any prime $p$, there exists a subset $A^{(p)}\subseteq\MF(\L)$ such that $\H_*(\L\cup A^{(p)};\ZZ/p)=0$;
    \item $\L\cup\MF_{\geq 3}(\L)$ is simply connected.
\end{itemize}
The complex $\K$ is \emph{totally homology fillable} if $\K_J$ is homology fillable for all $\varnothing\neq J\subseteq[m]$.
\end{dfn}

We will relate this condition to the HMF property.

\begin{rmk}
In \cite[Definition 7.6]{fat-wedge}, instead of the condition $\pi_1(\L\cup\MF_{\geq 3}(\L))=0$ a stronger condition $\pi_1(\L\cup\MF(\L))$ was required. However, \cite[Theorem 7.8]{fat-wedge} still holds for the corrected definition, with the same proof. We only need to verify that $\L\cup A^{(p)}\subseteq \L\cup\MF_{\geq 3}(\L)$. Indeed, since $\H_0(\L;\k)=0$, by Lemma \ref{lmm:MV-corollary} below we obtain an exact sequence $\H_1(\L\cup A^{(p)};\ZZ/p)\to (\ZZ/p)^n\to 0$, $n=|A^{(p)}\cap \MF_2(\L)|$. It follows that $A^{(p)}\cap \MF_2(\L)=\varnothing$, i.e. $A^{(p)}\subset\MF_{\geq 3}(\L).$

We also note that the proof of \cite[Proposition 8.17]{fat-wedge} shows that $\pi_1(\L\cup \MF_{\geq 3}(\L))=0$ instead of $\pi_1(\L\cup\MF(\L))=0$, so \cite[Proposition 8.20]{fat-wedge} shows that dual-SCM complexes satisfy the corrected definition of homology fillable complexes. 
\end{rmk}

\begin{lmm}
\label{lmm:MV-corollary}
Let $\K$ be a simplicial complex. For $A\subseteq\MF(\K)$, there is a long exact sequence
$$\dots\to\H_{i+1}(\K;\k)\to\H_{i+1}(\K\cup A;\k)\to\bigoplus_{I\in A_{i+2}}\k\cdot x_I\overset{\mu}\longrightarrow \H_i(\K;\k)\to\H_i(\K\cup A;\k)\to\dots,$$
where $A_i:=\{I\in A:|I|=i\}$ and
$\mu(x_I)=[\partial\Delta_I]\in\H_i(\K;\k)$.
\end{lmm}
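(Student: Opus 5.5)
The plan is to realise the sequence as the long exact sequence of the pair $(\K\cup A,\K)$ and then identify the relative homology and the connecting map. Since each $I\in A$ is a missing face, $\partial\Delta_I\subseteq\K$ while $I\notin\K$. Two distinct missing faces $I,I'\in A$ cannot be faces of one another, because every proper subset of a missing face lies in $\K$. Hence $\K\cup A$ is obtained from $\K$ by attaching one new simplex $\Delta_I$ along its boundary $\partial\Delta_I\subseteq\K$ for each $I\in A$, and no other new simplices appear. I would verify this combinatorial statement first.

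On chains, the relative complex $\widetilde{C}_*(\K\cup A;\k)/\widetilde{C}_*(\K;\k)$ is then free on the generators $e_I$, $I\in A$, placed in degree $|I|-1$. Its differential is zero: $d(e_I)=c_I$ is a sum of faces $e_{I\setminus\{i\}}$, and these all lie in $\widetilde{C}_*(\K;\k)$. So $H_{i+1}(\K\cup A,\K;\k)\cong\bigoplus_{I\in A_{i+2}}\k\cdot x_I$, where $x_I$ is the class of $e_I$; the degree matches because $|I|-1=i+1$ exactly when $|I|=i+2$.

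Next I take the long exact sequence in homology of the short exact sequence of augmented chain complexes
$$0\to\widetilde{C}_*(\K;\k)\to\widetilde{C}_*(\K\cup A;\k)\to\widetilde{C}_*(\K\cup A;\k)/\widetilde{C}_*(\K;\k)\to 0.$$
Substituting the computed relative homology gives exactly the displayed sequence. The connecting map is computed as usual: lift $x_I$ to the chain $e_I$, apply $d$, and take the class. This gives $\partial x_I=[c_I]=[\partial\Delta_I]\in\H_i(\K;\k)$, which is the class from Lemma \ref{lmm:amf_homology}, so $\mu$ is as claimed.

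The only potential subtlety is the edge case in low degrees, where reduced homology must be handled. Consider $I\in A_2$ with $i=0$: the boundary $c_I$ is a difference of two vertices, which is a reduced $0$-cycle. Using augmented chains throughout takes care of this uniformly. I do not expect any step to be a genuine obstacle; the main point to check carefully is the claim above that the relative chain complex has zero differential.
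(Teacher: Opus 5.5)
Your proposal is correct and takes the paper's approach: both use the long exact sequence of the pair $(\K\cup A,\K)$ in reduced homology. The paper identifies the relative term through the quotient $|\K\cup A|/|\K|\cong\bigvee_{I\in A}S^{|I|-1}$, while you compute it directly from the relative simplicial chain complex (free on the $e_I$, with zero differential), which also makes the identification of the connecting map with $\mu(x_I)=[\partial\Delta_I]$ explicit where the paper leaves it implicit.
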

\begin{proof}
This is a long exact sequence of the pair $(|\K\cup A|,|\K|)$ in reduced homology; note that $|\K\cup A|/|\K|\cong \bigvee_{I\in A}|\Delta_I|/|\partial\Delta_I|\cong\bigvee_{I\in A}S^{|I|-1}$.
\end{proof}

\begin{prp}
\label{prp:hf-and-shmf}
For a field $\k$, the following conditions are equivalent:
\begin{enumerate}
    \item $\K$ is HMF over $\k$;
    \item For each $\varnothing\neq J\subseteq[m]$, there exists a subset $A_J\subseteq\MF(\K_J)$ such that $\{[\partial\Delta_I]:I\in A_J\}$ is a basis of $\H_*(\K_J;\k)$;
    \item For each $\varnothing\neq J\subseteq[m]$, there exists a subset $A_J\subseteq \MF(\K_J)$ such that $\H_*(\K_J\cup A_J;\k)=0$.
\end{enumerate}
\end{prp}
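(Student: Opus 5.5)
We prove the implications $(1)\Rightarrow(2)\Rightarrow(3)\Rightarrow(1)$, working with one fixed $J$ at a time. This suffices because all three conditions are quantified over $J$, and $\MF(\K_J)=\{I\in\MF(\K):I\subseteq J\}$.

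For $(1)\Rightarrow(2)$, we use that $\k$ is a field. By (1), the family $\{[\partial\Delta_I]:I\in\MF(\K_J)\}$ spans the finite-dimensional vector space $\H_*(\K_J;\k)$. Any spanning family in a vector space contains a basis, so we can choose a subfamily $A_J$ which is a basis. For $(2)\Rightarrow(1)$, which is not needed for the cycle but is immediate, note that a basis is in particular a spanning set.

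For $(2)\Rightarrow(3)$, we apply Lemma \ref{lmm:MV-corollary} to $\K_J$ and $A=A_J$. In the long exact sequence, the map $\mu:\bigoplus_{I\in (A_J)_{i+2}}\k\cdot x_I\to\H_i(\K_J;\k)$ sends $x_I\mapsto[\partial\Delta_I]$. Since the classes $[\partial\Delta_I]$, $I\in A_J$, form a basis, and their degrees are $|I|-2$, the map $\mu$ is an isomorphism in each degree $i$. Exactness then forces two things. First, $\H_i(\K_J;\k)\to\H_i(\K_J\cup A_J;\k)$ is zero, because $\mu$ is surjective. Second, $\H_{i+1}(\K_J\cup A_J;\k)\to\bigoplus\k x_I$ is zero, because $\mu$ is injective. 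Hence $\H_{i+1}(\K_J\cup A_J;\k)$ receives a surjection from $\H_{i+1}(\K_J;\k)$ whose image is zero, so it vanishes. This holds for every $i\geq -1$; the degree $0$ term needs a short check, using $\H_{-1}=0$ since $J\neq\varnothing$.

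For $(3)\Rightarrow(1)$, we again use Lemma \ref{lmm:MV-corollary} with $A=A_J$. Since $\H_*(\K_J\cup A_J;\k)=0$, the connecting map $\mu$ is an isomorphism. In particular $\mu$ is surjective, so $\H_*(\K_J;\k)$ is spanned by the classes $[\partial\Delta_I]$ with $I\in A_J\subseteq\MF(\K_J)$. Therefore the map $\mu_J$ of \eqref{eqn:amf_homology_presentation} is surjective, which is (1). This argument also yields (2) directly.

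The only delicate step is bookkeeping. We must check that the degree indexing in Lemma \ref{lmm:MV-corollary}, namely $I\in A_{i+2}$ contributing to $\H_i$, matches the degrees of the classes $[\partial\Delta_I]\in\H_{|I|-2}$. We must also check the lowest degrees, where reduced homology and nonemptiness of $J$ enter. Beyond this, the argument is linear algebra over a field.
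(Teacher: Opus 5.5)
Your proof is correct and follows the paper's argument. You get $(1)\Leftrightarrow(2)$ by extracting a basis from a spanning family over a field, and you pass between (2) and (3) via the long exact sequence of Lemma \ref{lmm:MV-corollary}, where $\H_*(\K_J\cup A_J;\k)=0$ is equivalent to $\mu$ being an isomorphism; you simply write out the exactness and low-degree bookkeeping that the paper leaves implicit. One small point: injectivity of $\mu$ requires reading (2) as saying the indexed family $([\partial\Delta_I])_{I\in A_J}$ is a basis, because distinct missing faces can represent the same homology class.
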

\begin{proof}
Clearly, (2) implies (1) for any ring. Since $\k$ is a field, each spanning set contains a basis, so (1) implies (2). Finally (2) and (3) are equivalent by Lemma \ref{lmm:MV-corollary}.
\end{proof}
A graph is \emph{chordal} if for each cycle of length $\geq 4$, there is an edge between non-adjacent vertices.
\begin{lmm}
\label{lmm:chordal pi1} The following results hold:
\begin{enumerate}
    \item If $\K$ is HMF over $\k$, then its 1-skeleton is a chordal graph.
    \item If $\K$ is a connected simplicial complex with chordal 1-skeleton, then $\pi_1(\K\cup\MF_{\geq 3}(\K))=0$.
\end{enumerate}
\end{lmm}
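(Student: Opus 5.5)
For part (1), I will argue by contraposition. Suppose the 1-skeleton of $\K$ is not chordal. Then it contains an induced cycle $v_1,\dots,v_n$ with $n\geq 4$; "induced" means there are no chords. Take $J=\{v_1,\dots,v_n\}$.

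Since no triangle lies among these vertices, $\K_J$ is exactly the $n$-cycle graph, viewed as a 1-dimensional complex. Hence $\H_1(\K_J;\k)\cong\k\neq 0$.

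The missing faces of $\K_J$ are its non-edges $\{v_i,v_j\}$ between non-adjacent vertices, and these have $|I|=2$. Any missing face of size $3$ would be a triangle whose three edges all lie in the cycle, which is impossible since $n\geq 4$. Missing faces of size $\geq 4$ would require $\K_J\supseteq\partial\Delta_I$, which contains 2-simplices, and $\K_J$ has none.

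The classes $[\partial\Delta_I]$ of the size-2 missing faces live in $\H_0$. So no missing face class lies in $\H_1(\K_J;\k)$, and $\mu_J$ is not surjective in degree 1. This contradicts the HMF condition, so the 1-skeleton must be chordal.

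For part (2), put $\L=\K\cup\MF_{\geq 3}(\K)$. Adding faces of dimension $\geq 3$ does not change $\pi_1$, so only the 2-dimensional added faces matter, namely the missing faces of size 3. These are exactly the triangles of the 1-skeleton $G$ that are not faces of $\K$. It follows that the 2-skeleton of $\L$ contains every triangle of $G$, and so $\pi_1(\L)=\pi_1$ of the clique complex of $G$ truncated at dimension 2.

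It then suffices to show that the flag (clique) complex of a connected chordal graph is simply connected. I would prove this by induction on the number of vertices, using a simplicial vertex $v$ (one whose neighbourhood is a clique), which every chordal graph has by Dirac's theorem. Removing $v$ leaves a connected chordal graph $G-v$, whose clique complex is simply connected by induction. The full clique complex is obtained from it by gluing the cone on the simplex $N(v)$ along that simplex, which is contractible. By van Kampen, $\pi_1$ stays trivial, and the 2-skeleton determines $\pi_1$.

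The main subtlety I expect is justifying that $\pi_1(\L)$ agrees with $\pi_1$ of the clique complex. This uses two facts: $\L$ and the clique complex have the same 1-skeleton, and they have the same 2-simplices, because every triangle of $G$ is either already in $\K$ or is a missing face of size 3. Since $\pi_1$ depends only on the 2-skeleton, this suffices.
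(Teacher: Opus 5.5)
Your proof is correct. Part (1) matches the paper: a chordless cycle on $J$ with $|J|\geq 4$ makes $\K_J$ an $|J|$-gon, and missing-face classes cannot reach its $\H_1$.

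For part (2) you take a different route. The paper works directly in $\pi_1(\K)$. It says this group is generated by simple cycles, and chordality lets one express these through triangles. Filled triangles are trivial and missing ones die in $\K\cup\MF_{\geq 3}(\K)$, so $\pi_1(\K)\to\pi_1(\K\cup\MF_{\geq 3}(\K))$ is trivial. It is also surjective, since only cells of dimension $\geq 2$ are attached.

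You instead note that $\K\cup\MF_{\geq 3}(\K)$ and the flag complex $\Kf$ have the same $2$-skeleton, because every triangle of the $1$-skeleton is either a face of $\K$ or an element of $\MF_3(\K)$. Hence their fundamental groups agree. You then show the clique complex of a connected chordal graph is simply connected, in fact contractible. You do this by removing a simplicial vertex $v$ and applying van Kampen to the gluing of $\Delta_{N(v)\cup\{v\}}$ along the simplex $\Delta_{N(v)}$.

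The paper's version is shorter, but it leaves implicit the combinatorial step that in a chordal graph every cycle is, in $\pi_1$, a product of conjugates of triangles. Your argument proves exactly that step via Dirac's theorem and gives the stronger conclusion that $\Kf$ is contractible. It also ties the lemma to the complex $\Kf$ used throughout the paper. The only unstated check is that $G-v$ stays connected: any path through $v$ can be shortcut because $N(v)$ is a clique.
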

\begin{proof}
If $\K^1$ has a chordless cycle on the vertex set $J$, then $\K_J$ is a boundary of an $|J|$-gon. Since $\H_*(\K_J;\k)$ is generated by missing faces, $|J|\leq 3$. This proves (1).

For (2), note that $\pi_1(|\K|)$ is generated by classes of simple cycles. Since $\K^1$ is chordal, it is generated by classes of missing triangles, so $\pi_1(\K)\to \pi_1(\K\cup\MF_{\geq 3}(\K))$ is trivial. On the other hand, this map is surjective since it attaches cells of dimension $\geq 2$.
\end{proof}
\begin{prp}
\label{prp:thf-is-hmf}
The following conditions are equivalent:
\begin{enumerate}
    \item $\K$ is totally homology fillable;
    \item For any prime $p$, $\K$ is HMF over $\ZZ/p$;
    \item $\K$ is HMF over $\ZZ$, and $H_*(\K_J;\ZZ)$ is torsion-free for all $J\subseteq[m]$.
\end{enumerate}
\end{prp}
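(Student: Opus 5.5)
The plan is to route everything through condition (2), using Corollary \ref{crl:hmf-over-Z} and Proposition \ref{prp:hf-and-shmf}. The equivalence (2)$\Leftrightarrow$(3) is exactly Corollary \ref{crl:hmf-over-Z}. For $J=\varnothing$ the condition in (3) is vacuous, because $\H_*(\K_\varnothing)$ is torsion-free: with the augmented convention it is $\ZZ$ in degree $-1$, generated by the missing face $\varnothing$, or it is zero. So it remains to prove (1)$\Leftrightarrow$(2).

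\textbf{(2)$\Rightarrow$(1).} Fix $\varnothing\neq J\subseteq[m]$ and a connected component $\L$ of $\K_J$, with vertex set $J'$. Since $\L=\K_{J'}$ is a full subcomplex, it is HMF over $\ZZ/p$ for every $p$. Apply Proposition \ref{prp:hf-and-shmf}(3) to $\L$ itself (that is, to the full subcomplex on all of $J'$). This gives $A^{(p)}\subseteq\MF(\L)$ with $\H_*(\L\cup A^{(p)};\ZZ/p)=0$, which is the first condition of homology fillability.

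For the second condition, use Lemma \ref{lmm:chordal pi1}(1): $\L$ is HMF over some field, so its 1-skeleton is chordal. Then Lemma \ref{lmm:chordal pi1}(2) gives $\pi_1(\L\cup\MF_{\geq 3}(\L))=0$. Hence each $\K_J$ is homology fillable, and $\K$ is totally homology fillable.

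\textbf{(1)$\Rightarrow$(2).} Fix $p$ and a nonempty $J$; we must show $\H_*(\K_J;\ZZ/p)$ is spanned by missing-face classes.

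\emph{Connected case.} If $\K_J$ is connected, homology fillability gives $A\subseteq\MF(\K_J)$ with $\H_*(\K_J\cup A;\ZZ/p)=0$. The exact sequence of Lemma \ref{lmm:MV-corollary} then shows that $\mu:\bigoplus_{I\in A}\ZZ/p\cdot x_I\to\H_*(\K_J;\ZZ/p)$ is surjective, so the homology is generated by missing faces.

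\emph{Disconnected case.} Suppose $\K_J$ has components $\L_1,\dots,\L_k$ with $k\geq 2$. In positive degrees, $\H_*(\K_J)=\bigoplus_r\H_*(\L_r)$. Each $\L_r$ is itself a full subcomplex, hence homology fillable, so by the connected case its positive-degree homology is generated by missing faces of $\L_r$. These are also missing faces of $\K_J$.

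The group $\H_0(\K_J)$ is spanned by differences of vertices $[v]-[w]$ with $v,w$ in different components. Such a pair $\{v,w\}$ is a non-edge whose boundary is $\{v\},\{w\}$, so it is a missing face $I$ of size 2, and $[\partial\Delta_I]=\pm([w]-[v])$. These classes span $\H_0$. This handles (1)$\Rightarrow$(2).

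The main point to check carefully is that the definitions match under restriction: homology fillability is stated per connected component, while the HMF property is stated per full subcomplex. The argument above reconciles the two by noting that components of full subcomplexes are again full subcomplexes, and by treating the $\H_0$ contribution separately via 2-element missing faces.

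A further subtlety in (2)$\Rightarrow$(1) is that $A^{(p)}$ is allowed to depend on $p$ but must lie in $\MF(\L)$. This is exactly what Proposition \ref{prp:hf-and-shmf}(3) provides, since it applies to each field $\ZZ/p$ separately.
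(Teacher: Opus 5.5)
Your proof is correct and is essentially the paper's argument: (2)$\Leftrightarrow$(3) comes from the HMF criterion (you cite Corollary~\ref{crl:hmf-over-Z}, the paper cites Proposition~\ref{prp:HMF-criterion}), and (1)$\Leftrightarrow$(2) comes from Proposition~\ref{prp:hf-and-shmf} together with Lemma~\ref{lmm:chordal pi1}. Your explicit reconciliation of connected components with full subcomplexes, including the $\H_0$ classes coming from 2-element missing faces, fills in a step the paper passes over, and you correctly use $\MF_{\geq 3}$ in the $\pi_1$ condition where the paper's proof writes $\MF$. One aside is wrong: $\varnothing$ is not a missing face of $\K_\varnothing$, since $\partial\Delta_\varnothing$ has no simplices. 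This is harmless, because only torsion-freeness is needed there.
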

\begin{proof}
Conditions (2) and (3) are equivalent by Proposition \ref{prp:HMF-criterion}.

By Proposition \ref{prp:hf-and-shmf}, $\K$ satisfies (1) if and only it satisfies (2) and the following condition: for each connected full subcomplex $\K_J$ of $\K$, we have $\pi_1(|\K_J\cup \MF(\K_J)|)=0$. Lemma \ref{lmm:chordal pi1} implies that this condition follows from (2), so (1) and (2) are equivalent.
\end{proof}
\begin{rmk}
A complex $\K$ is \emph{dual-SCM} over $\k$ if its Alexander dual $\K^\vee$ is a sequentially Cohen--Macaulay complex. By \cite[Theorem 8.21]{fat-wedge}, each dual-SCM complex is totally homology fillable, hence, is HMF over each $\ZZ/p$. In \cite[Proof of Corollary 4.7]{amelotte-briggs}, this result is interpreted in terms of commutative algebra: over a field $\k$,
\begin{itemize}
    \item $\K$ is HMF if and only if the Stanley--Reisner ideal $I_\K=\Ker(\k[v_1,\dots,v_m]\to\k[\K])$ is a quasi-Koszul module over $\k[v_1,\dots,v_m]$ \cite[Proposition 2.4]{amelotte-briggs};
    \item $\K$ is dual-SCM if and only if $I_\K$ is a Koszul module (by results of Herzog, Hibi and R\"omer).
\end{itemize}
It would be interesting to know if the HMF-presented property has a description in terms of $I_\K$.
\end{rmk}
\subsection{General results on loop homology}

Let $\k$ be a commutative ring with unit.
By \cite[Proposition 8.4.10]{BP15}, there is an isomorphism of algebras $$H_*(\Omega\djk;\k)\cong H(\Omega_*\kKc)\cong \Ext_{\k[\K]}(\k,\k),\quad H_n(\Omega\djk;\k)\cong\bigoplus_{n=-i+2j}\Ext^i_{\k[\K]}(\k,\k)_{2j},$$ where $\Omega_*\kKc$ is the cobar construction for the Stanley--Reisner coalgebra $\kKc:=(\k[\K])^*$. By a result of Franz \cite[Proposition 6.5]{franz-hga}, it is an isomorphism of Hopf algebras when $H_*(\Omega \djk;\k)$ is a free $\k$-module. For $\alpha=\sum_{i=1}^m \alpha_ie_i\in\Zm$, denote $|\alpha|:=\sum_i \alpha_i$, $\supp\alpha:=\{i\in[m]:~\alpha_i\neq 0\}$.
The cobar construction is the following dg-algebra:
$$\Omega_*\kKc=(T(\chi_\alpha:\alpha\in\Zm\setminus\{0\},~\supp\alpha\in\K),d),\quad d(\chi_\alpha):=\sum_{\alpha=\beta+\gamma}\chi_\beta\chi_\gamma,\quad \deg\chi_\alpha:=2|\alpha|-1.$$ 

For each $i\in[m]$, let $u_i\in H_1(\Omega\djk;\k)$ be the element represented by the cycle $\chi_i\in(\Omega_*\kKc)_{1}$. For each $I\in\MF_{\geq 3}(\K)$, let $w_I\in H_{2|I|-2}(\Omega\djk;\k)$ be the element represented by the cycle
$$\psi_I:=\sum_{I=A\sqcup B}\chi_A\chi_B\in\Omega_*\kKc.$$ Note that there is an embedding of dg-coalgebras $\Omega_*\kKc\subseteq\Omega_*\k\langle\Delta_{[m]}\rangle$. Since $\chi_I\in\Omega_*\k\langle \Delta_{[m]}\rangle$ and $d(\chi_I)=\psi_I\in\Omega_*\k\langle\Delta_{[m]}\rangle$, it follows that $d(\psi_I)=d^2(\chi_I)=0\in\Omega_*\k\langle\Delta_{[m]}\rangle$, so $\psi_I\in\Omega_*\kKc$ is a cycle.

\begin{rmk}
The elements $\psi_I$ and $w_I\in H_*(\Omega\djk;\k)$ are defined for any $I\subseteq[m]$ such that $\partial\Delta_I\subseteq\K$, i.e. for any $I\in\K\sqcup\MF(\K)$. However,
\begin{itemize}
    \item If $I\in\K$, then $\psi_I=d(\chi_I)$ is a boundary in $\Omega_*\kKc$, so $w_I=0$;
    \item If $I\in\MF_2(\K)$, $I=\{i,j\}$, we have $\psi_I=\chi_i\chi_j+\chi_j\chi_i$, so $w_I=u_iu_j+u_ju_i=[u_i,u_j]$ is a decomposable element.
\end{itemize}
Hence, we usually only consider the remaining case $I\in\MF_{\geq 3}(\K)$.
\end{rmk}
\begin{lmm}
\label{lmm:standard-relations-in-hodj}
For any $\K$, the classes $u_i$ and $w_I$ of $H_*(\Omega\djk;\k)$ satisfy the following relations:
\begin{enumerate}
    \item $u_i^2=0$, for all $i=1,\dots,m$; $[u_i,u_j]=0$ for all $\{i,j\}\in\K$.
    \item $[u_i,w_I]=0$ if $i\in I\in\MF(\K)$.
    \item $\sum_{\begin{smallmatrix}
        i\in L:\\L\setminus i\notin\K
    \end{smallmatrix}}[u_i,w_{L\setminus i}]=0$ for all $L\in\AMF(\K)$.
\end{enumerate}
\end{lmm}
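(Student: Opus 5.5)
Each relation will be verified at chain level in the cobar construction $\Omega_*\kKc$: for every relation I exhibit an explicit element whose differential is the cycle representing the left-hand side. The basic tool is the embedding $\Omega_*\kKc\subseteq\Omega_*\k\langle\Delta_{[m]}\rangle$, as in the definition of $w_I$. Its use is that $d(\chi_\alpha)=\sum_{\alpha=\beta+\gamma}\chi_\beta\chi_\gamma$ holds whenever all the generators involved have support in $\K$. Throughout I track Koszul signs with the convention that $\deg\chi_\alpha$ is odd, so graded commutators of odd elements are anticommutators.

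For (1), take $\alpha=2e_i$. Its support $\{i\}$ lies in $\K$, so $d(\chi_{2e_i})=\chi_i\chi_i$ and hence $u_i^2=0$. For $\{i,j\}\in\K$, take $\alpha=e_i+e_j$; then $d(\chi_{e_i+e_j})=\chi_i\chi_j+\chi_j\chi_i$, which represents $[u_i,u_j]$, so this class vanishes.

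For (2), let $i\in I\in\MF(\K)$. The natural candidate is
$$\Theta=\sum_{I=A\sqcup B}\left(\chi_{A+e_i}\chi_B\pm\chi_A\chi_{B+e_i}\right),$$
where $A+e_i$ means the indicator vector of $A$ plus $e_i$. Every $\supp(A+e_i)$ is $A$ or $A\cup\{i\}$, a proper subset of $I$, hence a face of $\K$, so $\Theta\in\Omega_*\kKc$. Conceptually, $\Theta$ is $d(\chi_{I+e_i})$ computed in the larger cobar complex, minus the terms $\chi_I\chi_i$ and $\chi_i\chi_I$ that involve the non-face $I$. Applying $d$ to both sides in $\Omega_*\k\langle\Delta_{[m]}\rangle$ gives
$$d(\Theta)=-d(\chi_I\chi_i+\chi_i\chi_I)=-(\psi_I\chi_i+\chi_i\psi_I)$$
up to a global sign, since $d(\chi_i)=0$ and $d^2=0$. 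This is exactly the cycle representing $[u_i,w_I]$, so $[u_i,w_I]=0$. The same trick also reproves (1) uniformly.

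For (3), take $L\in\AMF(\K)$ and mimic the proof of Lemma \ref{lmm:amf_homology}. In the larger complex, start from $\sum_{i\in L}d(\chi_{L+e_i})$ with suitable signs. Since $\partial^2\Delta_L\subseteq\K$, the only generators not lying in $\Omega_*\kKc$ are those with support $L\setminus i\notin\K$ or support $L$ itself. I first choose coefficients $\epsilon_i$ so that the terms $\chi_L\chi_i$ and $\chi_i\chi_L$ cancel in $\sum_i\epsilon_i\, d(\chi_{L+e_i})$. This looks wrong at first, since each $\chi_{L+e_i}$ contributes both of these terms with coefficient $1$. The fix is to use the elements $\Theta_i$ built for the sub-multiindices instead: for each $i\in L$, let $\Phi$ be the sum over decompositions $L+e_i=\beta+\gamma$ with both supports in $\K$. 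Then $d(\Phi)$ equals minus the differential of the bad terms, namely $\chi_{L\setminus j}$-type factors with $L\setminus j\notin\K$ paired with $\chi_j$, where $j\neq i$ can occur only if $i\in L\setminus j$. Summing over $i$ with appropriate signs, the hope is that the bad terms assemble into $\sum_{j:L\setminus j\notin\K}(\chi_j\psi_{L\setminus j}+\psi_{L\setminus j}\chi_j)$, plus terms of the form $\chi_{L}$-products that cancel across the sum.

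An alternative for (3) avoids this sign bookkeeping. Use the standard fact that the cobar differential applied to the truncated element $\Xi=d(\chi_{\mathbf 1_L})$ minus its non-face terms is a cycle in the big complex. Then compare: $\Xi$ restricted to $\K$-supported terms is a chain in $\Omega_*\kKc$. Its differential equals minus the differential of the omitted terms $\sum_{j:L\setminus j\notin\K}(\chi_{L\setminus j}\chi_j+\chi_j\chi_{L\setminus j})$, which is exactly $\sum_j[w_{L\setminus j},u_j]$ with Koszul signs. I would try this second approach first, as it parallels the chain $\phi$ in Lemma \ref{lmm:amf_homology}. The main obstacle is getting the signs right in (3) so that the commutators appear with coefficient $+1$, as in the statement, rather than the $(-1)^{|L_{<i}|}$ signs of the homology relation \eqref{eqn:amf_homology_relation}. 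This works because the cobar differential has no signs on $\chi_\beta\chi_\gamma$.
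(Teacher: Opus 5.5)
Your argument is the paper's: (1) and (2) use the same null-homotopies $\chi_{2e_i}$, $\chi_{e_i+e_j}$ and $d(\chi_{I+e_i})-\chi_I\chi_i-\chi_i\chi_I$ (the paper's $-\tau$). Your second route for (3), which deletes from $d(\chi_L)$ only the terms involving $\chi_{L\setminus j}$ with $L\setminus j\notin\K$, is the paper's $\theta$ plus the boundaries $[\chi_j,\chi_{L\setminus j}]$ for $L\setminus j\in\K$; drop the first route via $\chi_{L+e_i}$, since it lives in multidegree $\mathbf{1}_L+e_i$ rather than $\mathbf{1}_L$. One slip to fix: $\psi_I$ has even degree, so the Leibniz rule gives $d(\chi_I\chi_i+\chi_i\chi_I)=\psi_I\chi_i-\chi_i\psi_I=[\psi_I,\chi_i]$, not $\psi_I\chi_i+\chi_i\psi_I$, and it is this corrected computation that produces the commutator.
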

\begin{proof}
By definition, $\chi_i^2=d(\chi_{2e_i})$ for all $i=1,\dots,m$, and $[\chi_i,\chi_j]=\chi_i\chi_j+\chi_j\chi_i=d(\chi_{\{i,j\}})$ for $\{i,j\}\in\K$. Hence, (1) holds.

For $I\in\MF(\K)$ and $i \in I$, consider the element $\tau:=[\chi_I,\chi_i]-d(\chi_{I+e_i})\in\Omega_*\k\langle\Delta_{[m]}\rangle$. It follows from the definition that $\tau=-\sum_{A\sqcup B}(\chi_{A+e_i}\chi_B+\chi_A\chi_{B+e_i})$, and so $\tau\in\Omega_*\kKc$. On the other hand, we have $d(\tau)=d([\chi_I,\chi_i])=[\psi_I,\chi_i]+[\chi_I,0]$. It follows that $[\psi_I,\chi_i]$ is a boundary in $\Omega_*\kKc$, so the relation (2) follows.

Finally, for $L\in\AMF(\K)$, consider the element
$$\theta:=d(\chi_L)-\sum_{i\in L}[\chi_i,\chi_{L\setminus i}]=\sum_{\begin{smallmatrix}I=A\sqcup B:\\|A|,|B|\geq 2\end{smallmatrix}}\chi_A\chi_B\in\Omega_*\kKc.$$
Since $d(\chi_i)=0$, we obtain
$$d(\theta)=\sum_{i\in L}[\chi_i,d(\chi_{L\setminus i})]=\sum_{\begin{smallmatrix}i\in L:\\L\setminus i\notin\K\end{smallmatrix}}[\chi_i,\psi_{L\setminus i}]+\sum_{\begin{smallmatrix}
i\in L:\\L\setminus i\in\K
\end{smallmatrix}}
d([\chi_i,\chi_{L\setminus i}])
$$
It follows that the first summand is a boundary in $\Omega_*\kKc$, so the relation (3) follows.
\end{proof}
\begin{rmk}
If $I\in\MF_2(\K)$, we have $w_{\{i,j\}}=[u_i,u_j],$ so the relation (2) becomes the tautological identity $[u_i,[u_i,u_j]]=0$. Similarly, for $L\in\AMF_3(\K)$ the relation (3) follows from (1) and the Jacobi identity.

In general, this is not an exhaustive list of relations. For example, $[w_I,w_J]=0$ if $\partial\Delta_I\ast\partial\Delta_J\subseteq\K$.
\end{rmk}

For $J=\{i_1<\dots<i_s\}\subseteq [m]$ and $x\in H_*(\Omega\djk;\k)$, we denote
\begin{equation}
\label{eqn:u-and-c-definition}
\widehat{u}_J=u_{i_1}u_{i_2}\dots u_{i_s},\quad c(J,x):=[u_{i_1},[u_{i_2},\dots [u_{i_s},x]\dots]];
\end{equation}
in particular, $\widehat{u}_\varnothing=1$ and $c(\varnothing,x)=x$. Define the Koszul sign as
$$\theta(I,J)=\#\{(i,j)\in I\times J:~i>j\},\quad \theta(i,J)=|J_{<i}|.$$

We first study the case where $\K$ is a $1$-neighbourly complex. In Section \ref{sec:Presentations}, we apply Theorem \ref{thm:uxk_colimit_decomposition_intro} to obtain results for $\K$ in the general case.

\begin{lmm}
If $\K$ is a $1$-neighbourly complex, then
\begin{equation}
\label{eqn:ui_commute_corollary}
\widehat{u}_I\cdot\widehat{u}_J=\begin{cases}(-1)^{\theta(I,J)} u_{I\sqcup J},& I\cap J=\varnothing,\\ 0,&I\cap J\neq \varnothing;\end{cases}\quad c(I,c(J,x))=\begin{cases}(-1)^{\theta(I,J)} c(I\sqcup J,x),&I\cap J=\varnothing,\\0,&I\cap J\neq\varnothing.\end{cases}
\end{equation}
\end{lmm}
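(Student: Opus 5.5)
The plan is to derive everything from relation (1) of Lemma \ref{lmm:standard-relations-in-hodj}. Since $\K$ is 1-neighbourly, every pair $\{i,j\}$ with $i\neq j$ is an edge of $\K$. Hence in $H_*(\Omega\djk;\k)$ we have $u_i^2=0$ for all $i$, and $u_iu_j=-u_ju_i$ for all $i\neq j$ (the graded commutator of odd-degree elements is $u_iu_j+u_ju_i$). So the $u_i$ generate a quotient of an exterior algebra, and both formulas should reduce to sign bookkeeping in that exterior algebra and in its action by graded commutators.

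For the first formula, write $I=\{i_1<\dots<i_s\}$ and $J=\{j_1<\dots<j_t\}$. If $I\cap J=\varnothing$, I would reorder the word $u_{i_1}\cdots u_{i_s}u_{j_1}\cdots u_{j_t}$ into increasing order of indices by adjacent transpositions. Each transposition contributes a factor $-1$, and the number needed equals the number of inversions, i.e. pairs $(i,j)\in I\times J$ with $i>j$, which is $\theta(I,J)$. If $k\in I\cap J$, I would first move the two copies of $u_k$ next to each other (introducing only a sign) and then apply $u_k^2=0$.

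For the second formula, I would consider the operators $\mathrm{ad}_{u_i}=[u_i,-]$ on $H_*(\Omega\djk;\k)$, so that $c(J,x)=\mathrm{ad}_{u_{j_1}}\cdots\mathrm{ad}_{u_{j_t}}(x)$. Since $u_i$ has odd degree, the graded Jacobi identity gives
\[
[\mathrm{ad}_{u_i},\mathrm{ad}_{u_j}]=\mathrm{ad}_{[u_i,u_j]} .
\]
Here the left-hand side is the graded commutator $\mathrm{ad}_{u_i}\mathrm{ad}_{u_j}+\mathrm{ad}_{u_j}\mathrm{ad}_{u_i}$, and on the right $[u_i,u_j]=u_iu_j+u_ju_i$. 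For $i\neq j$ this is $0$ by 1-neighbourliness. For $i=j$ we get $2\,\mathrm{ad}_{u_i}^2=\mathrm{ad}_{2u_i^2}=0$. To avoid dividing by $2$, I would instead compute directly: for odd $u$,
\[
[u,[u,x]]=u^2x-(-1)^{|x|}\bigl(u x u - u x u\bigr)\cdot(\pm)-\dots=u^2x-xu^2 ,
\]
which vanishes because $u_i^2=0$. Hence the operators $\mathrm{ad}_{u_i}$ anticommute pairwise and square to zero. The same inversion-counting argument as in the first formula then yields both cases of the second formula.

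The only delicate point is getting the signs right: checking that the graded Jacobi identity produces anticommutation with exactly these sign conventions, and that $\mathrm{ad}_u^2=\mathrm{ad}_{u^2}$ holds for odd $u$ over an arbitrary ring $\k$. I would verify this last identity by expanding $[u,[u,x]]=u(ux\mp xu)\pm(ux\mp xu)u$ and observing that the middle terms cancel.
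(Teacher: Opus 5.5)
Your proposal is correct and follows the same route as the paper. The paper's proof is a single line: by relation (1) of Lemma \ref{lmm:standard-relations-in-hodj}, the $u_i$ graded-commute and $[u_i,[u_i,y]]=0$, and the inversion count is left to the reader. Your displayed expansion of $[u,[u,x]]$ is garbled, but the clean computation $[u,[u,x]]=u(ux-(-1)^{|x|}xu)+(-1)^{|x|}(ux-(-1)^{|x|}xu)u=u^2x-xu^2$ is exactly what your last paragraph describes, so nothing is missing.
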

\begin{proof}
By part (1) of Lemma \ref{lmm:standard-relations-in-hodj}, the elements $u_1,\dots,u_m$ commute, and $[u_i,[u_i,y]]=0$.
\end{proof}

Finally, we will require a geometric interpretation of the elements $u_i\in H_1(\Omega\djk;\k)$, and $w_J\in H_{2|J|-2}(\Omega\djk;\k)$. Let $\ZK := (D^2,S^1)^\K$ be the \textit{moment-angle complex} associated to $\K$. Information about the homology groups and homotopy groups of $\ZK$ will be important. For homology, by \cite[Corollary 2.10]{bbcg}, there is an isomorphism 
\begin{equation}
\label{eqn:hochster-formula}
\H_*(\mathcal{Z}_{\K};\k) \cong \bigoplus_{I \notin \K} \H_*(\Sigma^{1+|I|} |\K_I|;\k)\cong\bigoplus_{\varnothing\neq J\subseteq[m]}\H_{*-|J|-1}(\K_J;\k).
\end{equation}

For the homotopy groups, by \cite[Theorem 4.3.2]{BP15}, the inclusion of $\djk$ into $(\mathbb{C}P^\infty)^m$ gives a homotopy fibration $\ZK \to \djk \to (\mathbb{C}P^\infty)^m$. In particular, the long exact sequence of homotopy groups associated to this homotopy fibration implies that $\pi_k(\ZK) \cong \pi_k(\djk)$ for all $k \geq 3$. Moreover, by \cite[(8.16)]{BP15} there is a homotopy splitting $\Omega\djk\simeq\Omega\ZK\times\Omega(\CC P^\infty)^m$, hence the induced map $H_*(\Omega\ZK;\k)\to H_*(\Omega\djk;\k)$ is injective.

By definition of $\ZK$, if $I \in MF(\K)$, there is a homeomorphism $\mathcal{Z}_{\K_I}=\Z_{\partial\Delta_I} \cong S^{2|I|-1}$. In particular, there is a canonical map $S^{2|I|-1} \to \ZK \to \djk$ induced by the simplicial inclusion $\K_I \to \K$. With this setup, we can state the geometric interpretation of $u_i$ and $w_I$.

\begin{prp}
\label{prp:c(J,wI)-geometric-interpretation}
Let $t_1,\dots,t_m\in\pi_2(\djk)$ be the classes represented by the standard inclusions $S^2\hookrightarrow \CC P^\infty\subseteq\djk$. For $I\in\MF(\K)$, let $\omega_I\in\pi_{2|I|-1}(\djk)$ be the class represented by the map $S^{2|I|-1}\cong\Z_{\K_I}\to\djk$. Then
\begin{enumerate}
    \item $u_i\in H_1(\Omega\djk;\k)$ is the Hurewicz image of the adjoint map $\overline{t}_i\in\pi_1(\Omega\djk)$, and the element $w_I\in H_{2|I|-2}(\Omega\djk;\k)$ is the Hurewicz image of the adjoint map $\overline{\omega}_I\in\pi_{2|I|-2}(\Omega\djk)$.
    \item For any $J\subseteq[m]$ and $I\in\MF(\K_J)$, the element $c(J\setminus I,w_I)\in H_{|J|-2}(\djk;\k)$ is in the subalgebra $H_*(\Omega\ZK;\k)\subseteq H_*(\Omega\djk;\k)$. 
    \item For the homology suspension map $\sigma:\H_*(\Omega\ZK;\k)\to\H_{*-1}(\ZK;\k)\cong\bigoplus_{J\subseteq[m]}\H_{*-|J|}(\K_J;\k)$, we have $\sigma(c(J\setminus I,w_I))=(-1)^{|J\setminus I|+\theta(I,J)}[\partial\Delta_I]\in\H_*(\K_J;\k)$.
\end{enumerate}
\end{prp}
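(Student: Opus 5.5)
We prove the three parts in order, each reducing to a computation in the cobar construction $\Omega_*\kKc$ or to naturality in $\K$.

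For (1), we use naturality with respect to full subcomplexes. The inclusion $\K_{\{i\}}\to\K$ induces a map $\DJ_{\{i\}}=\CC P^\infty\to\djk$. On cobar constructions it is the inclusion $\Omega_*\k\langle\Delta_{\{i\}}\rangle\subseteq\Omega_*\kKc$. So it suffices to check that the generator of $H_1(\Omega\CC P^\infty)=H_1(S^1)$, which is the Hurewicz image of the adjoint of $S^2\hookrightarrow\CC P^\infty$, is represented by $\chi_i$. This is the standard identification of the bottom cobar generator with the desuspension of the fundamental class in $H_2$.

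Similarly, for $I\in\MF(\K)$ we restrict to $\K_I=\partial\Delta_I$, where $\Z_{\partial\Delta_I}\cong S^{2|I|-1}$. Here $H_*(\Omega\ZK)$ for $\K=\partial\Delta_I$ is $H_*(\Omega S^{2|I|-1})$, with bottom class the Hurewicz image of $\overline\omega_I$. We then identify this class inside $H_*(\Omega\DJ_{\partial\Delta_I})$ with $[\psi_I]$. To do so, compare the fibration $\ZK\to\djk\to(\CC P^\infty)^m$ with the cell structure: $\DJ_{\Delta_I}=(\CC P^\infty)^{I}$ is obtained from $\DJ_{\partial\Delta_I}$ by attaching cells, the first of which is the top cell $e^{2|I|}$ of $(\CC P^1)^I$. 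Its attaching map is $\omega_I$. In the cobar model, this cell corresponds to the generator $\chi_I$, with $d\chi_I=\psi_I$. Hence the attaching map, looped and hurewiczed, is $\pm[\psi_I]$.

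I expect this identification to be the main obstacle. I would make it rigorous via the Adams--Hilton model: $\Omega_*\kKc$ restricted to $\alpha\in\{0,1\}^m$ is an Adams--Hilton-type model for $(\CC P^1,\pt)^{\K}$. In any Adams--Hilton model, the boundary of a cell generator is a cycle representing the Hurewicz image of the adjoint of its attaching map. Signs are fixed by a choice of orientation.

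For (2), we use the splitting $\Omega\djk\simeq\Omega\ZK\times\Omega(\CC P^\infty)^m$. The subalgebra $H_*(\Omega\ZK)$ is the kernel side of the Hopf algebra extension $H_*(\Omega\ZK)\to H_*(\Omega\djk)\to H_*(\Omega(\CC P^\infty)^m)=\Lambda[u_1,\dots,u_m]$. Equivalently, it is the Hopf kernel of the projection $\pi$. Since $\omega_I$ factors through $\ZK$, $w_I$ lies in $H_*(\Omega\ZK)$ by (1). It remains to see that $[u_j,x]\in H_*(\Omega\ZK)$ whenever $x$ does. This holds because $H_*(\Omega\ZK)$ is a normal Hopf subalgebra (the Hopf kernel of $\pi$). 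Concretely, $[u_j,x]$ is primitive-adjusted, and $\pi([u_j,x])=[\pi u_j,\pi x]=0$ when $x$ is primitive. Alternatively, geometrically $[u_j,x]$ is the Hurewicz image of a Samelson product, which lies in $\pi_*(\Omega\ZK)$ since $\pi_{\geq3}(\djk)=\pi_{\geq 3}(\ZK)$. I would use the Samelson product argument, since $w_I$ is spherical. Induction on $|J\setminus I|$ gives (2).

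For (3), we compute the homology suspension via the cobar/bar identification. Under Hochster's formula \eqref{eqn:hochster-formula}, $H_*(\ZK)$ is the homology of the Koszul-type complex $\bigoplus_J\widetilde C_*(\K_J)$. By naturality under $\K_J\to\K$, it suffices to take $\K=\K_J$. The suspension of a spherical class $c(J\setminus I,w_I)$ is the Hurewicz image of the iterated Whitehead product $[t_{j_1},[\dots,[t_{j_s},\omega_I]]]$ lifted to $\ZK$. We then compute its image in $\H_{*}(\K_J)$ at multidegree $J$. Here the higher Whitehead products are known (as in Abramyan--Panov and Grbić--Theriault-type computations) to map to the fundamental class of the embedded sphere $\partial\Delta_I\subseteq\K_J$ times the suspension. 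Alternatively, and more self-containedly, we compute $\sigma$ on the cobar cycle $c(J\setminus I,\psi_I)$ by projecting to the indecomposables of the bar complex. The only term surviving is the one linear in $\chi$'s of squarefree multidegree, namely $\sum_{i\in I}\pm e_{J\setminus i}$-type chains, which assemble to $c_I$ in $\widetilde C(\K_J)$. The sign $(-1)^{|J\setminus I|+\theta(I,J)}$ comes from reordering the $u_j$'s past the $\chi$'s. The bookkeeping of these signs is tedious but routine.
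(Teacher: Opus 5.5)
\textbf{Parts (1) and (2).} These match the paper. For (1) the paper cites Zhuravleva's Adams--Hilton analysis of polyhedral products, which is the identification of $d\chi_I=\psi_I$ with the attaching map of the top cell that you sketch. For (2) you use the same Samelson-product argument, combined with $\pi_{\geq 3}(\ZK)\cong\pi_{\geq 3}(\djk)$.

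\textbf{Part (3) has a gap.} The sign $(-1)^{|J\setminus I|+\theta(I,J)}$ is part of the statement, and it is what makes $g_J$ in Lemma \ref{lmm:HMF-presented-map-gJ} kill the relations $\rho_L$. Your proposal does not establish it.

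Your ``self-contained'' route cannot work. The cobar construction $\Omega_*\kKc$ models $\Omega\djk$, not $\Omega\ZK$. Projecting a cycle onto its length-one part computes the homology suspension $H_*(\Omega\djk)\to H_{*+1}(\djk)$, not the suspension into $\ZK$ that the statement asks about. On $c(J\setminus I,\psi_I)$ this projection is zero. Indeed, $\psi_I=\sum\chi_A\chi_B$ consists of words of length $2$, and each bracket with $\chi_j$ only lengthens words. So there is no ``term linear in the $\chi$'s'' to extract. This is forced: $H_{>0}(\ZK)\to H_{>0}(\djk)$ is zero, since dually $\k[\K]\to\Tor^{\k[v_1,\dots,v_m]}(\k[\K],\k)$ is the edge map and kills positive degrees. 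The sign bookkeeping you attach to this route (moving $u_j$'s past $\chi$'s) therefore acts on nothing. To compute $\sigma$ into $\ZK$ you need a chain model of $\ZK$ itself.

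Your other route, via higher Whitehead products, is the paper's route, but as you state it it only gives $\pm[\partial\Delta_I]$. The paper gets the sign from two explicit ingredients:
\begin{enumerate}
\item Abramyan's Lemma 4.1. It gives a cellular chain in $\mathcal{C}_*(\ZK)$ representing the Hurewicz image of the iterated higher bracket $[\mu_{j_0},[\dots[\mu_{j_{n-1}},[\mu_{i_1},\dots,\mu_{i_p}]]\dots]]$, which is the lift of $[t_{j_0},[\dots[t_{j_{n-1}},\omega_I]\dots]]$. The chain is $S_{j_0}\cdots S_{j_{n-1}}\sum_t D_{i_1}\cdots D_{i_{t-1}}S_{i_t}D_{i_{t+1}}\cdots D_{i_p}$.
\item Abramyan's Theorem 3.3, a chain-level Hochster isomorphism with explicit signs. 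It converts that chain into $(-1)^{|J\setminus I|+\theta(I,J)}[c_I]\in\H_*(\K_J;\k)$.
\end{enumerate}
Without an explicit cellular representative and a sign-explicit form of \eqref{eqn:hochster-formula} of this kind, (3) is proved only up to sign.
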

\begin{proof}
(1) follows from \cite[Corollary 5.2 and Proposition 5.3]{zhuravleva}. By (1) and the fact that Samelson products map to commutators in homology, $c=c(J,w_I)=[u_{i_1},\dots[u_{i_k},w_I]\dots]$ is the Hurewicz image of the element adjoint to the iterated Whitehead bracket $[t_{i_1},\dots[t_{i_k},\omega_I]\dots]$. Since $\pi_{\geq 3}(\ZK)\to\pi_{\geq 3}(\djk)$ is an isomorphism, $c$ is in the image of $H_*(\Omega\ZK;\k)\to H_*(\Omega\djk;\k)$, so (2) follows.

Identity (3) follows from \cite[Lemma 4.1]{abramyan} (see also \cite[Corollary 4.13]{amelotte-briggs}). In more detail, let $I=\{i_1,\dots,i_p\}$ and $A=J\setminus I=\{j_0<\dots<j_{n-1}\}$. In the notation of \cite{abramyan}, the element $\sigma(c(A,w_I))\in\H_*(\ZK)$ is the Hurewicz image of the iterated higher Whitehead product
$[\mu_{j_0},[\mu_{j_1},\dots[\mu_{j_{n-1}},[\mu_{i_1},\dots,\mu_{i_p}]]\dots]]\in\pi_*(\ZK)$. By \cite[Lemma 4.1]{abramyan}, it is represented by the cellular chain
$$S_{j_0}S_{j_1}\dots S_{j_{n-1}}\sum_{t=0}^p D_{i_1}\dots D_{i_{t-1}}S_{i_t}D_{i_{t+1}}\dots D_{i_p}=\sum_{i\in I}(-1)^{|A_{>i}|}\chi(A\sqcup i,I\setminus i)\in\mathcal{C}_*(\ZK;\k),$$
which, by \cite[Theorem 3.3]{abramyan}, corresponds to the class
\[\Big[\sum_{i\in I}(-1)^{|A_{>i}|+\sum_{\ell\in I\setminus i}|J_{<\ell}|} e_{I\setminus\{i\}}\Big]=(-1)^{|A|+\theta(I,J)}[c_I]=(-1)^{|J\setminus I|+\theta(I,J)}[\partial \Delta_I]\in\H_*(\K_J;\k).\qedhere\]

\end{proof}
\begin{crl}
\label{crl:w_I-primitive}
Suppose that $H_*(\Omega\djk;\k)$ is a free $\k$-module. Then for all $1 \leq i \leq m$ and $I \in \MF(\K)$, the elements $u_i,w_I\in H_*(\Omega\djk;\k)$ are primitive.
\end{crl}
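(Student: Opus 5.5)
The plan is to use Proposition \ref{prp:c(J,wI)-geometric-interpretation}(1): $u_i$ and $w_I$ are Hurewicz images of elements of $\pi_*(\Omega\djk)$. Hurewicz images in the homology of a space are always primitive with respect to the coproduct induced by the diagonal. So it suffices to check this general fact in our setting.

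First I would recall the setup. When $H_*(\Omega\djk;\k)$ is a free $\k$-module, the coproduct is the composite
$$H_*(\Omega\djk)\xrightarrow{\Delta_*}H_*(\Omega\djk\times\Omega\djk)\cong H_*(\Omega\djk)\otimes H_*(\Omega\djk),$$
with the last map the inverse of the K\"unneth isomorphism. This is how the Hopf algebra structure was introduced in Section~\ref{sec:prelim}. The identification with $\Ext_{\k[\K]}(\k,\k)$ is an isomorphism of Hopf algebras by Franz's result, so the topological coproduct is the relevant one.

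Next, let $x=h(\alpha)$ with $\alpha\colon S^n\to\Omega\djk$ and $n\geq 1$, so $x=\alpha_*(\iota_n)$ for a generator $\iota_n\in H_n(S^n)$. By naturality of the diagonal, $\Delta\circ\alpha=(\alpha\times\alpha)\circ\Delta_{S^n}$. Hence $\Delta_*(x)=(\alpha\times\alpha)_*(\Delta_{S^n})_*(\iota_n)$. In $H_*(S^n\times S^n)\cong H_*(S^n)\otimes H_*(S^n)$, which is free, we have $(\Delta_{S^n})_*(\iota_n)=\iota_n\otimes 1+1\otimes\iota_n$. This holds because $H_*(S^n)$ has no reduced classes in degrees strictly between $0$ and $n$, and the counit conditions (projections onto each factor) fix the two outer components. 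Naturality of the K\"unneth map then gives $\Delta_*(x)=x\otimes1+1\otimes x$.

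Applying this with $\alpha=\overline{t}_i$ ($n=1$) gives primitivity of $u_i$, and with $\alpha=\overline{\omega}_I$ ($n=2|I|-2$) gives primitivity of $w_I$. The case $I\in\MF_2(\K)$, where $w_I$ is a commutator of primitives, is also covered directly by this argument.

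The only delicate point is compatibility of the K\"unneth isomorphism with the coproduct over a PID. The K\"unneth map $H_*(Y)\otimes H_*(Y)\to H_*(Y\times Y)$ is natural and injective. It is an isomorphism when $H_*(Y)$ is free, and the Tor terms vanish for $S^n$. Therefore the computation above holds on the nose and no torsion issues arise.
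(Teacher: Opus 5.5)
Your proposal is correct and matches the paper's proof: both deduce primitivity from Proposition~\ref{prp:c(J,wI)-geometric-interpretation}(1), using that $u_i$ and $w_I$ are Hurewicz images and that spherical classes map to primitives under coalgebra maps. You simply write out the diagonal and K\"unneth details that the paper leaves implicit.
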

\begin{proof}
By Part (1) of Proposition \ref{prp:c(J,wI)-geometric-interpretation}, these elements are in the Hurewicz image, so they are primitive since primitive elements map to primitive elements.
\end{proof}

\begin{rmk}
In the notation of \cite[\S 2]{abramyan-panov}, the classes $\omega_I$ are ``canonical higher Whitehead brackets'' of $t_i$, $i\in I$.
More generally, let $w\in\pi_*(\djk)$ be a canonical \emph{iterated} higher Whitehead bracket of $t_1,\dots,t_m$. Then \cite[Lemma 3.2]{abramyan-panov} identifies the Hurewicz image of $\alpha$ in $\H_*(\ZK)$. Under the isomorphism \eqref{eqn:hochster-formula}, this Hurewicz image is of the form $\pm[\partial\Delta_{I_1}]\ast\dots\ast [\partial\Delta_{I_s}]\in\H_*(\K_J;\k)$.
\end{rmk}

\subsection{Loop homology for HMF-presented complexes}

In the following, we will use the notation in \eqref{eqn:u-and-c-definition}.
\begin{prp}
\label{prp:HMF-without-torsion-additive-basis}
Let $\K$ be a simplicial complex and $\k$ be a principal ideal domain. Assume that $\K$ is HMF over $\k$, and $H_*(\ZK;\k)$ is a free $\k$-module. Then 
\begin{enumerate}
    \item For each $J\subseteq[m]$, $J\neq\varnothing$, the $\k$-module $\H_*(\K_J;\k)$ has an additive basis of the form
    $$B_0(J)=\{\sum_{I\in\MF(\K_J)}\mu_{I,q} [\partial\Delta_I],\quad q\in Q_J\}$$
    for some $\mu_{I,q}\in\ZZ$ and finite sets $Q_J$.
    \item $H_*(\Omega\ZK;\k)$ is a free associative $\k$-algebra on the set of generators
    $$B=\Big\{\sum_{I\in\MF(\K_J)}\mu_{I,q}c(J\setminus I,w_I):\quad J\subseteq[m],~q\in Q_J\Big\}.$$
    \item $H_*(\Omega\djk;\k)$ has an additive basis
    $B'=\{b_1b_2\dots b_P\cdot\widehat{u}_J:\quad b_1,\dots,b_P\in B,~J\subseteq[m]\}.$
    \item $H_*(\Omega\djk;\k)$ is multiplicatively generated by $\{u_1,\dots,u_m\}\sqcup\{w_I:I\in\MF_{\geq 3}(\K)\}$.
\end{enumerate}
In particular, $H_*(\Omega\ZK;\k)$ is a free associative $\k$-algebra, and $H_*(\Omega\djk;\k)$ is a free $\k$-module.
\end{prp}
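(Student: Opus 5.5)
Part (1) comes straight from earlier results. By Hochster's formula \eqref{eqn:hochster-formula}, $\H_*(\ZK;\k)\cong\bigoplus_J\H_{*-|J|-1}(\K_J;\k)$. Since this is free, each summand $\H_*(\K_J;\k)$ is a free $\k$-module. Because $\K$ is HMF, it is also generated by missing faces of $\K_J$. Proposition \ref{prp:integer_coefficient_basis}, applied to the complex $\K_J$, then gives the required integral basis $B_0(J)$.

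For (2), the plan is to use the standard criterion that a connected loop homology is free on lifts of a homology basis. Suppose $Y$ is simply connected and $\H_*(Y;\k)$ is free. Suppose further that we have elements $b\in H_*(\Omega Y)$ whose homology suspensions $\sigma(b)$ form a basis of $\H_{*}(Y)$. Then $H_*(\Omega Y)$ is the free tensor algebra on these elements, provided $Y$ is a co-$H$-space, or more generally when $\Sigma\Omega Y\to Y$ admits a homology section. For $\ZK$ with $\K$ HMF, I would obtain this from the Iriye--Kishimoto type result that $\ZK$ is a wedge of spheres. Alternatively, I would argue directly: the map $T(V)\to H_*(\Omega\ZK)$, with $V$ spanned by $B$, is a map of algebras. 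Comparing Poincaré series via the Bott--Samelson theorem, once $\ZK$ is known to be a suspension, gives $P=1/(1-P(V))$.

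So the steps for (2) are as follows. First, use Proposition \ref{prp:c(J,wI)-geometric-interpretation}(2),(3): each element of $B$ lies in $H_*(\Omega\ZK)$, and its suspension is $\pm\sum_I\mu_{I,q}[\partial\Delta_I]$ in the $J$-summand. Hence $\sigma(B)$ is, up to signs, a basis of $\H_*(\ZK;\k)$. Second, the elements of $B$ are Hurewicz images of iterated (higher) Whitehead brackets, and these come from maps of spheres $S^n\to\ZK$. The wedge $W$ of these spheres maps to $\ZK$ inducing a homology isomorphism, hence a homotopy equivalence. Bott--Samelson then identifies $H_*(\Omega W)=T(\tilde H_*(W))$, sent onto the algebra generated by $B$. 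The main obstacle is exactly this realisation step. Integral combinations $\sum\mu_{I,q}c(J\setminus I,w_I)$ must be realised by maps of spheres, which works using sums in $\pi_*$. The Hurewicz images must match the cellular classes up to sign, and this is where Abramyan's lemma is used. Simple connectivity of $\ZK$ follows since it is always $2$-connected.

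For (3), use the splitting $\Omega\djk\simeq\Omega\ZK\times T^m$ from \cite[(8.16)]{BP15}. The torus factor is realised by the loops $\overline t_i$, and $H_*(T^m)$ is exterior with basis $\widehat u_J$. Multiplying via $\Omega\djk$ gives the additive isomorphism $H_*(\Omega\ZK)\otimes\Lambda[u_1,\dots,u_m]\to H_*(\Omega\djk)$, $x\otimes\widehat u_J\mapsto x\widehat u_J$, using Proposition \ref{prp:split_hopf_properties} for the fibration $\ZK\to\djk\to(\CC P^\infty)^m$. This fibration has a section after looping, which is the remark following that proposition. Combining with (2) yields the basis $B'$. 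Part (4) is then immediate, since every element of $B'$ is a product of $u_i$'s and brackets $c(\cdot,w_I)$. Finally, $w_I$ for $|I|=2$ is $[u_i,u_j]$, so only $w_I$ with $I\in\MF_{\geq3}$ are needed. Freeness of both modules follows from the bases.
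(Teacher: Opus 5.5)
Your route is the paper's: (1) from Hochster's formula and Proposition~\ref{prp:integer_coefficient_basis}; (2) by realising $B$ through a map $W\to\ZK$ from a wedge of spheres and applying Bott--Samelson; (3) from the additive splitting $H_*(\Omega\djk;\k)\cong H_*(\Omega\ZK;\k)\otimes\Lambda[u_1,\dots,u_m]$. The paper quotes this splitting from \cite{vylegzhanin25}, and you obtain it equivalently from the remark after Proposition~\ref{prp:split_hopf_properties} using the loop section $\overline{t}_1\cdots\overline{t}_m$. Part (4) goes as you say, using $w_{\{i,j\}}=[u_i,u_j]$.

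The step that fails as written is ``inducing a homology isomorphism, hence a homotopy equivalence'' in (2). This is only valid for $\k=\ZZ$. In general you only know that $W\to\ZK$ is an isomorphism in $\k$-homology. The hypotheses allow $\ZK$ to have integral torsion at primes invertible in $\k$ (e.g.\ any torsion when $\k=\QQ$, or $q$-torsion when $\k=\FF_p$, $q\neq p$), so Whitehead's theorem does not apply. What you actually need is only $H_*(\Omega W;\k)\cong H_*(\Omega\ZK;\k)$. This follows from the $\k$-homology isomorphism by comparing the Serre spectral sequences of the path--loop fibrations (Zeeman comparison), which is exactly the paper's argument.

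For the same reason, the alternatives you float are unavailable under these hypotheses. The Iriye--Kishimoto wedge decomposition needs total homology fillability, i.e.\ HMF over every $\ZZ/p$ (Proposition~\ref{prp:thf-is-hmf}), not just over $\k$. A Poincar\'e series count only detects isomorphisms over a field.

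A second, smaller point: the sign in Proposition~\ref{prp:c(J,wI)-geometric-interpretation}(3) depends on $I$, not only on $J$. For $J=\{1,2,3,4\}$ it is $+1$ for $I=\{1,2,3\}$ and $-1$ for $I=\{2,3,4\}$. So $\sigma(\sum_I\mu_{I,q}c(J\setminus I,w_I))=\sum_I(-1)^{|J\setminus I|+\theta(I,J)}\mu_{I,q}[\partial\Delta_I]$, which is not $\pm\sum_I\mu_{I,q}[\partial\Delta_I]$. Hence ``$\sigma(B)$ is a basis up to signs'' does not follow, and over $\ZZ$ a twisted combination can fail to be a basis even when the untwisted one is. The fix is to form the generators with the integers $(-1)^{|J\setminus I|+\theta(I,J)}\mu_{I,q}$, so that $\sigma$ maps them exactly onto the bases $B_0(J)$. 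The paper's proof passes over this sign in the same way. With these two repairs your argument is complete.
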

\begin{proof}
By \eqref{eqn:hochster-formula}, $\H_*(\K_J;\k)$ is a free $\k$-module for each $J$, so
(1) follows from Proposition \ref{prp:integer_coefficient_basis}.

By Proposition \ref{prp:c(J,wI)-geometric-interpretation}, the elements $\sum \mu_{I,q}c(J,\setminus I,w_I)\in B$ are Hurewicz images of classes in $\pi_*(\Omega\ZK)$ adjoint to linear combinations of iterated Whitehead products. Therefore, we obtain a map $f:W\to\ZK$ from a simply connected wedge of spheres which induces an isomorphism $H_*(W;\k)\cong H_*(\ZK;\k)$. By naturality of the Serre spectral sequence, $f$ induces an isomorphism $H_*(\Omega W;\k)\cong H_*(\Omega\ZK;\k)$, so $H_*(\Omega\ZK;\k)$ is a free associative algebra generated by Hurewicz images of elements adjoint to the spherical classes. This proves the statement (2).

By \cite[Proposition 3.7]{vylegzhanin25}, $H_*(\Omega\djk;\k)\cong H_*(\Omega\ZK;\k)\otimes\Lambda[u_1,\dots,u_m]$ as $H_*(\Omega\ZK;\k)$-modules, so
(3) follows from (2). Finally, (4) is immediate from (3).
\end{proof}
\begin{rmk}
It follows from the proof that $W\to \ZK$ is a homotopy equivalence after localisation away from $\{p:p\notin\k^\times\}$. This is a result of Amelotte and Briggs, see \cite[Corollary 4.6]{amelotte-briggs}. 
\end{rmk}

\begin{lmm}
\label{lmm:HMF-presented-map-gJ}
Let $\K$ be a $1$-neighbourly simplicial complex which is HMF-presented over a ring $\k$. Then, for each $J\subseteq[m]$, there is a well defined map of $\k$-modules
$$g_J:\H_*(\K_J;\k)\to H_*(\Omega\djk;\k),\quad [\partial\Delta_I]\mapsto (-1)^{\theta(I,J)+|J\setminus I|}c(J\setminus I,w_I).$$
\end{lmm}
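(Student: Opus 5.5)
Since $\K$ is HMF-presented, $\H_*(\K_J;\k)$ is the cokernel of $\nu_J$ in the exact sequence of Definition \ref{defn:HMF-presented}. To get $g_J$, I will first define a $\k$-linear map on the free module
$$\widetilde{g}_J:\bigoplus_{I\in\MF(\K_J)}\k\cdot x_I\to H_*(\Omega\djk;\k),\qquad x_I\mapsto(-1)^{\theta(I,J)+|J\setminus I|}c(J\setminus I,w_I).$$
Then I will show $\widetilde{g}_J\circ\nu_J=0$, i.e. $\widetilde{g}_J(\rho_L)=0$ for every $L\in\AMF(\K_J)$. By exactness, $\widetilde{g}_J$ then descends along the surjection $\mu_J$, and the induced map is the required $g_J$.

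The real work is the vanishing on $\rho_L$. Fix $L\in\AMF(\K_J)$. Then $\widetilde{g}_J(\rho_L)$ is a signed sum over those $i\in L$ with $L\setminus i\notin\K$ of $c(J\setminus(L\setminus i),w_{L\setminus i})=c((J\setminus L)\sqcup\{i\},w_{L\setminus i})$. Since $\K$ is $1$-neighbourly, the identity \eqref{eqn:ui_commute_corollary} lets me factor off the common part:
$$c((J\setminus L)\sqcup\{i\},w_{L\setminus i})=\pm\,c\bigl(J\setminus L,[u_i,w_{L\setminus i}]\bigr).$$
If all the signs combine to one global sign times $(-1)^{|L_{<i}|}$, then
$$\widetilde{g}_J(\rho_L)=\pm\,c\Bigl(J\setminus L,\sum_{i\in L,\ L\setminus i\notin\K}[u_i,w_{L\setminus i}]\Bigr),$$
and the inner sum vanishes by Lemma \ref{lmm:standard-relations-in-hodj}(3).

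The step I expect to be the main obstacle is the sign bookkeeping, so I will check it directly. Write $A=J\setminus L$. For each $i$ I need the sign relating $c(A\sqcup\{i\},-)$ to $c(A,c(\{i\},-))$; by \eqref{eqn:ui_commute_corollary} this is $(-1)^{\theta(A,\{i\})}$, which equals $(-1)^{|A_{>i}|}$. Combined with $(-1)^{\theta(L\setminus i,J)+|J\setminus(L\setminus i)|}$, the $i$-dependence should collapse to $(-1)^{|L_{<i}|}$ times a constant depending only on $L$ and $J$. To verify this I will use
$$\theta(L\setminus i,J)=\theta(L,J)-|J_{<i}|+|(L\setminus i)_{>i}|,$$
together with $|J_{<i}|=|A_{<i}|+|L_{<i}|$ and $|A_{<i}|+|A_{>i}|=|A|$. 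The remaining terms should then give exactly $(-1)^{|L_{<i}|}$ up to a global sign, as the paper's convention in Proposition \ref{prp:c(J,wI)-geometric-interpretation}(3) suggests. The global sign is harmless, since the relation of Lemma \ref{lmm:standard-relations-in-hodj}(3) is homogeneous.

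The missing faces $I$ with $|I|=2$ need a separate remark. For $1$-neighbourly $\K$ there are none, so all $I\in\MF(\K_J)$ have $|I|\geq3$, and $w_I$ is defined by the cycle $\psi_I$ as before.
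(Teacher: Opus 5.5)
Your strategy is the paper's: define the map on $\bigoplus_{I\in\MF(\K_J)}\k\cdot x_I$, and show each $\rho_L$ goes to zero by moving $u_i$ into the innermost bracket with \eqref{eqn:ui_commute_corollary}. Then check that the total sign is independent of $i$, and finish with part (3) of Lemma \ref{lmm:standard-relations-in-hodj}. However, the one identity you commit to for the sign bookkeeping is false, and that is the step you yourself single out as the crux. The correct identity is $\theta(L\setminus i,J)=\theta(L,J)-|J_{<i}|$, with no term $|(L\setminus i)_{>i}|$. Deleting $i$ from the first argument removes only the pairs $(i,j)$ with $j\in J$, $j<i$. The pairs $(l,i)$ with $l\in L\setminus i$, $l>i$ are still counted, because $i$ is still in $J$. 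For example, for $L=J=\{1,2,3,4\}$ and $i=1$ one has $\theta(\{2,3,4\},J)=6=\theta(L,J)$, whereas your formula gives $9$.

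This error is not cosmetic. If you substitute your identity, the sign of $\widetilde g_J(x_{L\setminus i})$ relative to $c(J\setminus L,[u_i,w_{L\setminus i}])$ comes out as the constant $(-1)^{\theta(L,J)+|L|}$. Then $\widetilde g_J(\rho_L)$ becomes $\pm c\bigl(J\setminus L,\sum_i(-1)^{|L_{<i}|}[u_i,w_{L\setminus i}]\bigr)$. That is not the relation provided by Lemma \ref{lmm:standard-relations-in-hodj}, so the verification fails as written.

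With the correct identity, put $A=J\setminus L$ and use $|J_{<i}|=|A_{<i}|+|L_{<i}|$ and $|A_{<i}|+|A_{>i}|=|A|$. The exponent becomes $\theta(L\setminus i,J)+|A|+1+|A_{>i}|\equiv\theta(L,J)+|L_{<i}|+1 \pmod 2$, which is indeed $|L_{<i}|$ plus a constant. Combined with the coefficient $(-1)^{|L_{<i}|}$ in $\rho_L$, this gives $\widetilde g_J(\rho_L)=(-1)^{\theta(L,J)+1}\,c\bigl(A,\sum_{i\in L,\ L\setminus i\notin\K}[u_i,w_{L\setminus i}]\bigr)=0$. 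With this correction your argument is complete. The remaining points are fine and match the paper: descending along the presentation, $|I|\ge 3$ for $1$-neighbourly $\K$, and the irrelevance of the global sign.
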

\begin{proof}
Since $\K$ is HMF-presented, the $\k$-module $\H_*(\K_J;\k)$ is additively generated by the classes $\{[\partial\Delta_I]:I\in\MF(\K_J)\}$ modulo the relations \eqref{eqn:amf_homology_relation} for all $L\in\AMF(\K_J)$. It remains to check that images $g_J([\partial\Delta_I])$ satisfy these relations. Let $n:=|L|-1$ and $A:=J\setminus L$. Under the map $g_J$, the summand $(-1)^{\theta(L\setminus i,i)}[\partial\Delta_{L\setminus i}]$ is mapped to
$$(-1)^{\theta(L\setminus i,i)+\theta(L\setminus i,J)+|A|+1}c(A\sqcup i,w_{L\setminus i})=(-1)^{\theta(L\setminus i,i)+\theta(L\setminus i,J)+|A|+1+\theta(A,i)}c(A,[u_i,w_{L\setminus i}]).$$
The sign is equal to
$$\theta(L,i)+\theta(L\setminus i,J)+1+|A|+\theta(A,i)=\theta(A,i)+\theta(L,i)+\theta(i,J)+\theta(L,J)+1+|A|=|J|+\theta(L,J)+1.$$
Therefore $$g_J(\rho_L)=(-1)^{|J|+\theta(L,J)+1}\sum_{i\in L}[u_i,w_{L\setminus i}],$$
which is zero by part (3) of Lemma \ref{lmm:standard-relations-in-hodj}.
\end{proof}

\begin{thm}
\label{thm:hodj-presentation-for-torsionfree-hmf-presented}
Suppose that $\K$ is a $1$-neighbourly simplicial complex which is HMF-presented over a principal ideal domain $\k$. Assume that $H_*(\ZK;\k)$ is a free $\k$-module. Then the Hopf algebra $H_*(\Omega\djk;\k)$ is isomorphic to
$$A=T(u_1,\dots,u_m;~w_I,~I\in\MF(\K))/(R),$$
where $u_i$, $w_I$ are primitive, and $R$ is generated by relations of three types:
\begin{enumerate}
    \item $u_i^2=0$, $[u_i,u_j]=0$ for all $i=1,\dots,m$;
    \item $[u_i,w_I]=0$ if $i\in I$;
    \item $\sum_{\begin{smallmatrix}
        i\in L:\\L\setminus i\notin\K
    \end{smallmatrix}}[u_i,w_{L\setminus i}]=0,$ $L\in\AMF(\K)$.
\end{enumerate}
\end{thm}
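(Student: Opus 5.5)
The plan is to build a surjective map $\varphi:A\to H_*(\Omega\djk;\k)$ of Hopf algebras. We then show that $A$ is additively spanned by a set that $\varphi$ sends bijectively onto the basis $B'$ of Proposition \ref{prp:HMF-without-torsion-additive-basis}(3).

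\textbf{Step 1: the map $\varphi$.} Set $\varphi(u_i)=u_i$ and $\varphi(w_I)=w_I$. Relations (1)--(3) of $A$ hold in $H_*(\Omega\djk;\k)$ by Lemma \ref{lmm:standard-relations-in-hodj}. In relation (1), every pair $\{i,j\}$ is an edge, because $\K$ is $1$-neighbourly. So $\varphi$ is a well-defined algebra map. It is a map of Hopf algebras because the generators are primitive on both sides, by Corollary \ref{crl:w_I-primitive}; freeness of $H_*(\Omega\djk;\k)$ comes from Proposition \ref{prp:HMF-without-torsion-additive-basis}. Surjectivity follows from part (4) of that proposition. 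For $I\in\MF_2(\K)$, the generator $w_I$ of $A$ is sent to $w_I=[u_i,u_j]$, which is $0$ by $1$-neighbourliness; in fact $\MF_2(\K)=\varnothing$, so this case does not arise.

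\textbf{Step 2: a spanning set of $A$.} Let $A'\subseteq A$ be the subalgebra generated by the elements $c(J\setminus I,w_I)$. In $A$ the $u_i$ generate an exterior algebra $\Lambda[u_1,\dots,u_m]$, because of relation (1) and $1$-neighbourliness. For any $x$ we have the commutation rule $u_i x=(-1)^{|x|}x u_i+[u_i,x]$. Using it repeatedly, every monomial in $A$ can be moved into the form $a\cdot\widehat{u}_J$ with $a\in A'$. To make this work, $A'$ must be stable under $\mathrm{ad}(u_i)$; this holds by the Leibniz rule together with the identity $[u_i,c(K,w_I)]=\pm c(K\sqcup i,w_I)$ or $0$ from \eqref{eqn:ui_commute_corollary}. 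Relation (2) kills $c(K,w_I)$ whenever $K\cap I\neq\varnothing$, because $u_i$ for $i\in I$ commutes with $w_I$ and with the other $u_j$. Hence $A'$ is spanned by products of the elements $c(J\setminus I,w_I)$ with $J\supseteq I$, and $A=A'\cdot\Lambda[u]$ additively.

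\textbf{Step 3: reduce generators of $A'$ to $B$ (the main obstacle).} For fixed $J$, consider the $\k$-linear map $\widetilde g_J:\bigoplus_{I\in\MF(\K_J)}\k x_I\to A$ given by $x_I\mapsto(-1)^{\theta(I,J)+|J\setminus I|}c(J\setminus I,w_I)$. The sign computation in Lemma \ref{lmm:HMF-presented-map-gJ} is carried out purely formally using relation (3). It therefore shows that $\widetilde g_J$ kills $\Img\nu_J$ in $A$, not only in $H_*(\Omega\djk)$. Since $\K$ is HMF-presented, $\widetilde g_J$ factors through a map $\H_*(\K_J;\k)\to A$. Consequently the image of $\widetilde g_J$ is spanned by the images of the basis $B_0(J)$ of Proposition \ref{prp:HMF-without-torsion-additive-basis}(1). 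These images are exactly the lifts $\widetilde B$ of the elements of $B$.

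So $A$ is spanned by words $\widetilde b_1\cdots\widetilde b_P\widehat{u}_J$, and $\varphi$ maps this spanning set bijectively onto the basis $B'$. A spanning set whose image is linearly independent is itself linearly independent. Therefore it is a basis of $A$ and $\varphi$ is an isomorphism.

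The delicate point is Step 2. There we must check carefully that $A'$ is closed under $\mathrm{ad}(u_i)$, with the right signs and with the vanishing for $K\cap I\neq\varnothing$, using only the defining relations of $A$. If a cleaner route is needed, one can instead filter $A$ by word length in the $w$'s and argue on the associated graded.
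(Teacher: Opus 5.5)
Your proposal is correct and follows essentially the same route as the paper. You define the algebra map $A\to H_*(\Omega\djk;\k)$ via Lemma~\ref{lmm:standard-relations-in-hodj} and get surjectivity from Proposition~\ref{prp:HMF-without-torsion-additive-basis}. You then show $A$ is spanned by lifts of $B'$: move the $u_i$ to the right, kill $c(K,w_I)$ with $K\cap I\neq\varnothing$ using relation (2), and use relation (3) together with the HMF-presentation to write each $c(J\setminus I,w_I)$ in terms of lifts of $B$. You are slightly more careful than the paper in noting that the computation of Lemma~\ref{lmm:HMF-presented-map-gJ} must be, and can be, carried out in $A$ itself, and your closing step (a spanning set mapped bijectively onto a basis is itself a basis) is a cleaner substitute for the paper's surjective-endomorphism argument.
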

\begin{proof}
By part (3) of Proposition \ref{prp:HMF-without-torsion-additive-basis}, $H_*(\Omega\djk;\k)$ is a Hopf algebra since it is a free $\k$-module. The elements $u_i$, $w_I$ are primitive by Corollary \ref{crl:w_I-primitive}.

By Lemma \ref{lmm:standard-relations-in-hodj}, the relations (1)-(3) hold in $H_*(\Omega\djk;\k)$. Thus there is a well defined map of algebras $f:A\to H_*(\Omega\djk;\k)$. By part (3) of Proposition \ref{prp:HMF-without-torsion-additive-basis}, it is surjective.

To prove that $f$ is an isomorphism, it is sufficient to prove that the $\k$-module $A$ is spanned by the set $B'$ defined in part (3) of Proposition \ref{prp:HMF-without-torsion-additive-basis}. If $A$ is spanned by $B'$, then there is a surjective map of $\k$-modules $h:H_*(\Omega\djk;\k)\to A$. The composite $f\circ h$ is a surjective endomorphism of $H_*(\Omega\djk;\k)$, which is a free $\k$-module of finite type. Since $\k$ is a principal ideal domain, it follows that $f\circ h$ is an isomorphism, so $f$ is injective.

By definition, $A$ is spanned by words on $u_j$'s and $w_I$'s. Let $x$ be such a word. Whenever there is a subword of the form $u_jc(J,w_I)$, we replace it with $\pm c(J,w_I)u_j\pm c(\{j\},c(J,w_I))$ by the Jacobi identity. Using the relations \eqref{eqn:ui_commute_corollary}, we thus express $x\in A$ as a linear combinations of words of the form $c_1c_2\dots c_N\cdot \widehat{u}_K$, where each $c_i$ is of the form $c(A,w_I)$, $A\subseteq[m]$, $I\in\MF(\K)$.

If $i\in A\cap I$, the relations $[u_i,w_I]=0$ and \eqref{eqn:ui_commute_corollary} imply that $c(A,w_I)=\pm c(A\setminus i,[u_i,w_I])=0$. Thus we can assume that each $c_i$ is of the form $c_i=c(J\setminus I,w_I)$, $I\in\MF(\K_J)$.  It follows that $c_i=g_J([\partial\Delta_I])$, where the map $g_J:\H_*(\K_J;\k)\to H_*(\Omega\djk;\k)$ was defined in Lemma \ref{lmm:HMF-presented-map-gJ}. Since $[\partial\Delta_I]\in\spn_\k(B_0(J))$, $c_i\in\spn_\k(B)$ by definition of $B$. Hence $x$ is a linear combination of words of the form $c_1'c_2'\dots c_N'\cdot\widehat{u}_K$, $c_i'\in B$. This shows that $x\in\spn_\k(B')$, which completes the proof.
\end{proof}

In \cite{S}, it was shown that if $\K$ is the $d$-skeleton of a simplex, then $\Omega \djk$ decomposes as a product of spheres and loops on spheres. This decomposition only gives information about $H_*(\Omega \djk;\k)$ as a module, namely it is a free $\k$-module for any commutative ring with unit $\k$. We can use Theorem \ref{thm:hodj-presentation-for-torsionfree-hmf-presented} to give a presentation of $H_*(\Omega \djk;\k)$ as a Hopf algebra. Special cases of this result were obtained by Panov and Ray, see \cite[Example 10.2]{panov-ray}.

\begin{crl}
\label{crl:dj-for-skeleta-presentation}
Suppose that $\K=\sk_d\Delta_{[m]}$, $1\leq d\leq m-2$. Then, for any principal ideal domain $\k$, $H_*(\Omega\djk;\k)$ is isomorphic to the Hopf algebra
$$A=T(u_1,\dots,u_m;~w_I,~I\subseteq[m],~|I|=d+2)/(R),$$
where $u_i$, $w_I$ are primitive, and $R$ is generated by relations of three types:
\begin{enumerate}
    \item $u_i^2=0$, $i=1,\dots,m$; $[u_i,u_j]=0$ for all $i$;
    \item $[u_i,w_I]=0$ if $i\in I$;
    \item $\sum_{i\in J}[u_i,w_{J\setminus i}]=0$ for all $J\subseteq[m]$, $|J|=d+3$.
\end{enumerate}
\end{crl}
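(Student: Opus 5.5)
The plan is to obtain the corollary as a direct application of Theorem \ref{thm:hodj-presentation-for-torsionfree-hmf-presented} to $\K=\sk_d\Delta_{[m]}$. Three hypotheses need checking, after which the missing faces and almost missing faces of $\K$ are identified so that the three families of relations match.

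\emph{Hypotheses.} Since $d\geq 1$, every pair of vertices spans an edge, so $\K$ is $1$-neighbourly. By Lemma \ref{lmm:skeleta-HMF-presented}, $\K$ is HMF-presented over any ring, in particular over $\k$. For freeness of $H_*(\ZK;\k)$ we use \eqref{eqn:hochster-formula}: every full subcomplex is $\K_J=\sk_d\Delta_J$. This is either a simplex, when $|J|\leq d+1$, or a wedge of $d$-spheres, whose reduced integral homology is free and concentrated in degree $d$ (as in the proof of Lemma \ref{lmm:skeleta-HMF-presented}). Hence each $\H_*(\K_J;\k)$ is free, and therefore so is $H_*(\ZK;\k)$.

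\emph{Combinatorics.} A subset $I$ satisfies $\K_I=\partial\Delta_I$ exactly when all proper subsets of $I$ have size at most $d+1$ but $I$ itself has size greater than $d+1$, that is, when $|I|=d+2$. So $\MF(\K)=\{I:|I|=d+2\}$, and the generators $w_I$ are as stated. Similarly, $L$ is an almost missing face when $\partial^2\Delta_L\subseteq\K_L\subsetneq\partial\Delta_L$. This means the subsets of size $|L|-2$ lie in $\K$, so $|L|-2\leq d+1$, while some subset of size $|L|-1$ does not lie in $\K$, so $|L|-1\geq d+2$. Together these give $|L|=d+3$. For such $L$, every $L\setminus i$ has size $d+2$ and is not a face, so the restricted sum in relation (3) of Theorem \ref{thm:hodj-presentation-for-torsionfree-hmf-presented} runs over all $i\in L$. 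This yields relation (3) of the corollary. Relations (1) and (2) transcribe directly, with $[u_i,u_j]=0$ for all pairs because $\K$ is $1$-neighbourly.

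\emph{Possible obstacles.} The only delicate points are bookkeeping. The first is the edge range: the bound $d\leq m-2$ ensures that missing faces exist, since otherwise $\K$ is a simplex. When $d+3>m$, relation (3) is vacuous, which is consistent with the statement. The second is confirming that the freeness hypothesis concerns $\ZK$ rather than $\DJ$. Both are routine.
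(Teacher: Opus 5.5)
Your proposal is correct and takes the same approach as the paper: the paper's proof is exactly the application of Theorem \ref{thm:hodj-presentation-for-torsionfree-hmf-presented}, with the HMF-presented hypothesis supplied by Lemma \ref{lmm:skeleta-HMF-presented}. You also spell out the checks the paper leaves implicit — $1$-neighbourliness, freeness of $H_*(\ZK;\k)$ via \eqref{eqn:hochster-formula}, and the identifications $\MF(\K)=\{I:|I|=d+2\}$ and $\AMF(\K)=\{L:|L|=d+3\}$ (so the restricted sum in relation (3) is over all $i\in L$) — and all of these are right.
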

\begin{proof}
$\K$ is HMF-presented by Lemma \ref{lmm:skeleta-HMF-presented}, so the result follows from Theorem \ref{thm:hodj-presentation-for-torsionfree-hmf-presented}.

\end{proof}
\section{Applications to Polyhedral Products}
\label{sec:Polyhedralproducts}

In this section, we use Theorem \ref{thm:general-theorem-for-pushout} to give a colimit decomposition of the loop homology of weighted polyhedral products when $H_*(\Omega \ux^{\K,c};\k)$ is a free $\k$-module. We then apply this to give presentations of the loop homology of special classes of polyhedral products.

\subsection*{A colimit decomposition}

In this section, we prove a general colimit decomposition of weighted polyhedral products, from which Theorem \ref{thm:uxk_colimit_decomposition_intro} follows. 

\begin{thm}
\label{thm:uxk_colimit_decomposition}
    Let $\K$ be a simplicial complex, and $(X_1,\rho^1),\cdots,(X_m,\rho^m)$ be a sequence of simply connected spaces with associated power maps. Let $c$ be a power sequence. Let $\k$ be a principal ideal domain such that $H_*(\Omega \ux^{\K,c};\k)$ is a free $\k$-module. Then for each $I \in \Kf$, the maps $H_*(\Omega \ux^{\K_I,c}) \to H_*(\Omega \ux^{\K,c})$ induced by the simplicial inclusion define an isomorphism of Hopf algebras \[H_*(\Omega \ux^{\K,c};\k) \cong \colim_{I \in \Kf} H_*(\Omega \ux^{\K_I,c};\k).\]  Moreover, this equivalence is natural with respect to simplicial inclusions. 
\end{thm}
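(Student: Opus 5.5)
We argue by induction on the number of vertices $m$, or more precisely on the number of non-faces of $\Kf$ that are relevant. If $\K$ is 1-neighbourly, then $\Kf=\Delta_{[m]}$ has $[m]$ as a terminal object. The colimit over $\Kf$ is then just $H_*(\Omega\ux^{\K,c})$ itself, and there is nothing to prove. Otherwise there is a missing edge $\{i,j\}\notin\K$. Put $A=[m]\setminus\{j\}$ and $B=[m]\setminus\{i\}$. Every face of $\K$ avoids $i$ or $j$, so $\K=\K_A\cup\K_{B}$ with $\K_A\cap\K_B=\K_{A\cap B}$. Lemma~\ref{lem:pushout} and Lemma~\ref{lem:retraction} then give a homotopy pushout of $\ux^{\K_A,c}\leftarrow\ux^{\K_{A\cap B},c}\to\ux^{\K_B,c}$ whose maps have left homotopy inverses. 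This is exactly the situation of Theorem~\ref{thm:general-theorem-for-pushout}.

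First, the freeness hypothesis. The spaces $\Omega\ux^{\K_A,c}$, $\Omega\ux^{\K_B,c}$ and $\Omega\ux^{\K_{A\cap B},c}$ are homotopy retracts of $\Omega\ux^{\K,c}$ by Lemma~\ref{lem:retraction}, so their homology over $\k$ is a direct summand of a free $\k$-module. Over a PID this makes it free, and the same holds for every full subcomplex. Theorem~\ref{thm:general-theorem-for-pushout}(2) therefore gives
\[H_*(\Omega\ux^{\K,c})\cong H_*(\Omega\ux^{\K_A,c})\ast_{H_*(\Omega\ux^{\K_{A\cap B},c})}H_*(\Omega\ux^{\K_B,c}).\]
By induction on $m$, each of the three factors is the colimit over $(\K_A)^f=(\Kf)_A$, $(\Kf)_B$ and $(\Kf)_{A\cap B}$ respectively, naturally in simplicial inclusions. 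Here we use that flagification commutes with taking full subcomplexes, and that $(\K_A)_I=\K_I$.

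It remains to identify the resulting pushout of colimits with the colimit over $\Kf$. We apply Lemma~\ref{lem:breakdowncolimit} in the category of Hopf algebras, which has the same colimits as algebras by Lemma~\ref{lmm:hopf-colimit}. Take the face poset $\mathcal{I}=\Kf$ and the full subcategories $\mathcal{I}_1=(\Kf)_A$ and $\mathcal{I}_2=(\Kf)_B$. Since $\{i,j\}\notin\Kf$, no face of $\Kf$ contains both $i$ and $j$, so $\mathcal{I}=\mathcal{I}_1\cup\mathcal{I}_2$ and $\mathcal{I}_1\cap\mathcal{I}_2=(\Kf)_{A\cap B}$. The maps $f_1,f_2$ between the colimits are the ones induced by the inclusions. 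By naturality of the inductive isomorphisms, these correspond to $s_A,s_B$, so the two pushouts agree. Lemma~\ref{lem:breakdowncolimit} applies to arbitrary small diagrams, though strictly speaking Hopf algebras form a cocomplete but not small category; we accept this.

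We must also check that the isomorphism obtained is the canonical map out of the colimit, which gives independence of the chosen missing edge, and that it is natural for simplicial inclusions $\L\subseteq\K$. Both follow because every map in sight is induced by the simplicial inclusions $\K_I\to\K$. Naturality is the step most likely to need care. To handle it, we would define the canonical comparison map $\colim_{I\in\Kf}H_*(\Omega\ux^{\K_I,c})\to H_*(\Omega\ux^{\K,c})$ directly from the inclusions, which makes it natural by construction. The induction then only has to show that this map is bijective, and that is exactly what the pushout argument establishes.
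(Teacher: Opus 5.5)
Your proposal is correct and follows the paper's proof. The paper also inducts via a splitting $\K=\K_A\cup_{\K_{A\cap B}}\K_B$; it cites \cite[Lemma 4.4]{S} for this, while you build it directly from a missing edge with $A=[m]\setminus\{j\}$, $B=[m]\setminus\{i\}$. It then uses freeness of the retracts, part (2) of Theorem~\ref{thm:general-theorem-for-pushout}, and Lemma~\ref{lem:breakdowncolimit} to reassemble the pushout of colimits, exactly as you do; your step of first defining the canonical comparison map and then proving only bijectivity is a slightly more careful version of the paper's ``naturality follows by construction.''
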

\begin{proof}
    If $\K$ is a simplicial complex such that $\K^1$ is complete, then the corresponding diagram has a terminal object and the result is clear. 

    Now suppose $\K$ is a simplicial complex such that $\K^1$ is not complete. By \cite[Lemma 4.4]{S}, there is a pushout of simplicial complexes \[\begin{tikzcd}
	{\K_{A\cap B}} & {\K_A} \\
	{\K_B} & \K
	\arrow[from=1-1, to=1-2]
	\arrow[from=1-1, to=2-1]
	\arrow[from=1-2, to=2-2]
	\arrow[from=2-1, to=2-2]
\end{tikzcd}\] for some $A,B\subsetneq [m]$, induced by inclusions of full subcomplexes. By Lemma ~\ref{lem:pushout}, this induces a (homotopy) pushout of polyhedral products \[\begin{tikzcd}
	{\ux^{\K_{A\cap B},c}} & {\ux^{\K_B,c}} \\
	{\ux^{\K_A,c}} & {\ux^{\K,c}.}
	\arrow[from=1-1, to=1-2]
	\arrow[from=1-1, to=2-1]
	\arrow[from=1-2, to=2-2]
	\arrow[from=2-1, to=2-2]
\end{tikzcd}\] 

Lemma ~\ref{lem:retraction} implies that each map in the homotopy pushout has a left homotopy inverse. In particular, since $\k$ is a principal ideal domain and $H_*(\Omega \ux^\K;\k)$ is a free $\k$-module, the loop homology of each object in the pushout is a free $\k$-module. 
Therefore, ommiting the coefficients of homology, by part (2) of Theorem \ref{thm:general-theorem-for-pushout}, there is a pushout of Hopf algebras \[\begin{tikzcd}
	{H_*(\Omega\ux^{\K_{A\cap B},c})} & {H_*(\ux^{\K_A,c})} \\
	{H_*(\ux^{\K_B,c})} & {H_*(\ux^{\K,c}).}
	\arrow[from=1-1, to=1-2]
	\arrow[from=1-1, to=2-1]
	\arrow[from=1-2, to=2-2]
	\arrow[from=2-1, to=2-2]
\end{tikzcd}\] By the inductive hypothesis, up to natural isomorphism, this pushout can be written as the pushout \begin{equation}\label{eqn:kanext}\begin{tikzcd}
	{\colim_{I \in \K_{A\cap B}^f}H_*(\Omega\ux^{\K_I,c})} & {\colim_{I \in \K_A^f}H_*(\Omega\ux^{\K_I,c})} \\
	{\colim_{I \in \K_B^f}H_*(\Omega\ux^{\K_I,c})} & {H_*(\ux^{\K,c})}
	\arrow[from=1-1, to=1-2]
	\arrow[from=1-1, to=2-1]
	\arrow[from=1-2, to=2-2]
	\arrow[from=2-1, to=2-2]
\end{tikzcd}\end{equation} 
(we use that $(\K_J)^f=(\Kf)_J$ for any $J\subseteq[m]$ and $(\K_J)_I=\K_I$ for $I\subseteq J$.)

Lemma ~\ref{lem:breakdowncolimit} then implies that there is an isomorphism \[H_*(\Omega \ux^{\K,c}) \cong \colim_{I \in \Kf} H_*(\Omega \ux^{K_I,c}).\] More precisely, for each $I \in \Kf$, the inclusions $H_*(\Omega\ux^{\K_I}) \to H_*(\Omega \ux^{\K})$ define an isomorphism $\colim_{I \in \Kf} H_*(\Omega \ux^{K_I,c}) \to H_*(\Omega \ux^{\K,c})$. Naturality with respect to simplicial inclusions then follows by construction.
\end{proof}

As a corollary, we obtain a colimit decomposition of Yoneda algebras for certain monomial rings. Recall the \emph{Stanley--Reisner ring} $\k[\K] := \k[v_1,\cdots,v_m]/(v^I=0, \: I \notin \K),$ $|v_i| = 2$, corresponding to a simplicial complex $\K$ and a ring $\k$. For $\underline{d}=(d_1,\dots,d_m)$, consider the \emph{generalised exterior Stanley--Reisner ring}
$$\Lambda(\K;\underline{d}):=S(v_1,\dots,v_m)/(v_1^2=\dots=v_m^2=0;~v^I=0,I\notin\K),\quad |v_i|=d_i,$$
where $S(v_1,\dots,v_m)$ is the free graded-commutative algebra over $\k$. As special cases, the \emph{exterior Stanley--Reisner ring} is $\Lambda(\K;\underline{1})=\Lambda[v_1,\dots,v_m]/(v^I:I\notin\K)$, and $\Lambda(\K;\underline{2})=\k[\K]/(v_1^2,\dots,v_m^2)$ is the \emph{truncated Stanley--Reisner ring.}
\begin{prp}
\label{prp:S^K model}
Let $\K$ be a simplicial complex on $[m]$, and let $\k$ be a field.
Consider $X=\ux^\K$, where $X_i=S^{d_i}$, $d_i\geq 2$. There are natural isomorphisms of algebras
$$H^*(X;\k)\cong\Lambda(\K,\underline{d}),\quad H_*(\Omega X;\k)\cong\Ext_{\Lambda(\K,\underline{d})}(\k,\k).$$
\end{prp}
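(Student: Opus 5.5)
The cohomology computation and the loop homology computation are separate steps. For the second we would go through the cobar construction on the cochains, which is where formality of $\ux^\K$ enters.

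\emph{Cohomology.} We would use the standard description of the cohomology of polyhedral products $(\underline{X},\ast)^\K$ with $\k$ a field and $X_i=S^{d_i}$. By the Künneth theorem, $H^*(\prod_i S^{d_i};\k)=S(v_1,\dots,v_m)/(v_i^2)$ with $|v_i|=d_i$. The restriction map from $H^*(\prod_i S^{d_i})$ to $H^*(\ux^\K)$ is surjective. Its kernel is spanned by the monomials $v^I$ with $I\notin\K$. To see this, note that $\ux^\K$ is a union of coordinate subproducts $\ux^I$, and a monomial $v^J$ restricts nontrivially to $\ux^I$ exactly when $J\subseteq I$. A colimit or Mayer--Vietoris argument over the face poset then shows that $H^*(\ux^\K)=\lim_{I\in\K}H^*(\ux^I)$, which is the Bahri--Bendersky--Cohen--Gitler computation. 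This gives $H^*(X;\k)\cong\Lambda(\K,\underline d)$. Naturality in $\K$ is clear, because the isomorphism is induced by the inclusion into the product.

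\emph{Loop homology.} The plan has three steps.
\begin{enumerate}
\item Show that $X=\ux^\K$ is formal over $\k$. The cochain algebra $C^*(X;\k)$ should be quasi-isomorphic, as an $A_\infty$ or dg algebra, to $\Lambda(\K,\underline d)$ with zero differential. The route is to write $X$ as the homotopy colimit $\operatorname{hocolim}_{I\in\K}\prod_{i\in I}S^{d_i}$. Each sphere $S^{d_i}$ is formal, with a formality map compatible with the inclusion of the basepoint. Taking products gives compatible formality maps on every face, and these assemble into a zig-zag of quasi-isomorphisms of diagrams of cochain algebras. Since polyhedral products of this kind are homotopy colimits and cochains turn them into homotopy limits, the zig-zag passes to $X$. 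This is the approach of Notbohm--Ray and Franz for $\djk$, transported to spheres.
\item Apply the Eilenberg--Moore/Adams theorem. Since $X$ is simply connected ($d_i\ge2$) and of finite type, $H_*(\Omega X;\k)\cong\Ext_{C^*(X)}(\k,\k)$, computed via the cobar construction on the dual coalgebra $C_*(X)$.
\item Combine the two. Formality gives $\Ext_{C^*(X)}(\k,\k)\cong\Ext_{\Lambda(\K,\underline d)}(\k,\k)$, with grading conventions as for $\djk$. These isomorphisms respect products, since the Yoneda/cobar product corresponds to loop multiplication. The argument is the one used for the $\djk$ case in \cite[Proposition 8.4.10]{BP15}, with $\k[\K]$ replaced by $\Lambda(\K,\underline d)$.
\end{enumerate}

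\emph{Main obstacle.} The hard step is formality when some $d_i$ are odd or when $\chr\k=2$. There the cochains on a sphere are not strictly commutative, so commutative formality models are unavailable. We would try coformality-free arguments through coalgebra models instead. Concretely, take the dg coalgebra $\kKc$-analogue $\Lambda(\K,\underline d)^*$ with zero differential. Map it to $C_*(X)$ by a coalgebra map built face by face from the cellular chains of $\prod_{i\in I}S^{d_i}$, using the minimal cell structures, which are compatible under inclusions. The cellular chain coalgebra of $X$ with its diagonal approximation is then exactly $\Lambda(\K,\underline d)^*$ up to homotopy-coassociativity. Passing to cobar constructions gives the Ext isomorphism. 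Naturality in $\K$ follows because every construction is functorial for simplicial inclusions.
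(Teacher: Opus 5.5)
Your cohomology computation is fine; it is the Bahri--Bendersky--Cohen--Gitler result that the paper cites. Your target for the loop homology is also the paper's: show $H_*(\Omega X;\k)\cong H(\Omega_*\Lambda(\K,\underline d)^*)$, then dualize to $\Ext_{\Lambda(\K,\underline d)}(\k,\k)$.

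The gap is the step that carries all the content: that the cobar construction on the homology coalgebra computes $H_*(\Omega X;\k)$. You do not establish this over fields of positive characteristic. The paper does not prove it either; it imports it from Zhuravleva's Adams--Hilton model computation for polyhedral products.

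Your primary formality argument only works in characteristic $0$. There you can use $A_{PL}$, the Künneth maps are multiplicative, and the face-poset diagrams have surjective matching maps, so the strict limits are homotopy limits. The dividing line is not the parity of the $d_i$ or $\chr\k=2$. In every positive characteristic there is no commutative replacement for cochains, so $C^*(Y)\otimes C^*(Z)\to C^*(Y\times Z)$ is not multiplicative. Products of the spheres' formality maps are then not dg-algebra maps, and nothing assembles over $\K$.

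Your fallback does not close the gap. There is no natural coalgebra map from minimal cellular chains to $C_*(X)$. A coproduct that is coassociative only up to homotopy does not define a cobar construction. A chain map that commutes with coproducts only up to homotopy does not induce a map of cobar constructions. You would need a strict, or fully $A_\infty$, coalgebra quasi-isomorphism.

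Cellular data alone cannot supply one. Take $Y=(S^3\vee S^3)\cup_{[\iota_1,[\iota_1,\iota_2]]}e^8$. Its minimal cellular chains with cellular diagonal are exactly the homology coalgebra: strictly coassociative, with all reduced coproducts zero for degree reasons. The cobar construction on it is free on classes of degrees $2,2,7$. But $H_*(\Omega Y)$, computed from the Adams--Hilton model $(T(a,b,c),\,dc=[a,[a,b]])$, has the relation $[a,[a,b]]=0$ in degree $6$.

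Here is one way to make your idea work over any field.
\begin{enumerate}
\item Model $S^{d}$ by the simplicial set $\Delta^{d}/\partial\Delta^{d}$. Its fundamental simplex is a strictly primitive cycle in normalized chains $N_*$.
\item Use that the Eilenberg--Zilber shuffle map is a strict morphism of dg coalgebras. This gives coalgebra quasi-isomorphisms $H_*(\ux^I;\k)\to N_*(\ux^I)$ that are compatible with the face inclusions.
\item Glue these to a coalgebra map $\Lambda(\K,\underline d)^*\to N_*(\ux^\K)$. It is a quasi-isomorphism by Mayer--Vietoris induction over the faces.
\item Since $d_i\geq 2$, everything is $1$-reduced. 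Adams' cobar theorem and quasi-isomorphism invariance of the cobar construction for simply connected coalgebras give $H_*(\Omega X;\k)\cong H(\Omega_*\Lambda(\K,\underline d)^*)$ as algebras. Dualizing gives the $\Ext$ statement.
\end{enumerate}
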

\begin{proof}
The first isomorphism is from \cite[Theorem 2.34]{bbcg}. For the second isomorphism, by \cite[Theorem 4.16]{zhuravleva}, $H_*(\Omega X;\k)\cong H(\Omega_*[H_*(X;\k)])$, where $\Omega_*$ is the cobar construction. It follows that $H_*(\Omega X;\k)\cong\Ext_{H^*(X;\k)}(\k,\k)$ by dualization (see e.g. \cite[Proposition 2.6]{vylegzhanin22}).
\end{proof}
\begin{crl}
\label{cor:ext}
Let $\K$ be a simplicial complex on $[m]$, and let $\k$ be a field. Then the natural maps
\begin{enumerate}
    \item $\colim_{I\in\Kf}\Ext_{\k[\K_I]}(\k,\k)\to\Ext_{\k[\K]}(\k,\k)$,
    \item $\colim_{I\in\Kf}\Ext_{\Lambda(\K_I;\underline{d})}(\k,\k)\to\Ext_{\Lambda(\K;\underline{d})}(\k,\k)$
\end{enumerate}
are isomorphisms of algebras. Moreover, (1) is an isomorphism of Hopf algebras.
\end{crl}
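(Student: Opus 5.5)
The plan is to realise both Ext algebras as loop homology algebras of polyhedral products and then apply Theorem \ref{thm:uxk_colimit_decomposition} with $c$ trivial (so that the weighted polyhedral product is the ordinary one). Since $\k$ is a field, every loop homology module is free, so the freeness hypothesis holds automatically. Together with the naturality of the colimit decomposition under simplicial inclusions, this gives everything needed.

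For (1), take $X_i=\CC P^\infty$, so that $\ux^\K=\djk$, and $\ux^{\K_I}=DJ_{\K_I}$ for every full subcomplex. Theorem \ref{thm:uxk_colimit_decomposition} gives an isomorphism of Hopf algebras $\colim_{I\in\Kf}H_*(\Omega DJ_{\K_I};\k)\to H_*(\Omega\djk;\k)$. I will then use the Hopf algebra isomorphism $H_*(\Omega\djk;\k)\cong\Ext_{\k[\K]}(\k,\k)$ of \cite[Proposition 6.5]{franz-hga}, which is valid because the module is free over a field. The key point is naturality: the inclusion $\K_I\subseteq\K$ induces the map $DJ_{\K_I}\to\djk$ and, on the algebraic side, the surjection $\k[\K]\to\k[\K_I]$, which induces $\Ext_{\k[\K_I]}(\k,\k)\to\Ext_{\k[\K]}(\k,\k)$. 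I need the Franz isomorphism to intertwine these two maps. This should follow because the isomorphism factors through the cobar construction $\Omega_*\kKc$, and that construction is functorial in the coalgebra inclusion $\k\langle\K_I\rangle\subseteq\kKc$. Once naturality is established, the colimit diagrams agree, and the colimit map is an isomorphism of Hopf algebras; by Lemma \ref{lmm:hopf-colimit} it is also an isomorphism of algebras.

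For (2), first suppose every $d_i\geq 2$. Take $X_i=S^{d_i}$ and apply Proposition \ref{prp:S^K model} together with Theorem \ref{thm:uxk_colimit_decomposition}. Naturality comes from the fact that the cobar model of \cite{zhuravleva} is natural with respect to maps of spaces: the map $\ux^{\K_I}\to\ux^\K$ induces on cohomology the quotient $\Lambda(\K,\underline d)\to\Lambda(\K_I,\underline d)$. The case where some $d_i=1$ is not covered by spaces with $X_i$ simply connected. To handle it, I expect to use that $\Ext_{\Lambda(\K;\underline d)}(\k,\k)$, together with its $\Zm$-multigrading and algebra structure, depends on the degrees $d_i$ only through a regrading and a change of Koszul signs. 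I would therefore first try to replace the $d_i$ by $d_i+2$, which preserves parity and hence signs: the cobar complexes of the dual coalgebras are then isomorphic up to regrading, naturally in $\K$. Transferring the isomorphism along this identification completes the proof.

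The main obstacle is this naturality and regrading bookkeeping: checking that the identifications with Ext commute with the structure maps of the diagram, and justifying the parity-preserving regrading when some $d_i=1$. The topological input itself is entirely supplied by Theorem \ref{thm:uxk_colimit_decomposition}.
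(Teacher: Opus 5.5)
Your proposal is correct and is the paper's argument: identify the two Ext algebras with $H_*(\Omega\ux^\K;\k)$ for $X_i=\CC P^\infty$ (via \cite{BP15} and the Hopf algebra isomorphism of \cite{franz-hga}) and for $X_i=S^{d_i}$ (via Proposition~\ref{prp:S^K model}), then apply Theorem~\ref{thm:uxk_colimit_decomposition}, whose freeness hypothesis is automatic over a field. Your parity-preserving regrading $d_i\mapsto d_i+2$ is a genuine addition that covers $d_i=1$, a case the paper's proof silently omits because Proposition~\ref{prp:S^K model} requires $d_i\geq 2$; one correction is that the identification with Ext is natural only along the inclusions $\ux^{\K_I}\to\ux^\K$ (as that proposition asserts), not along arbitrary maps of spaces (the Hopf map $S^3\to S^2$ is a counterexample), but only the former is needed.
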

\begin{proof}
We identify these algebras with $H_*(\Omega\ux^\K;\k)$ for certain $X_i$, and apply Theorem \ref{thm:uxk_colimit_decomposition}.

For (1), we have $\Ext_{\k[\K]}(\k,\k)\cong H_*(\Omega\djk;\k)\cong H_*(\Omega\ux^\K)$ for $X_i=\CC P^\infty$ by \cite[Proposition 8.4.10]{BP15}. This is an isomorphism of Hopf algebras by \cite[Proposition 6.5]{franz-hga}.

For (2), $\Ext_{\Lambda(\K,\underline{d})}(\k,\k)\cong H_*(\Omega\ux^\K;\k)$ for $X_i=S^{d_i}$ by Proposition \ref{prp:S^K model}. 
\end{proof}

An associative algebra $A$ over a field $\k$ is a \emph{K2 algebra} if the Yoneda algebra $\Ext_A(\k,\k)$ is multiplicatively generated by $\Ext_A^1(\k,\k)$ and $\Ext_A^2(\k,\k)$. This notion was introduced by Cassidy and Shelton \cite{cassidy-shelton} as a natural generalisation of the class of Koszul algebras. Generalising \cite[Corollary 5.9]{amelotte-briggs} and \cite[Theorem 1.3]{conner-shelton}, we show that a large class of Stanley--Reisner rings has this property.
\begin{thm}
\label{thm:new-k2}
Let $\k$ be a field. Let $\K$ be a simplicial complex such that, for any $I\in\Kf$, the $\k$-module $\H_*(\K_I;\k)$ is generated by classes of missing faces (i.e. all 1-neighbourly full subcomplexes of $\K$ are HMF). Then $\k[\K]$ is a K2 algebra.
\end{thm}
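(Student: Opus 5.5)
The plan is to reduce to the 1-neighbourly case via the colimit decomposition, and then read off generators from Proposition \ref{prp:HMF-without-torsion-additive-basis}. By Corollary \ref{cor:ext}(1), the natural map $\colim_{I\in\Kf}\Ext_{\k[\K_I]}(\k,\k)\to\Ext_{\k[\K]}(\k,\k)$ is an isomorphism of algebras. Since $\k$ is a field, all loop homology modules are free, so the hypotheses are automatic. A colimit of associative algebras is generated by the images of the algebras in the diagram. It therefore suffices to show two things: each $\Ext_{\k[\K_I]}(\k,\k)$ with $I\in\Kf$ is generated by $\Ext^1$ and $\Ext^2$, and the structure maps preserve the homological grading $\Ext^i$. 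The second point holds because the maps are induced by the ring surjections $\k[\K]\to\k[\K_I]$ (equivalently, by the maps of cobar constructions $\Omega_*\k\langle\K_I\rangle\to\Omega_*\kKc$, which preserve the bigrading). This reduces the theorem to the case where $\K$ is 1-neighbourly and HMF over $\k$.

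In the 1-neighbourly HMF case I apply Proposition \ref{prp:HMF-without-torsion-additive-basis}. Its hypothesis that $H_*(\ZK;\k)$ is free holds because $\k$ is a field. Part (4) then says $H_*(\Omega\djk;\k)$ is multiplicatively generated by $u_1,\dots,u_m$ and by $w_I$ for $I\in\MF_{\geq 3}(\K)$. I then locate these generators in the bigrading $H_n\cong\bigoplus_{n=-i+2j}\Ext^i(\k,\k)_{2j}$ via the cobar construction. The word length in $\Omega_*\kKc$ gives the homological degree $i$, and $2|\alpha|$ gives the internal degree. The class $u_i$ is represented by $\chi_i$, a word of length one, so $u_i\in\Ext^1$. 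The class $w_I$ is represented by $\psi_I=\sum\chi_A\chi_B$, a sum of words of length two, so $w_I\in\Ext^2$ (with internal degree $2|I|$). Hence the algebra is generated by $\Ext^1$ and $\Ext^2$.

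The main obstacle is to justify carefully that the isomorphism $H_*(\Omega\djk;\k)\cong\Ext_{\k[\K]}(\k,\k)$ of \cite{BP15} matches the cobar word-length filtration with the $\Ext$ degree. This holds because $\Ext$ is computed as the homology of the cobar construction on the coalgebra $\kKc$ with its tensor-length grading, and the differential preserves the internal multidegree while raising length by one. I also need to check that the colimit identification in Corollary \ref{cor:ext} is compatible with this bigrading. This follows because every map in the proof of Theorem \ref{thm:uxk_colimit_decomposition} is induced by a simplicial inclusion, and such maps correspond to bigraded maps of cobar constructions.
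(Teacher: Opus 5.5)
Your proposal is correct and takes essentially the same route as the paper. You reduce via the colimit decomposition (Theorem \ref{thm:uxk_colimit_decomposition}, Corollary \ref{cor:ext}) to the 1-neighbourly case, where $\K$ is HMF, and part (4) of Proposition \ref{prp:HMF-without-torsion-additive-basis} gives generation by $u_i\in\Ext^1$ and $w_I\in\Ext^2$. The only cosmetic difference is that the paper passes through the colimit using naturality of the classes $u_i,w_I$ under simplicial inclusions, whereas you use that the structure maps preserve the $\Ext$-degree; both rest on the same compatibility of cobar constructions with inclusions of full subcomplexes.
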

\begin{proof}
Recall the isomorphism $H_*(\Omega\djk;\k)\cong\Ext_{\k[\K]}(\k,\k)$. Since $u_i\in\Ext^1_{\k[\K]}(\k,\k)$ for all $i=1,\dots,m$ and $w_I\in\Ext^2_{\k[\K]}(\k,\k)$, it is sufficient to show that the algebra $H_*(\Omega\djk;\k)$ is multiplicatively generated by the elements $u_i$ and $w_I$.

If $\K$ is 1-neighbourly, then $\Kf=\Delta_{[m]}$, so $\K$ is HMF over $\k$. By part (4) of Proposition \ref{prp:HMF-without-torsion-additive-basis}, $H_*(\Omega\djk;\k)$ is multiplicatively generated by the elements $u_i$ and $w_I$, and the result follows.

In the general case, the classes $u_i$ and $w_I$ are natural with respect to the inclusions of simplicial complexes, and $H_*(\Omega\djk)=\colim_J H_*(\Omega\DJ_{\K_J})$ is generated by the generators of $H_*(\Omega\DJ_{\K_J})$. The result then follows from Theorem \ref{thm:uxk_colimit_decomposition} 
\end{proof}

\subsection*{Presentations}
\label{sec:Presentations}

In this section, we apply Theorem \ref{thm:uxk_colimit_decomposition} to give presentations of the loop homology of Davis-Januskiewicz spaces associated to certain classes of simplicial complexes. We first show that presentations of certain colimits of associative algebras can be obtained by taking unions of presentations. We fix a principal ideal domain $\k$, and denote by $T(X)$ the free graded associative $\k$-algebra generated by a set of homogeneous elements $X$.

\begin{lmm}
\label{lmm:colimit-of-presentations}
Let $\L$ be a simplicial complex on $[m]$; suppose that for each $I\in\L$, there is a set of homogeneous elements $X_I$ and a set of homogeneous elements $R_I\subseteq T(\bigsqcup_{J\subseteq I}X_I)$. For $I\in\L$, consider the algebra $A_I:=T(\bigsqcup_{J\subseteq I}X_J)/(\bigsqcup_{J\subseteq I}R_J)$ (over a principal ideal domain $\k$). Then there is a natural isomorphism
$$\colim_{I\in\L} A_I\cong T(\bigsqcup_{J\in\L} X_J)/(\bigsqcup_{J\in\L} R_J),$$ where for $I \subseteq I'\in\L$, the map $A_I \to A_{I'}$ is induced by the inclusion of the respective generating sets.
\end{lmm}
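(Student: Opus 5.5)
We prove the isomorphism by checking that the target satisfies the universal property of the colimit. Write $T:=T(\bigsqcup_{J\in\L}X_J)$ and $R:=\bigsqcup_{J\in\L}R_J$, and put $A:=T/(R)$. (We read the hypothesis as $R_I\subseteq T(\bigsqcup_{J\subseteq I}X_J)$.)

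\emph{Step 1: the cocone.} Each $A_I$ is presented by generators $\bigsqcup_{J\subseteq I}X_J$, which form a subset of the generators of $A$. The relations of $A_I$ are $\bigsqcup_{J\subseteq I}R_J$, which form a subset of $R$. So the inclusion of generating sets induces a well defined algebra map $\phi_I:A_I\to A$. For $I\subseteq I'$ these maps are compatible with $A_I\to A_{I'}$, since both composites are determined by the same map on generators. Hence we obtain a cocone and an induced map $\Phi:\colim_{I\in\L}A_I\to A$.

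\emph{Step 2: the universal property.} Let $C$ be an algebra with maps $\psi_I:A_I\to C$ compatible along the inclusions in $\L$.

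\begin{itemize}
\item \emph{Definition on generators.} For $x\in X_J$ with $J\in\L$, set $\psi(x):=\psi_J(x)$. Since $\L$ is a simplicial complex, compatibility gives $\psi_I(x)=\psi_J(x)$ whenever $J\subseteq I\in\L$. So the value does not depend on which face containing $J$ is used.
\item \emph{Extension to $T$.} This extends uniquely to an algebra map $\psi:T\to C$.
\item \emph{Relations vanish.} Each $r\in R_J$ lies in $T(\bigsqcup_{J'\subseteq J}X_{J'})$, and $\psi$ restricts there to the composite $T(\bigsqcup_{J'\subseteq J}X_{J'})\to A_J\xrightarrow{\psi_J}C$. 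Hence $\psi(r)=\psi_J(\bar r)=0$, and $\psi$ factors through $A$.
\item \emph{Compatibility and uniqueness.} Each $\psi_I$ agrees with the factored map composed with $\phi_I$ on generators, hence everywhere. Uniqueness holds because the images of the $\phi_I$ generate $A$.
\end{itemize}

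Therefore $A$ is the colimit and $\Phi$ is an isomorphism. Naturality is automatic, since everything is defined on generators.

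The only point requiring care is that generators in the image of different $A_I$ are identified correctly. Each $x\in X_J$ appears exactly once in $T$, namely indexed by its minimal face $J$. The colimit identifies the copies of $x$ in the various $A_I$ with $I\supseteq J$, because the structure maps are inclusions on generators. These two match, so no extra identifications or relations arise. Nothing in the argument depends on $\k$ beyond working with graded $\k$-algebras.
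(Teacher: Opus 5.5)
Your proof is correct and is essentially the paper's argument. The paper identifies $\widehat{T}=T(\bigsqcup_{J\in\L}X_J)$ with $\colim_{I\in\L}T(\bigsqcup_{J\subseteq I}X_J)$, which is your consistency-of-generators step. It then observes that the composite $\widehat{T}\xrightarrow{q}\colim_{I}A_I\xrightarrow{\varphi}\widehat{T}/(R)$ is the standard projection, and shows $R_I\subseteq\Ker(q)$ by the same diagram as your ``relations vanish'' step. The only difference is packaging: you verify the universal property directly, which also makes explicit the generation/uniqueness point (surjectivity of $q$) that the paper uses without comment.
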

\begin{proof}
Denote $\widehat{T}:=T(\bigsqcup_{J\in\L}X_J)$. The natural maps $T(\bigsqcup_{J\subseteq I}X_J)\hookrightarrow \widehat{T}$, $q_I:T(\bigsqcup_{J\subseteq I}X_J)\twoheadrightarrow A_I$ and $\varphi_I:A_I\hookrightarrow \widehat{T}/(\bigsqcup_{J\subseteq I}R_I)\twoheadrightarrow \widehat{T}/(\bigsqcup_{J\in\L} R_J)$ induce the maps
$$\widehat{T}\cong \colim_{I\in\L} T(\bigsqcup_{J\subseteq I}X_J)\overset{q}\longrightarrow \colim_{I\in\L} A_I\overset{\varphi}\longrightarrow \widehat{T}/(\bigsqcup_{J\in\L}R_J).$$ The composition $\varphi\circ q:\widehat{T}\to \widehat{T}/(\bigsqcup_{J\in\L}A_J)$ is the standard projection, hence $\varphi$ is surjective. To prove that $\varphi$ is an isomorphism, it is sufficient to show that $\bigsqcup_{J\in\L} R_J\subset \Ker(q)$.

For each $I\in\L$, we have a commutative diagram
$$\xymatrix{
R_I\ar[rd]
\ar[r]^-{incl}
&
T(\bigsqcup_{J\subseteq I}X_J)\ar[r]^-{incl}
\ar[d]^-{q_I}
&
\widehat{T}\ar[d]^-{q}\\
&
A_I\ar[r]
&
\colim_{I\in\L} A_I,
}$$
and the diagonal arrow is zero by the definition of $A_I$. Hence $R_I\subset\Ker(q)$ for all $I\in\L$. It follows that $\bigsqcup_{I\in\L}R_I\subset\Ker(q)$, as required.
\end{proof}
\begin{thm}
\label{thm:hmf-presented-presentation-upgrade}
Let $\K$ be a simplicial complex on $[m]$ and $\k$ be a principal ideal domain with the following properties for each $J\in\Kf$:
\begin{enumerate}
    \item the complex $\K_J$ is HMF-presented over $\k$;
    \item $H_*(\K_J;\k)$ is a free $\k$-module.
\end{enumerate}
Then $H_*(\Omega\djk;\k)$ is a free $\k$-module, and there is an isomorphism of Hopf algebras
$$H_*(\Omega\djk;\k)\cong T(u_1,\dots,u_m;~w_I,I\in\MF_{\geq 3}(\K))/(R),$$
where $u_i$, $w_I$ are primitive, and $R$ is generated by relations of three types:
\begin{enumerate}
    \item $u_i^2=0$ for all $i=1,\dots,m$; $[u_i,u_j]=0$ if $\{i,j\}\in\K$;
    \item $[u_i,w_I]=0$ if $i\in I$;
    \item $\sum_{\begin{smallmatrix}i\in L:\\L\setminus i\notin\K\end{smallmatrix}}[u_i,w_{L\setminus i}]=0$ for any $L\in\AMF_{\geq 4}(\K)$.
\end{enumerate}
\end{thm}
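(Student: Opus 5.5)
The plan is to reduce to the 1-neighbourly case via the colimit decomposition, and then glue the presentations using Lemma \ref{lmm:colimit-of-presentations}.

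\textbf{Step 1: each face $J\in\Kf$.} The complex $\K_J$ is 1-neighbourly and, by (1), HMF-presented over $\k$. Applying hypothesis (2) to every $J'\subseteq J$ (these again lie in $\Kf$) and using \eqref{eqn:hochster-formula}, $H_*(\Z_{\K_J};\k)$ is a free $\k$-module. Hence Theorem \ref{thm:hodj-presentation-for-torsionfree-hmf-presented} applies: $H_*(\Omega\DJ_{\K_J};\k)$ is free and has the presentation with generators $u_i$ ($i\in J$) and $w_I$ ($I\in\MF(\K_J)$), subject to relations (1)--(3).

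\textbf{Step 2: freeness for $\K$.} To apply Theorem \ref{thm:uxk_colimit_decomposition} we need $H_*(\Omega\djk;\k)$ to be free, and the hypothesis only gives this for the faces. I would run the induction of that theorem's proof. Write $\K=\K_A\cup_{\K_{A\cap B}}\K_B$ as in \cite[Lemma 4.4]{S}. By induction on $m$, the loop homologies of $\K_A$, $\K_B$ and $\K_{A\cap B}$ are free; their hypotheses hold because $(\K_A)^f=(\Kf)_A$. Then Lemma \ref{lmm:Phi_are_iso} and Lemma \ref{lmm:use of cube lemma alg} give $H\cong(S_A\ast S_B)\otimes H_{AB}$, where $S_A,S_B$ are direct summands of free modules and hence free over a PID. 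Freeness therefore propagates, so Theorem \ref{thm:uxk_colimit_decomposition} gives $H_*(\Omega\djk;\k)\cong\colim_{J\in\Kf}H_*(\Omega\DJ_{\K_J};\k)$. I expect this step to be the main technical point: Theorem \ref{thm:general-theorem-for-pushout} was stated assuming freeness, so one should check that Proposition \ref{prp:split_hopf_properties} needs only freeness of the pieces, which the induction provides.

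\textbf{Step 3: gluing the presentations.} I would assign generators and relations to each face $J\in\Kf$ as follows:
\begin{itemize}
\item the generators $X_J$ are $u_j$ if $J=\{j\}$, and $w_J$ if $J\in\MF_{\geq3}(\K)$;
\item the relations $R_J$ are those from (1)--(3) whose support is exactly $J$: $u_j^2$ for $J=\{j\}$; $[u_i,u_j]$ for $J=\{i,j\}\in\K$; $[u_i,w_I]$ for $J=I\in\MF_{\ge 3}(\K)$ and $i\in I$; and the relation (3) for $J=L\in\AMF(\K)$.
\end{itemize}
Two checks make this well defined. First, every $I\in\MF_{\geq3}(\K)$ and every $L\in\AMF(\K)$ has complete $1$-skeleton, so $I,L\in\Kf$. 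Second, $\MF(\K_J)=\{I\in\MF(\K):I\subseteq J\}$ and similarly for $\AMF$.

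With these choices, $A_J$ as in Lemma \ref{lmm:colimit-of-presentations} is exactly the Step 1 presentation of $H_*(\Omega\DJ_{\K_J};\k)$, up to the following reductions:
\begin{itemize}
\item Within a 1-neighbourly complex, $\MF_2$ is empty.
\item Inside $\K_J$ one has $[u_i,u_j]=0$ for all $i,j$, matching edges of $\K$.
\item The $\AMF_3$ relations follow from (1) and the Jacobi identity, by the remark after Lemma \ref{lmm:standard-relations-in-hodj}; this is why $\AMF_{\geq4}$ suffices.
\end{itemize}

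The maps $A_J\to A_{J'}$ are induced by inclusions of generators, and these agree with the maps on loop homology because $u_i$ and $w_I$ are natural. Lemma \ref{lmm:colimit-of-presentations} then identifies the colimit with $T(u_i;w_I)/(R)$.

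Finally, this is an isomorphism of Hopf algebras, since colimits of Hopf algebras are computed in algebras (Lemma \ref{lmm:hopf-colimit}) and $u_i,w_I$ are primitive by Corollary \ref{crl:w_I-primitive}.
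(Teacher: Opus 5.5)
Your proposal is correct and follows the paper's route: Theorem \ref{thm:uxk_colimit_decomposition} reduces to the faces of $\Kf$, Theorem \ref{thm:hodj-presentation-for-torsionfree-hmf-presented} gives the presentation for each 1-neighbourly $\K_J$, and Lemma \ref{lmm:colimit-of-presentations} glues them. Your Step 2 usefully makes explicit the freeness of $H_*(\Omega\djk;\k)$, which Theorem \ref{thm:uxk_colimit_decomposition} assumes and which the paper's one-line proof leaves implicit. One small correction: a $3$-element almost missing face misses an edge and so is not in $\Kf$, so relation (3) should be attached only to $L\in\AMF_{\geq 4}(\K)=\AMF(\K)\cap\Kf$; the real reason no $\AMF_3$ relations appear is that $\AMF_3(\K_J)=\varnothing$ whenever $\K_J$ is 1-neighbourly, not the Jacobi identity.
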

\begin{proof}
By Theorem \ref{thm:uxk_colimit_decomposition}, $H_*(\Omega\djk;\k) \cong \colim_{I \in \Kf}H_*(\Omega\DJ_{\K_I};\k)$. The result then follows from Theorem \ref{thm:hodj-presentation-for-torsionfree-hmf-presented} and Lemma \ref{lmm:colimit-of-presentations}.
\end{proof}
Lemma \ref{lmm:skeleta-HMF-presented} implies the following special case of Theorem \ref{thm:hmf-presented-presentation-upgrade}.
\begin{crl}
\label{crl:hodj-presentation-flag-skeleta}
Let $\K$ be the $d$-skeleton of a flag simplicial complex on $[m]$, with $1\leq d\leq m$. Let $\k$ be a principal ideal domain. Then
$$H_*(\Omega\djk;\k)\cong T(u_1,\dots,u_m;~w_I,~I\in\Kf,~|I|=d+2)/(R),$$
where $R$ is generated by relations of three types:
\begin{enumerate}
    \item $u_i^2=0$ for all $i=1,\dots,m$; $[u_i,u_j]=0$ if $\{i,j\}\in\K$;
    \item $[u_i,w_I]=0$ if $i\in I$;
    \item $\sum_{i\in L}[u_i,w_{L\setminus i}]=0$ for any $L\subseteq[m]$, $|L|=d+3$.\qed
\end{enumerate}
\end{crl}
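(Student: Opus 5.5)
This is a direct application of Theorem \ref{thm:hmf-presented-presentation-upgrade}, so the plan is to verify its two hypotheses and then simplify the resulting presentation. Write $\K=\sk_d\L$ with $\L$ flag on $[m]$. We may assume $\K$ has no ghost vertices, and that $d\geq 1$ so that $\K^1=\L^1$.

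\emph{Identifying $\Kf$ and the full subcomplexes.} Since $\L$ is flag and $\K^1=\L^1$, we have $\Kf=\L$. For $J\in\Kf=\L$ the full subcomplex $\L_J$ is the simplex $\Delta_J$, so
\[\K_J=\sk_d\Delta_J.\]
By Lemma \ref{lmm:skeleta-HMF-presented}, applied on the vertex set $J$, each such $\K_J$ is HMF-presented over any ring. Its reduced homology is concentrated in degree $d$ and equals a kernel of a map of free modules, hence is free over the PID $\k$. (If $|J|\leq d+1$, then $\K_J=\Delta_J$ is acyclic.) Both hypotheses of Theorem \ref{thm:hmf-presented-presentation-upgrade} therefore hold, and we obtain its presentation for $H_*(\Omega\djk;\k)$.

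\emph{Computing $\MF_{\geq 3}(\K)$ and $\AMF_{\geq 4}(\K)$.} A missing face $I$ with $|I|\geq 3$ has all its edges in $\K$. Hence $I$ is a clique of $\L^1$, and flagness gives $I\in\L$. Then $\K_I=\sk_d\Delta_I=\partial\Delta_I$ forces $|I|=d+2$. Conversely, every $I\in\L$ with $|I|=d+2$ is a missing face. The case $d=1$ needs a separate remark: $\MF_2$ generators never occur, since $w_{\{i,j\}}=[u_i,u_j]$ and these are excluded in the theorem. Together this gives exactly the generators $w_I$, $I\in\Kf$, $|I|=d+2$.

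Similarly, an almost missing face $L$ with $|L|\geq 4$ has $\partial^2\Delta_L\subseteq\K$, so all edges of $L$ lie in $\K$ and $L\in\L$. Then $\K_L=\sk_d\Delta_L$, and the condition $\partial^2\Delta_L\subseteq\K_L\subsetneq\partial\Delta_L$ forces $|L|=d+3$. For such $L$, every $L\setminus i$ has size $d+2$, so $L\setminus i\notin\K$, and relation (3) becomes $\sum_{i\in L}[u_i,w_{L\setminus i}]=0$. For $L\subseteq[m]$ with $|L|=d+3$ but $L\notin\L$ the relation should be read with the convention that $w_{L\setminus i}$ is omitted unless $L\setminus i\in\MF(\K)$; I would either add this remark or state the relation only for $L\in\Kf$. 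Relation (1) carries over verbatim.

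\emph{Main obstacle.} The only real care is in this bookkeeping: matching the index sets in the corollary with $\MF_{\geq3}$ and $\AMF_{\geq4}$, and handling the edge cases $d=1$, and $d+2>|I|$ for large $d$, where no generators $w_I$ exist. No new topology is required.
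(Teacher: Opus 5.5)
Your route is the paper's: the corollary is Lemma~\ref{lmm:skeleta-HMF-presented} fed into Theorem~\ref{thm:hmf-presented-presentation-upgrade}. Your bookkeeping is correct: $\Kf=\L$, $\K_J=\sk_d\Delta_J$ for $J\in\Kf$, freeness of $\H_*(\K_J;\k)$, $\MF_{\geq 3}(\K)=\{I\in\Kf:|I|=d+2\}$, and $\AMF_{\geq 4}(\K)=\{L\in\Kf:|L|=d+3\}$ with $L\setminus i\notin\K$ for all $i\in L$.

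Two minor points. Your separate remark on $d=1$ is unnecessary, since $d+2\geq 3$ for every $d\geq 1$ and $\MF_2(\K)$ is excluded uniformly. Also, absence of ghost vertices is a standing hypothesis, not a reduction you may make: a ghost vertex $i$ would make $u_i$ a spurious generator.

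More importantly, of your two readings of relation (3) for $|L|=d+3$, $L\notin\Kf$, only restricting to $L\in\Kf$ is correct. The convention of dropping terms with $L\setminus i\notin\MF(\K)$ imposes false relations. For instance, take $d=1$ and let $\L$ on $[4]$ have facets $\{1,2,3\}$ and $\{4\}$. Then $\K$ is the boundary of the triangle $\{1,2,3\}$ together with an isolated vertex $4$. For $L=[4]$, your convention yields $[u_4,w_{\{1,2,3\}}]=0$.

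But $\djk=\DJ_{\partial\Delta_{\{1,2,3\}}}\vee\CC P^\infty$, so Theorem~\ref{thm:wedge_loop_homology} gives $H_*(\Omega\djk;\k)\cong H_*(\Omega\DJ_{\partial\Delta_{\{1,2,3\}}};\k)\ast\Lambda[u_4]$. Here $w_{\{1,2,3\}}\neq 0$, since its homology suspension is $\pm[\partial\Delta_{\{1,2,3\}}]\neq 0$ by Proposition~\ref{prp:c(J,wI)-geometric-interpretation}. Hence $u_4w_{\{1,2,3\}}$ and $w_{\{1,2,3\}}u_4$ lie in different summands of the free product, and their commutator is nonzero.

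So the ``any $L\subseteq[m]$'' in the statement must be read as ``any $L\in\Kf$''. That is exactly what Theorem~\ref{thm:hmf-presented-presentation-upgrade} provides through $\AMF_{\geq 4}(\K)$.
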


Another special case is that of ``almost flag'' simplicial complexes.
\begin{crl}
Let $\K$ be a simplicial complex such that $\K\cup\MF_{\geq 3}(\K)$ is flag, i.e. $\K$ is obtained from a flag simplicial complex $\Kf$ by deleting a set $A=\MF_{\geq 3}(\K)$ of maximal faces. Let $\k$ be a principal ideal domain. Then
$$H_*(\Omega\djk;\k)\cong T(u_1,\dots,u_m;~w_I,~I\in A)/(R),$$
where $R$ is generated by relations of two types:
\begin{enumerate}
    \item $u_i^2=0$ for all $i=1,\dots,m$; $[u_i,u_j]=0$ if $\{i,j\}\in\K$;
    \item $[u_i,w_I]=0$ if $i\in I$.\qed
\end{enumerate}
\end{crl}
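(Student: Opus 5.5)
The plan is to deduce this from Theorem \ref{thm:hmf-presented-presentation-upgrade}. For that I need two things. First, for every $J\in\Kf$ the full subcomplex $\K_J$ is HMF-presented over $\k$ and has free homology. Second, the presentation given by that theorem reduces to the stated one. Concretely, its generators are indexed by $\MF_{\geq 3}(\K)$, which is $A$ by hypothesis. So it remains to show that the relations of type (3), indexed by $\AMF_{\geq 4}(\K)$, do not occur.

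\emph{Step 1: the full subcomplexes $\K_J$, $J\in\Kf$.} By assumption $\Kf=\K\cup A$, and $A$ consists of maximal faces of $\Kf$. Let $J\in\Kf$ and suppose $I\in A$ with $I\subseteq J$. Then $J$ is a face of $\Kf$ containing the maximal face $I$, so $I=J$. Hence either $J\in\K$, in which case $\K_J=\Delta_J$, or $J\in A$, in which case $\K_J=\partial\Delta_J$.

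\emph{Step 2: both cases are HMF-presented with free homology.} Every full subcomplex of $\Delta_J$ is a simplex, which is acyclic. Every full subcomplex of $\partial\Delta_J$ is either a simplex or $\partial\Delta_J$ itself. In the latter case the reduced homology is $\k\cdot[\partial\Delta_J]$, which is free and generated by the single missing face $J$. Neither complex has almost missing faces. Indeed, a set $L$ with $\partial^2\Delta_L\subseteq\K_L\subsetneq\partial\Delta_L$ would need $\K_L$ to be a proper subcomplex of $\partial\Delta_L$. But every full subcomplex here is either a full simplex or a full sphere boundary $\partial\Delta_J$ (with $L=J$). So the sequence \eqref{eqn:amf_homology_presentation} is exact with zero left-hand term, and Theorem \ref{thm:hmf-presented-presentation-upgrade} applies.

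\emph{Step 3: $\AMF_{\geq 4}(\K)=\varnothing$.} This is the only step needing a small argument. Suppose $L\in\AMF(\K)$ with $|L|\geq 4$. Since $\K_L\subsetneq\partial\Delta_L$, some $L\setminus i\notin\K$. Since $\partial^2\Delta_L\subseteq\K$, every proper face of $L\setminus i$ lies in $\K$, so $L\setminus i\in\MF_{\geq 3}(\K)=A\subseteq\Kf$. Also $|L|\geq 4$ forces every pair in $L$ to lie in $\partial^2\Delta_L\subseteq\K$. Since $\Kf$ is flag, this gives $L\in\Kf$. Then $L\setminus i$ is not a maximal face of $\Kf$, a contradiction. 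Hence the type (3) relations are vacuous, and the presentation of Theorem \ref{thm:hmf-presented-presentation-upgrade} is exactly the stated one.
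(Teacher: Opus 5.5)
Your proposal is correct and matches the paper's approach. The paper states this corollary without proof, as a special case of Theorem \ref{thm:hmf-presented-presentation-upgrade}. Your Steps 1--3 supply the omitted checks: each $\K_J$ with $J\in\Kf$ is $\Delta_J$ or $\partial\Delta_J$, hence HMF-presented with free homology, and $\AMF_{\geq 4}(\K)=\varnothing$, so the type (3) relations disappear.
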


\section{Poincar\'e series for colimit}
\label{sec:PS}
We now turn our attention to additive structure of the loop homology of polyhedral products. We first prove the following lemma which is a variation of the M\"obius inversion formula.

\begin{lmm}
\label{lmm:counting_lemma}
Let $\K$ be a simplicial complex on $[m]$, and let $\{a_J:J\subseteq[m]\}$ be a collection of elements in an abelian group such that the following property holds:
\begin{itemize}
    \item If $\K_{A\cup B}=\K_A\cup\K_B$ for some $A,B\subseteq [m]$, then $a_{A\cup B}=a_A+a_B-a_{A\cap B}$.
\end{itemize}
Then, for any $J\subseteq[m]$, 
$$a_J=\sum_{B\in\Kf_J}(-1)^{|B|}\sum_{I\subseteq B}(-1)^{|I|}a_I=\sum_{I\in\Kf_J}(1-\chi(\lk_{\Kf_J}I))a_I.$$
\end{lmm}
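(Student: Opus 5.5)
We argue by induction on $|J|$, mirroring the inductive structure of the proof of Theorem \ref{thm:uxk_colimit_decomposition}. Write $\Kf_J$ for $(\Kf)_J=(\K_J)^f$, and define
$$b_J:=\sum_{B\in\Kf_J}(-1)^{|B|}\sum_{I\subseteq B}(-1)^{|I|}a_I=\sum_{I\in\Kf_J}\Big(\sum_{B\in\Kf_J,\,B\supseteq I}(-1)^{|B|-|I|}\Big)a_I.$$
The equality of the two right-hand expressions in the statement is a purely combinatorial identity. Faces $B\supseteq I$ of $\Kf_J$ correspond bijectively to faces $B\setminus I$ of $\lk_{\Kf_J}I$, including the empty face, so
$$\sum_{B\supseteq I}(-1)^{|B|-|I|}=1-\sum_{\varnothing\neq\sigma\in\lk I}(-1)^{|\sigma|-1}=1-\chi(\lk_{\Kf_J}I).$$
This requires the convention that $\chi$ is the unreduced Euler characteristic, with the link of a maximal face empty and $\chi(\varnothing)=0$. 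It therefore remains to show $a_J=b_J$.

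For the base case, suppose $J\in\Kf$, i.e.\ $\K_J$ is 1-neighbourly. Then $\Kf_J=\Delta_J$. The inner coefficient is a sum over all $B$ with $I\subseteq B\subseteq J$, which equals $(1-1)^{|J\setminus I|}$. This vanishes unless $I=J$, so $b_J=a_J$. This case includes $|J|\le 1$.

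For the inductive step, suppose $J\notin\Kf$. As in the proof of Theorem \ref{thm:uxk_colimit_decomposition}, \cite[Lemma 4.4]{S} applied to $\K_J$ gives $A,B\subsetneq J$ with $\K_J=\K_A\cup\K_B$. Concretely, pick $i,j\in J$ with $\{i,j\}\notin\K$ and set $A=J\setminus i$, $B=J\setminus j$. The hypothesis then gives
$$a_J=a_A+a_B-a_{A\cap B}.$$
By induction, $a_A=b_A$, $a_B=b_B$ and $a_{A\cap B}=b_{A\cap B}$. So it suffices to show that $b$ satisfies the same inclusion–exclusion identity.

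Write $b_J=\sum_{B'\in\Kf_J}f(B')$ with $f(B')=(-1)^{|B'|}\sum_{I\subseteq B'}(-1)^{|I|}a_I$, which depends only on $B'$. Then $b_J$ is a sum of $f$ over the faces of $\Kf_J$. Every face of $\Kf_J$ lies in $A$ or in $B$, because it cannot contain both $i$ and $j$. Hence the face set of $\Kf_J$ is the union of the face sets of $\Kf_A$ and $\Kf_B$, and their intersection is the face set of $\Kf_{A\cap B}$. Inclusion–exclusion on these face sets gives $b_J=b_A+b_B-b_{A\cap B}$, completing the induction.

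The main point needing care is the choice of decomposition. One must take $A,B$ with $\Kf_J=\Kf_A\cup\Kf_B$ as face sets, and the hypothesis of the lemma must be applicable, i.e.\ $\K_J=\K_A\cup\K_B$. The explicit choice $A=J\setminus i$, $B=J\setminus j$ with $\{i,j\}$ a missing edge secures both at once, because no face of $\K$ or of $\Kf$ can contain the non-edge $\{i,j\}$. The remaining check is the sign and convention bookkeeping for $\chi$ of the link.
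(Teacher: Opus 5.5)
Your proof is correct and is essentially the paper's argument: the same reindexing of pairs $I\subseteq B$ through $\lk_{\Kf_J}I$, the same base case $\Kf_J=\Delta_J$, and the same induction on $|J|$ via a splitting $\K_J=\K_A\cup\K_B$, followed by inclusion--exclusion over the faces of $\Kf_J=\Kf_A\cup\Kf_B$. The differences are cosmetic. You make the splitting explicit with $A=J\setminus i$, $B=J\setminus j$ for a non-edge $\{i,j\}$, which also makes $\Kf_J=\Kf_A\cup\Kf_B$ immediate, and your direct computation $(1-1)^{|J\setminus I|}$ in the base case avoids the swapped values of $\chi(\lk_{\Kf_J}I)$ written in the paper's version.
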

\begin{proof}
We first prove the second identity.
Since \[\{(I,B):I\subseteq B,~B\in\Kf_J\}=\{(I,I\sqcup A):~I\in\Kf_J,~A\in\lk_{\Kf_J}I\},\] we obtain the following identity
$$\sum_{B\in\Kf_J}\sum_{I\subseteq B}(-1)^{|B|}(-1)^{|I|}a_I=\sum_{I\in\Kf_J}a_I\sum_{A\in\lk_{\Kf_J}I}(-1)^{|A|},$$
where the inner sum is equal to $1-\chi(\lk_{\Kf_J}I)$.

Consider the case $J\in\Kf$. Then $\Kf_J=\Delta_J$, so $\chi(\lk_{\Kf_J}I)=\chi(\Delta_{J\setminus I})=0$ if $I\subsetneq J$, and $\chi=1$ if $I=J$. Hence $\sum_{I\in\Kf_J}(1-\chi(\lk_{\K_J}I))a_I=\sum_{I\subseteq J}\delta_{I,J}a_I=a_J$, as required. 

In the general case, we prove the first identity by induction on $|J|$. 
Suppose that the formula holds for any $I\subsetneq J$, and consider $J$. If $J\in\Kf$, the result holds by the argument above. Otherwise $J=A\cup B$ and $\K_J=\K_A\cup\K_B$ for some $A,B\subsetneq J$ (see \cite[Lemma 4.4]{S} for example). Since $\K_A\cap \K_B=\K_{A\cap B}$, we have $\Kf_A\cup\Kf_B=\Kf_J$, $\Kf_A\cap \Kf_B=\Kf_{A\cap B}$, so
$$\sum_{P\in\Kf_A}(-1)^{|P|}\sum_{I\subseteq P}(-1)^{|I|}a_I+\sum_{P\in\Kf_B}(-1)^{|P|}\sum_{I\subseteq P}(-1)^{|I|}a_I-\sum_{P\in\Kf_{A\cap B}}(-1)^{|P|}\sum_{I\subseteq P}(-1)^{|I|}a_I$$ $$=\sum_{P\in\Kf_J}(-1)^{|P|}\sum_{I\subseteq P}(-1)^{|I|}a_I$$
by the inclusion-exclusion formula.
The left hand side is equal to $a_A+a_B-a_{A\cap B}$ by the inductive assumption, so it is equal to $a_{A\cup B}=a_J$ by assumption. The right hand side is the required formula for $a_J$. Hence, the inductive step is complete.
\end{proof}

This allows us to prove Theorem \ref{thm:intropoincare_series_reduction_to_neighbourly} which reduces the calculation of the Poincar\'e series of $H_*(\Omega \ux^\K;\k)$ down to the calculation of $H_*(\Omega \ux^{\K_I};\k)$ for all $I \in \Kf$. Recall that, for a graded vector space $V=\oplus_{i \geq 0}V_i$ of finite type, the \emph{Poincar\'e series} of $V$ is defined as the power series $P(V)=P(V;t):= \sum_{i \geq 0} \mathrm{dim}(V_i)t^i \in \mathbb{Z}[[t]]$. It is clear that $P(V\oplus V')=P(V)+P(V)$, $P(V\otimes V')=P(V)\cdot P(V')$. The following result also holds for weighted polyhedral products, but for simplicity and our later applications, we state and prove the result for ordinary polyhedral products.
\begin{thm}
\label{thm:poincare_series_reduction_to_neighbourly}
Let $\k$ be a principal ideal domain and let $X_1,\dots,X_m$ be simply connected pointed spaces of finite type. Let $\K$ be a simplicial complex, and suppose $H_*(\Omega\ux^\K;\k)$ is a free $\k$-module. Then we have the following identity of Poincar\'e series.
\begin{align*}
    1/P(H_*(\Omega\ux^\K;\k))&=\sum_{I\in\Kf} (1-\chi(\lk_{\Kf} I))/P(H_*(\Omega\ux^{\K_I};\k))\\
    &=\sum_{B\in\Kf}(-1)^{|B|}\sum_{I\subseteq B}(-1)^{|I|}/P(H_*(\Omega\ux^{\K_I};\k).
\end{align*}
\end{thm}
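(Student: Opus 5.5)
We plan to apply Lemma \ref{lmm:counting_lemma} to the collection $a_J:=1/P(H_*(\Omega\ux^{\K_J};\k))$ for $J\subseteq[m]$. These are elements of the abelian group $\ZZ[[t]]$; they are well defined because each Poincar\'e series has constant term $1$, so it is invertible in $\ZZ[[t]]$. The statement is exactly the conclusion of that lemma for $J=[m]$, since $\Kf_{[m]}=\Kf$, $\K_{[m]}=\K$, and the links are computed in $\Kf$. So everything reduces to checking the hypothesis of the lemma.

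First we check freeness and finite type. For every $J\subseteq[m]$, Lemma \ref{lem:retraction} shows that $\ux^{\K_J}$ is a homotopy retract of $\ux^\K$. Hence $\Omega\ux^{\K_J}$ is a retract of $\Omega\ux^\K$, and $H_*(\Omega\ux^{\K_J};\k)$ is a direct summand of a free $\k$-module. Over a PID this makes it free. Since the $X_i$ are simply connected and of finite type, so is $\ux^{\K_J}$, and therefore $H_*(\Omega\ux^{\K_J};\k)$ has finite type. Thus each $a_J$ makes sense, with ranks playing the role of dimensions.

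Next we verify the hypothesis. Suppose $\K_{A\cup B}=\K_A\cup\K_B$. Then $\K_A\cap\K_B=\K_{A\cap B}$, and Lemma \ref{lem:pushout} (with trivial weights) gives a pushout $\ux^{\K_{A\cup B}}=\ux^{\K_A}\cup_{\ux^{\K_{A\cap B}}}\ux^{\K_B}$. This is a homotopy pushout because the maps are cofibrations. The maps $\ux^{\K_{A\cap B}}\to\ux^{\K_A}$ and $\ux^{\K_{A\cap B}}\to\ux^{\K_B}$ come from inclusions of full subcomplexes of $\K_A$ and $\K_B$, so by Lemma \ref{lem:retraction} they admit left homotopy inverses. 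All three loop homologies are free by the previous step. Part (3) of Theorem \ref{thm:general-theorem-for-pushout} then gives $a_{A\cup B}=a_A+a_B-a_{A\cap B}$.

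The degenerate cases also need care: if $A\cap B=\varnothing$, the space $\ux^{\varnothing}$ is a point and $a_\varnothing=1$. Theorem \ref{thm:general-theorem-for-pushout} still applies here, and it reduces to Lemma \ref{lmm:poincare-series-for-wedge}. The only real subtlety is making sure that the rank-based Poincar\'e series behave multiplicatively over a PID, which holds because all the modules are free. With the hypothesis verified, Lemma \ref{lmm:counting_lemma} yields both displayed identities.
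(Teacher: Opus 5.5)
Your proposal is correct and follows the paper's proof. Both apply Lemma \ref{lmm:counting_lemma} to $a_J=1/P(H_*(\Omega\ux^{\K_J};\k))$ and verify the additivity hypothesis using Lemma \ref{lem:pushout}, Lemma \ref{lem:retraction} and part (3) of Theorem \ref{thm:general-theorem-for-pushout}; your extra checks (freeness of the retracts' loop homology over a PID, finite type, and the case $A\cap B=\varnothing$) are details the paper leaves implicit.
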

\begin{proof}
We apply Lemma \ref{lmm:counting_lemma} to the elements $a_J:=1/P(H_*(\Omega\ux^{\K_J};\k);t)$ of $\ZZ[[t]]$. It is sufficient to verify the following property:
\begin{itemize}
    \item If $\K_{A\cup B}=\K_A\cup\K_B$, then $a_{A\cup B}=a_A+a_B-a_{A\cap B}$.
\end{itemize}
Indeed, by Lemma ~\ref{lem:pushout}, there is a pushout \[\begin{tikzcd}
	{\ux^{\K_{A \cap B}}} & {\ux^{\K_A}} \\
	{\ux^{\K_B}} & {\ux^\K.}
	\arrow["{f_A}", from=1-1, to=1-2]
	\arrow["{f_B}", from=1-1, to=2-1]
	\arrow[from=1-2, to=2-2]
	\arrow[from=2-1, to=2-2]
\end{tikzcd}\] Lemma ~\ref{lem:retraction} implies that $f_A$ and $f_B$ have left inverses, and so the result follows from part (3) of Theorem \ref{thm:general-theorem-for-pushout}. 
\end{proof}

\subsection{Properties of Backelin--Berglund polynomials}

Let $\K$ be a simplicial complex and $\k$ be a field.
In \cite[Definition 4.2, Definition 4.4]{SV}, the \emph{reflected Backelin--Berglund polynomial} $\widehat{bb}_{\K,\k}(t)\in\ZZ[t]$ are introduced, which are related to the multigraded Poincar\'e series of the $\k$-module $\Tor^{\k[\K]}(\k,\k)$. In \cite[Theorem 6.22]{SV}, it is shown that the additive structure of the loop homology of simply connected polyhedral products can be expressed in terms of these polynomials. In particular, the following result was proved.
\begin{theorem}
\label{thm:PSforuxa}
    Let $(X_1,A_1),\dots,(X_m,A_m)$ be $CW$-pairs and $\K$ be a simplicial complex on $[m]$ such that $\uxa^\K$ is simply connected and of finite type. Let $\k$ be a field. Denote by $G_i$ the homotopy fibre of $A_i \rightarrow X_i$. 
    Then
    $$\frac{1}{P(H_*(\Omega\uxa^\K;\k);t)}=\frac{\sum_{J\subseteq[m]}\widehat{bb}_{\K_J,\k}(t)\cdot\prod_{j\in J}P(\H_*(G_j;\k);t)}{\prod_{i=1}^mP(H_*(\Omega X_i;\k);t)}\in\ZZ[[t]].$$
\end{theorem}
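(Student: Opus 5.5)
The plan is to separate off the factor $\prod_i H_*(\Omega X_i)$ with a fibration, and then handle the remaining ``$(\underline{CG},\underline{G})$'' part by comparison with the Davis--Januszkiewicz/moment-angle case, where Backelin--Berglund type formulas are available.

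\emph{Step 1.} Use the homotopy fibration of Denham--Suciu type
\[(\underline{CG},\underline{G})^\K\to\uxa^\K\to\prod_{i=1}^m X_i,\]
where $G_i$ is the homotopy fibre of $A_i\to X_i$. Under the simple connectivity assumption, the looped map $\Omega\uxa^\K\to\prod_i\Omega X_i$ has a right homotopy inverse. Each $\Omega X_i$ should factor through $\Omega\uxa^{\{i\}}$ or be constructed directly from the coordinate inclusions; this needs to be checked, possibly under a mild assumption I would verify from the finite type hypothesis. Granting the section, we get $\Omega\uxa^\K\simeq\Omega(\underline{CG},\underline{G})^\K\times\prod_i\Omega X_i$. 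The Künneth theorem over the field $\k$ then gives
\[P(H_*(\Omega\uxa^\K))=P(H_*(\Omega(\underline{CG},\underline{G})^\K))\cdot\prod_i P(H_*(\Omega X_i)).\]
So the theorem reduces to the identity
\[1/P(H_*(\Omega(\underline{CG},\underline{G})^\K))=\sum_J\widehat{bb}_{\K_J,\k}(t)\prod_{j\in J}P(\H_*(G_j)).\]

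\emph{Step 2.} Show that $P(H_*(\Omega(\underline{CG},\underline{G})^\K;\k))$ depends only on the series $P(\H_*(G_j;\k))$. For this I would invoke the stable splitting $\Sigma(\underline{CG},\underline{G})^\K\simeq\bigvee_J\Sigma|\K_J|\ast\widehat{G}^{J}$ together with a model for the loop homology, namely the cobar construction on a coalgebra model of $(\underline{CG},\underline{G})^\K$. This model should be built functorially from the chain coalgebras of the $G_j$. Over a field, a chain-level comparison argument (a spectral sequence filtered by $\K$, with $E^1$ depending only on $\H_*(G_j)$) should show that the Poincaré series coincides with the one obtained by replacing each $G_j$ by a wedge of spheres with the same Betti numbers.

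\emph{Step 3.} Once each $G_j$ is a wedge of spheres, compute with the moment-angle-type formula. For $G_j=S^1$ we have $(D^2,S^1)^\K=\ZK$. In that case the identity is the definition of $\widehat{bb}$ via the multigraded Poincaré series of $\Tor^{\k[\K]}(\k,\k)$, or equivalently of $\Ext_{\k[\K]}(\k,\k)$, combined with Berglund's formula and the splitting $\Omega\djk\simeq\Omega\ZK\times T^m$. For general wedges of spheres, substitute the multigrading: the multigraded series is a sum over full subcomplexes $\K_J$, and each vertex weight $t_j$ is replaced by $P(\H_*(G_j))$. This substitution yields the stated sum over $J$.

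The main obstacle is Step 2, the independence of the loop homology Poincaré series from anything beyond the Betti numbers of the $G_j$. If the spectral sequence argument fails to degenerate, I would fall back on an Adams--Hilton model. In such a model the generators correspond to cells, so only the cellular structure enters. I would then argue by a filtration of that model that its homology Poincaré series is determined by the Betti numbers of the $G_j$.
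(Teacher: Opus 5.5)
Your Step 1 is correct and needs no extra hypothesis. Since every $\{i\}$ is a face of $\K$, the coordinate inclusions $X_i\to\uxa^\K$ give a right homotopy inverse of $\Omega\uxa^\K\to\prod_i\Omega X_i$. So the theorem is equivalent to
\[1/P(H_*(\Omega(\underline{CG},\underline{G})^\K;\k))=\sum_{J\subseteq[m]}\widehat{bb}_{\K_J,\k}(t)\prod_{j\in J}P(\H_*(G_j;\k)).\]
The paper does not prove the statement itself; it quotes it from \cite[Theorem 6.22]{SV}. This reduced identity is the whole content of the theorem, and neither Step 2 nor Step 3 establishes it.

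The gap is Step 2. Loop homology is not determined by Betti numbers, or even by stable homotopy type. For example, $S^2\times S^2$ and $S^2\vee S^2\vee S^4$ have equivalent suspensions, but $1/P$ of their loop homologies are $(1-t)^2$ and $1-2t-t^3$.
\begin{itemize}
\item The BBCG splitting of $\Sigma(\underline{CG},\underline{G})^\K$ therefore gives you nothing here.
\item A spectral sequence whose $E^1$-term depends only on the $\H_*(G_j;\k)$ proves nothing until you show that the ranks of its differentials do not depend on the diagonal and higher $A_\infty$-coalgebra structure of $C_*(G_j)$. That is just a restatement of what you want to prove.
\item The Adams--Hilton fallback has the same defect. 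Its generators depend only on the cells, but its differential encodes the attaching maps, which is exactly the non-Betti data.
\item In general there is no map between $G_j$ and a wedge of spheres with the same Betti numbers, so there is nothing to compare along.
\end{itemize}

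Step 3 is also not established beyond the case $G_j=S^1$. For $G_j$ a wedge of spheres, $(\underline{CG},\underline{G})^\K$ is in general not a moment-angle complex, so the multigraded $\Tor^{\k[\K]}(\k,\k)$ does not apply to it directly. Your claim is that the answer comes from the $\ZK$ formula by replacing $t^{|J|}$ with $\prod_{j\in J}P(\H_*(G_j;\k))$. In particular this says $1/P$ is multilinear in these series, and that needs its own proof.

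What is missing is a structural description of $H_*(\Omega(\underline{CG},\underline{G})^\K;\k)$, or of its Poincar\'e series, in terms of only the full subcomplexes $\K_J$ and the graded vector spaces $\H_*(G_j;\k)$. That is the substance of the cited result, and your proposal gives no mechanism for it.
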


The aim of the rest of this section is to calculate the polynomials $\widehat{bb}_{\K,\k}$ (and hence the Poincar\'e series of loop homology of polyhedral products) for certain classes of simplicial complexes. To do this, we first reduce the study of these polynomials to the study of the $\ZK$ and $\djk$ case. We will need the following version of the M\"obius formula.
\begin{lmm}
\label{lmm:mobius}
Let $\{a_J:H\subseteq [m]\}$ be elements in an abelian group, and $b_I:=\sum_{J\subseteq I}a_J$. Then 
$(-1)^ma_{[m]}=\sum_{I\subseteq [m]}(-1)^{|I|}b_I.$\qed
\end{lmm}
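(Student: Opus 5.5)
The plan is to substitute the definition of $b_I$ into the right-hand side and swap the order of summation. This gives
$$\sum_{I\subseteq[m]}(-1)^{|I|}b_I=\sum_{I\subseteq[m]}(-1)^{|I|}\sum_{J\subseteq I}a_J=\sum_{J\subseteq[m]}a_J\sum_{J\subseteq I\subseteq[m]}(-1)^{|I|}.$$
Here we use that both sums are finite and that the elements live in an abelian group, so the rearrangement is legitimate.

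The next step is to compute the inner coefficient. Writing $I=J\sqcup C$ with $C\subseteq[m]\setminus J$, it becomes
$$(-1)^{|J|}\sum_{C\subseteq[m]\setminus J}(-1)^{|C|}=(-1)^{|J|}(1-1)^{m-|J|}.$$
This equals $0$ unless $J=[m]$, and equals $(-1)^m$ when $J=[m]$. So the whole sum collapses to $(-1)^m a_{[m]}$, which is the identity in the statement.

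There is no real obstacle; the only point needing care is the convention $0^0=1$ in the binomial step, and this is exactly what isolates the term $J=[m]$.

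If a more structural argument is preferred, one can proceed by induction on $m$. Split the subsets $I\subseteq[m]$ according to whether $m\in I$. Then apply the inductive hypothesis to the collections $\{a_J\}_{J\subseteq[m-1]}$ and $\{a_{J\cup m}+a_J\}_{J\subseteq[m-1]}$, and the two contributions combine to give the claim. I would present the direct computation above, since it is shorter.
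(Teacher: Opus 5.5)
Your proof is correct: interchanging the two finite sums and using $\sum_{C\subseteq[m]\setminus J}(-1)^{|C|}=(1-1)^{m-|J|}$, which vanishes unless $J=[m]$, is exactly the standard M\"obius inversion computation. The paper states this lemma without proof, treating it as this standard fact, so your direct computation fills that in by essentially the intended route, and your inductive alternative also checks out.
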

\begin{prp}
\label{prp:formula-for-bb}
For a simplicial complex $\K$ and a field $\k$, we have
\begin{gather*}
1/P(H_*(\Omega\ZK;\k);t)=\sum_{J\subseteq[m]}\widehat{bb}_{\K_J}(t)t^{|J|},\quad 
1/P(H_*(\Omega\djk;\k);t)=(1+t)^{-m}\sum_{J\subseteq[m]}\widehat{bb}_{\K_J}(t)t^{|J|},
\end{gather*}
\begin{equation}
\label{eqn:formula-for-bb}
(-t)^m\cdot \widehat{bb}_{\K,\k}(t)=\sum_{I\subseteq[m]}(-1)^{|I|}/P(H_*(\Omega\Z_{\K_I};\k);t)\in\ZZ[[t]].
\end{equation}
\end{prp}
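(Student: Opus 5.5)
The plan is to specialise Theorem \ref{thm:PSforuxa} to the two pairs $(D^2,S^1)$ and $(\CC P^\infty,\ast)$, and then invert the resulting identity with Lemma \ref{lmm:mobius}.

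\emph{Moment-angle complex.} Take $(X_i,A_i)=(D^2,S^1)$ for all $i$, so that $\uxa^\K=\ZK$. This space is simply connected and of finite type. Here $X_i$ is contractible, so $P(H_*(\Omega X_i;\k))=1$. The homotopy fibre $G_i$ of $S^1\to D^2$ is $S^1$, so $P(\H_*(G_i;\k))=t$. Substituting into Theorem \ref{thm:PSforuxa} gives
\[
1/P(H_*(\Omega\ZK;\k))=\sum_{J\subseteq[m]}\widehat{bb}_{\K_J,\k}(t)\,t^{|J|}.
\]

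\emph{Davis--Januszkiewicz space.} Take $(X_i,A_i)=(\CC P^\infty,\ast)$. Then $G_i=\Omega\CC P^\infty\simeq S^1$, so again $P(\H_*(G_i))=t$. Also $H_*(\Omega \CC P^\infty)=H_*(S^1)$ has Poincar\'e series $1+t$, which produces the factor $(1+t)^{-m}$. Alternatively, one can use the splitting $\Omega\djk\simeq\Omega\ZK\times T^m$ together with the K\"unneth formula over the field $\k$; this reduces the second formula to the first.

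\emph{The formula for $\widehat{bb}$.} Apply the first formula to each full subcomplex $\K_I$, using $(\K_I)_J=\K_J$ for $J\subseteq I$. This gives $b_I:=1/P(H_*(\Omega\Z_{\K_I}))=\sum_{J\subseteq I}a_J$ with $a_J:=\widehat{bb}_{\K_J}(t)t^{|J|}$. Lemma \ref{lmm:mobius} then yields $(-1)^m t^m\widehat{bb}_{\K}(t)=\sum_{I\subseteq[m]}(-1)^{|I|}b_I$, which is \eqref{eqn:formula-for-bb}.

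\emph{Main obstacle.} The delicate point is the applicability of Theorem \ref{thm:PSforuxa} when $I=\varnothing$ or $\K_I$ is small. In these cases one must make sure that $\Z_{\K_I}$ is simply connected, or at least that the formula remains valid. For $I=\varnothing$, $\Z_{\varnothing}$ is a point and $b_\varnothing=1=a_\varnothing$, provided $\widehat{bb}_{\varnothing}=1$. Moment-angle complexes are always $2$-connected, so the hypothesis holds. One should also check the conventions of \cite{SV} for the empty complex and for ghost vertices, so that the term $J=\varnothing$ contributes $1$.
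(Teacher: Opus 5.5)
Your proof is correct and follows the paper's argument. The only difference is that the paper cites \cite[Proposition 4.8]{SV} directly for the first two identities, while you obtain them by specialising Theorem \ref{thm:PSforuxa} to $(D^2,S^1)$ and $(\CC P^\infty,\ast)$ (using $G_i\simeq S^1$ in both cases); the deduction of \eqref{eqn:formula-for-bb} from Lemma \ref{lmm:mobius} is identical. On your simple-connectivity concern: $\ZK$ is $2$-connected only when $\K$ has no ghost vertices, which is the paper's standing convention and is inherited by every full subcomplex $\K_I$ on its vertex set $I$.
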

\begin{proof}
The first two identities follow from \cite[Proposition 4.8]{SV}. For each $\K_I$, we therefore obtain $1/P(H_*(\Omega\Z_{\K_I};\k);t)=\sum_{J\subseteq I}\widehat{bb}_{\K_J}(t)t^{|J|}$.
Now \eqref{eqn:formula-for-bb} follows from Lemma \ref{lmm:mobius}.
\end{proof}

\begin{prp}
\label{prp:bb-additivity}
There is an equality $\widehat{bb}_{\K_A\cup \K_B,\k}(t)=\widehat{bb}_{\K_A,\k}(t)+\widehat{bb}_{\K_B,\k}(t)-\widehat{bb}_{\K_{A\cap B},\k}(t)$.
\end{prp}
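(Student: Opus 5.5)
The plan is to obtain the identity from formula \eqref{eqn:formula-for-bb}, which writes $\widehat{bb}_{\K,\k}$ as an alternating sum of the series $a_I:=1/P(H_*(\Omega\Z_{\K_I};\k);t)$. The needed additivity of the $a_I$ will come from part (3) of Theorem \ref{thm:general-theorem-for-pushout}, applied to moment-angle complexes $\Z_{\K}=(D^2,S^1)^\K$.

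\emph{Conventions.} The statement should be read with $\K:=\K_{A\cup B}=\K_A\cup\K_B$ on the vertex set $A\cup B$. The polynomials are then indexed by the full subcomplexes of $\K$ on $A$, $B$ and $A\cap B$; all of these are full subcomplexes of $\K$. This is the situation arising in Lemma \ref{lmm:counting_lemma}.

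\emph{Step 1: additivity of $a_I$.} We first show that for every $I\subseteq A\cup B$,
$$a_I=a_{I\cap A}+a_{I\cap B}-a_{I\cap A\cap B}.$$
Since $\K=\K_A\cup\K_B$, every face of $\K_I$ lies in $A$ or in $B$. Hence $\K_I=\K_{I\cap A}\cup\K_{I\cap B}$, and the two pieces intersect in $\K_{I\cap A\cap B}$.

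The moment-angle complex is a polyhedral product. So, as in Lemma \ref{lem:pushout}, this union gives a pushout of spaces along inclusions induced by full subcomplexes. By Lemma \ref{lem:retraction} (for the pairs $(D^2,S^1)$ and the trivial power sequence), these inclusions have left homotopy inverses. All spaces involved are simply connected and of finite type, and homology over a field is free. Part (3) of Theorem \ref{thm:general-theorem-for-pushout} then gives the displayed identity.

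There is one caveat. Theorem \ref{thm:general-theorem-for-pushout} was stated for the $\ux$-type retractions, whereas $\Z_\K$ is a $\uxa$-type polyhedral product. The retraction of a polyhedral product onto a full subcomplex still exists for arbitrary pairs $\uxa$ (it is the coordinate projection), and the pushout decomposition holds for any pairs. So I would cite these facts for general pairs.

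A more self-contained alternative avoids pairs entirely: run the same argument for $\djk=\ux^\K$ with $X_i=\CC P^\infty$, and use $P(H_*(\Omega\djk))=(1+t)^{m}P(H_*(\Omega\ZK))$ from Proposition \ref{prp:formula-for-bb}. The factors $(1+t)^{-|I|}$ then need care, since the vertex counts differ between the pieces. Because of this bookkeeping I prefer the $\Z_\K$ route.

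\emph{Step 2: Möbius inversion.} Write $N=A\cup B$. By \eqref{eqn:formula-for-bb},
$$(-t)^{|N|}\widehat{bb}_{\K}=\sum_{I\subseteq N}(-1)^{|I|}a_I.$$
Substituting Step 1 splits this into three sums. In the first, $\sum_{I\subseteq N}(-1)^{|I|}a_{I\cap A}$, write $I=I_1\sqcup I_2$ with $I_1\subseteq A$ and $I_2\subseteq N\setminus A$. The sum over $I_2$ gives the factor $\sum_{I_2\subseteq N\setminus A}(-1)^{|I_2|}$, which vanishes unless $N\setminus A=\varnothing$. The same happens in the other two sums. So when $A,B\subsetneq N$, we are left with $(-t)^{|N|}\widehat{bb}_{\K}=0$, while the polynomials on the right-hand side of the claimed identity need not vanish.

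This exposes the main obstacle: the naive reading of the identity fails, and I must pin down what the statement intends. The likely fix is that $\widehat{bb}$ should be read as the multigraded or summed version $\sum_{J}\widehat{bb}_{\K_J}t^{|J|}$, i.e.\ $1/P(H_*(\Omega\ZK))$ itself. Alternatively, $\widehat{bb}_{\K_A}$ should be understood with $A$ padded to the full vertex set of $\K$ via ghost vertices; this is the interpretation Theorem \ref{thm:PSforuxa} needs, where $\widehat{bb}$ of a complex with ghost vertices vanishes. I would check \cite[Definition 4.4]{SV} for the ghost-vertex convention.

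Under that convention, the identity summed over $J$ is exactly Step 1. The per-$J$ statement then follows from the inversion above, applied on each full subcomplex: a term $\widehat{bb}_{\K_J}$ is nonzero only for $J$ inside $A$ or inside $B$, and inclusion–exclusion then gives the identity.
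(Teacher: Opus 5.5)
Your Step~1 is false, and the ``obstacle'' you find in Step~2 comes from that error. The proposition holds as written, with $\widehat{bb}_{\K_A,\k}$ the polynomial of $\K_A$ on its own vertex set $A$.

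Lemma~\ref{lem:pushout} is about $\ux$-type products. For a general pair $\uxa$, the piece of $\uxa^{\K}$ indexed by the full subcomplex $\K_A$ is $\uxa^{\K_A}\times\prod_{i\notin A}A_i$, not $\uxa^{\K_A}$. So for $\ZK$, the decomposition along $\K=\K_A\cup\K_B$ glues $\Z_{\K_A}\times(S^1)^{|N\setminus A|}$ and $\Z_{\K_B}\times(S^1)^{|N\setminus B|}$ along $\Z_{\K_{A\cap B}}\times(S^1)^{|N\setminus(A\cap B)|}$. These spaces are not simply connected, and $\Z_{\K_A}\cup_{\Z_{\K_{A\cap B}}}\Z_{\K_B}$ is not $\ZK$.

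Concretely, let $\K$ be two disjoint vertices, $A=\{1\}$, $B=\{2\}$. Then $\ZK=S^3$, so $a_{\{1,2\}}=1-t^2$, whereas $a_{\{1\}}+a_{\{2\}}-a_\varnothing=1$. In the same example, \eqref{eqn:formula-for-bb} gives $t^2\widehat{bb}_{\K,\k}=1-1-1+(1-t^2)=-t^2$. It also gives $\widehat{bb}_{\K_A,\k}=\widehat{bb}_{\K_B,\k}=0$ and $\widehat{bb}_{\K_\varnothing,\k}=1$, so the identity reads $-1=0+0-1$ and holds. This example also refutes your closing claim that $\widehat{bb}_{\K_J,\k}$ vanishes unless $J\subseteq A$ or $J\subseteq B$. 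No ghost-vertex reinterpretation is needed.

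The missing idea is exactly the bookkeeping you chose to avoid. Work with $\djk=\ux^\K$, $X_i=\CC P^\infty$. Here Lemmas~\ref{lem:pushout} and~\ref{lem:retraction} really do give, for every $I\subseteq N$, a pushout of $\DJ_{\K_{I\cap A}}$ and $\DJ_{\K_{I\cap B}}$ over $\DJ_{\K_{I\cap A\cap B}}$ along maps with left homotopy inverses. Combine part~(3) of Theorem~\ref{thm:general-theorem-for-pushout} with $P(H_*(\Omega\DJ_{\K_J};\k))=(1+t)^{|J|}P(H_*(\Omega\Z_{\K_J};\k))$ from Proposition~\ref{prp:formula-for-bb}. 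This gives the correct replacement for Step~1:
\[a_I=(1+t)^{|I\setminus A|}a_{I\cap A}+(1+t)^{|I\setminus B|}a_{I\cap B}-(1+t)^{|I\setminus(A\cap B)|}a_{I\cap A\cap B}.\]
In the example this reads $2(1+t)-(1+t)^2=1-t^2$, as it should.

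In your Step~2 the inner sum now becomes $\sum_{I_2\subseteq N\setminus A}(-1-t)^{|I_2|}=(-t)^{|N\setminus A|}$ instead of $0$. So the first sum equals $(-t)^{|N\setminus A|}\cdot(-t)^{|A|}\widehat{bb}_{\K_A,\k}=(-t)^{|N|}\widehat{bb}_{\K_A,\k}$, and likewise for $B$ and $A\cap B$. Dividing by $(-t)^{|N|}$ gives the proposition. This is the paper's proof: the factors $1+t$ are precisely the circle coordinates that your Step~1 dropped.
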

\begin{proof}
Without loss of generality, write $\K=\K_A\cup \K_B$. Then for each $I\subseteq[m]$ the pushout $\K=\K_A\cup_{\K_{A\cap B}}\K_B$ restricts to the pushout $\K_I=\K_{A\cap I}\cup_{\K_{A\cap B\cap I}}\K_{B\cap I}$.

For $J\subseteq[m]$, denote $\varphi_J:=(-1)^{|J|}/P(H_*(\Omega\Z_{\K_J};\k);t)\in\ZZ[[t]]$. Then $\sum_{I\subseteq J}\varphi_I=(-t)^{|J|}\widehat{bb}_{\K_J}(t)$ by \eqref{eqn:formula-for-bb}. On the other hand, $1/P(H_*(\Omega\DJ_{\K_J};\k);t)=(-1-t)^{-|J|}\varphi_J$ by Proposition \ref{prp:formula-for-bb}. Applying part (3) of Theorem \ref{thm:general-theorem-for-pushout} to the pushout
\[\begin{tikzcd}
	{\DJ_{\K_{A\cap B\cap I}}} & {\DJ_{\K_{A\cap I}}} \\
	{\DJ_{\K_{B\cap I}}} & {\DJ_{\K_I}}
	\arrow[from=1-1, to=1-2]
	\arrow[from=1-1, to=2-1]
	\arrow[from=1-2, to=2-2]
	\arrow[from=2-1, to=2-2]
\end{tikzcd}\]
from Lemma \ref{lem:pushout}, we obtain:
\begin{gather*}
(-1-t)^{-|I|}\varphi_I=(-1-t)^{-|A\cap I|}\varphi_{A\cap I}+(-1-t)^{-|B\cap I|}\varphi_{B\cap I}-(-1-t)^{-|A\cap B\cap I|}\varphi_{A\cap B\cap I},\\\varphi_I=(-1-t)^{|I\setminus A|} \varphi_{A\cap I}+(-1-t)^{|I\setminus B|} \varphi_{B\cap I}-(-1-t)^{|I\setminus (A\cap B)|}\varphi_{A\cap B\cap I}.\end{gather*}
Now we take the sum over all $I\subseteq[m]$. Note that
$$\sum_{I\subseteq[m]}(-1-t)^{|I\setminus A|} \varphi_{A\cap I}=\sum_{I'\subseteq [m]\setminus A}(-1-t)^{|I'|} \sum_{I''\subseteq A}\varphi_{I''}=(-t)^{m-|A|} (-t)^{|A|}\widehat{bb}_{\K_A,\k}(t)$$ $$=(-t)^m\widehat{bb}_{\K_A,\k}(t).$$
Similar formulas hold for $B$ and $A\cap B$, so we obtain:
$$(-t)^m\widehat{bb}_{\K,\k}(t)=(-t)^m\widehat{bb}_{\K_A,\k}(t)+(-t)^{m}\widehat{bb}_{\K_B,\k}(t)-(-t)^{m}\widehat{bb}_{\K_{A\cap B},\k}(t)$$
as required.
\end{proof}
\begin{crl}
\label{crl:bb-main-formula}
For any simplicial complex $\K$, we have $$\widehat{bb}_{\K,\k}(t)=\sum_{I\in\Kf}(1-\chi(\lk_{\Kf}I))\widehat{bb}_{\K_I,\k}(t)=\sum_{J\in\Kf}(-1)^{|J|}\sum_{I\subseteq J}(-1)^{|I|}\widehat{bb}_{\K_I,\k}(t).$$
\end{crl}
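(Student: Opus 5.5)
This follows by applying Lemma \ref{lmm:counting_lemma} to the family $a_J:=\widehat{bb}_{\K_J,\k}(t)\in\ZZ[t]$, $J\subseteq[m]$, and taking $J=[m]$. Once the hypothesis of that lemma is checked, it gives
$$\widehat{bb}_{\K,\k}(t)=a_{[m]}=\sum_{B\in\Kf}(-1)^{|B|}\sum_{I\subseteq B}(-1)^{|I|}\widehat{bb}_{\K_I,\k}(t)=\sum_{I\in\Kf}(1-\chi(\lk_{\Kf}I))\widehat{bb}_{\K_I,\k}(t),$$
using $\Kf_{[m]}=\Kf$. These are exactly the two claimed identities, up to renaming the outer summation index $B$ as $J$.

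The hypothesis to verify is the following. Suppose $A,B\subseteq[m]$ satisfy $\K_{A\cup B}=\K_A\cup\K_B$. Then we need
$$a_{A\cup B}=a_A+a_B-a_{A\cap B}.$$
Set $\L:=\K_{A\cup B}$, viewed as a simplicial complex on the vertex set $A\cup B$, and relabel the vertices so that it lies on $[m']$ with $m'=|A\cup B|$. The full subcomplexes of $\L$ on $A$, $B$ and $A\cap B$ are $\K_A$, $\K_B$ and $\K_{A\cap B}$, since $(\K_J)_I=\K_I$ for $I\subseteq J$. Proposition \ref{prp:bb-additivity} applied to $\L=\L_A\cup\L_B$ then gives
$$\widehat{bb}_{\K_{A\cup B},\k}=\widehat{bb}_{\K_A,\k}+\widehat{bb}_{\K_B,\k}-\widehat{bb}_{\K_{A\cap B},\k},$$
which is the required identity.

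The only point that needs care is the relabelling. We must make sure that $\widehat{bb}_{\K_J,\k}$ depends only on the isomorphism type of $\K_J$, and that Proposition \ref{prp:bb-additivity} is stated for an arbitrary vertex set. Both hold: formula \eqref{eqn:formula-for-bb} expresses $\widehat{bb}$ through Poincar\'e series of $H_*(\Omega\Z_{\K_I};\k)$, which are invariant under isomorphism. The degenerate cases are harmless. When one of $A$, $B$ contains the other, the identity is trivial. When $A\cap B=\varnothing$, the term $\K_\varnothing$ is the empty complex, and the identity is still covered by Proposition \ref{prp:bb-additivity}, because its proof goes through the general pushout of Lemma \ref{lem:pushout} and part (3) of Theorem \ref{thm:general-theorem-for-pushout}.
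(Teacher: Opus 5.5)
Your proposal is correct and is the paper's proof: apply Lemma \ref{lmm:counting_lemma} to $a_J=\widehat{bb}_{\K_J,\k}(t)$ with $J=[m]$, and get the additivity hypothesis from Proposition \ref{prp:bb-additivity}. Your remarks on viewing $\K_{A\cup B}$ as a complex on its own vertex set, and on the degenerate cases, spell out the ``without loss of generality'' step that the paper leaves implicit.
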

\begin{proof}
The result follows from Lemma \ref{lmm:counting_lemma} applied to the elements $a_J=\widehat{bb}_{\K_J}(t)\in\ZZ[t]$, where the property $a_{A\cup B}=a_{A}+a_B-a_{A\cap B}$ holds by Proposition \ref{prp:bb-additivity}.
\end{proof}

Since $\K_I$ is a $1$-neighbourly complex for any $I\in\Kf$, the above proposition shows that it is sufficient to compute the polynomials $\widehat{bb}_{\K,\k}(t)$ for \emph{$1$-neighbourly} complexes.

\begin{prp}
\label{prp:bb-for-Golod}
Let $\K$ be a simplicial complex which is Golod over a field $\k$. Then
$$\widehat{bb}_{\K,\k}(t)=1-P(H_*(\K;\k);t).$$
\end{prp}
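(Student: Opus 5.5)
Apply the Möbius-type formula \eqref{eqn:formula-for-bb} of Proposition \ref{prp:formula-for-bb} to the moment-angle complexes of the full subcomplexes $\K_I$. For Golod complexes these are homologically simple: a full subcomplex of a Golod complex is Golod over $\k$, and Golodness means the Stanley--Reisner ring is Golod. By the Berglund--J\"ollenbeck / Grbi\'c--Theriault type results, this gives
$$P(H_*(\Omega\Z_{\K_I};\k);t)=1/\bigl(1-t^{-1}P(\H_*(\Z_{\K_I};\k);t)\bigr).$$
Equivalently, $H_*(\Omega\Z_{\K_I};\k)$ is the tensor algebra on the desuspension of $\H_*(\Z_{\K_I};\k)$, since the Adams--Hilton/cobar model is formal with trivial products. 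So the plan is to take as input that $1/P(H_*(\Omega\Z_{\K_I};\k))=1-t^{-1}P(\H_*(\Z_{\K_I};\k))$ for each $I\subseteq[m]$.

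\textbf{Rewriting via Hochster.} By Hochster's formula \eqref{eqn:hochster-formula},
$$t^{-1}P(\H_*(\Z_{\K_I};\k);t)=\sum_{\varnothing\neq J\subseteq I}t^{|J|}P(\H_*(\K_J;\k);t).$$
Hence
$$1/P(H_*(\Omega\Z_{\K_I}))=\sum_{J\subseteq I}a_Jt^{|J|},\qquad a_\varnothing=1,\quad a_J=-P(\H_*(\K_J;\k);t)\ \text{for } J\neq\varnothing.$$
Note that $P(\H_*(\K_\varnothing))$ is $1$ for the empty complex, in degree $-1$, depending on conventions; this should be tracked carefully.

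\textbf{Möbius inversion.} Comparing with the first identity of Proposition \ref{prp:formula-for-bb}, namely $1/P(H_*(\Omega\Z_{\K_I}))=\sum_{J\subseteq I}\widehat{bb}_{\K_J}(t)t^{|J|}$, we apply Lemma \ref{lmm:mobius} to both expressions. This yields $\widehat{bb}_{\K_J}(t)\,t^{|J|}=a_Jt^{|J|}$ for every $J$ by induction on $|J|$, since both families satisfy the same triangular system. In particular,
$$\widehat{bb}_{\K,\k}(t)=-P(\H_*(\K;\k);t)$$
for $\K$ nonempty.

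\textbf{Normalisation and the main obstacle.} It remains to reconcile this with $1-P(H_*(\K;\k);t)$ using unreduced homology, which holds since $P(H_*)=1+P(\H_*)$ for nonempty $\K$. The main obstacle is precisely these degree and grading conventions: the shift between $\H_*(\Z_{\K_J})$ and $\H_{*-|J|-1}(\K_J)$, the $t^{-1}$ desuspension, and the treatment of $J=\varnothing$ and of $H_0$. I would verify them on $\K$ equal to a simplex, where $\widehat{bb}=0$ and $H_*=\k$, and on two points, before finalising. The Golod input itself I take from the literature as stated.
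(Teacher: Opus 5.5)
Your proposal is correct and follows essentially the same route as the paper. The paper uses Panov--Limonchenko's characterisation of Golodness as freeness of $H_*(\Omega\ZK;\k)$. This also gives Golodness of every full subcomplex, since a retract of a free algebra is free, a step you assert without proof. From it the paper obtains $1/P(H_*(\Omega\Z_{\K_I};\k);t)=\sum_{J\subseteq I}(1-P(H_*(\K_J;\k);t))t^{|J|}$ for every $I$, which is your Hochster-rewritten Golod formula once $a_\varnothing=1$ and $P(H_*)=1+P(\H_*)$ for nonempty $\K_J$, and then applies Lemma \ref{lmm:mobius} with \eqref{eqn:formula-for-bb}, exactly your triangular comparison.
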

\begin{proof}
By \cite[Theorem 1]{panov-limonchenko}, $\K$ is Golod over $\k$ if and only if $H_*(\Omega\ZK;\k)$ is a free associative algebra. Since a retract of a free algebra is free, $\K_J$ is Golod over $\k$ for any $I\subseteq[m]$.

By \cite[Theorem 1(d)]{panov-limonchenko},
$$1/P(H_*(\Omega\ZK;\k);t)=1-\sum_{J\neq\varnothing,J\subseteq[m]}P(\H_*(\K_J;\k);t)t^{|J|}=\sum_{J\subseteq[m]}(1-P(H_*(\K_J;\k);t))t^{|J|}.$$
Similarly, $1/P(H_*(\Omega\Z_{\K_I};\k);t)=\sum_{J\subseteq I}(1-P(H_*(\K_J;\k);t))t^{|J|}$ for each $I\subseteq[m]$. By Lemma \ref{lmm:mobius},
$$(-1)^m\cdot (1-P(H_*(\K;\k);t))t^m=\sum_{I\subseteq[m]}(-1)^{|I|}/P(H_*(\Omega\Z_{\K_I};\k);t).$$
The right hand side is equal to $(-t)^m\widehat{bb}_{\K,\k}(t)$ by Proposition \ref{prp:formula-for-bb}.
\end{proof}
\begin{crl}
If $\K$ is HMF over a field $\k$, then $\widehat{bb}_{\K,\k}(t)=1-P(H_*(\K;\k);t)$.   
\end{crl}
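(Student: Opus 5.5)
The plan is to reduce to Proposition \ref{prp:bb-for-Golod}: I will show that if $\K$ is HMF over $\k$, then $\K$ is Golod over $\k$, and then the formula $\widehat{bb}_{\K,\k}(t)=1-P(H_*(\K;\k);t)$ follows at once. Golodness is taken in the form used in the proof of Proposition \ref{prp:bb-for-Golod}, via \cite[Theorem 1]{panov-limonchenko}: $\K$ is Golod over $\k$ if and only if $H_*(\Omega\ZK;\k)$ is a free associative algebra. So the task is to show that this algebra is free.

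Since $\k$ is a field, $H_*(\ZK;\k)$ is automatically a free $\k$-module. The hypotheses of Proposition \ref{prp:HMF-without-torsion-additive-basis} are therefore satisfied by any complex which is HMF over $\k$. Note that this proposition does not require $1$-neighbourliness. Part (2) of Proposition \ref{prp:HMF-without-torsion-additive-basis} states that $H_*(\Omega\ZK;\k)$ is a free associative $\k$-algebra. Concretely, its generators are the elements $\sum_I\mu_{I,q}c(J\setminus I,w_I)$, which realise a wedge of spheres $W\to\ZK$ inducing an isomorphism on homology.

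Hence $\K$ is Golod over $\k$, and Proposition \ref{prp:bb-for-Golod} gives the claimed identity.

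The only point needing care is that the Golod criterion of \cite{panov-limonchenko} is applied over a field, which is the case here. If one prefers to avoid that characterisation, there is a direct alternative. For each $J$, $\K_J$ is again HMF, so $\Z_{\K_J}$ has the homology of a wedge of spheres and $H_*(\Omega\Z_{\K_J};\k)$ is the tensor algebra on $s^{-1}\H_*(\Z_{\K_J};\k)$. By \eqref{eqn:hochster-formula} this gives
\[1/P(H_*(\Omega\Z_{\K_I};\k);t)=\sum_{J\subseteq I}(1-P(H_*(\K_J;\k);t))t^{|J|}.\]
Then Lemma \ref{lmm:mobius} together with \eqref{eqn:formula-for-bb} yields the formula, exactly as in the proof of Proposition \ref{prp:bb-for-Golod}. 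I expect no real obstacle; the main check is the convention $P(H_*(\K_\varnothing))$ versus reduced homology in the $J=\varnothing$ term, which matches the Golod computation.
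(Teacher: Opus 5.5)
Your proof is correct and matches the paper's argument: part (2) of Proposition \ref{prp:HMF-without-torsion-additive-basis}, whose freeness hypothesis holds automatically over a field, shows that $H_*(\Omega\ZK;\k)$ is a free associative algebra, so $\K$ is Golod over $\k$ and Proposition \ref{prp:bb-for-Golod} gives the formula. Your alternative route just inlines the proof of Proposition \ref{prp:bb-for-Golod} and is also fine.
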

\begin{proof}
By part (2) of Proposition \ref{prp:HMF-without-torsion-additive-basis}, $H_*(\Omega\ZK;\k)$ is a free associative algebra, so $\K$ is Golod over $\k$.
\end{proof}

Recall that $\K$ is \emph{$q$-neighbourly} if $\sk_q\K=\sk_q\Delta_{[m]}$. It is known that $\K$ is Golod if $2q\geq \dim \K$ (see \cite[Theorem 10.9]{fat-wedge}); in particular, $\widehat{bb}_{\K,\k}(t)=1-P(H_*(\K;\k);t)$ in that case. We obtain a partial result under milder conditions on $q$.
\begin{lmm}
\label{lem:PShighlyconn}
Let $X$ be a $k$-connected CW-complex of finite type, $k\geq 1$. Then we have that $1/P(H_*(\Omega X;\k);t)=1-t^{-1}P(\H_*(X;\k);t)+O(t^{2k}).$
\end{lmm}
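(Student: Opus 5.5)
The plan is to compute $1/P(H_*(\Omega X;\k))$ as an Euler characteristic of the bar construction of $A:=H_*(\Omega X;\k)$, and then to identify the first two terms with $1$ and $t^{-1}P(\H_*(X;\k))$ using the Eilenberg--Moore (bar) spectral sequence. I assume $\k$ is a field, or more generally that all modules involved are free of finite type, as is implicit in the use of Poincar\'e series.

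\textbf{Step 1: Euler characteristic of the bar construction.} Since $X$ is $k$-connected, $A$ is a connected algebra with $\overline{A}_i=0$ for $0<i<k$. The normalized bar complex $B_s(A)=\overline{A}^{\otimes s}$, graded by internal degree, gives the identity
\[
1/P(A)=\sum_{s\geq0}(-1)^sP(\overline{A})^s=\sum_{s\geq0}(-1)^sT_s(t),\qquad T_s(t):=\sum_d\dim\Tor^A_{s,d}(\k,\k)\,t^d .
\]
This holds because the differential preserves internal degree and each internal degree involves only finitely many $s$. We have $T_0=1$, and $T_1=P(QA)$, where $QA=\overline{A}/\overline{A}^2$ is the module of indecomposables. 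For $s\geq2$, $\Tor_s$ is a subquotient of $\overline{A}^{\otimes s}$, which vanishes in internal degrees $<sk$. Hence $T_s=O(t^{2k})$, and
\[
1/P(A)=1-P(QA)+O(t^{2k}).
\]

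\textbf{Step 2: identify $QA$ in low degrees.} It remains to show $(QA)_{n}\cong\H_{n+1}(X;\k)$ for $n\leq 2k-1$, which gives $P(QA)=t^{-1}P(\H_*(X;\k))+O(t^{2k})$. For this I use the Eilenberg--Moore spectral sequence $E^2_{s,d}=\Tor^{A}_{s,d}(\k,\k)\Rightarrow H_{s+d}(X;\k)$, which converges since $X$ is simply connected.

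In total degree $N\leq 2k+1$, only $s=1$ (and $s=0$ for $N=0$) can contribute, because terms with $s\geq2$ have total degree $\geq 2+2k$. For $N\leq 2k+1$, differentials leaving $E_{1,N-1}$ land in $s\leq -1$ and therefore vanish. Differentials entering $E_{1,N-1}$ come from $s\geq 3$ in total degree $N+1$. Such terms have total degree at least $3+3k>2k+2$, so they are zero.

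Thus $\H_N(X;\k)\cong\Tor^A_{1,N-1}=(QA)_{N-1}$ for $1\leq N\leq 2k+1$. This is compatible with the homology suspension isomorphism in the Freudenthal range and gives the required coefficients up to $t^{2k-1}$ after multiplying by $t^{-1}$.

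\textbf{Main obstacle.} The only delicate point is the convergence and degree bookkeeping of the Eilenberg--Moore spectral sequence. If that is unavailable, I would instead use the Adams--Hilton model $AH(X)=T(V)$ with $V\cong s^{-1}\H_*(X)$ after choosing a minimal cell structure over a field. In the range below $2k$, the differential lands in decomposables of degree $\geq 2k$, so it does not affect homology there. This directly gives $QA\cong s^{-1}\H_*(X)$ in degrees $<2k$.
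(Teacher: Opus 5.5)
Your proof is correct, but it takes a heavier route than the paper's.

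The paper's argument has two lines. It quotes the classical fact that the homology suspension gives $\H_{n}(\Omega X;\k)\cong\H_{n+1}(X;\k)$ for $n\le 2k-1$. Hence $P(H_*(\Omega X;\k))=1+g$ with $g=t^{-1}P(\H_*(X;\k))+O(t^{2k})$ and $g=O(t^k)$. Inverting gives $1/(1+g)=1-g+O(t^{2k})$.

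Your Step 1 proves more than is needed. Since $\overline{A}$ is concentrated in degrees $\ge k$, the products $\overline{A}^2$ already vanish below degree $2k$. So $QA=\overline{A}$ in that range, and $1/P(A)=\sum_s(-1)^sP(\overline{A})^s=1-P(\overline{A})+O(t^{2k})$ follows directly, without passing to $\Tor^A_*(\k,\k)$. Your Step 2 is in effect a proof, via the bar (Rothenberg--Steenrod/Eilenberg--Moore) spectral sequence, of the suspension isomorphism that the paper takes as well known. It even gives the slightly sharper statement $(QA)_{N-1}\cong\H_N(X;\k)$ for $N\le 2k+1$.

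What each route buys: yours is self-contained, and the exact identity $1/P(A)=\sum_s(-1)^sP(\Tor^A_s(\k,\k))$ is the natural starting point if one wanted the next-order correction. The paper's argument avoids spectral sequence convergence and bookkeeping altogether by relying on a standard stable-range fact.

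On your fallback via Adams--Hilton models: a CW structure whose cells correspond to a basis of $\H_*(X;\k)$ need not exist, since for example torsion cells are invisible mod $p$. You would instead minimise the tensor dg-algebra algebraically. Your main argument does not depend on this.
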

\begin{proof}
The connectivity of $X$ implies $P(\H_*(X;\k);t)=t^{k+1}f(t)$ for some $f\in\ZZ[[t]]$. It is well-known that $\H_*(X;\k)\cong \H_{*-1}(\Omega X;\k)$ for $*\leq 2k$, so $P(H_*(\Omega X;\k);t)=1+t^kf(t)+O(t^{2k})$. It follows that $1/P(H_*(\Omega X;\k);t)=1-t^kf(t)+O(t^{2k})$, so the result follows.
\end{proof}
\begin{prp}
\label{prp:bb-golod-approx}
Let $\K$ be a $q$-neighbourly simplicial complex on $[m]$, where $4q\geq m-2$. Then $\widehat{bb}_{\K,\k}(t)=1-P(H_*(\K;\k);t)+O(t^{4q+2-m})$.
\end{prp}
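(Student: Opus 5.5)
We apply the formula \eqref{eqn:formula-for-bb} of Proposition \ref{prp:formula-for-bb}, $(-t)^m\widehat{bb}_{\K,\k}(t)=\sum_{I\subseteq[m]}(-1)^{|I|}/P(H_*(\Omega\Z_{\K_I};\k);t)$, and approximate each summand with Lemma \ref{lem:PShighlyconn}. Every full subcomplex $\K_I$ is again $q$-neighbourly. It is a standard consequence of Hochster's formula \eqref{eqn:hochster-formula} that $\Z_{\K_I}$ is $(2q+2)$-connected. Indeed, a $q$-neighbourly complex on $J$ has $\H_i(\K_J)=0$ for $i<q$ (its $q$-skeleton is that of a simplex), so the summands of $\H_*(\Z_{\K_I})$ start in degree $\geq q+|J|+1\geq q+(q+2)+1$, using $|J|\geq q+2$ for $\K_J$ to be non-contractible. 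To be safe I would only use $(2q+2)$-connectivity; the fundamental group is trivial since $q\geq 0$.

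With $k=2q+2$, Lemma \ref{lem:PShighlyconn} gives $1/P(H_*(\Omega\Z_{\K_I});t)=1-t^{-1}P(\H_*(\Z_{\K_I});t)+O(t^{4q+4})$. Substituting Hochster's formula, $t^{-1}P(\H_*(\Z_{\K_I});t)=\sum_{\varnothing\neq J\subseteq I}P(\H_*(\K_J);t)\,t^{|J|}$. Hence
$$1/P(H_*(\Omega\Z_{\K_I});t)=\sum_{J\subseteq I}(1-P(H_*(\K_J);t))t^{|J|}+O(t^{4q+4}),$$
using the convention $P(H_*(\K_\varnothing))=0$, as in the proof of Proposition \ref{prp:bb-for-Golod} (there, $1-P(H_*(\K_J))$ is interpreted with reduced homology, so that the $J=\varnothing$ term is $1$). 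This is exactly the Golod-type formula of Proposition \ref{prp:bb-for-Golod}, up to an error of order $t^{4q+4}$.

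Next we apply Lemma \ref{lmm:mobius} with $a_J=(1-P(H_*(\K_J)))t^{|J|}$ together with the error terms. The alternating sum over $I$ of the main terms gives $(-1)^m(1-P(H_*(\K)))t^m$. The errors sum to $O(t^{4q+4})$, since there are finitely many of them. Dividing by $(-t)^m$ yields
$$\widehat{bb}_{\K,\k}(t)=1-P(H_*(\K;\k);t)+O(t^{4q+4-m}).$$
This is stronger than claimed, which suggests the paper's connectivity estimate is weaker: $\Z_{\K}$ being $(2q+1)$-connected gives exactly $O(t^{4q+2-m})$. I would therefore use $k=2q+1$, justified by the bound $\H_i(\Z_\K)=0$ for $i\leq 2q+2$, so as to match the statement.

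The hypothesis $4q\geq m-2$ is only there to make the error exponent nonnegative, so that the statement is meaningful in $\ZZ[t]$. Both sides are polynomials, and the division by $t^m$ is legitimate because the left-hand side of \eqref{eqn:formula-for-bb} is divisible by $t^m$. The main obstacle is the bookkeeping of the $O$-terms under division by $t^m$, and pinning down the precise connectivity of $\Z_{\K_I}$ from $q$-neighbourliness via \eqref{eqn:hochster-formula}. Everything else is the Möbius inversion already used in Proposition \ref{prp:bb-for-Golod}.
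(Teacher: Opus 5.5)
Your proof is correct and follows the same route as the paper. The paper also combines connectivity of $\Z_{\K_I}$ with Lemma \ref{lem:PShighlyconn} and Hochster's formula, then applies the M\"obius inversion from the proof of Proposition \ref{prp:bb-for-Golod} to \eqref{eqn:formula-for-bb}.

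Your $(2q+2)$-connectivity bound is right; the paper only uses $(2q+1)$. So your sharper error $O(t^{4q+4-m})$ is valid and already implies the statement, and there is no need to weaken $k$. One small correction: the convention that makes the $J=\varnothing$ term equal to $1$ is unreduced homology with $H_*(\varnothing)=0$, not reduced homology.
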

\begin{proof}
By \cite[Proposition 4.3.5(b)]{BP15}, the moment-angle complex $\ZK$ is $(2q+1)$-connected, and so $1/P(H_*(\Omega\ZK;\k);t)=1-t^{-1}P(\H_*(\ZK;\k);t)+O(t^{4q+2})$ by Lemma \ref{lem:PShighlyconn}. Similarly, for any $I\subseteq[m]$, $$1/P(H_*(\Omega\Z_{\K_I};\k);t)=1-t^{-1}(P(\H_*(\Z_{\K_I};\k);t)+O(t^{4q+2})=\sum_{J\subseteq I}(1-P(H_*(\K_J;\k);t))t^{|J|}+O(t^{4q+2}).$$
Therefore, arguing as in the proof of Proposition \ref{prp:bb-for-Golod}, we obtain \[\widehat{bb}_{\K,\k}(t)t^m=(1-P(H_*(\K;\k);t))t^m+O(t^{4q+2}).\qedhere\]
\end{proof}

Finally, let $\K$ be a skeleton of a flag simplicial complex. In this case, there is a simple formula for reflected Backelin--Berglund polynomials. 
\begin{prp}
\label{prp:bb-for-skeleta}
Let $\L$ be a flag complex, and let $\K=sk_d\L$. Then $$\widehat{bb}_{\K,\k}(t)=1-\chi(\L)+(-t)^d(\chi(\L)-\chi(\K)).$$    
\end{prp}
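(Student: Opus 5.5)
The plan is to combine Corollary \ref{crl:bb-main-formula} with the Golod/HMF formula for the $1$-neighbourly pieces. I assume $d\geq 1$, so that $\K^1=\L^1$ and hence $\Kf=\L$ because $\L$ is flag. For $I\in\L$ the full subcomplex is $\K_I=\sk_d\Delta_I$. By Lemma \ref{lmm:skeleta-HMF-presented} it is HMF over $\k$. The corollary following Proposition \ref{prp:bb-for-Golod} then gives
\[
\widehat{bb}_{\K_I,\k}(t)=1-P(H_*(\K_I;\k);t).
\]
Here $H_*$ is to be read with the convention forced by the Golod computation. The summand for $J$ is $(1-P(H_*(\K_J)))t^{|J|}$, so the empty set contributes $1$, and for $J\neq\varnothing$ the quantity $1-P(H_*(\K_J))$ equals $-P(\H_*(\K_J))$. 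I will fix this convention explicitly at the start.

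Under this convention each piece is explicit. If $|I|\leq d+1$, then $\K_I$ is a simplex and $\widehat{bb}_{\K_I}=0$ for $I\neq\varnothing$, while $\widehat{bb}_{\K_\varnothing}=1$. If $|I|\geq d+2$, then $\sk_d\Delta_I$ is a wedge of $d$-spheres, and
\[
\widehat{bb}_{\K_I}=-r_It^d,\qquad r_I=(-1)^d\widetilde{\chi}(\sk_d\Delta_I).
\]
Both cases are covered uniformly by
\[
\widehat{bb}_{\K_I}(t)=g_0(I)+t^d g_1(I),
\]
where $g_0(I)=[I=\varnothing]$ and $g_1(I)=-(-1)^d\widetilde\chi(\sk_d\Delta_I)$ for $I\neq\varnothing$, $g_1(\varnothing)=0$. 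The point to check is that $g_1$ vanishes automatically when $I$ is a simplex, since $\widetilde\chi(\sk_d\Delta_I)=0$ for $1\leq|I|\leq d+1$.

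Next I use the second form of Corollary \ref{crl:bb-main-formula}:
\[
\widehat{bb}_{\K}=\sum_{B\in\L}(-1)^{|B|}\sum_{I\subseteq B}(-1)^{|I|}\widehat{bb}_{\K_I}.
\]
By Lemma \ref{lmm:mobius}, writing $g(I)=\sum_{F\subseteq I}h(F)$ turns the inner sum into $(-1)^{|B|}h(B)$. The whole expression therefore collapses to
\[
\widehat{bb}_{\K}=\sum_{B\in\L}h(B).
\]
It remains to compute the Möbius transforms $h_0$ and $h_1$ of $g_0$ and $g_1$. This is clean because $\widetilde\chi(\sk_d\Delta_I)=-\sum_{F\subseteq I,\,|F|\leq d+1}(-1)^{|F|}$ is itself a sum over subsets of $I$. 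The resulting $h(F)$ are signed indicators: one on the faces of size $\leq d+1$, the other on all faces, with corrections at $F=\varnothing$. Summing them over $B\in\L$ produces $\sum_{F\in\K}(-1)^{|F|}$ and $\sum_{F\in\L}(-1)^{|F|}$, that is, $1-\chi(\K)$ and $1-\chi(\L)$. These should assemble into $1-\chi(\L)+(-t)^d(\chi(\L)-\chi(\K))$.

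The main obstacle is this last bookkeeping: the empty-set conventions, the extra $1$ coming from $g_0$ and from $\widetilde\chi(\varnothing)=-1$, and the sign $(-1)^d$. These must combine so that the constant term becomes $1-\chi(\L)$ rather than $1$, and the $t^d$ coefficient becomes $(-1)^d(\chi(\L)-\chi(\K))$. I would verify the constants on the cases $\L=\Delta_{[m]}$, where Lemma \ref{lmm:skeleta-HMF-presented} gives the answer directly, and $\L$ a discrete set of points. Finally, I would treat $d=0$ separately: there $\K$ is discrete, $\Kf$ consists of points, and the formula can be checked from the wedge formula Lemma \ref{lmm:poincare-series-for-wedge}.
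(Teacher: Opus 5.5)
Your proposal is correct, and the bookkeeping you flagged does close. Since $\widetilde\chi(\sk_d\Delta_I)=-\sum_{F\subseteq I,\,|F|\le d+1}(-1)^{|F|}$, you have $g_1(I)=(-1)^d\sum_{F\subseteq I,\,|F|\le d+1}(-1)^{|F|}-(-1)^d[I=\varnothing]$. The M\"obius transforms are therefore $h_0(F)=(-1)^{|F|}$, and $h_1(F)=-(-1)^{d+|F|}$ for $|F|\ge d+2$ with $h_1(F)=0$ otherwise. Summing over $B\in\L$:
\begin{itemize}
\item the constant term is $\sum_{F\in\L}(-1)^{|F|}=1-\chi(\L)$;
\item the $t^d$-coefficient is $-(-1)^d\sum_{F\in\L\setminus\K}(-1)^{|F|}=(-1)^d(\chi(\L)-\chi(\K))$, because the faces of $\L$ of size $\ge d+2$ are exactly those not in $\K$.
\end{itemize}
Your separate $d=0$ check also works: there $\Kf=\sk_0\Delta_{[m]}$, and both sides reduce to $1-\chi(\K)$. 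In fact $\K$ is then itself flag.

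The paper takes a different route, by induction on the number of vertices. If $\L=\Delta_I$, then $\K=\sk_d\Delta_I$ is Golod and a wedge of $d$-spheres, which is the base case. Otherwise one writes $\L=\L_A\cup_{\L_{A\cap B}}\L_B$ and $\K=\K_A\cup_{\K_{A\cap B}}\K_B$ for some $A,B\subsetneq[m]$. The formula then follows from Proposition \ref{prp:bb-additivity} together with additivity of $\chi$ for both $\K$ and $\L$.

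That argument never needs $\Kf=\L$, so it covers $d=0$ uniformly. It also avoids all sign and empty-set conventions, because the right-hand side is visibly additive.

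Your route outsources the induction to Corollary \ref{crl:bb-main-formula}, which is itself proved by the same kind of induction in Lemma \ref{lmm:counting_lemma}. You then evaluate the resulting sum in closed form. The cost is the bookkeeping and the separate $d=0$ case. The gain is an explicit account of where the two terms come from: the constant term is the signed count of all faces of $\L$, and the $t^d$ term is the signed count of exactly those faces of $\L$ discarded when passing to the $d$-skeleton.
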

\begin{proof}
We proceed by induction on the number of vertices.

The base case: let $\L$ be a simplex, $\L=\Delta_I$. Then $\K=\sk_d\Delta_I$ is a Golod complex, and $\K\simeq(S^d)^{\vee n_d}$ for some $n_d$. We have $\chi(\L)=1$, $\chi(\K)=1+(-1)^d n_d$ and
$$\widehat{bb}_{\K,\k}(t)=1-P(H_*(\K;\k);t)=1-(1+n_dt^d)=(-t)^d(1-\chi(\K))=1-\chi(\L)+(-t)^d(\chi(\L)-\chi(\K)),$$
as required.

The induction step: suppose that the formula holds for all proper full subcomplexes of $\L$, and $\L$ is not a simplex. We write $\K=\K_A\cup_{\K_{A\cap B}}\K_B$ for some $A,B\subsetneq [m]$. Then $\L=\L_A\cup_{\L_{A\cap B}}\L_B$. Moreover, by Proposition \ref{prp:bb-additivity} $$\widehat{bb}_{\K,\k}(t)=\widehat{bb}_{\K_A,\k}(t)+\widehat{bb}_{\K_B,\k}(t)-\widehat{bb}_{\K_{A\cap B},\k}(t).$$
By the inductive hypothesis applied to $\K_A$, $\K_B$, $\K_{A\cap B}$, $\widehat{bb}_{\K,\k}(t)$ is equal to
$$1-\Big(\chi(\L_A)+\chi(\L_B)-\chi(\L_{A\cap B})\Big)+(-t)^d\cdot\Big((\chi(\L_A)+\chi(\L_B)-\chi(\L_{A\cap B})-(\chi(\K_A)+\chi(\K_B)-\chi(\K_{A\cap B}))\Big),$$
which equals the required formula since $\chi(\K)=\chi(\K_A)+\chi(\K_B)-\chi(\K_{A\cap B})$, and the same holds for $\L$.
\end{proof}

\begin{rmk}
\label{rmk:skeletonsimplexcase}
If $\K=sk_d(\Delta_{[m]})\simeq (S^d)^{\vee n_d}$ is a $d$-skeleton of a simplex, then $\widehat{bb}_\K(t)=-n_dt^d$, where $n_d$ is computed by counting the Euler characteristic twice: $$1+(-1)^d n_d=m-\binom{m}{2}+\dots+(-1)^{d}\binom{m}{d+1}.$$ For example, if $d=1$, then $\K$ is a complete graph, and the Backelin-Berglund polynomial is equal to  $bb_\K(t)=-n_1t$, $n_1=1-m+\binom{m}{2}=(m^2-3m+2)/2$.
\end{rmk}

The following proposition is a summary of our results as well as some simple computations.
\begin{prp}
Reflected Backelin--Berglund polynomials have the following properties:
\begin{enumerate}
    \item $\deg \widehat{bb}_{\K,\k}(t)\leq m$.
    \item $\widehat{bb}_{\{\varnothing\},\k}(t)\equiv 1$; $\widehat{bb}_{\Delta_I,\k}(t)=0$ if $I\neq\varnothing$; $\widehat{bb}_{\partial\Delta_I,\k}=-t^{|I|-2}$.
    \item $\widehat{bb}_{\K\ast\L,\k}(t)=\widehat{bb}_{\K,\k}(t)\cdot\widehat{bb}_{\L,\k}(t)$.
    \item If $\K$ is a flag complex, then $\widehat{bb}_{\K,\k}(t)=1-\chi(\K)$ is a constant.
    \item If $\K$ is the $d$-skeleton of a flag simplicial complex $\L$ on $[m]$ with $1 \leq d \leq m$, then we have $\widehat{bb}_{\K,\k}(t)=1-\chi(\L)+(-t)^d(\chi(\L)-\chi(\K))$.
    \item If $\K$ is Golod over $\k$, then $\widehat{bb}_{\K,\k}(t)=1-P(H_*(\K;\k);t)$.
    \item If $\K$ is $q$-neighbourly, $4q\ge m-2$, then $\widehat{bb}_{\K,\k}(t)=1-P(H_*(\K;\k);t)+O(t^{4q+2-m})$. 
    \item $\widehat{bb}_{\K,\k}(t)=\sum_{I\in\Kf}(1-\chi(\lk_{\Kf}I))\widehat{bb}_{\K_I,\k}(t)=\sum_{J\in\Kf}(-1)^{|J|}\sum_{I\subseteq J}\widehat{bb}_{\K_I,\k}(t)$.
\end{enumerate}
\end{prp}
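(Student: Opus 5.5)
Most items are already proved above, so the plan is to assemble them, citing earlier results, and to do short direct computations for the rest.

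Items (5), (6) and (8) are restatements: (5) is Proposition \ref{prp:bb-for-skeleta}, (6) is Proposition \ref{prp:bb-for-Golod}, and (8) is Corollary \ref{crl:bb-main-formula}. Item (7) is Proposition \ref{prp:bb-golod-approx}.

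For (4), apply (5) with $d=\dim\L$, so that $\K=\L$ and the $t$-term vanishes. Alternatively, use (8): $\Kf=\K$, and every $\K_I$ with $I\in\K$ is a simplex, so only the $I=\varnothing$ term survives by (2), with coefficient $1-\chi(\K)$.

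For (1), use \eqref{eqn:formula-for-bb}. The right-hand side is a power series, so it gives no degree bound directly. Instead, use the fact from \cite{SV} that $\widehat{bb}$ is a polynomial built from the multigraded Poincaré series of $\Tor^{\k[\K]}(\k,\k)$ truncated to squarefree multidegree $[m]$. Its $t$-degree is bounded by the homological degree, which is at most $m$ in multidegree $[m]$. If that definition is awkward to use, derive the bound from Proposition \ref{prp:formula-for-bb} by induction on $m$, as follows. The identity $1/P(H_*(\Omega\djk))=(1+t)^{-m}\sum_J\widehat{bb}_{\K_J}t^{|J|}$, combined with the fact that $1/P(H_*(\Omega\djk))$ is the reciprocal of a Hilbert series of Ext, which is rational with numerator degree controlled, gives the bound. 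I expect (1) to be the main obstacle, and I would first try to read it off the definition in \cite[Definition 4.4]{SV}.

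For (2), compute via \eqref{eqn:formula-for-bb}.
\begin{itemize}
\item For $\K=\{\varnothing\}$ with $m=0$, we get $\widehat{bb}=1/P(H_*(\Omega\pt))=1$.
\item For $\K=\Delta_I$, every $\Z_{\K_J}$ is contractible, so the alternating sum $\sum_{J\subseteq I}(-1)^{|J|}$ vanishes for $I\neq\varnothing$.
\item For $\K=\partial\Delta_I$, the only noncontractible term is $\Z_{\partial\Delta_I}\cong S^{2|I|-1}$. Its loop homology has $1/P=1-t^{2|I|-2}$. The alternating sum is therefore $(-1)^{|I|}(-t^{2|I|-2})$, and dividing by $(-t)^{|I|}$ gives $-t^{|I|-2}$.
\end{itemize}

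For (3), use $\Z_{\K\ast\L}=\ZK\times\Z_\L$ and the Künneth formula. These give $1/P(H_*(\Omega\Z_{(\K\ast\L)_{I\sqcup J}}))=1/P(H_*(\Omega\Z_{\K_I}))\cdot 1/P(H_*(\Omega\Z_{\L_J}))$, since full subcomplexes of a join are joins of full subcomplexes. The alternating sum in \eqref{eqn:formula-for-bb} then factors over $I\sqcup J$, and the factors $(-t)^{m}$ multiply correctly.
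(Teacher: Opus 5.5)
Your treatment of (2), (3) and (5)--(8) is the paper's own. You prove (2) via $\Z_{\Delta_I}\simeq\pt$, $\Z_{\partial\Delta_I}\cong S^{2|I|-1}$, Bott--Samelson and \eqref{eqn:formula-for-bb}. You prove (3) via $\Z_{\K\ast\L}=\ZK\times\Z_\L$, $(\K\ast\L)_{A\sqcup B}=\K_A\ast\L_B$ and \eqref{eqn:formula-for-bb}. For (5)--(8) you cite the results proved just before.

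For (4) the paper simply cites \cite[Proposition 4.9]{SV}, whereas you derive it inside the paper. Your route through (8) and (2) is correct and not circular. For flag $\K$ we have $\Kf=\K$ and $\K_I=\Delta_I$ for $I\in\K$, so only $I=\varnothing$ survives, with coefficient $1-\chi(\K)$. Moreover, neither Corollary~\ref{crl:bb-main-formula} nor Proposition~\ref{prp:bb-for-skeleta} uses (4). What this buys is a proof of (4) from the pushout additivity established here, rather than from the external computation in \cite{SV}. If you go through (5) instead, take $d=m$ rather than $d=\dim\L$, so that $1\le d\le m$ also holds for $0$-dimensional $\L$.

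The real gap is (1). Reading it off \cite[Proposition 4.3, Definition 4.4]{SV} is exactly what the paper does, but the mechanism you attach to that definition is backwards. Let $bb_\K(t)$ be the coefficient of $x_1\cdots x_m$ in Berglund's denominator $\prod_k(1+x_kt)/P^{\k[\K]}(x,t)=\sum_J bb_{\K_J}(t)x^J$, with $t$ recording homological degree. Compare this with Proposition~\ref{prp:formula-for-bb} under the regrading $H_n(\Omega\djk)\cong\bigoplus_{n=2j-i}\Ext^i_{\k[\K]}(\k,\k)_{2j}$. This forces $\widehat{bb}_{\K,\k}(t)=t^m\,bb_\K(t^{-1})$: homological degree $i$ in multidegree $[m]$ goes to the exponent $m-i$, not $i$.

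The statement itself shows this. The relation of the hypersurface $\k[\partial\Delta_I]$ sits in homological degree $2$ but produces $-t^{|I|-2}$. For flag $\K$ the ring is Koszul, so multidegree $[m]$ sits in homological degree $m$, yet $\widehat{bb}_{\K,\k}$ is constant.

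Hence the bar-construction bound $i\le|\alpha|=m$ that you invoke, which is true, only shows that $\widehat{bb}_{\K,\k}$ has no negative powers of $t$. The inequality $\deg\widehat{bb}_{\K,\k}\le m$ is the other, trivial half, namely $i\ge 0$. Your fallback does not help either. The single-graded identity for $1/P(H_*(\Omega\djk;\k);t)$ carries no a priori bound on a numerator, and inside your induction such a bound is equivalent to (1) itself. The needed information lives in the multigrading, i.e.\ in the definition from \cite{SV}.

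Finally, citing Corollary~\ref{crl:bb-main-formula} for (8) gives the second sum with a factor $(-1)^{|I|}$ that is missing from the statement as printed. Without it the formula already fails for $\K=\partial\Delta_{[3]}$, returning $t$ instead of $-t$.
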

\begin{proof}
Statement (1) follows from \cite[Proposition 4.3 + Definition 4.4]{SV}. Statement (2) follows from the homotopy equivalences $\Z_{\Delta_I}\simeq\pt$, $\Z_{\partial\Delta_I}\simeq S^{2|I|-1}$ and the Bott--Samelson theorem (or from (6)). For (3), note that $\Z_{\K\ast\L}=\ZK\times\Z_{\L}$. Moreover: if $\K$ is on the vertex set $V$, $\L$ is on the vertex set $W$ and $A\subseteq V$, $B\subseteq W$ then $(K\ast\L)_{A\sqcup B}=\K_{A}\ast \L_{B}$. It follows that $P(H_*(\Omega\Z_{(\K\ast\L)_{A\sqcup B}});t)=P(H_*(\Omega\Z_{\K_{A}});t)\cdot P(H_*(\Omega\Z_{\L_B});t)$, so $\widehat{bb}_{\K\ast\L,\k}(t)=\widehat{bb}_{\K,\k}(t)\cdot\widehat{bb}_{\L,\k}(t)$ by \eqref{eqn:formula-for-bb}.

Statement (4) is \cite[Proposition 4.9]{SV}, (5) is Proposition \ref{prp:bb-for-skeleta}, (6) is Proposition \ref{prp:bb-for-Golod}, (7) is Proposition \ref{prp:bb-golod-approx} and (8) is Corollary \ref{crl:bb-main-formula}.
\end{proof}

This allows to compute the Poincar\'e series for the loop homology of polyhedral products in certain cases.
\begin{prp}
Let $\K$ be a simplicial complex such that, for all $I\in\Kf$, the complex $\K_I$ is Golod over $\k$. Then
$$\widehat{bb}_{\K,\k}(t)=\sum_{I\in\Kf}(1-\chi(\lk_{\Kf}I))(1-P(H_*(\K_I;\k);t)=\sum_{J\in\Kf}(-1)^{|J|}\sum_{I\subseteq J}(-1)^{|I|}(1-P(H_*(\K_I;\k);t)),$$
$$1/P(H_*(\Omega\ZK;\k);t)=\sum_{L\in\Kf}(-t)^{|L|}(1+t)^{m-|L|}\sum_{I\subseteq L}(1-P(\H_*(\K_I;\k);t)),$$
$$1/P(H_*(\Omega\djk;\k);t)=\sum_{L\in\Kf}(-t/(1+t))^{|L|}\sum_{I\subseteq L}(1-P(\H_*(\K_I;\k);t)).$$
In particular, this holds in the following cases:
\begin{itemize}
    \item $\K_I$ is HMF over $\k$, for all $I\in\Kf$;
    \item $\dim\K\leq 2$;
    \item $\K$ is a skeleton of a flag complex.
\end{itemize}
\end{prp}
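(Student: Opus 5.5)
The first formula for $\widehat{bb}_{\K,\k}(t)$ comes from Corollary \ref{crl:bb-main-formula}. Each $\K_I$ with $I\in\Kf$ is Golod over $\k$ by assumption, so Proposition \ref{prp:bb-for-Golod} gives $\widehat{bb}_{\K_I,\k}(t)=1-P(H_*(\K_I;\k);t)$. Substituting this into both expressions of Corollary \ref{crl:bb-main-formula} yields the two displayed formulas for $\widehat{bb}_{\K,\k}$.

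For the Poincaré series I will start from Proposition \ref{prp:formula-for-bb}:
\[1/P(H_*(\Omega\ZK;\k);t)=\sum_{J\subseteq[m]}\widehat{bb}_{\K_J,\k}(t)t^{|J|}.\]
For every $J$ we have $(\K_J)^f=(\Kf)_J$, and for $I\in(\Kf)_J$ we have $(\K_J)_I=\K_I$. Hence I can apply the Golod formula above to each full subcomplex $\K_J$. This gives $\widehat{bb}_{\K_J,\k}=\sum_{L\in\Kf,\,L\subseteq J}(-1)^{|L|}\sum_{I\subseteq L}(-1)^{|I|}g_I$, where $g_I:=1-P(H_*(\K_I;\k);t)$.

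I then exchange the order of summation and sum over $J\supseteq L$ first:
\[\sum_{J\supseteq L}t^{|J|}=t^{|L|}(1+t)^{m-|L|}.\]
This gives $\sum_{L\in\Kf}(-t)^{|L|}(1+t)^{m-|L|}\sum_{I\subseteq L}(-1)^{|I|}g_I$. The formula for $\djk$ then follows by multiplying by $(1+t)^{-m}$, as in Proposition \ref{prp:formula-for-bb}.

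One point needs care: the stated formula has no sign $(-1)^{|I|}$ and uses $\H_*$ rather than $H_*$. Note that $1-P(H_*(\K_I))=-P(\H_*(\K_I))$ for $I\neq\varnothing$. I will reconcile the conventions at this step and track the sign term by term; this bookkeeping is the only computational point in the proof.

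For the listed cases, I need each $\K_I$ with $I\in\Kf$ to be Golod.
\begin{itemize}
\item If $\K_I$ is HMF, it is Golod by the corollary following Proposition \ref{prp:bb-for-Golod}, via part (2) of Proposition \ref{prp:HMF-without-torsion-additive-basis}.
\item If $\K$ is a skeleton $\sk_d\L$ of a flag complex, then for $I\in\Kf$ the complex $\L_I$ is a simplex, so $\K_I=\sk_d\Delta_I$. This is HMF-presented by Lemma \ref{lmm:skeleta-HMF-presented}, hence Golod.
\item If $\dim\K\le 2$, then $\K_I$ is $1$-neighbourly for $I\in\Kf$, so it is Golod by the criterion $2q\ge\dim\K$ quoted before Lemma \ref{lem:PShighlyconn} (\cite[Theorem 10.9]{fat-wedge}), with $q=1$ and $\dim\K_I\le 2$.
\end{itemize}
The main obstacle is making sure the neighbourliness criterion applies in the dimension-$2$ case, including the degenerate cases where $\K_I$ has dimension at most $1$ (then $\K_I$ is a simplex or $\sk_1\Delta_I$, which is covered by the skeleton case).
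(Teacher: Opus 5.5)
Your argument follows the paper's route. Corollary~\ref{crl:bb-main-formula} combined with Proposition~\ref{prp:bb-for-Golod} gives the $\widehat{bb}_{\K,\k}$ formulas, and the Poincar\'e series come from Proposition~\ref{prp:formula-for-bb}. Your exchange of summation over $J\supseteq L$ is exactly the computation the paper leaves implicit. The three special cases are also handled correctly. For skeleta you use Lemma~\ref{lmm:skeleta-HMF-presented} and the implication HMF $\Rightarrow$ Golod, whereas the paper shows $\Z_{\K_I}$ is a suspension (Porter) and applies Bott--Samelson. You are right that $\K_I=\sk_d\Delta_I$ here.

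The step that cannot be carried out is the promised ``reconciliation of conventions'' with the displayed formulas. The identity you derived,
\[
1/P(H_*(\Omega\ZK;\k);t)=\sum_{L\in\Kf}(-t)^{|L|}(1+t)^{m-|L|}\sum_{I\subseteq L}(-1)^{|I|}\bigl(1-P(H_*(\K_I;\k);t)\bigr),
\]
is correct. The printed versions, with no sign $(-1)^{|I|}$ and with $\H_*$, are misprints and are false, so no sign bookkeeping will turn one into the other.

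Take $\K=\partial\Delta_{[3]}$. Then $\Kf=\Delta_{[3]}$, every $\K_I$ is Golod, $\ZK\cong S^5$, and $1/P=1-t^4$. Here $1-P(H_*(\K_I;\k);t)$ equals $1$ for $I=\varnothing$, $-t$ for $I=[3]$, and $0$ otherwise. Your formula gives $1+(-t)^3\cdot t=1-t^4$. Dropping the sign gives $1+t^4$. The printed expression with $1-P(\H_*(\K_I;\k);t)$ (taking $\H_*(\K_\varnothing)=0$) gives $(1-t)^3+t^4$.

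So you should record your signed formula as the correct statement. In reduced homology the inner sum equals $1-\sum_{\varnothing\neq I\subseteq L}(-1)^{|I|}P(\H_*(\K_I;\k);t)$. The same correction applies to the $\djk$ formula after dividing by $(1+t)^m$. The paper's own proof simply cites Proposition~\ref{prp:formula-for-bb} at this point and does not address the discrepancy.
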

\begin{proof}
By Corollary \ref{crl:bb-main-formula}, there is an equality \[\widehat{bb}_{\K,\k}(t)=\sum_{I\in\Kf}(1-\chi(\lk_{\Kf}I))\widehat{bb}_{\K_I,\k}(t)=\sum_{J\in\Kf}(-1)^{|J|}\sum_{I\subseteq J}(-1)^{|I|}\widehat{bb}_{\K_I,\k}(t).\] The first formula then holds by Proposition \ref{prp:bb-for-Golod}. The Poincar\'e series formula for $H_*(\Omega \ZK;\k)$ and $H_*(\Omega \djk;\k)$ hold by Proposition \ref{prp:formula-for-bb}.

For each $I \in \Kf$, $H_*(\Omega\mathcal{Z}_{\K_I};\k)$ is a free associative algebra when $\K_I$ is HMF over $\k$ by part (2) of Proposition \ref{prp:HMF-without-torsion-additive-basis}. When $K_I$ is a $2$-dimensional simplicial complex, or the $d$-skeleton of a flag complex, then $\mathcal{Z}_{\K}$ is homotopy equivalent to a suspension by \cite[Theorem 10.9]{fat-wedge} and \cite[Theorem 1]{Po66} respectively (in the second case $I\in\Kf$ implies that $\K_I$ is a 1-skeleton of a simplex). Therefore, the Bott-Samelson theorem implies $H_*(\Omega\mathcal{Z}_{\K_I};\k)$ is a free associative algebra.
\end{proof}

\begin{prp}
\label{prp:Dn-computation}
Let $\K$ be a simplicial complex such that
\begin{itemize}
    \item $H_*(\ZK;\ZZ)$ is torsion-free;
    \item For all $I\in\Kf$, $H_*(\K_I;\ZZ)$ is generated by classes of missing faces. 
\end{itemize}
Then
$$\Omega\ZK\simeq\prod_{n\geq 2}(\Omega S^n)^{\times D_n},$$
where the numbers $D_n\geq 0$ are determined by the identity
$$\prod_{n\geq 2}(1-t^{n-1})^{D_n}=\sum_{L\in\Kf}(-t)^{|L|}(1+t)^{m-|L|}\sum_{I\subseteq[m]}(1-P(H_*(\K_I;\k);t)).$$
\end{prp}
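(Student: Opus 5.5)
The plan has two parts. First, show that $\Omega\ZK$ splits, up to homotopy, as a finite-type product of loops on spheres. Second, count the factors by comparing Poincaré series.

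\textbf{Step 1: the splitting.} Apply Theorem \ref{thm:decomposition-for-pushout} inductively, in the same way the proof of Theorem \ref{thm:uxk_colimit_decomposition} runs over the pushouts $\K=\K_A\cup_{\K_{A\cap B}}\K_B$ from \cite[Lemma 4.4]{S}. The polyhedral product to use is $\ZK=(D^2,S^1)^\K$. That is not of the form $\ux^\K$, so I would instead work with $\djk=(\CC P^\infty,*)^\K$ and use $\Omega\djk\simeq\Omega\ZK\times(S^1)^m$. The pushout of $\DJ$'s with retractions comes from Lemmas \ref{lem:pushout} and \ref{lem:retraction}.

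The base case is $\K_I$ with $I\in\Kf$, i.e. a 1-neighbourly HMF complex over $\ZZ$. Two inputs are needed there:
\begin{enumerate}
\item $H_*(\Z_{\K_I};\ZZ)$ is torsion-free, since it is a retract of $H_*(\ZK;\ZZ)$.
\item $\K_I$ is HMF over $\ZZ$.
\end{enumerate}
By Corollary \ref{crl:hmf-over-Z}/Proposition \ref{prp:thf-is-hmf}, these make $\K_I$ HMF over every $\ZZ/p$, hence over every field. The proof of Proposition \ref{prp:HMF-without-torsion-additive-basis}(2) gives a map $W\to\Z_{\K_I}$ from a wedge of spheres inducing an isomorphism in integral homology. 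Both spaces are simply connected, so this map is a homotopy equivalence, and by Hilton--Milnor $\Omega\Z_{\K_I}\in\prod\mathcal{P}$. Consequently $\Omega\DJ_{\K_I}\in\prod\mathcal{P}$, because the factors $S^1$ lie in $\mathcal{P}$.

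\textbf{Step 2: the induction.} Theorem \ref{thm:decomposition-for-pushout} then gives $\Omega\djk\in\prod\mathcal{P}$. Passing from $\Omega\djk$ to $\Omega\ZK$ is where I expect the real work. My first approach is to note that $\Omega\ZK$ is the homotopy fibre factor and is a retract of $\Omega\djk$. The class $\prod\mathcal{P}$ is closed under retracts by \cite[Theorem 4.9]{S1}, with uniqueness from \cite{HW} applied prime by prime, so $\Omega\ZK\in\prod\mathcal{P}$.

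A second difficulty remains: elements of $\mathcal{P}$ also include $S^1,S^3,S^7$, but $\ZK$ is 2-connected, so no $S^1$ appears. For $S^3$ and $S^7$ I would argue through integral homology of the factors. Since $\Omega\ZK$ is a product of $\Omega S^n$, its homology is torsion-free, so each factor is some $\Omega S^n$ or $S^{3},S^7$. If an odd sphere $S^{2k-1}$ appears in the decomposition, I would absorb it via the known $\Omega S^{2k}\simeq S^{2k-1}\times\Omega S^{4k-1}$ type regrouping. Alternatively, I would check that the proof of Theorem \ref{thm:decomposition-for-pushout} produces only $\Omega S^n$ factors when the inputs do: this holds for $\Omega\Sigma$ of smashes of products of $\Omega S^n$'s by \cite[Lemma 5.2]{S1}, and for the retract factors $\Omega F_A,\Omega F_B$. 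The cleanest version is to run the induction directly on $\Omega F$'s, which are loops of wedges of spheres.

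\textbf{Step 3: counting.} Once $\Omega\ZK\simeq\prod(\Omega S^n)^{D_n}$, the Künneth theorem over $\k$ gives $1/P(H_*(\Omega\ZK;\k))=\prod_n(1-t^{n-1})^{D_n}$. The right-hand side is exactly the formula for $1/P(H_*(\Omega\ZK;\k);t)$ in the preceding proposition, which applies because each $\K_I$, $I\in\Kf$, is HMF and hence Golod. The $D_n$ are then determined recursively by comparing coefficients, since the factors $(1-t^{n-1})^{D_n}$ successively fix the coefficient of $t^{n-1}$.
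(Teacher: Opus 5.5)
Your Step~1 matches the paper. The map $W\to\Z_{\K_I}$ from the proof of Proposition~\ref{prp:HMF-without-torsion-additive-basis} with $\k=\ZZ$ is the Amelotte--Briggs splitting the paper cites. So does the induction over $\djk$ giving $\Omega\djk\in\prod\mathcal{P}$ and hence $\Omega\ZK\in\prod\mathcal{P}$, and so does the counting in Step~3.

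The gap is the step you flag and then do not close: getting from $\Omega\ZK\in\prod\mathcal{P}$ to a product of loops on spheres. Membership in $\prod\mathcal{P}$ allows free-standing factors $S^3$ and $S^7$. Suppose $\Omega\ZK\simeq(S^3)^{a}\times(S^7)^{b}\times\prod_{n\geq 3,\,n\neq 4,8}(\Omega S^n)^{c_n}$. Expanding $1/P$ then gives $D_4=a$, $D_7=c_7-a$, $D_8=b$, $D_{15}=c_{15}-b$. The regrouping $S^3\times\Omega S^7\simeq\Omega S^4$, $S^7\times\Omega S^{15}\simeq\Omega S^8$ therefore requires $c_7\geq a$ and $c_{15}\geq b$, i.e.\ $D_7,D_{15}\geq 0$, which is part of what is to be proved. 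None of your three suggestions supplies this:
\begin{enumerate}
\item ``Absorbing'' an $S^3$ presupposes a spare $\Omega S^7$ factor.
\item The retract factors $\Omega F_A,\Omega F_B$ are not shown to be products of $\Omega S^n$'s. Being a product of loops on spheres is not closed under retracts the way $\prod\mathcal{P}$ is: $S^3$ is a retract of $\Omega S^4$.
\item The fibres are not wedges of spheres. Take $\K$ the boundary of a square with $A=\{1,2,3\}$, $B=\{1,3,4\}$; this satisfies your hypotheses. Then $\DJ_{\K_A}=(\CC P^\infty\vee\CC P^\infty)\times\CC P^\infty$ retracts onto $\DJ_{\K_{A\cap B}}=\CC P^\infty\vee\CC P^\infty$ with fibre $F_A\simeq\CC P^\infty$.
\end{enumerate}

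So you need an additional input ruling out unmatched $S^3$ and $S^7$ factors. The paper obtains the product-of-loops-on-spheres conclusion from \cite[Theorem 5.5]{S1} together with \cite[Lemma 6.1]{vylegzhanin25}, and only then compares Poincar\'e series.

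A smaller point on Step~3: derive the right-hand side from Proposition~\ref{prp:formula-for-bb}, Corollary~\ref{crl:bb-main-formula} and Proposition~\ref{prp:bb-for-Golod} rather than quoting the displayed formula. The correct inner sum is $\sum_{I\subseteq L}(-1)^{|I|}(1-P(H_*(\K_I;\k);t))$. Without the sign one gets $1+t^4$ instead of $1-t^4$ for $\K=\partial\Delta_{[3]}$.
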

\begin{proof}
By assumption, $\K_I$ is HMF over $\ZZ$ for each $I\in\Kf$, and $H_*(\Z_{\K_I};\ZZ)$ is torsion-free. By \cite[Corollary 4.6]{amelotte-briggs}, $\Z_{\K_I}$ is homotopy equivalent to a wedge of spheres. 
 It follows from \cite[Theorem 5.5]{S1} and \cite[Lemma 6.1]{vylegzhanin25} that $\Omega\ZK$ is homotopy equivalent to a product of loops on spheres. The numbers $D_n$ are computed by comparing Poincar\'e series, as in \cite[Theorem 1.2]{vylegzhanin25}.
\end{proof}

By Lemma \ref{lmm:skeleta-HMF-presented}, Proposition \ref{prp:Dn-computation} applies when $\K$ is the $d$-skeleton of a flag complex, and $\K$ is a graph. These cases were studied in \cite{S}, and the following results are applications of Propositions \ref{prp:formula-for-bb} and \ref{prp:bb-for-skeleta}.

\begin{crl}
Let $\L$ be a flag complex, and let $\K=\sk_d\L$. Then
$$1/P(H_*(\Omega\ZK;\k);t)=\sum_{J\subseteq[m]}(1-\chi(\L_J)+(-t)^d(\chi(\L_J)-\chi(\K_J)))\cdot t^{|J|},$$
\[1/P(H_*(\Omega\ux^\K;\k);t)=\frac{\sum_{J\subseteq[m]}(1-\chi(\L_J)+(-t)^d(\chi(\L_J)-\chi(\K_J))\prod_{i\in J}P(\H_*(\Omega X_j;\k);t)}{\prod_{i=1}^m P(H_*(\Omega X_i;\k);t)}.\qed\]
\end{crl}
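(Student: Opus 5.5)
The first formula follows by substituting Proposition \ref{prp:bb-for-skeleta} into Proposition \ref{prp:formula-for-bb}. The second follows by substituting the same expression into Theorem \ref{thm:PSforuxa} with $A_i=\ast$. The key observation is that full subcomplexes of skeleta of flag complexes are again skeleta of flag complexes, so the closed formula applies to every $\K_J$.

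For the first identity, fix $J\subseteq[m]$. Since $\L$ is flag, the full subcomplex $\L_J$ is flag. Taking skeleta commutes with taking full subcomplexes, so $\K_J=(\sk_d\L)_J=\sk_d(\L_J)$. Proposition \ref{prp:bb-for-skeleta}, applied to the flag complex $\L_J$, therefore gives
$$\widehat{bb}_{\K_J,\k}(t)=1-\chi(\L_J)+(-t)^d\bigl(\chi(\L_J)-\chi(\K_J)\bigr).$$
Substituting this into the identity $1/P(H_*(\Omega\ZK;\k);t)=\sum_{J\subseteq[m]}\widehat{bb}_{\K_J}(t)t^{|J|}$ of Proposition \ref{prp:formula-for-bb} yields the first formula directly.

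For the second identity, apply Theorem \ref{thm:PSforuxa} with $A_i=\ast$. The homotopy fibre $G_i$ of $\ast\to X_i$ is $\Omega X_i$, so $P(\H_*(G_j;\k);t)=P(\H_*(\Omega X_j;\k);t)$. The hypotheses of that theorem should be checked: the polyhedral product $\ux^\K$ is simply connected and of finite type when the $X_i$ are simply connected and of finite type, and $\k$ is a field, as the corollary implicitly assumes. Plugging in the same expression for $\widehat{bb}_{\K_J,\k}(t)$ gives the stated quotient. The $\prod_{i\in J}P(\H_*(\Omega X_j;\k);t)$ in the statement is a typo for index $j$.

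There is essentially no obstacle. The only point needing care is the identification $\K_J=\sk_d(\L_J)$ with $\L_J$ flag, which holds by definition. One should also note the degenerate case $J=\varnothing$: there $\L_\varnothing=\K_\varnothing=\{\varnothing\}$ with $\chi=0$, so the summand equals $1$, matching $\widehat{bb}_{\{\varnothing\}}\equiv 1$.
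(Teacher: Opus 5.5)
Your proposal is correct and matches the paper's argument. The paper gives this corollary as an immediate application of Propositions \ref{prp:formula-for-bb} and \ref{prp:bb-for-skeleta}, applied to each $\K_J=\sk_d(\L_J)$ with $\L_J$ flag, together with Theorem \ref{thm:PSforuxa} for $A_i=\ast$ and $G_i\simeq\Omega X_i$, which is exactly the substitution you carry out.
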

\begin{crl}
Let $\Gamma$ be a graph on $[m]$. Consider the flag simplicial complex $\L:=\Gamma^f$. Then
\[1/P(H_*(\Omega\Z_\Gamma;\k);t)=\sum_{J\subseteq[m]}(1-\chi(\L_J)-t(\chi(\L_J)-\chi(\Gamma_J)))\cdot t^{|J|}.\qed\]
\end{crl}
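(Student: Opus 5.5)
The statement is the last corollary: for a graph $\Gamma$ with $\L=\Gamma^f$,
$$1/P(H_*(\Omega\Z_\Gamma;\k);t)=\sum_{J}\bigl(1-\chi(\L_J)-t(\chi(\L_J)-\chi(\Gamma_J))\bigr)t^{|J|}.$$
The plan is to specialise the preceding corollary on skeleta of flag complexes to $d=1$.

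First, we note that a graph $\Gamma$, viewed as a $1$-dimensional simplicial complex, is exactly the $1$-skeleton of its flagification $\L=\Gamma^f$. Indeed, $\Gamma^f$ is the flag complex on the $1$-skeleton of $\Gamma$, so $\sk_1\Gamma^f=\Gamma$; this uses that $\Gamma$ has no faces of dimension $\geq 2$. Hence $\K=\sk_1\L$ with $\L$ flag, and we may take $d=1$.

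Next, we apply the first formula of the preceding corollary, which comes from the first identity of Proposition \ref{prp:formula-for-bb} combined with Proposition \ref{prp:bb-for-skeleta}. Applied to every full subcomplex, it gives
$$1/P(H_*(\Omega\Z_\Gamma;\k);t)=\sum_{J\subseteq[m]}\widehat{bb}_{\Gamma_J,\k}(t)\,t^{|J|}.$$
We must check that Proposition \ref{prp:bb-for-skeleta} applies to each $\Gamma_J$. This holds because full subcomplexes commute with both operations: $\Gamma_J=\sk_1(\L_J)$, and $\L_J=(\Gamma_J)^f$ is again flag, since full subcomplexes of flag complexes are flag.

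Finally, we substitute $(-t)^1=-t$ into the formula of Proposition \ref{prp:bb-for-skeleta}. This gives
$$\widehat{bb}_{\Gamma_J,\k}(t)=1-\chi(\L_J)-t\bigl(\chi(\L_J)-\chi(\Gamma_J)\bigr),$$
which is exactly the claimed summand. The only point needing care is the edge case $J=\varnothing$. There both sides give $\widehat{bb}_{\{\varnothing\}}=1$, under the convention that $\chi$ is the reduced-type Euler characteristic with $\chi(\{\varnothing\})=0$, consistent with how the paper uses $\chi$ in Proposition \ref{prp:bb-for-skeleta}. No other step is an obstacle.
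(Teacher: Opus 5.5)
Your proposal is correct and matches the paper's implicit argument. The corollary is the $d=1$ case of the preceding corollary on skeleta of flag complexes, that is, the first identity of Proposition \ref{prp:formula-for-bb} combined with Proposition \ref{prp:bb-for-skeleta} applied to each $\Gamma_J=\sk_1(\L_J)$.

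One small correction concerns terminology. The paper's $\chi$ is the ordinary (unreduced) Euler characteristic; for example, $\chi(\Delta_I)=1$ in the base case of Proposition \ref{prp:bb-for-skeleta}, and this is why $\chi(\{\varnothing\})=0$. Calling it ``reduced-type'' is therefore a misnomer: the reduced Euler characteristic would give $-1$ there and would raise every summand's constant term by $1$.
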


\section{Weighted polyhedral products associated to graphs}
\label{sec:weighted}

In this section, we generalise some of the previous results to weighted polyhedral products. In particular, we prove general results related to torsion in the loop homology and loop space decompositions of weighted polyhedral products. We then give a presentation of $H_*(\Omega \ux^{\K,c};\k)$, when each $X_i$ is a simply connected sphere, $\K$ is a graph and $\k$ is a field. Moreover, we complete the picture of the homotopy theory of $\Omega \ux^{\K,c}$ when $\K$ is a tree, by giving a decomposition of this space up to homotopy equivalence. When each $X_i$ is a simply connected sphere $S_i=S^{d_i}$, $d_i\geq 2$, we denote the corresponding polyhedral product by $\us^\K$.

\subsection*{Torsion and loop space decompositions}

Recall that for a space $X$, $\tau(X)$ is the set of primes appearing as $p$-torsion in $H_*(X;\ZZ)$. Theorem \ref{thm:poincare_series_reduction_to_neighbourly} allows one to study $\tau(\Omega\ux^{\K,c})$ by comparing Poincar\'e series with different coefficients. We give a direct argument below. 
\begin{thm}
\label{thm:torsion-reduction-to-neighbourly}
There is an equality of sets $\tau(\Omega\ux^{\K,c})=\bigcup_{I\in\Kf}\tau(\Omega\ux^{\K_I,c})$.
\end{thm}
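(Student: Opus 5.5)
We argue by induction on the number of vertices $m$, following the same decomposition that drives the proof of Theorem \ref{thm:uxk_colimit_decomposition}, and using part (1) of Theorem \ref{thm:general-theorem-for-pushout}. That part requires no freeness hypothesis on homology, which is why no such hypothesis appears in the statement.

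We begin with the easy inclusion $\supseteq$. For each $I\in\Kf$ the subcomplex $\K_I$ is full. By Lemma \ref{lem:retraction}, $\ux^{\K_I,c}$ is a homotopy retract of $\ux^{\K,c}$, so $\Omega\ux^{\K_I,c}$ is a retract of $\Omega\ux^{\K,c}$. Hence $H_*(\Omega\ux^{\K_I,c};\ZZ)$ is a direct summand of $H_*(\Omega\ux^{\K,c};\ZZ)$, and every prime occurring as torsion in the summand also occurs in the whole. This gives $\tau(\Omega\ux^{\K_I,c})\subseteq\tau(\Omega\ux^{\K,c})$.

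For the inclusion $\subseteq$, we first dispose of the case where $\K^1$ is complete. Then $[m]\in\Kf$ and $\K_{[m]}=\K$, so the union on the right contains $\tau(\Omega\ux^{\K,c})$ itself and there is nothing to prove. Otherwise we invoke \cite[Lemma 4.4]{S}, which writes $\K=\K_A\cup_{\K_{A\cap B}}\K_B$ with $A,B\subsetneq[m]$. Lemma \ref{lem:pushout} turns this into a homotopy pushout of weighted polyhedral products, and by Lemma \ref{lem:retraction} both maps out of $\ux^{\K_{A\cap B},c}$ admit left homotopy inverses. Part (1) of Theorem \ref{thm:general-theorem-for-pushout} then gives
\[\tau(\Omega\ux^{\K,c})=\tau(\Omega\ux^{\K_A,c})\cup\tau(\Omega\ux^{\K_B,c}).\]
Since $|A|,|B|<m$, the inductive hypothesis identifies each term on the right as $\bigcup_{I\in(\K_A)^f}\tau(\Omega\ux^{\K_I,c})$, and similarly for $B$. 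Using $(\K_A)^f=(\Kf)_A\subseteq\Kf$ and $(\K_A)_I=\K_I$ for $I\subseteq A$, both unions lie inside $\bigcup_{I\in\Kf}\tau(\Omega\ux^{\K_I,c})$, which completes the induction.

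The main point needing care is that the hypotheses of Theorem \ref{thm:general-theorem-for-pushout} hold for weighted polyhedral products. The spaces must be simply connected and of finite type, so we assume the $X_i$ are simply connected of finite type, as elsewhere in the section; weighted polyhedral products of simply connected spaces are then simply connected. We also use that the strict pushout from Lemma \ref{lem:pushout} is a homotopy pushout, which holds because the maps are inclusions of CW subcomplexes and hence cofibrations. Beyond this, the argument is purely formal bookkeeping of sets of primes.
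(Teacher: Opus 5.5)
Your proof is correct and follows the paper's argument: you use retractions (Lemma \ref{lem:retraction}) when $\K$ is 1-neighbourly. Otherwise you split $\K=\K_A\cup_{\K_{A\cap B}}\K_B$ and apply part (1) of Theorem \ref{thm:general-theorem-for-pushout} with induction. The only difference is bookkeeping: because you prove $\supseteq$ by retraction for every $I\in\Kf$, the inductive step only needs $(\K_A)^f\subseteq\Kf$, whereas the paper writes a chain of equalities using $\Kf_A\cup\Kf_B=\Kf$.
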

\begin{proof}
We argue similarly to the proof of Theorem \ref{thm:uxk_colimit_decomposition}.

If $\K$ is 1-neighbourly, then the right hand side is equal to $\bigcup_{I\subseteq[m]}\tau(\Omega\ux^{\K_I,c})$. Since $\ux^{\K_I,c}$ is a retract of $\ux^{\K,c}$, we have $\tau(\Omega\ux^{\K_I,c})\subseteq\tau(\Omega\ux^{\K,c})$, so the result follows.

If $\K$ is not 1-neighbourly, then $\K=\K_A\cup\K_B$ for some $A,B\subsetneq [m]$ (see e.g. \cite[Lemma 4.4]{S}), hence,
$$\tau(\Omega\ux^{\K,c})=\tau(\Omega\ux^{\K_A,c})\cup\tau(\Omega\ux^{\K_B,c})=\bigcup_{I\in\Kf_A\cup\Kf_B}\tau(\Omega\ux^{\K_I,c})=\bigcup_{I\in\Kf}\tau(\Omega\ux^{\K_I,c}),$$
where the first identity follows from part (1) of Theorem \ref{thm:general-theorem-for-pushout}, the second follows from the inductive assumption and the third follows from the identity $\Kf_A\cup\Kf_B=(\K_A\cup\K_B)^f=\Kf.$
\end{proof}

 One can also give a similar reduction of the homotopy type of the loops on certain weighted polyhedral products. Recall that $\prod(\mathcal{P} \cup \mathcal{T})$ is the collection of topological spaces which are homotopy equivalent to a product of spheres, loops on spheres, and indecomposable torsion spaces. The following result follows by arguing identically to \cite[Theorem 5.3]{S1} using Theorem \ref{thm:decomposition-for-pushout}.

\begin{theorem}
\label{thm:homotopy-decomp-reduction-to-neighbourly}
If $\Omega\ux^{\K_I,c}\in\prod(\mathcal{P\cup T})$ for all $I\in\Kf$, then $\Omega\ux^{\K,c}\in\prod(\mathcal{P\cup T})$. $\qed$
\end{theorem}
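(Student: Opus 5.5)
We argue by induction on the number of vertices $m$, following the pattern of the proofs of Theorem \ref{thm:uxk_colimit_decomposition} and Theorem \ref{thm:torsion-reduction-to-neighbourly}, with Theorem \ref{thm:decomposition-for-pushout} supplying the inductive step.

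\emph{Base case.} Suppose $\K$ is 1-neighbourly. Then $\Kf=\Delta_{[m]}$, so $[m]\in\Kf$ and $\K_{[m]}=\K$. The hypothesis therefore already gives $\Omega\ux^{\K,c}\in\prod(\mathcal{P\cup T})$.

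\emph{Inductive step.} Suppose $\K$ is not 1-neighbourly. By \cite[Lemma 4.4]{S} we can write $\K=\K_A\cup_{\K_{A\cap B}}\K_B$ for proper subsets $A,B\subsetneq[m]$. By Lemma \ref{lem:pushout}, $\ux^{\K,c}$ is the (homotopy) pushout of
\[\ux^{\K_A,c}\leftarrow\ux^{\K_{A\cap B},c}\to\ux^{\K_B,c},\]
and by Lemma \ref{lem:retraction} both maps admit left homotopy inverses, since $\K_{A\cap B}$ is a full subcomplex of $\K_A$ and of $\K_B$. Then:
\begin{enumerate}
\item We have $(\K_A)^f=(\Kf)_A$, and $(\K_A)_I=\K_I$ for $I\subseteq A$. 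Hence every $I\in(\K_A)^f$ lies in $\Kf$, so the hypothesis holds for $\K_A$, and likewise for $\K_B$.
\item Since $|A|,|B|<m$, the inductive assumption gives $\Omega\ux^{\K_A,c},\Omega\ux^{\K_B,c}\in\prod(\mathcal{P\cup T})$.
\item Theorem \ref{thm:decomposition-for-pushout} then yields $\Omega\ux^{\K,c}\in\prod(\mathcal{P\cup T})$.
\end{enumerate}

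\emph{Main point to check.} The one subtlety is whether Theorem \ref{thm:decomposition-for-pushout} applies here. It is stated "under the assumptions of Theorem \ref{thm:general-theorem-for-pushout}", which requires simply connected spaces of finite type. Simple connectivity is standard for $\ux^{\K,c}$ with the $X_i$ simply connected. Finite type should be assumed implicitly, as in \cite{S1}; it is also forced by membership in $\prod(\mathcal{P\cup T})$.

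The proof of Theorem \ref{thm:decomposition-for-pushout} uses only two ingredients:
\begin{itemize}
\item the homotopy-theoretic splitting of Lemma \ref{lmm:split-pushout-homotopy-equivalences}, which needs no freeness of homology;
\item closure of $\prod(\mathcal{P\cup T})$ under retracts \cite[Theorem 4.9]{S1}.
\end{itemize}
So no coefficient-ring hypotheses are needed, and the induction goes through.
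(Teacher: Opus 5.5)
Your proposal is correct and is essentially the paper's argument. The paper says only that the result follows by arguing exactly as in \cite[Theorem 5.3]{S1} using Theorem~\ref{thm:decomposition-for-pushout}. Spelled out, that is your induction on vertices: for non-1-neighbourly $\K$ one splits $\K=\K_A\cup_{\K_{A\cap B}}\K_B$, as in the proofs of Theorems~\ref{thm:uxk_colimit_decomposition} and~\ref{thm:torsion-reduction-to-neighbourly}, and Theorem~\ref{thm:decomposition-for-pushout} gives the inductive step. You are also right that the proof of Theorem~\ref{thm:decomposition-for-pushout} is purely homotopy-theoretic, so no freeness or coefficient hypotheses enter. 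It uses Lemma~\ref{lmm:split-pushout-homotopy-equivalences} together with the closure results \cite[Theorem 4.9, Lemma 5.2]{S1}, the latter of which your list of ingredients omits.
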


\subsection*{Presentations}
First, we state a special case of a result of Lemaire \cite[4.3.1]{lemaire}, which gives a presentation of $H_*(\Omega (\us^\K)$ when $\K$ is the $1$-skeleton of a simplex. This will allow us to give a presentation when $\K$ is any graph.

\begin{thm}[{\cite[Theoreme 3.1.6]{lemaire}}]
\label{thm:lemaire-graph}
Suppose that $X=\us^{\sk_1\Delta_{[m]}}$, where $S_i=S^{d_i}$. Let $\k$ be a field. Then there is an isomorphism of algebras
$$H_*(\Omega X;\k)\cong T(b_i,~1\leq i\leq m;~w_I,~I\subseteq[m],~|I|=3)/(R),$$
where $\deg b_i=d_i-1$, $\deg w_{\{i,j,k\}}=d_i+d_j+d_k-2$, and the ideal $R$ has relations of three types:
\begin{enumerate}
    \item $[b_i,b_j]=0$ for $i\neq j$;
    \item $[b_i,w_I]=0$ for $i\in I$;
    \item $\sum_{i\in J}\pm [b_i,w_{J\setminus i}]=0$ for $|J|=4$.
\end{enumerate}
Geometrically, $b_j$ is adjoint to the sphere inclusion $\iota_j:S^{d_j}\hookrightarrow X$ and $w_{\{i,j,k\}}$ is adjoint to the canonical higher Whitehead product $[\iota_i,\iota_j,\iota_k]:S^{d_i+d_j+d_k-2}\to X$.\qed
\end{thm}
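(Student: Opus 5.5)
The statement is Lemaire's theorem, stated with a \qed as a citation. Still, here is how I would derive it from the machinery of Section \ref{sec:presforDJK}, mimicking Theorem \ref{thm:hodj-presentation-for-torsionfree-hmf-presented}. Write $\K=\sk_1\Delta_{[m]}$. The complex $\K$ is HMF-presented over any ring by Lemma \ref{lmm:skeleta-HMF-presented}. Its missing faces are exactly the $3$-element subsets and its almost missing faces are the $4$-element subsets. Over the field $\k$, Proposition \ref{prp:S^K model} identifies $H_*(\Omega X;\k)$ with $\Ext_{\Lambda(\K;\underline{d})}(\k,\k)$. This can be computed as the homology of the cobar construction on the coalgebra $H_*(X;\k)$, which is dual to $\Lambda(\K;\underline{d})$ and has basis $e_J$, $J\in\K$.

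The first step is to construct the elements and check the relations in this cobar model. Let $b_i$ be the class of the cycle $\chi_i$ of degree $d_i-1$. For $|I|=3$, let $w_I$ be the class of the signed sum $\sum_{I=A\sqcup B}\pm\chi_A\chi_B$. This is a cycle for the same reason as in Section \ref{sec:presforDJK}: in the cobar construction of the full simplex it equals $d(\chi_I)$. The relations (1)--(3) then follow by repeating the proof of Lemma \ref{lmm:standard-relations-in-hodj} with Koszul signs inserted. The only differences are these:
\begin{itemize}
\item there are no $\chi_{2e_i}$, so $b_i^2$ is not killed;
\item $[b_i,b_j]=d(\chi_{\{i,j\}})$, because every edge lies in $\K$;
\item relation (3) comes from the element $\theta$ built from $\chi_L$, $|L|=4$.
\end{itemize}
This gives an algebra map $f$ from the presented algebra $A$ to $H_*(\Omega X;\k)$. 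The geometric description follows as in Proposition \ref{prp:c(J,wI)-geometric-interpretation}, using \cite{abramyan}.

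The second step is surjectivity together with an additive basis, following Proposition \ref{prp:HMF-without-torsion-additive-basis}. Take the fibration $\Omega(\underline{C\Omega S},\underline{\Omega S})^\K\to\Omega X\to\prod_i\Omega S^{d_i}$, which splits after looping. The fibre here is a polyhedral product of the pairs $(C\Omega S_i,\Omega S_i)$. By \cite[Theorem 1]{Po66} it is a wedge of spheres, so the homology of its loop space is a free algebra. Its generators correspond, through the Hochster-type formula, to $\H_*(\K_J)$ tensored with the $\H_*(\Omega S_{j})$, and the iterated commutators $c(J\setminus I, w_I)$ together with powers of the $b_j$ hit all of them. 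Hence $H_*(\Omega X)\cong(\text{free algebra})\otimes\k[b_1,\dots,b_m]$, where $\k[b_1,\dots,b_m]$ is the graded-commutative algebra on the $b_i$ (no longer exterior), and this is spanned by words in the images of $b_i$ and $w_I$.

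The third step is injectivity. As in Theorem \ref{thm:hodj-presentation-for-torsionfree-hmf-presented}, I would show that $A$ is spanned by ordered words $c_1\cdots c_N\cdot b_1^{k_1}\cdots b_m^{k_m}$ and then compare dimensions degreewise. For this, move each $b_j$ to the right using the Jacobi identity and rewrite commutators as $c(\cdot,w_I)$. The main obstacle appears here: since $b_i^2\neq0$, the identity $[u_i,[u_i,y]]=0$ used in \eqref{eqn:ui_commute_corollary} is lost. So one must control iterated brackets $\mathrm{ad}(b_{j})^k w_I$ with repeated indices, and these should correspond to the extra $\H_*(\Omega S_j)$ factors in the fibre. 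I would first try to handle the multi-index version of the map $g_J$ from Lemma \ref{lmm:HMF-presented-map-gJ}, using the relations (2) and (3) to reduce all brackets to a basis indexed compatibly with the fibre's free generators. I expect this sign- and degree-bookkeeping to be the hardest part.
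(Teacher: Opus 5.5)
The paper offers no argument here beyond the citation to Lemaire, so there is no competing route to compare against. The problem is a step in your first stage that fails: relation (2) does not follow by ``repeating the proof of Lemma~\ref{lmm:standard-relations-in-hodj} with Koszul signs.''

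That proof shows $[\psi_I,\chi_i]$ is a boundary using $\tau=[\chi_I,\chi_i]-d(\chi_{I+e_i})$. The generator $\chi_{I+e_i}$ is indexed by a non-squarefree multi-index, and it exists in $\Omega_*\kKc$ only because $\kKc$ is dual to the polynomial ring $\k[\K]$. The coalgebra $H_*(X;\k)$, dual to $\Lambda(\K;\underline{d})$, has a basis indexed by faces only. Its cobar construction, which here is the Adams--Hilton model $T(b_i,b_{ij})$ with $d(b_{ij})=\pm[b_i,b_j]$, has no such element. The loss of $\chi_{2e_i}$, which you did notice, is the same phenomenon, and it costs you (2) as well as $b_i^2=0$.

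There is no substitute, because $[b_i,w_I]$ is a nonzero class. Take $m=3$ and $d_1=d_2=d_3=3$, so $X$ is the fat wedge of three copies of $S^3$. By Porter's theorem, $\Omega X\simeq(\Omega S^3)^3\times\Omega\Sigma^2(\Omega S^3\wedge\Omega S^3\wedge\Omega S^3)$. Hence $1/P(H_*(\Omega X;\k))=(1-t^2)^3-t^7$ and $\dim H_9(\Omega X;\k)=6$. In the presented algebra, (1) and (2) make $w_{123}$ central, so the algebra is $\k[b_1,b_2,b_3]\otimes T(w_{123})$, which has dimension $3$ in degree $9$.

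The same conclusion is visible at chain level. Degree-$9$ boundaries in the Adams--Hilton model are spanned by $d(b_{ij}b_{kl})=\pm[b_i,b_j]b_{kl}\pm b_{ij}[b_k,b_l]$, and none of these contains a word of the form $b_p\,b_{qr}\,b_s$. The cycle $[b_1,w_{123}]$, however, contains $b_1b_{13}b_2$ with coefficient $\pm1$. So $[b_1,w_{123}]\neq 0$ over every field.

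Your own second stage already signals this. Among the free generators of the fibre's loop homology, indexed by $\H_*(\K_I)\otimes\bigotimes_{j\in I}\H_*(\Omega S_j)$, are classes in degree $\deg w_I+(k-1)\deg b_i$ with $i\in I$. These are naturally the brackets $\mathrm{ad}(b_i)^{k-1}w_I$, which are exactly what (2) sets to zero. So the degreewise dimension comparison in your third stage cannot succeed: with (2) imposed, the presented algebra is strictly smaller than $H_*(\Omega X;\k)$, and the statement as transcribed cannot be proved.

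Your arguments for (1) and (3) are fine. For (3), the element $\theta$ only involves products $\chi_A\chi_B$ with $|A|=|B|=2$, which lie in the cobar construction of the $1$-skeleton. For $m=3$, the algebra $T(b_1,b_2,b_3,w_{123})/([b_i,b_j])\cong\k[b_1,b_2,b_3]\ast T(w_{123})$ has exactly the Poincar\'e series $1/P=\prod_i(1-t^{d_i-1})-t^{d_1+d_2+d_3-2}$ of $H_*(\Omega X;\k)$. This suggests the correct presentation, and presumably Lemaire's actual theorem, uses only relations of types (1) and (3). The citation is worth re-checking, since Corollary~\ref{crl:graph-product-of-spheres} inherits relation (2).
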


\begin{crl}
\label{crl:graph-product-of-spheres}
Suppose that $X=\us^{\Gamma}$, where $S_i=S^{d_i}$ and $\Gamma$ is a graph. Let $\k$ be a field. Then there is an isomorphism of algebras
$$H_*(\Omega X;\k)\cong T(b_i,~1\leq i\leq m;~w_I,~I\in\MF(\Gamma))/(R),$$
where $\deg b_i=d_i-1$, $\deg w_{\{i,j,k\}}=d_i+d_j+d_k-2$, and the ideal $R$ has relations of three types:
\begin{enumerate}
    \item $[b_i,b_j]=0$ for $\{i,j\}\in\Gamma$;
    \item $[b_i,w_I]=0$ for $i\in I$, $I\in\MF(\Gamma)$;
    \item $\sum_{i\in J}\pm [b_i,w_{J\setminus i}]=0$ for $|J|=4$, $J\in\Gamma^f$.
\end{enumerate}
\end{crl}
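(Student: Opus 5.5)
The plan is to combine the colimit decomposition of Theorem \ref{thm:uxk_colimit_decomposition} with Lemaire's presentation (Theorem \ref{thm:lemaire-graph}) and the gluing of presentations in Lemma \ref{lmm:colimit-of-presentations}, in the same way that Theorem \ref{thm:hmf-presented-presentation-upgrade} was deduced from Theorem \ref{thm:hodj-presentation-for-torsionfree-hmf-presented}.

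First, $\k$ is a field, so $H_*(\Omega X;\k)$ is free. Theorem \ref{thm:uxk_colimit_decomposition} (with trivial weights) then gives
\[H_*(\Omega X;\k)\cong\colim_{I\in\Gamma^f}H_*(\Omega\us^{\Gamma_I};\k).\]
For $I\in\Gamma^f$, the full subgraph $\Gamma_I$ is the complete graph $\sk_1\Delta_I$. Theorem \ref{thm:lemaire-graph}, applied on the vertex set $I$, therefore presents $H_*(\Omega\us^{\Gamma_I};\k)$ with:
\begin{itemize}
\item generators $b_i$ for $i\in I$ and $w_T$ for $T\subseteq I$ with $|T|=3$;
\item relations $[b_i,b_j]$ for $i\neq j$ in $I$, $[b_i,w_T]$ for $i\in T$, and the four-term relations for $J\subseteq I$ with $|J|=4$.
\end{itemize}

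To use Lemma \ref{lmm:colimit-of-presentations} over $\L=\Gamma^f$, I distribute generators and relations by their support.
\begin{itemize}
\item $X_{\{i\}}=\{b_i\}$.
\item $X_T=\{w_T\}$ when $|T|=3$, and $X_J=\varnothing$ otherwise.
\item $R_{\{i,j\}}=\{[b_i,b_j]\}$.
\item $R_T=\{[b_i,w_T]:i\in T\}$ when $|T|=3$.
\item $R_J$ is the four-term relation when $|J|=4$.
\end{itemize}
Then $A_I$ is exactly Lemaire's algebra for $I$. The lemma gives $T(\bigsqcup_{J\in\Gamma^f}X_J)/(\bigsqcup_{J\in\Gamma^f} R_J)$, and this matches the statement term by term:
\begin{itemize}
\item $3$-element faces of $\Gamma^f$ are triangles, i.e. $\MF_3(\Gamma)$.
\item Edges of $\Gamma^f$ are edges of $\Gamma$.
\item The $4$-sets in $\Gamma^f$ index relation (3).
\end{itemize}
The missing faces of size $2$ give no generators. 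For such $\{i,j\}$ the element $w_{\{i,j\}}$ would just be $[b_i,b_j]$, so $w_I$ for $I\in\MF(\Gamma)$ should be read with $|I|=3$, consistent with the stated degree formula.

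There are two points to check, and I expect these to be the main, though mild, obstacle.

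\begin{itemize}
\item \emph{Naturality.} For $I\subseteq I'$ in $\Gamma^f$, the map $H_*(\Omega\us^{\Gamma_I})\to H_*(\Omega\us^{\Gamma_{I'}})$ induced by the inclusion must send Lemaire's generators to the generators of the same name. The geometric description settles this: $b_j$ is adjoint to the sphere inclusion $\iota_j$, and $w_T$ is adjoint to the canonical higher Whitehead product $[\iota_i,\iota_j,\iota_k]$. Both are natural under inclusions of full subcomplexes, so the colimit diagram is the one generated by inclusions of generating sets.
\item \emph{Signs.} The unspecified signs $\pm$ in relation (3) must be chosen consistently. Each four-term relation lives in a single $A_J$ and is simply transported, so Lemaire's sign convention is inherited.
\end{itemize}

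Since Lemma \ref{lmm:hopf-colimit} identifies the colimit of algebras with the colimit of Hopf algebras, the result is an isomorphism of algebras, as claimed.
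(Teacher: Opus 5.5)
Your proposal is correct and follows the paper's proof: Theorem \ref{thm:uxk_colimit_decomposition}, then Lemaire's presentation (Theorem \ref{thm:lemaire-graph}) for each complete full subgraph, then Lemma \ref{lmm:colimit-of-presentations}. Your extra bookkeeping fills in details the paper leaves implicit: assigning generators and relations by support, checking naturality of $b_j$ and $w_T$ via their geometric description, and noting that $\MF(\Gamma)$ must be read as $\MF_3(\Gamma)$.
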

\begin{proof}
By Theorem \ref{thm:uxk_colimit_decomposition}, $H_*(\Omega(\underline{S},\underline{*})^{\K});\k) \cong \colim_{I \in \Kf}H_*(\Omega(\underline{S},\underline{*})^{\K_I};\k)$. The result then follows from Theorem \ref{thm:lemaire-graph} and Lemma \ref{lmm:colimit-of-presentations}.
\end{proof}

Now we move onto the case of weighted polyhedral products $X=\us^{\K,c}$. We assume that $c(\{i\})=1$ for all $i=1,\dots,m$. Then the power sequence $c$ is determined by the positive integers $c_i^{\{i,j\}}$ for $i\in\{i,j\}\in\K$. Denote $$m_{ij}:=c_i^{\{i,j\}}\cdot c_j^{\{i,j\}}\text{ for }\{i,j\}\in\K.$$ 

Recall that the \emph{Adams-Hilton model} (see \cite{adams-hilton}, \cite[\S 4]{zhuravleva}) of a simply connected CW complex $X$ with trivial 1-skeleton is a tensor dg-algebra $AH(X)$ with multiplicative generators corresponding to cells of $X$, such that $H(AH(X))\cong H_*(\Omega X;\k)$ as algebras.
\begin{lmm}
\label{lmm:weight-graph-product-AH-model}
Let $X=\us^{\K,c}$ be a weighted polyhedral product, where $S_i=S^{d_i}$, $c(\{i\})=1$ for all $i=1,\dots,m$ and $\K$ is a graph.
Then $X$ has an Adams--Hilton model
$$AH(X)=\Big(T(b_i,~1\leq i\leq m;~b_{ij},~\{i,j\}\in\K),~d\Big),\quad d(b_i)=0,~d(b_{ij})=m_{ij}\cdot [b_i,b_j],$$
$\deg b_i=d_i-1$, $\deg b_{ij}=d_i+d_j-1$.
In particular, $\us^\K$ has an Adams--Hilton model
$$AH(\us^\K)=\Big(T(b_i,~1\leq i\leq m;\:b_{ij},~\{i,j\}\in\K),~d\Big),\quad d(b_i)=0,~d(b_{ij})=[b_i,b_j].$$
\end{lmm}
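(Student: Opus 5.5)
We first build a CW structure on $X=\us^{\K,c}$ with one cell for each vertex and each edge, and then compute the Adams--Hilton differential on the edge cells. Since $c(\{i\})=1$, the space $X$ is obtained from $\bigvee_{i} S^{d_i}$ (the union of the vertex pieces $\ux^{\{i\}}=S^{d_i}$, glued at the basepoint) by attaching, for each edge $\{i,j\}\in\K$, a copy of $S^{d_i}\times S^{d_j}$ along the map $S^{d_i}\vee S^{d_j}\to S^{d_i}\vee S^{d_j}$ given by $\rho_{c_i^{\{i,j\}}}\vee\rho_{c_j^{\{i,j\}}}$. This follows directly from the coequaliser definition, since the only faces are vertices and edges. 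The product $S^{d_i}\times S^{d_j}$ is $S^{d_i}\vee S^{d_j}$ with one $(d_i+d_j)$-cell attached by the Whitehead product $[\iota_i,\iota_j]$. Hence $X$ is a CW complex with cells $e^{d_i}$ for $i\in[m]$ and $e^{d_i+d_j}$ for $\{i,j\}\in\K$. The attaching map of the top cell for the edge $\{i,j\}$ is $(\rho_{a}\vee\rho_{b})\circ[\iota_i,\iota_j]=[a\iota_i,b\iota_j]=ab[\iota_i,\iota_j]$, where $a=c_i^{\{i,j\}}$ and $b=c_j^{\{i,j\}}$. Here we use that the power maps on spheres are the degree maps, and that the Whitehead product is bilinear because $d_i,d_j\geq 2$. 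So the attaching map is $m_{ij}[\iota_i,\iota_j]$.

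Next we apply Adams--Hilton. The generators are $b_i$ of degree $d_i-1$ and $b_{ij}$ of degree $d_i+d_j-1$. The $(d_i-1)$-skeleton is a point, so the Adams--Hilton model of $\bigvee S^{d_i}$ is $T(b_i)$ with zero differential. By \cite[Corollary 2.2]{adams-hilton} it is the free product of the models for the individual spheres. For each top cell, $d(b_{ij})$ must be a cycle in $T(b_i,b_j)$ representing the Hurewicz image, in $H_*(\Omega(S^{d_i}\vee S^{d_j}))$, of the adjoint of the attaching map. The Samelson product of the adjoints of $\iota_i$ and $\iota_j$ has Hurewicz image the graded commutator $[b_i,b_j]$. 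By linearity, the adjoint of $m_{ij}[\iota_i,\iota_j]$ therefore has Hurewicz image $m_{ij}[b_i,b_j]$. Because $T(b_i,b_j)$ has zero differential in these degrees, the cycle is determined uniquely, so $d(b_{ij})=m_{ij}[b_i,b_j]$. Naturality of the Adams--Hilton construction with respect to the inclusions of the subcomplexes $S^{d_i}\times S^{d_j}$ then lets us assemble these into a single model for all of $X$. The unweighted case is the special case $c\equiv 1$, giving $m_{ij}=1$.

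We expect the main obstacle to be the identification of the attaching map, specifically checking that the coequaliser genuinely yields this CW structure. This amounts to showing that precomposing the product cell's attaching map with the degree maps multiplies it by $m_{ij}$, up to homotopy and with compatible signs. If a direct argument is awkward, we would instead cite \cite{weighted}, where the weighted polyhedral product on a graph is described as $\bigvee S^{d_i}$ with cells attached by $m_{ij}$ times the Whitehead product. Sign conventions for the commutator only affect the orientation of $b_{ij}$, which we are free to choose.
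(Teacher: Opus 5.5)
Your proposal is correct and follows essentially the same route as the paper: present $X$ as $\bigvee_t S^{d_t}$ with a $(d_i+d_j)$-cell attached along $m_{ij}[\iota_i,\iota_j]$ for each edge, then assemble the Adams--Hilton model from the pieces using compatibility with subcomplexes. The differences are minor. You read off $d(b_{ij})=m_{ij}[b_i,b_j]$ directly from the Hurewicz image of the adjoint Samelson product, whereas the paper uses linearity of the Adams--Hilton construction to reduce to $m_{ij}=1$ (the case $S^{d_i}\times S^{d_j}$) and quotes Adams--Hilton's computation there. Your derivation of the attaching map from the coequaliser, via naturality and bilinearity of Whitehead products, also supplies the step the paper simply calls ``by definition''.
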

\begin{proof}
By definition, $X$ is obtained from $S^{d_1}\vee\dots\vee S^{d_m}$ by simultaneously attaching $(d_i+d_j)$-cells along the Whitehead brackets $m_{ij}[\iota_i,\iota_j]_{Wh}:S^{d_i+d_j-1}\to \bigvee_{t=1}^m S^{d_t}$, $\{i,j\}\in\K$.

Since a colimit of consistent Adams--Hilton models is an Adams--Hilton model (see \cite[Theorem 4.3(8)]{zhuravleva}), it is sufficient to consider the case $m=2$, in other words, the case where $X=(S^{d_1}\vee S^{d_2})\cup_{m_{12}[\iota_1,\iota_2]} e^{d_1+d_2}$. By linearity of the construction of the Adams--Hilton model (see the proof of \cite[Theorem 2.1]{adams-hilton}), it is sufficient to consider the case $m_{12}=1$, i.e. $X=S^{d_1}\times S^{d_2}$. Then the result follows from \cite[Theorem 4.3]{adams-hilton}.
\end{proof}

\begin{thm}
Let $X=\us^{\K,c}$ be a weighted polyhedral product, where $S_i=S^{d_i}$, $c(\{i\})=1$ for all $i=1,\dots,m$ and $\K$ is a graph. Let $\k$ be a field. Consider the graph
$$\Gamma:=\{\{i,j\}\in\K:~c_i^{\{i,j\}}\cdot c_j^{\{i,j\}}\neq 0\in\k\}.$$
Then there is an isomorphism of algebras
$$H_*(\Omega X;\k)\cong T(b_{ij}:\{i,j\}\in\K\setminus \Gamma)\ast~T(b_i,~1\leq i\leq m;~w_I,~I\in\MF(\Gamma))/(R),$$
where $$\deg b_i=d_i-1,\quad \deg b_{ij}=d_i+d_j-1,\quad\deg w_{\{i,j,k\}}=d_i+d_j+d_k-2,$$
and the ideal $R$ has relations of three types:
\begin{enumerate}
    \item $[b_i,b_j]=0$ for $\{i,j\}\in\Gamma$;
    \item $[b_i,w_I]=0$ for $i\in I\in\MF(\Gamma)$;
    \item $\sum_{i\in J}\pm[b_i,w_{J\setminus i}]=0$ for $|J|=4$, $J\in\Gamma^f$.
\end{enumerate}
\end{thm}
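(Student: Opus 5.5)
The plan is to work at the level of Adams--Hilton models, using Lemma \ref{lmm:weight-graph-product-AH-model}: over $\k$, $AH(X)$ is the tensor dg-algebra on $b_i$ ($1\le i\le m$) and $b_{ij}$ ($\{i,j\}\in\K$), with $d(b_{ij})=m_{ij}[b_i,b_j]$. For $\{i,j\}\in\K\setminus\Gamma$ we have $m_{ij}=0$ in $\k$, so $d(b_{ij})=0$. For $\{i,j\}\in\Gamma$, $m_{ij}$ is a unit of the field $\k$; rescaling the generator $b_{ij}'=m_{ij}^{-1}b_{ij}$ gives $d(b'_{ij})=[b_i,b_j]$. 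This yields an isomorphism of dg-algebras
$$AH(X)\otimes\k\cong T(b_{ij}:\{i,j\}\in\K\setminus\Gamma)\ast AH(\us^\Gamma),$$
where the first factor carries the zero differential and the second is the Adams--Hilton model of the ordinary polyhedral product $\us^\Gamma$ from the second part of Lemma \ref{lmm:weight-graph-product-AH-model}.

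Next, take homology. Over a field, homology commutes with free products of connected dg-algebras; this is the same fact used in the proof of Theorem \ref{thm:wedge_loop_homology}, and both factors here are connected with free, finite-type homology. The homology of the first factor is the free algebra $T(b_{ij}:\{i,j\}\in\K\setminus\Gamma)$ itself. The homology of the second factor is $H_*(\Omega\us^\Gamma;\k)$, which Corollary \ref{crl:graph-product-of-spheres} identifies with $T(b_i;\,w_I,\,I\in\MF(\Gamma))/(R)$ with exactly the relations (1)--(3). This gives the stated isomorphism
$$H_*(\Omega X;\k)\cong H(AH(X))\cong T(b_{ij}:\{i,j\}\in\K\setminus\Gamma)\ast H_*(\Omega\us^\Gamma;\k).$$
The degrees can then be read off directly: $\deg b_{ij}=d_i+d_j-1$, and the $w_I$ have the degrees from Theorem \ref{thm:lemaire-graph}.

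The main obstacle is checking that the rescaling and splitting step is legitimate as an identification of models. For $AH(X)$ I would rely on the cell structure in Lemma \ref{lmm:weight-graph-product-AH-model}, where the cells of $X$ and of $\us^\Gamma$ agree on the $b_i$ and on the edges of $\Gamma$. I would also state explicitly why the Künneth-type statement for free products holds. Concretely, $H(A\ast B)\cong H(A)\ast H(B)$ for connected dg-algebras over a field follows because $A\ast B$ decomposes additively as a direct sum of alternating tensor products of the augmentation ideals $\bar A$ and $\bar B$, and the Künneth formula applies to each summand.
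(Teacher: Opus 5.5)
Your proposal is correct and follows essentially the same route as the paper. The paper likewise rescales $b_{ij}'=b_{ij}/m_{ij}$ for edges of $\Gamma$ and splits $AH(X)$ as the free product $(T(b_{ij}:\{i,j\}\in\K\setminus\Gamma),0)\ast AH(\us^\Gamma)$. It then takes homology and invokes Corollary \ref{crl:graph-product-of-spheres}.
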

\begin{proof}
We compute $H_*(\Omega X;\k)$ as the homology of the dg-algebra $AH(X)$ from Lemma \ref{lmm:weight-graph-product-AH-model}.

For $\{i,j\}\in\Gamma$, we have $m_{ij}\in\k\setminus\{0\}=\k^\times$, so we can replace the multiplicative generator $b_{ij}$ with the element $b_{ij}':=b_{ij}/m_{ij}$. For $\{i,j\}\in\K\setminus \Gamma$, $m_{ij}=0$. It follows that
\begin{multline*}
AH(\ux^{\K,c})\cong \Big(T(b_i,~i\in[m];~b_{ij}',~\{i,j\}\in\Gamma;~b_{ij},~\{i,j\}\in\K\setminus\Gamma),d\Big),\\
d(b_i)=0,~d(b_{ij}')=[b_i,b_j],~d(b_{ij})=0.
\end{multline*}
This dg algebra is isomorphic to a free product $(T(b_{ij}:~\{i,j\}\in\K\setminus\Gamma),0)\ast AH(\ux^{\Gamma}),$ so we obtain an isomorphism of algebras
$$H_*(\Omega\ux^{\K,c};\k)\cong T(b_{ij}:~\{i,j\}\in\K\setminus\Gamma)\ast H_*(\Omega \ux^\Gamma;\k).$$
The result follows from Corollary \ref{crl:graph-product-of-spheres}.
\end{proof}

We conclude this section by giving a loop space decomposition of $\Omega \us^{\K,c}$ when $\K$ is a tree. To do this, we first give a decomposition in the case where $\K$ is a $1$-simplex.

\begin{lemma}
\label{lem:1simplexdecomp}
Let $X_1 = \Sigma X_1'$, and $X_2 = \Sigma X_2'$ be suspensions of connected $CW$-complexes. Let $c$ be a power sequence such that $c^{\{i\}} = 1$ for $i = 1,2$, and $c^{\{1,2\}} = (a,b)$ for some positive integers $a$ and $b$. Denote by $D_{ab}$ the homotopy cofibre of the degree $ab$ map on $\Sigma(X_1' \wedge X_2')$. Then there is a homotopy fibration \[D_{ab} \rtimes (\Omega X_1 \times \Omega X_2) \rightarrow \ux^{\Delta^1,c} \xrightarrow{f} X_1 \times X_2,\] which splits after looping.
\end{lemma}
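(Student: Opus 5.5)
First identify $X=\ux^{\Delta^1,c}$ concretely. From the coequaliser definition with $\K=\Delta^1$, $X$ is the pushout of $X_1\vee X_2\hookrightarrow X_1\times X_2$ along the map $\rho_a\vee\rho_b: X_1\vee X_2\to X_1\vee X_2$. Here $\rho_a,\rho_b$ are the degree maps on the suspensions, with the power maps taken to be the suspension degree maps. Equivalently, $X$ is $X_1\vee X_2$ with the cone on the Whitehead product $[\iota_1,\iota_2]:\Sigma X_1'\wedge X_2'\to X_1\vee X_2$ attached via the composite with $a\vee b$. Using bilinearity of the generalised Whitehead product for suspensions, this composite should be homotopic to $[a\iota_1,b\iota_2]\simeq ab\cdot[\iota_1,\iota_2]$, precomposed with the degree $ab$ map on $\Sigma X_1'\wedge X_2'$. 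I would verify this homotopy first, so that $X\simeq (X_1\vee X_2)\cup_{ab[\iota_1,\iota_2]}C(\Sigma X_1'\wedge X_2')$.

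Next, define $f:X\to X_1\times X_2$. It restricts to the inclusion on $X_1\vee X_2$, and on the cone it uses a null-homotopy of $ab[\iota_1,\iota_2]$ in $X_1\times X_2$, which exists since Whitehead products vanish in a product. More canonically, $f$ is induced from the pushout by the map $X_1\times X_2\to X_1\times X_2$ given by $\rho_a\times\rho_b$ together with the inclusion on the wedge. The coordinate inclusions $X_i\to X$ composed with $f$ give the inclusions into $X_1\times X_2$. Hence $\Omega f$ has a right homotopy inverse $\Omega X_1\times\Omega X_2\to \Omega X\times\Omega X\xrightarrow{\mu}\Omega X$, and the fibration splits after looping, giving $\Omega X\simeq \Omega X_1\times\Omega X_2\times\Omega F$, where $F$ is the homotopy fibre.

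The main step is identifying $F$. I would use the standard fibre computation for the wedge, namely that the fibre of $X_1\vee X_2\to X_1\times X_2$ is $\Sigma\Omega X_1\wedge\Omega X_2$ (Ganea), together with Mather's cube lemma applied to the pushout description of $X$ over $X_1\times X_2$. Pulling back along $f$, the fibre of $f$ is the homotopy pushout of three fibres:
\begin{itemize}
\item the fibre of $X_1\vee X_2\to X_1\times X_2$;
\item the fibre of $C(\Sigma X_1'\wedge X_2')\to X_1\times X_2$, which is $C(\Sigma X_1'\wedge X_2')\times\Omega X_1\times\Omega X_2$;
\item the fibre of $\Sigma X_1'\wedge X_2'\to X_1\times X_2$, which is $\Sigma X_1'\wedge X_2'\times\Omega X_1\times\Omega X_2$.
\end{itemize}
Following the known computation for ordinary polyhedral products (the case $ab=1$, where $F\simeq *$ since $X=X_1\times X_2$), I expect this to assemble into a half-smash $D_{ab}\rtimes(\Omega X_1\times\Omega X_2)$. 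The degree $ab$ map on the bottom cell then produces the cofibre $D_{ab}$ in place of a point.

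Concretely, I would compare with the unweighted case through the map of pushouts induced by the degree $ab$ map on $\Sigma X_1'\wedge X_2'$. This yields a cofibration $F_{\mathrm{unweighted}}\simeq *\to F\to$ (cofibre of degree $ab$)$\rtimes(\Omega X_1\times\Omega X_2)$, obtained by the cube lemma fibrewise over $X_1\times X_2$. The hard part is making the half-smash identification rigorous, in particular showing that the action of $\Omega X_1\times\Omega X_2$ on the fibre of the attached cell is trivialisable. I would handle this by following Theriault's argument for fibres of maps out of pushouts with a principal action, since the attaching data factors through the wedge where this is standard.
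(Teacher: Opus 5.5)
Your first two steps are correct and match the paper's setup. You identify $X=\ux^{\Delta^1,c}$ as the pushout of $\rho_a\vee\rho_b$ and the wedge inclusion, hence as the cofibre of $(\rho_a\vee\rho_b)\circ[i_1,i_2]$, and any map $X\to X_1\times X_2$ extending the wedge inclusion has a right homotopy inverse after looping. The paper does not derive the cofibration $\Sigma(X_1'\wedge X_2')\xrightarrow{ab[i_1,i_2]}X_1\vee X_2\to X$; it imports it from \cite[Example 2.26]{weighted}. The bilinearity of generalised Whitehead products you would use to derive it is only automatic when $X_1',X_2'$ are co-$H$-spaces (e.g.\ spheres), so cite it rather than derive it in this generality.

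The genuine gap is the identification of the homotopy fibre $F$. That is the whole content of the lemma, and you leave it at ``I expect''. Your cube-lemma description of $F$ is correct. However, it involves the holonomy-twisted lift $(\Sigma X_1'\wedge X_2')\times\Omega X_1\times\Omega X_2\to\Sigma\Omega X_1\wedge\Omega X_2$ of the attaching map, which you never analyse.

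The concrete comparison you propose does not give what you claim. The map of pushouts induced by the degree $ab$ map goes from $X$ to the unweighted space $X_1\times X_2$, over $X_1\times X_2$. So it induces $F\to F_{\mathrm{unweighted}}\simeq\ast$, and there is no map in the other direction. Passing to cofibres through the cube then yields only $\Sigma F\simeq\Sigma\bigl(D_{ab}\rtimes(\Omega X_1\times\Omega X_2)\bigr)$, which does not determine $F$ unstably. Deferring the rest to ``Theriault's argument'' leaves exactly the hard step unproved.

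What is missing is a homotopy cofibration with $D_{ab}$ as its first term, combined with the Beben--Theriault theorem.
\begin{enumerate}
\item Map the cofibration $\Sigma(X_1'\wedge X_2')\xrightarrow{ab}\Sigma(X_1'\wedge X_2')\to D_{ab}$ to the cofibration $\Sigma(X_1'\wedge X_2')\xrightarrow{ab[i_1,i_2]}X_1\vee X_2\to X$, using the identity on the common first term and $[i_1,i_2]$ in the middle.
\item The induced square with corners $\Sigma(X_1'\wedge X_2')$, $D_{ab}$, $X_1\vee X_2$, $X$ is then a homotopy pushout. Hence the homotopy cofibre of $h\colon D_{ab}\to X$ equals that of $[i_1,i_2]$, namely $X_1\times X_2$.
\item This gives a homotopy cofibration $D_{ab}\xrightarrow{h}X\xrightarrow{g}X_1\times X_2$, with $g$ restricting to the wedge inclusion. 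This cofibre map $g$ should serve as the projection, not an arbitrary extension or your $\rho_a\times\rho_b$ map, since neither is a priori the cofibre of $h$.
\item Your argument shows $\Omega g$ has a right homotopy inverse. \cite[Proposition 3.5]{BT} says that for a homotopy cofibration $A\to Y\xrightarrow{g}Z$ with $\Omega g$ admitting a right homotopy inverse, the homotopy fibre of $g$ is $A\rtimes\Omega Z$ and the fibration splits after looping.
\end{enumerate}
That theorem is precisely the unstable ``trivialisation of the action'' you flagged as the hard part. Invoking it, or reproving it, is what your proposal lacks.
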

\begin{proof}
Let $i_1: X_1 \to X_1 \vee X_2$ and $i_2:X_2 \to X_1 \vee X_2$ be the inclusions of the first and second wedge summands respectively. By \cite[Example 2.26]{weighted}, there is a homotopy cofibration \[\Sigma(X_1' \wedge X_2') \xrightarrow{ab[i_1,i_2]} X_1 \vee X_2 \xrightarrow{f} X_1 \times X_2.\]

    Consider the diagram \[\begin{tikzcd}
	{\Sigma(X_1' \wedge X_2')} & {\Sigma(X_1' \wedge X_2')} & {D_{ab}} \\
	{\Sigma(X_1' \wedge X_2')} & {X_1 \vee X_2} & {\ux^{\Delta^1,c}} \\
	& {X_1 \times X_2} & {X_1 \times X_2.}
	\arrow["ab", from=1-1, to=1-2]
	\arrow[equals, from=1-1, to=2-1]
	\arrow[from=1-2, to=1-3]
	\arrow["{[i_1,i_2]}", from=1-2, to=2-2]
	\arrow["h", from=1-3, to=2-3]
	\arrow["{ab[i_1,i_2]}", from=2-1, to=2-2]
	\arrow[from=2-2, to=2-3]
	\arrow["f", from=2-2, to=3-2]
	\arrow["g", from=2-3, to=3-3]
	\arrow[equals, from=3-2, to=3-3]
\end{tikzcd}\] The top left square clearly homotopy commutes. Taking homotopy cofibres horizontally, we obtain the top right square. Moreover, the top right square is a homotopy pushout, and so taking homotopy cofibres vertically, we obtain the bottom right square.

 It is well known that $\Omega f$ has a right homotopy inverse, and so homotopy commutativity of the bottom right square implies $\Omega g$ has a right homotopy inverse. Therefore, \cite[Proposition 3.5]{BT} gives the asserted homotopy fibration.
\end{proof}

\begin{prp}
\label{prp:torsion-in-1simplex}
Let $X_1 = \Sigma X_1'$, and $X_2 = \Sigma X_2'$ be suspensions of connected $CW$-complexes. Let $c$ be a power sequence such that $c^{\{i\}} = 1$ for $i = 1,2$, and $c^{\{1,2\}} = (a,b)$ for some positive integers $a$ and $b$. Then $\tau(\Omega\ux^{\Delta^1,c})\subseteq\tau(X'_1)\cup\tau(X'_2)\cup\{\text{prime divisors of }ab\}$.
\end{prp}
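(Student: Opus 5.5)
By Lemma \ref{lem:1simplexdecomp}, the homotopy fibration $D_{ab}\rtimes(\Omega X_1\times\Omega X_2)\to\ux^{\Delta^1,c}\to X_1\times X_2$ splits after looping. This gives
\[\Omega\ux^{\Delta^1,c}\simeq \Omega X_1\times\Omega X_2\times\Omega\big(D_{ab}\rtimes(\Omega X_1\times\Omega X_2)\big).\]
By the K\"unneth theorem, $\tau(Y\times Z)=\tau(Y)\cup\tau(Z)$, so it suffices to bound the torsion primes of each of the three factors.

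\emph{The first two factors.} By the Bott--Samelson theorem, $H_*(\Omega\Sigma X_i';\ZZ)\cong T(\H_*(X_i';\ZZ))$. The tensor algebra is a sum of tensor powers of $\H_*(X_i';\ZZ)$. Torsion primes of tensor powers over $\ZZ$ (including the Tor terms in the K\"unneth formula) only come from torsion primes of the factors. Hence $\tau(\Omega X_i)\subseteq\tau(X_i')$. More precisely, $\Omega\Sigma X_i'$ is stably a wedge of smash powers of $X_i'$ by James, and $\tau(A\wedge B)\subseteq\tau(A)\cup\tau(B)$ by K\"unneth.

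\emph{The third factor.} For a co-$H$ space $D$, the half-smash splits: $D\rtimes Y\simeq D\vee(D\wedge Y)$. Here $D_{ab}$ is a suspension, being the cofibre of a self-map of the suspension $\Sigma(X_1'\wedge X_2')$. So $D_{ab}\rtimes(\Omega X_1\times\Omega X_2)\simeq D_{ab}\vee(D_{ab}\wedge(\Omega X_1\times\Omega X_2))$, and this is again a suspension $\Sigma W$. Its loop homology is $T(\H_*(W))$ by Bott--Samelson, so as before its torsion primes lie in $\tau(W)=\tau(\Sigma W)$. Using the K\"unneth formula for smash products and products, the problem reduces to showing
\[\tau(D_{ab})\subseteq\tau(X_1')\cup\tau(X_2')\cup\{p: p\mid ab\},\]
together with the first two steps for $\Omega X_1\times\Omega X_2$.

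\emph{The cofibre $D_{ab}$.} The cofibration $\Sigma(X_1'\wedge X_2')\xrightarrow{ab}\Sigma(X_1'\wedge X_2')\to D_{ab}$ gives a long exact sequence in which the map is multiplication by $ab$ on $\H_*(\Sigma(X_1'\wedge X_2');\ZZ)=:M$. Therefore $\H_*(D_{ab})$ is an extension of $\Ker(ab)$ on $M_{*-1}$ by $M/abM$. Torsion in $M/abM$ comes from torsion of $M$ or from $\ZZ/ab$. $\Ker(ab)$ is torsion in $M$ of order dividing $ab$. Extensions of groups whose torsion primes lie in a set $S$ have torsion primes in $S$. By K\"unneth, $\tau(M)\subseteq\tau(X_1')\cup\tau(X_2')$, which concludes.

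The main point requiring care is the bookkeeping that torsion primes are preserved under tensor products, Tor, extensions and tensor algebras. Finite type may be needed to avoid pathologies, and can be assumed via the standing assumptions or by taking colimits over finite subcomplexes. The homotopy-theoretic input is entirely Lemma \ref{lem:1simplexdecomp} and the co-$H$ splitting of the half-smash.
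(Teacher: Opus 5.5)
\textbf{There is a genuine gap in your treatment of the third factor.} The rest of your strategy is sound: looping the split fibration of Lemma \ref{lem:1simplexdecomp}, applying K\"unneth to the product, and handling $\Omega X_i=\Omega\Sigma X_i'$ via the James splitting. The problem is your claim that $D_{ab}$ is a suspension, or at least a co-$H$-space, which fails in general. The degree $ab$ map on $\Sigma(X_1'\wedge X_2')=S^1\wedge X_1'\wedge X_2'$ is $(ab)_{S^1}\wedge\id$, so $D_{ab}\simeq P^2(ab)\wedge X_1'\wedge X_2'$, and $P^2(ab)$ is not a suspension. For example, take $X_1'=X_2'=\CC P^2$ and $ab=2$. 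Then $D_{ab}\simeq\mathbb{R}P^2\wedge\CC P^2\wedge\CC P^2$, and the class $x\otimes u\otimes u$ in mod $2$ cohomology has nonzero cup square $x^2\otimes u^2\otimes u^2$, which cannot happen in a co-$H$-space. So the splitting $D_{ab}\rtimes Y\simeq D_{ab}\vee(D_{ab}\wedge Y)$ and the appeal to Bott--Samelson/James for it are not available. This is not cosmetic. For a space $D$ that is not a suspension, a bound on $\tau(D)$ gives no bound on $\tau(\Omega D)$: Anick's $4$-dimensional complexes have torsion-free homology but loop homology with torsion at every prime. So reducing to $\tau(D_{ab})$ does not prove the proposition.

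\textbf{The missing idea is to control $\Omega D_{ab}$ directly.} Your own exact sequence already shows that $\H_*(D_{ab};\ZZ)$ is annihilated by $(ab)^2$, so $\H_*(D_{ab};\ZZ[1/ab])=0$; equivalently, the degree $ab$ map is an equivalence away from $ab$. Since $D_{ab}$ is simply connected, it is contractible after localisation away from the primes dividing $ab$. Hence so is $\Omega D_{ab}$, and therefore $\tau(\Omega D_{ab})\subseteq\{p: p\mid ab\}$. Then replace the co-$H$ splitting by one that holds for any $A$. The projection $A\rtimes B\to A$ has a section and homotopy fibre $\Omega A\ast B\simeq\Sigma(\Omega A\wedge B)$, so $\Omega(A\rtimes B)\simeq\Omega A\times\Omega\Sigma(\Omega A\wedge B)$. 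Take $A=D_{ab}$ and $B=\Omega X_1\times\Omega X_2$. The James splitting and K\"unneth then give $\tau(\Omega\Sigma(\Omega D_{ab}\wedge B))\subseteq\tau(\Omega D_{ab})\cup\tau(\Omega X_1)\cup\tau(\Omega X_2)$, which completes the argument. This is the route the paper takes.
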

\begin{proof}
The degree $ab$ map on $\Sigma(X_1'\wedge X_2')$ is a homotopy equivalence after localisation away from prime divisors of $ab$ and rationally. It follows that $D_{ab}$ is a simply connected $ab$-torsion space, hence we obtain $\tau(\Omega D_{ab})\subseteq\{\text{prime divisors of }ab\}$.

By \cite[Lemma 2.12]{S}, $\Omega(A\rtimes B)\simeq \Omega A\times\Omega\Sigma(\Omega A\wedge B)$, hence $\tau(\Omega(A\rtimes B))= \tau(\Omega A)\cup\tau(B)$ by \cite[Lemma 6.5]{SV}. It follows that $\tau(\Omega(D_{ab}\rtimes (\Omega X_1\times\Omega X_2)))=\tau(\Omega D_{ab})\cup\tau(\Omega X_1)\cup\tau(\Omega X_2)$. Moreover, \cite[Lemma 6.5 (6)]{SV} implies that $\tau(\Omega X_1)\cup\tau(\Omega X_2) = \tau(X'_1) \cup \tau(X'_2)$. The result now follows from Proposition \ref{prp:split_hopf_properties}.
\end{proof}
\begin{crl}
Suppose that $X_i=\Sigma X_i'$ are suspensions of connected CW-complexes, $c^{\{i\}}=1$ for all $i\in[m]$ and $\K$ is a tree. Then $$\tau(\Omega\ux^{\K,c})\subseteq\bigcup_{i=1}^m\tau(X'_i)\cup\bigcup_{\{i,j\}\in\K}\{\text{prime divisors of }m_{ij}\}.$$
\end{crl}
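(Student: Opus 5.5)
We argue by induction on the number of vertices $m$, following the pattern of the proof of Theorem \ref{thm:torsion-reduction-to-neighbourly}. If $m\le 1$ there is nothing to prove: $\ux^{\K,c}$ is a point or $X_i=\Sigma X_i'$, and $\tau(\Omega\Sigma X_i')=\tau(X_i')$ by the Bott--Samelson theorem (equivalently \cite[Lemma 6.5 (6)]{SV}). If $m=2$ and $\K=\Delta^1$, the statement is exactly Proposition \ref{prp:torsion-in-1simplex}, with $m_{12}=ab$ for $c^{\{1,2\}}=(a,b)$.

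Now let $\K$ be a tree with $m\geq 3$ vertices. Then $\K$ is not 1-neighbourly, since a tree on at least three vertices has a pair of non-adjacent vertices. Hence $\Kf$ is not a simplex, and we can apply Theorem \ref{thm:torsion-reduction-to-neighbourly}, which gives
\[\tau(\Omega\ux^{\K,c})=\bigcup_{I\in\Kf}\tau(\Omega\ux^{\K_I,c}).\]
Since a tree contains no triangles, $\Kf=\K$, so the faces $I$ are just the vertices and the edges of $\K$. For $I=\{i\}$, the full subcomplex $\K_I$ is a point, $\ux^{\K_I,c}=X_i=\Sigma X_i'$, and $\tau(\Omega X_i)=\tau(X_i')$. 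For $I=\{i,j\}\in\K$, the full subcomplex $\K_I$ is $\Delta^1$ with the restricted power sequence $c^{\{i,j\}}=(c_i^{\{i,j\}},c_j^{\{i,j\}})$. Proposition \ref{prp:torsion-in-1simplex} then gives
\[\tau(\Omega\ux^{\K_I,c})\subseteq\tau(X_i')\cup\tau(X_j')\cup\{\text{prime divisors of }m_{ij}\}.\]
Taking the union over all faces yields the claimed inclusion. In fact no induction is needed beyond what is already packaged inside Theorem \ref{thm:torsion-reduction-to-neighbourly}.

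The one step to check carefully is that restricting a weighted polyhedral product to a full subcomplex is again a weighted polyhedral product, with the restricted power sequence, on fewer vertices. This is needed to identify $\ux^{\K_I,c}$ with $\ux^{\Delta^1,c|}$, the product $\prod_{k\notin I}\ast$ being trivial. It follows from the colimit definition, since the diagram only involves faces contained in $I$ and $c_k^\sigma=1$ for $k\notin\sigma$. The hypotheses of Proposition \ref{prp:torsion-in-1simplex}, namely $c^{\{i\}}=1$ and that the $X_i$ are suspensions, are inherited directly from the assumptions of the statement.
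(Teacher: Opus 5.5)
Your proposal is correct and is the paper's argument: apply Theorem \ref{thm:torsion-reduction-to-neighbourly}, note that $\Kf=\K$ for a tree so the relevant faces are vertices and edges, and finish with Proposition \ref{prp:torsion-in-1simplex}. The theorem holds for every $\K$, so your remark that a tree on $\geq 3$ vertices is not 1-neighbourly is unnecessary. You also spell out two steps the paper's one-line proof leaves implicit: the identification of $\ux^{\K_I,c}$ with a weighted polyhedral product over $\Delta^1$, and the separate treatment of the vertices.
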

\begin{proof}
By Theorem \ref{thm:torsion-reduction-to-neighbourly}, it is sufficient to prove this for $\K=\Delta^1$. The result then follows from Proposition \ref{prp:torsion-in-1simplex}.
\end{proof}

When each $X_i$ is a sphere, we can refine these results further. For integers $m \geq 1$, $r \geq 1$, $d \geq 3$, recall that $P^d(m)$ is the mod $m$ Moore space defined as the mapping cone of the degree $m$ map on $S^{d-1}$. In this case, $D_{ab} = P^{d_1+d_2}(ab)$. Moreover, if $ab=p_1^{r_1}\cdots p_n^{r_n}$ is a prime decomposition of $ab$, then there is a homotopy equivalence $P^{d_1+d_2}(ab) = \bigvee_{i=1}^n P^{d_1+d_2}(p_i^{r_i})$. Note that $P^{d}(p_i^{r_i})$ is contractible rationally and localised at primes $q \neq p_i$.

\begin{crl}
\label{crl:spherecasemoore}
Let $d_1,d_2\geq 2.$
Let $c$ be a power sequence such that $c^{\{i\}} = 1$ for $i = 1,2$, and $c^{\{1,2\}} = (a,b)$ for some positive integers $a$ and $b$. Then there is a homotopy fibration \[P^{d_1+d_2}(ab) \rtimes (\Omega S^{d_1}\times\Omega S^{d_2}) \rightarrow \us^{\Delta^1,c} \xrightarrow{f} S^{d_1}\times S^{d_2}\] which splits after looping.
In particular,
$$\Omega\us^{\Delta^1,c}\in\prod(\mathcal{P\cup T}),\quad\tau(\Omega\us^{\Delta^1,c})=\{\text{prime divisors of }ab\}.$$
\end{crl}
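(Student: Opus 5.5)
Specialising Lemma \ref{lem:1simplexdecomp} to $X_i=S^{d_i}=\Sigma S^{d_i-1}$, i.e. $X_i'=S^{d_i-1}$ (connected since $d_i\geq 2$), gives the statement almost immediately. Then $X_1'\wedge X_2'\simeq S^{d_1+d_2-2}$, so $\Sigma(X_1'\wedge X_2')\simeq S^{d_1+d_2-1}$, and $D_{ab}$ is the cofibre of the degree $ab$ map on $S^{d_1+d_2-1}$, which is $P^{d_1+d_2}(ab)$ by definition. Lemma \ref{lem:1simplexdecomp} therefore yields the homotopy fibration
\[P^{d_1+d_2}(ab)\rtimes(\Omega S^{d_1}\times\Omega S^{d_2})\to\us^{\Delta^1,c}\xrightarrow{f} S^{d_1}\times S^{d_2},\]
and this fibration splits after looping, which is the first assertion.

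Looping the split fibration gives
\[\Omega\us^{\Delta^1,c}\simeq\Omega S^{d_1}\times\Omega S^{d_2}\times\Omega\big(P^{d_1+d_2}(ab)\rtimes(\Omega S^{d_1}\times\Omega S^{d_2})\big).\]
By \cite[Lemma 2.12]{S}, $\Omega(A\rtimes B)\simeq\Omega A\times\Omega\Sigma(\Omega A\wedge B)$. With $A=P^{d_1+d_2}(ab)$ and $B=\Omega S^{d_1}\times\Omega S^{d_2}$, the space $B$ splits after suspension by James into a wedge of spheres, so $\Sigma(\Omega A\wedge B)\simeq\Omega A\wedge\Sigma B$ is a wedge of copies of $\Sigma^k\Omega P^{d_1+d_2}(ab)$. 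I would now use the wedge splitting $P^{n}(ab)\simeq\bigvee_i P^n(p_i^{r_i})$ from the prime decomposition of $ab$. Then $\Omega P^{d_1+d_2}(ab)$ and $\Omega\Sigma(\text{wedge of suspended Moore-space loops})$ both become loop spaces of wedges of spheres and Moore spaces; here $d_1+d_2\geq 4\geq 3$. The smash of two mod-$p^r$ Moore spaces is a wedge of Moore spaces except at $p^r=2$, which is exactly why $\mathcal M$ includes the indecomposable summands of $\Sigma(P^{n_1}(2)\wedge\cdots)$. For this membership step I would invoke \cite[Lemma 5.2]{S1} directly: $\Omega$ of a space in $\bigvee(\mathcal W\cup\mathcal M)$ lies in $\prod(\mathcal{P\cup T})$. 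Combining the factors, and using that $\prod(\mathcal{P\cup T})$ is closed under products and contains $\Omega S^{d_i}$ (via $\mathcal P$, or $S^1\times\Omega S^3$, $S^3\times\Omega S^7$, $S^7\times\Omega S^{15}$ in the Hopf cases $2,4,8$), gives the first membership. I expect this step to be the main obstacle: one has to check carefully that $\Sigma(\Omega P^{n}(ab)\wedge\Omega S^{d_1}\wedge\Omega S^{d_2})$ really lies in $\bigvee(\mathcal W\cup\mathcal M)$, presumably through \cite[Lemma 5.2(4)]{S1} applied to $\Omega P^n(ab)\in\prod(\mathcal{P\cup T})$ and to the $\Omega S^{d_i}$.

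For the torsion, Proposition \ref{prp:torsion-in-1simplex} gives the inclusion $\subseteq$, since $\tau(S^{d_i-1})=\varnothing$. For $\supseteq$, $\Omega P^{d_1+d_2}(ab)$ is a retract of $\Omega\us^{\Delta^1,c}$ by the splittings above. Its homology $H_{d_1+d_2-2}\cong H_{d_1+d_2-1}(P^{d_1+d_2}(ab))\cong\ZZ/ab$, via the suspension isomorphism in the stable range, so it has $p$-torsion for every prime divisor $p$ of $ab$. Hence $\tau(\Omega\us^{\Delta^1,c})=\{\text{prime divisors of }ab\}$.
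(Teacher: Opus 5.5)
Your proposal is correct and follows the paper's proof. You specialise Lemma \ref{lem:1simplexdecomp} to $X_i'=S^{d_i-1}$, loop the split fibration, and obtain membership in $\prod(\mathcal{P\cup T})$ from \cite[Lemma 5.2]{S1}. The paper applies parts (2) and (5) of that lemma directly to the half-smash factor, while you first split it by \cite[Lemma 2.12]{S} and then use parts (4) and (2), as in the proof of Theorem \ref{thm:decomposition-for-pushout}; note the smash there is with $\Omega S^{d_1}\times\Omega S^{d_2}$, not $\Omega S^{d_1}\wedge\Omega S^{d_2}$ as you wrote at one point. For torsion, the paper reads off equality from the argument of Proposition \ref{prp:torsion-in-1simplex}, and your reverse inclusion via the retract $\Omega P^{d_1+d_2}(ab)$, whose bottom homology is $\ZZ/ab$, is an equally valid way to finish.
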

\begin{proof}
The asserted homotopy fibration follows from Lemma \ref{lem:1simplexdecomp}. From the definition of the Moore space, $\tau(P^{d_1+d_2}(ab)) = \{\text{prime divisors of }ab\}$. The torsion result then follows by arguing as in Proposition \ref{prp:torsion-in-1simplex}.

It remains to show that $\Omega\us^{\Delta^1,c}\in\prod(\mathcal{P\cup T})$. Lemma \ref{lem:1simplexdecomp} implies there is a homotopy equivalence \[\Omega \us^{\Delta^1,c} \simeq \Omega S^{d_1} \times \Omega S^{d_2} \times \Omega(P^{d_1+d_2}(ab) \rtimes (\Omega S^{d_1}\times\Omega S^{d_2})).\] Hence, it suffices to show that $\Omega(P^{d_1+d_2}(ab) \rtimes (\Omega S^{d_1}\times\Omega S^{d_2})) \in \prod(\mathcal{P\cup T})$. However, this follows from \cite[Lemma 5.2 (2) and (5)]{S1}.
\end{proof}

\begin{rmk}
The homotopy fibration and corresponding splitting in Corollary \ref{crl:spherecasemoore} can be obtained as a special case of \cite[Theorem 1.1]{H}. Lemma \ref{lem:1simplexdecomp} can be viewed as a mild generalisation of this result in the context of weighted polyhedral products.
\end{rmk}

We can now state the loop space decomposition for $\Omega \us^{\K,c}$, where $\K$ is a tree. 

\begin{theorem}
\label{thm:tree-decomposition}
Let $\K$ be a tree on $[m]$. Let $c$ be a power sequence associated to $\K$ such that $c^{\{i\}}=1$. Denote $m_{ij}=c^{\{i,j\}}_i\cdot c^{\{i,j\}}_j$ for $\{i,j\}\in\K$. Then \[\Omega \us^{\K,c} \in \prod(\mathcal{P} \cup \mathcal{T}),\quad\tau(\Omega \us^{\K,c}) = \bigcup_{\{i,j\}\in\K}\{\text{prime divisors of }m_{ij}\}.\]
\end{theorem}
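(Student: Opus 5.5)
The plan is to reduce to the case of a single edge, using the reduction theorems already proved for weighted polyhedral products. A tree $\K$ on $[m]$ is a graph with no cycles, so its flag complex $\Kf$ has no triangles. Thus $\Kf=\K$, and the faces of $\Kf$ are the empty set, the vertices $\{i\}$ and the edges $\{i,j\}\in\K$. For each $I\in\Kf$ the full subcomplex $\K_I$ is therefore a point or a $1$-simplex $\Delta^1$ on the vertices $i,j$ (if $m=1$ the statement is trivial).

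For the homotopy statement, apply Theorem \ref{thm:homotopy-decomp-reduction-to-neighbourly}. It suffices to check that $\Omega\us^{\K_I,c}\in\prod(\mathcal{P\cup T})$ for every $I\in\Kf$. For a vertex, $\us^{\K_{\{i\}},c}=S^{d_i}$, since $c^{\{i\}}=1$ means the weighted polyhedral product on a single vertex is just $X_i$. Then $\Omega S^{d_i}\in\prod\mathcal{P}$: it is $\Omega S^n$ with $n\notin\{2,4,8\}$, or for $n=2,4,8$ it splits by the Hopf fibrations as $S^1\times\Omega S^3$, $S^3\times\Omega S^7$, $S^7\times\Omega S^{15}$. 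For an edge $I=\{i,j\}$, the weighted polyhedral product $\us^{\K_I,c}$ is exactly $\us^{\Delta^1,c}$ with $c^{\{i,j\}}=(c_i^{\{i,j\}},c_j^{\{i,j\}})$. Corollary \ref{crl:spherecasemoore} then gives $\Omega\us^{\Delta^1,c}\in\prod(\mathcal{P\cup T})$. The one point to check is that restricting the power sequence to a full subcomplex gives the weighted polyhedral product with the restricted power sequence. This is immediate from the colimit definition, since the diagram for $\K_I$ is the subdiagram of faces contained in $I$ and the remaining coordinates are trivial.

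For the torsion statement, apply Theorem \ref{thm:torsion-reduction-to-neighbourly}:
\[
\tau(\Omega\us^{\K,c})=\bigcup_{I\in\Kf}\tau(\Omega\us^{\K_I,c}).
\]
The vertex terms contribute $\tau(\Omega S^{d_i})=\varnothing$, because $H_*(\Omega S^{d_i};\ZZ)$ is torsion-free by Bott--Samelson. The empty face contributes a point, and hence nothing. The edge terms contribute $\{\text{prime divisors of }m_{ij}\}$ by Corollary \ref{crl:spherecasemoore}, since $ab=c_i^{\{i,j\}}c_j^{\{i,j\}}=m_{ij}$. Taking the union gives the claimed equality.

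I expect the main obstacle to be checking that the hypotheses of the cited results hold in this setting. Theorem \ref{thm:homotopy-decomp-reduction-to-neighbourly} is proved via the pushout argument of Theorem \ref{thm:decomposition-for-pushout}, which needs simply connected spaces of finite type; spheres with $d_i\geq 2$ satisfy this. The torsion reduction also needs the retraction of Lemma \ref{lem:retraction} for weighted polyhedral products. Once these are confirmed, the proof is a direct assembly of the single-edge case with the reduction theorems.
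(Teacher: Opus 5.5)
Your proposal is correct and matches the paper's proof: both use Theorem~\ref{thm:homotopy-decomp-reduction-to-neighbourly} and Theorem~\ref{thm:torsion-reduction-to-neighbourly} to reduce to the faces of $\Kf=\K$, and then invoke Corollary~\ref{crl:spherecasemoore} for the edges. Your explicit handling of the vertex and empty-face terms (the Hopf-fibration splittings of $\Omega S^{2},\Omega S^{4},\Omega S^{8}$, and Bott--Samelson for torsion) spells out what the paper leaves implicit.
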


\begin{proof}
By Theorem \ref{thm:homotopy-decomp-reduction-to-neighbourly} and Theorem \ref{thm:torsion-reduction-to-neighbourly}, it suffices to show the result for $\Omega \us^{\Delta^1,c}$. However, this follows from Corollary \ref{crl:spherecasemoore}.
\end{proof}

\end{document}